\theoremstyle{plain}
\newtheorem{theorem}{Theorem}[section]
\newtheorem{thm}{Theorem}
\newtheorem{lemma}[theorem]{Lemma}
\newtheorem{corollary}[theorem]{Corollary}
\newtheorem{proposition}[theorem]{Proposition}
\newtheorem*{claim*}{Claim}
\theoremstyle{definition}
\newtheorem*{definition*}{Definition}
\newtheorem{definition}[theorem]{Definition}
\newtheorem{example}[theorem]{Example}
\newtheorem{question}{Question}
\theoremstyle{remark}
\newtheorem{remark}[theorem]{Remark}
\numberwithin{equation}{section}
\def \D {\mathcal{D}}
\def \R {\mathbb{R}}
\def \A {\mathcal{A}}
\def \E {\mathbb{E}}
\def \H {\mathcal{H}}
\def \N {\mathbb{N}}
\def \X {\mathbb{X}}
\def \Y {\mathbb{Y}}
\def \L {\mathcal{L}}
\def \F {\mathcal{F}}
\def \B {\mathbb{B}}
\def \Z {\mathbb{Z}}
\def \eps {\varepsilon}
\DeclareMathOperator{\diam}{diam}
\DeclareMathOperator{\Lip}{Lip}
\DeclareMathOperator{\lip}{lip}
\DeclareMathOperator{\dist}{dist}
\DeclareMathOperator{\gen}{gen}
\DeclareMathOperator{\intr}{int}
\DeclareMathOperator{\aff}{aff}
\DeclareMathOperator{\At}{\mathcal{A}t}
\DeclareMathOperator{\Diff}{Diff}
\newcommand{\res}{\big|}
\begin{document}

%%%%% To ease editing, for IMPAN journals add:

\baselineskip=17pt

%%%%%%%%%%%%%%%%

\title{Lipschitz functions on quasiconformal trees}

\author[D. Freeman]{David Freeman}
\address[D. Freeman]{University of Cincinnati Blue Ash College, Blue Ash, OH 45236, USA}
\email{freemadd@ucmail.uc.edu}

\author[C. Gartland]{Chris Gartland}
\address[C. Gartland]{Texas A\&M University, College Station, TX 77843, USA}
\email{cgartland@math.tamu.edu}

\date{}

\begin{abstract}
We first identify (up to linear isomorphism) the Lipschitz free spaces of quasiarcs. By decomposing quasiconformal trees into quasiarcs as done in an article of David, Eriksson-Bique, and Vellis, we then identify the Lipschitz free spaces of quasiconformal trees and prove that quasiconformal trees have Lipschitz dimension 1. Generalizing the aforementioned decomposition, we define a \emph{geometric tree-like decomposition} of a metric space. Our results pertaining to quasiconformal trees are in fact special cases of results about metric spaces admitting a geometric tree-like decomposition. Furthermore, the methods employed in our study of Lipschitz free spaces yield a decomposition of any (weak) quasiarc into rectifiable and purely unrectifiable subsets, which may be of independent interest.
\end{abstract}

\subjclass[2020]{Primary 51F30; Secondary 30L05, 28A15, 28A78, 46B20}

\keywords{Lipschitz-free space, Lipschitz dimension, quasiconformal tree, martingale, countably rectifiable, purely unrectifiable}

\maketitle

\setcounter{tocdepth}{1}
\tableofcontents

\section{Introduction}
A \textit{Jordan arc} is a metric space homeomorphic to the unit interval $[0,1]$. Via \cite[Theorem 4.9]{TV80}, a \textit{quasiarc} can be defined as a Jordan arc that is both \textit{$D$-doubling} and \textit{$B$-bounded turning} (for some constants $D,B\geq1$). Here we say that a metric space $X$ is $D$-doubling if each metric ball in $X$ of radius $r>0$ can be covered by at most $D$ metric balls of radius $r/2$. The space $X$ is $B$-bounded turning if every pair of points $u,v\in X$ is contained in a compact and connected set $E$ such that $\diam(E)\leq B\,d(u,v)$.

Quasiarcs can be generalized by \textit{quasiconformal trees}. Let $T$ denote a \textit{metric tree}, meaning a compact, connected, locally connected metric space such that each pair of distinct points in $T$ forms the endpoints of a unique Jordan arc. We say that $T$ is a quasiconformal tree provided that $T$ is both doubling and bounded turning\footnote{Some sources, such as \cite{DEV21}, omit the requirement of local connectedness in the definition of a metric tree. However, if $T$ is a metric tree in this more general sense \emph{and $T$ is also bounded turning}, then $T$ is necessarily locally connected. Thus, the definition of quasiconformal trees from \cite{DEV21} coincides with ours.} that the Quasiconformal trees have been studied in papers such as \cite{K17}, \cite{BM20a}, \cite{BM20b}, \cite{DV20}, and \cite{DEV21}. 

For brevity, we shall refer to quasiarcs and quasiconformal trees as QC arcs and QC trees, respectively. Given a QC tree $T$, we say that $x\in T$ is a \textit{leaf} provided that $T\setminus\{x\}$ is connected. In particular, a QC arc is a QC tree possessing exactly two leaves.

In this paper, we study the Lipschitz geometry of QC arcs and trees. We do this in two ways. On one hand, we study the Banach space of real-valued Lipschitz functions on a given QC arc or tree, as well as its canonical Banach space predual. On the other, we focus on a special class of Lipschitz mappings known as \textit{Lipschitz light} mappings (as in \cite{CK13} and \cite{David21}), and show that QC trees admit Lipschitz light mappings into the real line $\mathbb{R}$. In other words, we prove that the \textit{Lipschitz dimension} of any QC tree is equal to 1. We expand on these concepts in more detail as follows.

\subsection{Lipschitz Free Spaces of QC Trees}
Given a metric space $(X,d)$ with fixed basepoint $x_0\in X$, we denote by $\Lip_0(X)$ the space of all Lipschitz functions $f:X\to\mathbb{R}$ such that $f(x_0)=0$. Here a function $f:X\to \mathbb{R}$ is said to be \textit{$L$-Lipschitz}, for some $L\geq 1$, provided that, for all $u,v\in X$, we have $|f(u)-f(v)|\leq L\,d(u,v)$. The space $\Lip_0(X)$ is a Banach space when equipped with the norm 
\[\|f\|_{\Lip_0(X)}:=\sup_{u\not=v}\frac{|f(u)-f(v)|}{d(u,v)}.\]
The \textit{Lipschitz free space} of $X$, denoted $\mathcal{F}(X)$, is the canonical Banach space predual of $\Lip_0(X)$ (and the unique predual when $X$ is bounded). See \cite{GK03} or \cite[Chapter 3]{Weaver18} for further background, and note that Lipschitz free spaces are also referred to as \textit{Arens-Eells spaces}.

In the study of free spaces, a major topic is embeddability into $L^1$-spaces\footnote{By an \emph{$L^1$-space}, we mean a Banach space of the form $L^1(\mu)$ for some measure $\mu$.}. It is known that $\F(X)$ linearly isometrically embeds into an $L^1$-space if and only if $X$ isometrically embeds into a geodesic tree (and equals an $L^1$-space if $X$ equals a geodesic tree) \cite{Godard10}, and $\F(X)$ linearly isometrically embeds into $\ell^1(\Lambda)$ for some indexing set $\Lambda$ if and only if $X$ is a subset of a geodesic tree such that both $X$ and the closure of the branch points of $X$ have length measure zero\footnote{A subset $X$ of a geodesic tree $T$ has \emph{length measure zero} if the Lebesgue measure of $[x,y] \cap X$ equals 0 for every subarc $[x,y] \subset T$.} \cite{APP}. However, the problem of isomorphic embeddability has thus far proved to be more difficult; as far as we are aware, there does not even exist a conjectural characterization of metric spaces whose free space isomorphically embeds into an $L^1$-space. In addition to subsets of geodesic trees, it is known that whenever $(X,d)$ is doubling and $\alpha \in (0,1)$, the free space $\F(X,d^\alpha)$ isomorphically embeds into an $L^1$-space (see \cite[Theorem~8.49]{Weaver18} for a proof that $\F(X,d^\alpha)$ is isomorphic to $\ell^1$), in spite of the fact that these spaces are quite different geometrically from subsets of trees. On the other hand, there are doubling spaces whose free spaces do not isomorphically embed into $L^1$. For example, $\F([0,1]^2)$ and $\F(\Z^2)$ fail to embed into $L^1$ by Naor-Schechtman's adaptation of Kislyakov's argument \cite{NaorSchechtman}.

The canonical examples of QC arcs are the Euclidean interval $[0,1]$ and its snowflakes $([0,1],|\cdot|^\alpha)$. As mentioned above, $\F([0,1])$ is linearly isometric to an $L^1$-space and $\F([0,1],|\cdot|^\alpha)$ is isomorphic to $\ell^1$ if $\alpha \in (0,1)$. Our first main theorem simultaneously generalizes these results (see the comment before Corollary \ref{cor:isoL1} and Subsection \ref{SS:RP1U} for the definition of purely 1-unrectifiable).

\begin{thm}\label{T:main_arcs}
For every QC arc $\gamma$, the Lipschitz free space $\F(\gamma)$ is linearly isomorphic to $L^1(Z)$ for some measure space $Z$. Moreover, $Z$ is purely atomic if and only if $\gamma$ is purely 1-unrectifiable.
\end{thm}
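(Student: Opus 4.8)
The plan is to reduce $\gamma$ to a concrete combinatorial model using the quasiarc decomposition of David--Eriksson-Bique--Vellis, to analyze $\F(\gamma)$ through a martingale/Haar-type basis, and to read off the atomic--nonatomic dichotomy from the length geometry of $\gamma$. First, fix the two endpoints of $\gamma$, use them to linearly order $\gamma$, and apply the decomposition of \cite{DEV21} specialized to a single quasiarc: a nested family of subarcs $\{\gamma_w : w\in\{0,1\}^{<\N}\}$ with $\gamma_\emptyset=\gamma$, $\gamma_w=\gamma_{w0}\cup\gamma_{w1}$, $\gamma_{w0}\cap\gamma_{w1}=\{m_w\}$ a single \emph{split point}, and $\max_{|w|=n}\diam\gamma_w\to 0$. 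Since bounded turning passes to subarcs with unchanged constants, every $\gamma_w$ is again a QC arc with the same data. Write $x_w^{\pm}$ for the endpoints of $\gamma_w$, $\ell_w:=d(x_w^-,x_w^+)$, and $\rho_w:=(\ell_{w0}+\ell_{w1})/\ell_w\ge 1$ for the \emph{defect ratio} at $m_w$; then $\sum_{|w|=n}\ell_w=\ell_\emptyset+\sum_{|u|<n}\ell_u(\rho_u-1)$ increases to $\operatorname{len}(\gamma)\in(0,\infty]$. With the finite models $\gamma_n$ (the endpoints of the level-$n$ subarcs), density of $\bigcup_n\gamma_n$ gives $\F(\gamma)=\overline{\bigcup_n\F(\gamma_n)}$.

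Next I would set up a Haar-type basis of $\F(\gamma)$: the molecules $m_w=(\delta_{x_w^+}-\delta_{x_w^-})/\ell_w$, the Haar differences $h_w$ (suitably normalized differences of the two half-molecules of $\gamma_w$), and $m_\emptyset$. The technical core is to show that, read in the tree order, $\{m_\emptyset\}\cup\{h_w\}$ is a Schauder basis satisfying a two-sided square-function estimate
\[
\Bigl\|\,a_\emptyset m_\emptyset+\sum_w a_w h_w\,\Bigr\|_{\F(\gamma)}\ \asymp\ \Bigl\|\Bigl(|a_\emptyset|^2\mathbf{1}_\gamma+\sum_w |a_w|^2\,\mathbf{1}_{\gamma_w}\Bigr)^{1/2}\Bigr\|_{L^1(\gamma,\mu)},
\]
with constants depending only on the doubling and bounded-turning constants, where $\mu$ is a carefully chosen length-type measure on $\gamma$ built from the weights $\ell_w$ (as a weak limit of discretizations $\sum_{|w|=n}\ell_w\nu_w$), with a matching renormalization of the $a_w$ absorbed into the $h_w$. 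The upper bound needs only the triangle inequality along subarcs; the lower bound is where bounded turning enters — converting the diameters of the subarcs comprising a given $\gamma[x,y]$ back into the single distance $d(x,y)$ — together with doubling, to bound how many scale-$2^{-n}$ subarcs a fixed subarc can meet. Granting the estimate, $(h_w)$ is (after normalization) a martingale difference sequence for the filtration $\mathcal F_n=\sigma(\{\gamma_w:|w|=n\})$ on $(\gamma,\mu)$, and since $\diam\gamma_w\to 0$ this filtration generates the Borel $\sigma$-algebra; by martingale convergence the coefficient map is then a linear isomorphism of $\F(\gamma)$ onto $L^1(\gamma,\mu)$, so $\F(\gamma)\cong L^1(Z)$ with $Z=(\gamma,\mu)$. (One can instead route this through ``a complemented subspace of $L^1$ is an $L^1$-space'' if one prefers not to argue that the $h_w$ span.)

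For the ``moreover'', the point is that the measure $\mu$ itself can be atomic, with its atomic part sitting over the purely $1$-unrectifiable part of $\gamma$. If $\gamma$ contains a rectifiable subarc $\sigma$ of positive length, then $\sigma$ — a continuum of finite $\mathcal{H}^1$-measure, hence $1$-rectifiable — carries $\mathcal{H}^1|_\sigma$ nonatomic, and one checks that $\mu$ restricted there is comparable to $\mathcal{H}^1|_\sigma$; so $Z$ has a nonatomic part and $\gamma$ is not purely $1$-unrectifiable. Conversely, if $\gamma$ is purely $1$-unrectifiable I would show $\mu$ is purely atomic. This is where I would establish the decomposition advertised in the abstract — that an arbitrary quasiarc splits into a $\sigma$-finite $1$-rectifiable part $R$ and a purely $1$-unrectifiable part $U$ — and then argue that along $U$ the defects $\rho_w$ stay bounded away from $1$ in an averaged/stopping-time sense (rectifiability of a subarc being exactly the summability of the length its descendants add, i.e.\ $\rho_w\to1$ fast enough), which forces the discretizations $\sum_{|w|=n}\ell_w\nu_w$ to concentrate and $\mu|_U$ to be atomic.

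The main obstacle is the lower bound of the square-function estimate with constants independent of the level $n$. The naive level-by-level splitting $\F(\gamma_w)\cong\F(\gamma_{w0})\oplus_1\F(\gamma_{w1})$ — valid because the split point is metrically between the two sides up to the factor $B$ — costs a factor $\sim B$ \emph{per level}, so iterating destroys control; one must instead argue globally, estimating transport cost across all scales of the decomposition at once, using doubling to limit the combinatorics and bounded turning to pass between subarc diameters and endpoint distances. Relatedly, the $L^1$-structure of $\F(\gamma)$ is genuinely a limiting phenomenon: the finite models $\F(\gamma_n)$ are only $\sim B2^n$-isomorphic to $\ell^1_{2^n}$, so the isomorphism $\F(\gamma)\cong L^1(Z)$ cannot come from uniformly good finite-dimensional approximations and must be assembled through the martingale limit. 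Getting the measure $\mu$ right — so that the $L^1(\gamma,\mu)$-norm matches the free norm, with atoms precisely along the purely unrectifiable part — is itself a nontrivial part of the construction.
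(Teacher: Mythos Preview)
Your approach is conceptually in the same family as the paper's --- a dyadic/martingale analysis of the arc --- but carried out on the predual side rather than on $\Lip_0$. Two attributions should be corrected: the dyadic subdivision of a single arc comes from the Herron--Meyer catalogue (Theorem~\ref{T:HM}), not from \cite{DEV21}; the DEBV decomposition breaks a tree into arcs and is trivial on a single arc. Second, the paper never produces a measure $\mu$ on $\gamma$ itself with $\F(\gamma)\cong L^1(\gamma,\mu)$; the target $L^1(Z)$ arises as an abstract $\ell^1$-sum $\bigoplus_n^1 L^1(X_n)$ (Lemma~\ref{lem:iso2}), and your stronger claim would need separate justification.

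The genuine gap is exactly where you flag it: the lower bound in your square-function estimate. You correctly identify that doubling and bounded turning must enter here, but you give no mechanism for proving the bound with constants independent of depth, and your own remark that the naive level-by-level argument loses a factor of $B$ per level shows why a real idea is needed. The paper's substitute is to work dually: it maps $\Lip_0([0,1],d)$ into $L^\infty$-bounded martingale difference sequences via a derivative operator $D$ (Theorem~\ref{thm:Dweak*}), and then proves that the inverse integral operator $I$ is bounded (Theorem~\ref{thm:Ibndd}). The crucial step is the local-to-global Lipschitz estimate of Lemma~\ref{L:dyadic}, which converts control of $|f(u)-f(v)|$ on dyadic edges into global Lipschitz control; this is the one place doubling is used, and it is what replaces your missing square-function lower bound. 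Without an analogue of Lemma~\ref{L:dyadic} on the $\F$ side, your outline does not close.

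For the ``moreover'', the paper's route is far shorter than yours: once $\F(\gamma)\cong L^1(Z)$, pure atomicity of $Z$ is equivalent to the Schur property of $L^1(Z)$, and by \cite[Theorem~C]{AGPP} the Schur property of $\F(\gamma)$ is equivalent to pure 1-unrectifiability of $\gamma$. Your plan --- build the rectifiable/purely unrectifiable decomposition and then argue that the defect ratios $\rho_w$ force $\mu$ to be atomic on the unrectifiable part --- is plausible in spirit (and the decomposition is indeed proved as Theorem~\ref{thm:rp1udecomp}), but the final step linking defect behavior to atomicity of $\mu$ is asserted without argument and would be substantial additional work.
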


\noindent Each of the isomorphisms $\F([0,1]) \cong L^1([0,1])$ and $\F([0,1],|\cdot|^\alpha) \cong \ell^1(\mathcal{V})$ can be obtained by predualizing a weak*-isomorphism $\Lip_0([0,1]) \to L^\infty([0,1])$ and $\Lip_0([0,1],|\cdot|^\alpha) \to \ell^\infty(\mathcal{V})$. In the first case, the map is simply the usual continuous derivative afforded by Lebesgue's theorem, and in the second case, the map uses a sequence of discrete derivatives. The proof of our Theorem \ref{T:main_arcs} follows along the same lines; we construct a linear map on $\Lip_0(\gamma)$ by using derivatives. However, because $\gamma$ lacks the spatial- and scale-homogeneity enjoyed by the arcs $[0,1]$ and $([0,1],|\cdot|^\alpha)$, it is not clear exactly what kind of derivatives should be taken or how to use them to form a weak*-isomorphism onto an $L^\infty$-space. A new insight we provide is to first use derivatives to map $\Lip_0(\gamma)$ isomorphically onto a subspace of an $L^\infty$-space whose underlying measure space is adapted to the non-homogeneous geometry of $\gamma$, and then map this subspace isomorphically onto another $L^\infty$-space with a more abstract map. The intermediate Banach space we use is the space of \emph{$L^\infty$-bounded martingale difference sequences adapted to a filtration $(\A_n)_{n\geq0}$} of $\sigma$-algebras on $[0,1]$. In a sense, this filtration measures the deviation of $\gamma$ from the Euclidean interval $[0,1]$. The key tool used to construct $(\A_n)_{n\geq0}$ is a combinatorial description of QC arcs provided by work of Herron and Meyer \cite{HM12} (Theorem \ref{T:HM}). The bulk of Section \ref{S:arc_freespace} is dedicated to proving Theorem \ref{thm:Diso}, which establishes the weak*-isomorphism between $\Lip_0(\gamma)$ and the space of $L^\infty$-bounded martingale difference sequences adapted to $(\A_n)_{n\geq0}$. From this theorem we deduce Corollary \ref{cor:isoL1} (a restatement of Theorem \ref{T:main_arcs}) with relative ease.

With intermediate results used to obtain Theorem \ref{T:main_arcs}, we are able to produce a rectifiable/purely unrectifiable decomposition (see Subsection \ref{SS:RP1U} for the definitions) for bounded turning Jordan arcs a l\`a \cite[15.6 Theorem]{Mattila}.

\begin{thm}\label{T:rp1udecomp}
For every bounded turning Jordan arc $\gamma$, there exist $R,U \subset \gamma$ such that $R$ is countably 1-rectifiable, $U$ is purely 1-unrectifiable, and $\gamma = R \cup U$.
\end{thm}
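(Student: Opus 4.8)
The plan is to adapt the classical rectifiable/purely unrectifiable decomposition (as in \cite[15.6 Theorem]{Mattila}), exploiting the structure of bounded turning Jordan arcs. Since $\gamma$ is a Jordan arc, fix a homeomorphism $\phi \colon [0,1] \to \gamma$ and, for $s<t$ in $[0,1]$, write $\gamma[s,t] := \phi([s,t])$ for the corresponding subarc. First I would define $R$ to be the set of points $x \in \gamma$ at which $\gamma$ is ``rectifiable at small scales,'' made precise as follows: $x = \phi(t) \in R$ iff there is a parameter $\delta > 0$ such that the subarc $\gamma[\max(t-\delta,0),\min(t+\delta,1)]$ has finite length (equivalently, finite $\H^1$ measure). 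Set $U := \gamma \setminus R$. Then $\gamma = R \cup U$ is immediate, so the content is in showing $R$ is countably $1$-rectifiable and $U$ is purely $1$-unrectifiable.

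For the first claim, I would observe that the set of parameters $t \in [0,1]$ such that some neighborhood of $t$ maps to a subarc of finite length is open in $[0,1]$, hence a countable union of open intervals $(a_i, b_i)$; on each, every compactly contained subarc $\gamma[a_i + \eta, b_i - \eta]$ has finite length and so is a rectifiable curve, thus countably $1$-rectifiable (a finite-length connected set is $1$-rectifiable by \cite[Lemma~3.12]{Falconer} or the classical fact that it is a Lipschitz image of an interval). Exhausting each $(a_i, b_i)$ by countably many such subarcs and taking the union over $i$ shows $R$ is a countable union of countably $1$-rectifiable sets, hence countably $1$-rectifiable.

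The heart of the matter is that $U$ is purely $1$-unrectifiable, and this is where the bounded turning hypothesis enters decisively. Suppose for contradiction that $U$ contains a subset $S$ with $\H^1(S) > 0$ that is $1$-rectifiable, i.e., $\H^1$-almost covered by countably many Lipschitz images of $\R$. I would want to conclude that then some subarc of $\gamma$ through a point of $S$ has finite length, contradicting the definition of $U$. The key structural input is that in a $B$-bounded turning Jordan arc, the arc $\gamma$ is (quantitatively) the shortest connected set joining its endpoints, so the $\H^1$ measure of a subarc is comparable to its diameter \emph{only when that subarc is rectifiable}; more usefully, I expect to invoke the intermediate machinery developed for Theorem \ref{T:main_arcs} — namely, the martingale/Herron--Meyer combinatorial description of bounded turning arcs — to show that the ``derivative'' of the arclength-type parametrization exists and is finite at $\H^1$-a.e.\ point of any rectifiable piece, forcing local finiteness of length near such points. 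Concretely, a rectifiable subset $S$ of positive $\H^1$ measure has, at $\H^1$-a.e.\ of its points, an approximate tangent line (by \cite[15.19 Theorem]{Mattila}); combined with bounded turning, this tangent structure propagates along the arc to give that $S$ meets the interior of one of the finite-length subarcs $\gamma[a_i,b_i]$ from the previous paragraph, i.e.\ $S \cap R \neq \emptyset$ in fact $\H^1(S \cap R) > 0$, contradicting $S \subset U$.

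The main obstacle I anticipate is the last step: turning the local (infinitesimal) tangency/rectifiability information at a single point into the \emph{genuinely local} (positive-radius) statement that an entire subarc around that point has finite length. In a general metric space this implication fails, and it is precisely the bounded turning condition — which prevents the arc from oscillating wildly between two nearby points — that should rescue it. I expect the cleanest route is not via approximate tangents directly but via the arclength functional: define $\ell(t) := \H^1(\gamma[0,t]) \in [0,\infty]$, show using bounded turning and the Herron--Meyer structure that $\ell$ is either locally finite or locally identically $+\infty$ in an appropriate density sense, identify $R$ with the locally-finite part and $U$ with the complement, and then deduce pure $1$-unrectifiability of $U$ from the fact that on $U$ the arc has ``infinite length at every scale,'' which is incompatible with containing a rectifiable piece of positive measure. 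Making the dichotomy for $\ell$ precise, with the right quantitative dependence on $B$, is the technical crux.
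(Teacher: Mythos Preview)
Your decomposition $R = \{x : \text{some subarc around } x \text{ has finite length}\}$, $U = \gamma \setminus R$ is \emph{not} the one the paper uses, and the step you yourself flag as the ``main obstacle'' is a genuine gap that the proposed arguments do not close. Bounded turning controls the \emph{diameter} of the subarc between two points, not its \emph{length}; there is no mechanism by which an approximate tangent to a subset $S$ at a point, or $\H^1$-rectifiability of $S$, forces the ambient arc to have finite $\H^1$ on any neighbourhood of that point. One can arrange (in the Herron--Meyer framework) a bounded turning arc in which a fat Cantor set $C$ carries a metric bi-Lipschitz to Euclidean---so $C$ is $1$-rectifiable with $\H^1(C)>0$---while every complementary gap is snowflaked enough to have infinite $\H^1$; then your $U$ equals all of $\gamma$ and is not purely $1$-unrectifiable. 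The appeal to an ``arclength dichotomy'' $\ell(t)$ locally finite versus locally $+\infty$ has the same problem: $\ell$ can be identically $+\infty$ on a set that nevertheless contains a positive-measure rectifiable piece.

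The paper's proof is structurally different. After reducing to $([0,1],d_\Delta)$ via Herron--Meyer, it sets $R := \bigcup_{n\geq 0}\Diff_n$, the union of the diffuse parts of the filtration $(\A_n)_n$, and $U := [0,1]\setminus R$. Rectifiability of $R$ follows because $d$ agrees with the bi-Lipschitz-to-Euclidean metric $d_n$ on each $\Diff_n$ (Proposition~\ref{prop:ddn}\eqref{item:ddn3}). Pure $1$-unrectifiability of $U$ is proved \emph{not} through any length argument but via the criterion that locally flat Lipschitz functions separate points of $U$ uniformly (Lemma~\ref{lem:Up1u}): every $z\in U$ lies in the interior of an atom of $\At_n$ for all $n$, and on those atoms $d_k/d \leq 2^{k-n}$, so any $d_k$-Lipschitz function is locally flat on $U$ with respect to $d$. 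The separating functions are then built explicitly as integrals $I_{k+1}(g)$ of simple martingale-difference data. This route bypasses entirely the question of whether subarcs near points of $U$ have finite length.
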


\noindent We note that such a decomposition is typically available only for $\H^1$-$\sigma$-finite metric spaces. Theorem \ref{T:rp1udecomp} is restated and proven as Theorem \ref{thm:rp1udecomp} in Subsection \ref{SS:RP1U}.

We build on Theorem \ref{T:main_arcs} and study Lipschitz functions on QC trees. We do this by way of a decomposition provided by \cite{DEV21}. Given a QC tree $T$, we refer to such a decomposition as a \textit{DEBV decomposition of $T$} (see Definition \ref{D:DEBV}). In particular, DEBV decompositions allow us to break down a QC tree into constituent QC arcs. Denoting these QC arcs by $\{\gamma_i\}_{i\in I}$, our second main result can be presented as follows.

\begin{thm}\label{T:main_trees}
For every QC tree $T$, the Lipschitz free space $\F(T)$ is linearly isomorphic to the $\ell^1$-sum $\bigoplus_{i\in I}^1\F(\gamma_i)$. Consequently, $\F(T)$ is isomorphic to $L^1(Z)$ for some measure space $Z$, where $Z$ is purely atomic if and only if $T$ is purely 1-unrectifiable.
\end{thm}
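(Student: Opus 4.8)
\textbf{Proof proposal for Theorem \ref{T:main_trees}.}

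The plan is to reduce the tree statement to the arc statement (Theorem \ref{T:main_arcs}) via the DEBV decomposition. The first step is to recall from Definition \ref{D:DEBV} and the work of David--Eriksson-Bique--Vellis the structure of a DEBV decomposition: it writes $T$ as a union of QC arcs $\{\gamma_i\}_{i\in I}$, organized in generations, where each $\gamma_i$ is attached to the union of the previous generations at a single point, and the overlaps between distinct $\gamma_i$'s are at most single points (branch points). I would then construct the candidate isomorphism $\F(T) \to \bigoplus_{i\in I}^1 \F(\gamma_i)$ at the level of preduals: by the universal property of $\F$, the inclusions $\gamma_i \hookrightarrow T$ (after translating basepoints appropriately) induce canonical linear maps $\F(\gamma_i) \to \F(T)$, and summing them gives a bounded linear map $\Phi\colon \bigoplus_{i\in I}^1 \F(\gamma_i) \to \F(T)$. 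The content is to show $\Phi$ is an isomorphism; equivalently, that the dual map $\Phi^*\colon \Lip_0(T) \to \bigoplus_{i\in I}^\infty \Lip_0(\gamma_i)$, which is simply $f \mapsto (f|_{\gamma_i} - f(p_i))_{i\in I}$ for appropriate anchor points $p_i$, is a weak*-isomorphism onto its range with a bounded inverse on the range.

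The key estimate is the surjectivity-with-control direction: given a bounded family $(g_i)_{i\in I}$ with $g_i \in \Lip_0(\gamma_i)$ and $\sup_i \|g_i\|_{\Lip_0(\gamma_i)} < \infty$, one must glue the $g_i$ into a single Lipschitz function $g$ on $T$ with $\|g\|_{\Lip_0(T)} \lesssim \sup_i \|g_i\|$. Because each $\gamma_i$ meets the rest of the tree in (essentially) a single point, the gluing is consistent: define $g$ on $\gamma_i$ to be $g_i$ plus the constant needed to match the value already assigned at the attaching point, proceeding generation by generation. The Lipschitz bound for pairs $u,v$ lying in a common $\gamma_i$ is immediate; for $u \in \gamma_i$, $v \in \gamma_j$ with $i \neq j$, one uses the tree structure: the geodesic-like arc $[u,v]$ in $T$ passes through a chain of branch points $p = q_0, q_1, \dots, q_k = $ (attaching points), and $d(u,v)$ controls $\sum d(q_{\ell-1},q_\ell)$ up to the bounded-turning constant — here one invokes that in a metric tree the unique arc between two points, together with bounded turning, forces $\sum_\ell d(q_{\ell-1}, q_\ell) \leq B\, d(u,v)$ (this is where the bounded turning and the ``tree-like'' combinatorics of the DEBV decomposition enter). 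Telescoping $|g(u)-g(v)|$ along this chain against $\sup_i\|g_i\|$ times $\sum_\ell d(q_{\ell-1},q_\ell)$ gives the bound. The reverse direction, $\|(f|_{\gamma_i} - f(p_i))_i\|_\infty \leq \|f\|_{\Lip_0(T)}$, is trivial since each $\gamma_i \subset T$ isometrically. One also checks $\Phi^*$ is weak*-continuous (it is a restriction map, hence dual to $\Phi$ by construction) and that its range is weak*-closed, so that predualizing yields the isomorphism $\F(T) \cong \bigoplus_{i\in I}^1 \F(\gamma_i)$.

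The main obstacle I anticipate is controlling the gluing uniformly over a potentially infinite (countable) family of arcs across infinitely many generations — one needs the constant in $\|g\|_{\Lip_0(T)} \lesssim \sup_i\|g_i\|$ to be independent of $|I|$ and of the number of generations, which requires the quantitative ``geometric tree-like'' properties of the DEBV decomposition (uniform bounds on how arcs attach, and that the branch-point chains behave like those in an $\R$-tree up to the bounded turning constant) rather than just the qualitative decomposition. A secondary subtlety is the bookkeeping of basepoints: $\F(\gamma_i)$ depends on a choice of basepoint in $\gamma_i$, and one must fix these choices (e.g., the attaching point of $\gamma_i$ to its parent, and the global basepoint $x_0$ in the root arc) so that the induced maps assemble correctly; this is routine but must be done carefully. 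Once the $\ell^1$-sum decomposition is in hand, the ``consequently'' clause follows immediately: by Theorem \ref{T:main_arcs} each $\F(\gamma_i) \cong L^1(Z_i)$, so $\F(T) \cong \bigoplus_{i\in I}^1 L^1(Z_i) \cong L^1(Z)$ with $Z = \bigsqcup_i Z_i$; and $Z$ is purely atomic iff every $Z_i$ is, iff every $\gamma_i$ is purely $1$-unrectifiable, iff $T$ is purely $1$-unrectifiable — the last equivalence because $T = \bigcup_i \gamma_i$ is a countable union, so $T$ is purely $1$-unrectifiable precisely when each piece is (a countable union of purely $1$-unrectifiable sets is purely $1$-unrectifiable, and conversely each $\gamma_i \subset T$).
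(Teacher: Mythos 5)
Your overall plan matches the paper's: use the DEBV decomposition to break $T$ (really the dense subtree $T_\infty$) into arcs, glue Lipschitz functions from the pieces, and predualize. The paper formalizes the gluing step as Theorem \ref{T:general} for an abstract geometric tree-like decomposition and then shows (Theorem \ref{thm:arc_qa_decomp}) that the DEBV arcs form one; your $\Phi$-and-$\Phi^*$ framework is essentially that argument specialized to trees. But two steps in your sketch do not go through as written.

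The key Lipschitz estimate is false as stated. You claim that for branch points $q_0,\dots,q_k$ along the unique arc $[u,v]$, bounded turning forces $\sum_\ell d(q_{\ell-1},q_\ell) \leq B\,d(u,v)$. Bounded turning only controls the \emph{maximum}: each pair lies on $[u,v]$, so $d(q_{\ell-1},q_\ell) \leq \diam([u,v]) \leq B\,d(u,v)$, which says nothing about the sum. On a snowflake arc $([0,1],|\cdot|^\alpha)$ with $q_\ell = \ell/k$, one has $\sum_\ell |q_\ell - q_{\ell-1}|^\alpha = k^{1-\alpha}\to\infty$ while $d(0,1)=1$, so the sum estimate genuinely fails for arbitrary chains. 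What actually makes the sum controlled in the paper is Proposition \ref{P:arc_decomp}: for the subarcs traversed by $[u,v]$ in a DEBV decomposition, $\sum_i \diam(\gamma_i) \leq C_1\diam([u,v])$, and its proof uses the \emph{doubling} condition together with the dyadic-scale structure of the nets (via \cite[Lemmas 3.3, 3.5, 3.6]{DEV21}) to ensure only boundedly many traversed subarcs have diameter comparable to $\diam([u,v])$, with the rest decaying geometrically. You correctly sense that quantitative control is needed, but attribute it to bounded turning plus generic tree combinatorics; in fact, doubling is the decisive ingredient.

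The final equivalence is also incomplete. You argue that $T = \bigcup_i \gamma_i$ so that $T$ is purely 1-unrectifiable iff each $\gamma_i$ is. But $\bigcup_i \gamma_i = T_\infty$, which is only \emph{dense} in $T$; when $T$ has infinitely many leaves, $T \setminus T_\infty$ is a nonempty subset of $\mathcal{L}(T)$, and it is not clear a priori that this remainder is $\H^1$-null or purely 1-unrectifiable. The paper sidesteps this by working at the Banach-space level: $\F(T)$ has the Schur property iff $T$ is purely 1-unrectifiable (\cite[Theorem C]{AGPP}); combined with the isomorphism $\F(T) \cong \bigoplus^1_i \F(\gamma_i)$ and the fact that $L^1(Z)$ is Schur iff $Z$ is purely atomic, the equivalence follows with no need to control $T\setminus T_\infty$ measure-theoretically.
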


The moral of Theorem \ref{T:main_trees} is that Lipschitz functions on a QC tree can be decomposed into Lipschitz functions on certain QC arcs comprising the tree (which can then be studied via Theorem \ref{T:main_arcs}). The core of the proof heavily relies on geometric properties of a DEBV decomposition. Theorem \ref{T:main_trees} follows from Corollary \ref{C:tree_arcs} in Section \ref{S:tree_apps}.

We point out that examples of QC arcs not bi-Lipschitz homeomorphic to $[0,1]$ or any snowflake space $(X,d^\alpha)$ (for $\alpha \in (0,1)$) are provided by arcs satisfying certain types of \textit{generalized chord-arc conditions} as investigated in \cite{HM99}. For specific examples of such arcs, see \cite[Proposition 1.10]{LD13} or \cite[Section 8]{Freeman10}. In particular, we emphasize that Theorems \ref{T:main_arcs} and \ref{T:main_trees} are not contained in results such as \cite[Theorem 3.2]{Godard10} or \cite[Corollary 5.7]{AACD21}.

\subsection{Lipschitz Dimension of QC Trees}
Next, we focus our attention on a particular subclass of Lipschitz functions known as Lipschitz light mappings. These mappings were  introduced by Cheeger and Kleiner in \cite{CK13} and further developed by David in \cite{David21}. 

Lipschitz light maps are defined as follows. Given $\delta>0$, we say that a sequence $(u_i)_{i\in I}$ is a \textit{$\delta$-chain} provided that, for each $i<\max(I)$, we have $d(x_i,x_{i+1})\leq \delta$. A subset $U$ of a metric space $X$ is \textit{$\delta$-connected} if every pair of points in $U$ is contained in a $\delta$-chain in $U$. A \textit{$\delta$-component} of $X$ is a maximal $\delta$-connected subset of $X$. 

\begin{definition}\label{D:LL}
A map $f:X\to Y$ between metric spaces is \textit{Lipschitz light} if there exist constants $L,Q>0$ such that \begin{enumerate}
    \item{$f$ is $L$-Lipschitz, and}
    \item{for every $r>0$ and $E\subset Y$ such that $\diam(E)\leq r$, the $r$-components of $f^{-1}(E)$ have diameter at most $Qr$.}
\end{enumerate}
In this case we say that $f$ is $L$-Lipschitz and $Q$-light. 
\end{definition}

Given a collection of maps $\{f_n\}_{n\in N}$, we say that the maps are \textit{uniformly} Lipschitz light if there exist $L,Q>0$ such that, for every $n\in N$, the map $f_n$ is $L$-Lipschitz and $Q$-light.

\begin{remark}\label{R:LipLight}
In \cite[Section 1.4]{David21}, David points out that the above definition of a Lipschitz light map is equivalent to the following for maps into Euclidean space: There exist $L,Q>0$ such that $f$ is $L$-Lipschitz, and, for every bounded subset $E\subset \mathbb{R}^d$, the $\diam(E)$-components of $f^{-1}(E)$ have diameter at most $Q\cdot\diam(E)$.
\end{remark}

Lipschitz light mappings are used to define the \textit{Lipschitz dimension} of a metric space. Specifically, a metric space $X$ has Lipschitz dimension at most $n$ provided there exists a Lipschitz light map $f:X\to \mathbb{R}^n$. A few reasons that Lipschitz dimension is of theoretical significance are provided by the embedding results for spaces of Lipschitz dimension 1 contained in \cite{CK13} and certain non-embedding results about spaces of infinite Lipschitz dimension contained in \cite{David21}. 

In \cite{Freeman20}, the first author proves that the Lipschitz dimension of any bounded turning Jordan arc is equal to 1. We build on this work via our third main result. 

\begin{thm}\label{T:main_dim}
The Lipschitz dimension of any QC tree is equal to $1$.
\end{thm}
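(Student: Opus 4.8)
The plan is to reduce the statement about QC trees to the known result for bounded turning Jordan arcs (the first author's theorem that every such arc has Lipschitz dimension~$1$), using the DEBV decomposition of $T$ into QC arcs $\{\gamma_i\}_{i\in I}$ together with the combinatorial, tree-like structure of how these arcs are glued. First I would fix a DEBV decomposition of $T$; this organizes the constituent arcs $\gamma_i$ into ``generations,'' where each arc of generation $n$ is attached at one of its endpoints (a branch point of $T$) to an arc of generation $n-1$, the diameters of the arcs shrink geometrically in $n$ (with constants depending only on the doubling and bounded turning data of $T$), and distinct arcs meet in at most one point. On each $\gamma_i$, the result of \cite{Freeman20} provides a Lipschitz light map $g_i\colon\gamma_i\to\R$, and because all the $\gamma_i$ are QC arcs with uniformly controlled doubling and bounded turning constants, these maps are \emph{uniformly} Lipschitz light; by composing with affine maps of $\R$ we may normalize each $g_i$ so that the attaching endpoint of $\gamma_i$ maps to $0$.

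The heart of the argument is to assemble the $g_i$ into a single map $f\colon T\to\R$. The natural candidate is to define $f$ on $\gamma_i$ by adding $g_i$ to the already-defined value of $f$ at the attaching point of $\gamma_i$, i.e.\ proceed generation by generation, ``stacking'' each arc's contribution onto its parent. One must check this is well defined and Lipschitz on all of $T$: well-definedness follows from the fact that two arcs meet in at most one point and the gluing is done at that point; the Lipschitz estimate follows because any two points $x,y\in T$ are joined by the unique arc $[x,y]$, which passes through only finitely many of the $\gamma_i$ (meeting each in a subarc), and along $[x,y]$ the increments of $f$ telescope --- here the geometric decay of diameters across generations, together with bounded turning (so that $d(x,y)$ controls $\diam[x,y]$), keeps the total variation of $f$ along $[x,y]$ comparable to $d(x,y)$. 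For the light condition, given $r>0$ and $E\subset\R$ with $\diam(E)\le r$, one analyzes an $r$-component $C$ of $f^{-1}(E)$: using the tree structure, $C$ is contained in the union of the arcs it meets, and on each such arc the Lipschitz light property of the corresponding $g_i$ (restricted to the preimage of the relevant interval) bounds the diameter of $C\cap\gamma_i$ by $Q\,r$; one then bounds how many arcs $C$ can spread across --- essentially because an arc of generation $n$ with $\diam\gamma_i\lesssim r$ only contributes at scales $\le r$, and the geometric decay bounds the total --- to conclude $\diam(C)\lesssim r$. Combining the two conditions shows $f$ is Lipschitz light, so $T$ has Lipschitz dimension at most $1$; since $T$ is a non-degenerate metric tree it contains an arc and hence is not uniformly disconnected, forcing the dimension to be exactly~$1$.

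The main obstacle I anticipate is the light condition, specifically controlling an $r$-component $C$ of $f^{-1}(E)$ that straddles a branch point where many child arcs are attached to a common parent. A priori $C$ could jump through a branch point into several sibling arcs, and one must rule out $C$ wandering through a long chain of arcs of comparable size. The key facts to exploit are: (i) the uniform lightness on each $\gamma_i$ confines $C\cap\gamma_i$; (ii) passing from one arc to an adjacent one forces $f$ to change by the full ``offset'' accumulated at the branch point, which for arcs of diameter $\gtrsim r$ is incompatible with $\diam f(C)\le r$ --- so $C$ can only spill into arcs of diameter $\lesssim r$; and (iii) the geometric diameter decay then bounds the total diameter of the at most boundedly-many (per generation) relevant small arcs. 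This is essentially the same bookkeeping used in the first author's arc result, lifted to the branching setting via the DEBV decomposition; I would expect the generalization to ``metric spaces admitting a geometric tree-like decomposition'' (promised in the introduction) to be proved in exactly this framework, with the QC tree case following as the special case where the pieces are QC arcs.
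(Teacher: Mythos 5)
Your overall route is the same as the paper's: decompose $T$ via the DEBV decomposition into QC arcs, pull in the first author's result \cite{Freeman20} that each arc admits a Lipschitz light map to $\R$ with uniform constants, glue these along the tree structure, and then observe that the abstract ``geometric tree-like decomposition'' framework is exactly what makes the gluing work. That is precisely how the paper proceeds (Theorems \ref{thm:arc_qa_decomp}, \ref{T:arc_lip_dim}, \ref{T:general_LL}, \ref{T:tree_dim1}). Two points are worth flagging, one a genuine flaw and one a minor omission.

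The genuine flaw is in your item (ii) for the light condition. You argue that ``passing from one arc to an adjacent one forces $f$ to change by the full `offset' accumulated at the branch point,'' and use this to rule out an $r$-component spilling into sibling arcs of diameter $\gtrsim r$. But by construction $f$ is \emph{continuous} at every branch point: you define $f|_{\gamma_i} = g_i + f(p_i)$ after normalizing $g_i(p_i)=0$, so the value of $f$ is unambiguous at $p_i$ and there is no offset to speak of. Nothing in the value of $f$ alone prevents an $r$-component from crossing through a branch point into a large sibling arc. The paper's actual mechanism is different and cleaner: property (2) of a $C$-geometric tree-like decomposition (Definition \ref{D:decomp}) says that for any \emph{minimal} decomposition path $(z_i)$ joining $x$ to $y$, one has $d(x,y) \le C\max_i d(z_{i-1},z_i)$. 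Combined with Lemma \ref{L:gen_chain}, which replaces an arbitrary $\delta$-chain from $x$ to $y$ in an $r$-component by a nearby $C\delta$-chain whose branch points form a minimal decomposition path, this reduces bounding $d(x,y)$ to bounding a \emph{single} within-piece increment $d(w_{j_{k_0-1}},w_{j_{k_0}})$, which is then handled by the lightness of the one relevant $f_{n_{k_0}}$ (see the proof of Theorem \ref{T:general_LL}). In other words, the spreading-across-many-arcs issue is not resolved by an ``offset'' obstruction, nor really by summing geometrically decaying diameters over generations; it is resolved because the maximum increment along a minimal decomposition path already controls the total distance. Verifying that DEBV decompositions actually satisfy this $\max$-control is the substance of Proposition \ref{P:arc_decomp} and Lemmas \ref{L:arcs_qd}, \ref{L:trees_qd} (and this is where the doubling hypothesis enters).

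A smaller omission: the DEBV decomposition only decomposes the subtree $T_\infty\subset T$, which is dense but may be proper when $T$ has infinitely many leaves. The Lipschitz light map is built on $T_\infty$ and then extended to $T$ by completion; one needs to check that Lipschitz lightness survives this extension, which is Lemma \ref{L:dense_ll} in the paper. Your sketch implicitly assumes the decomposition covers all of $T$.
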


\noindent Theorem \ref{T:main_dim} follows from Theorem \ref{T:tree_dim1} in Section \ref{S:tree_apps}.

\subsection{Spaces Admitting a Geometric Tree-Like Decomposition}
The above results pertaining to QC trees are in fact special cases of results we obtain for metric spaces that admit a geometric tree-like decomposition. We define such a decomposition as follows.

\begin{definition}\label{D:treelike}
We say that $\{X_n\}_{n\in N}$ is a \textit{tree-like decomposition} of a set $X$ (with indexing set $N = \{0,1,\dots \max(N)\} \subset \N$ or $N = \N$) if $X=\bigcup_{n\in N}X_n$ and, for each $n \in N \setminus \{0\}$, there exists a unique point $p_n\in X_n\cap\bigcup_{m<n}X_m$. The points $\{p_n\}_{n\in N\setminus\{0\}}$ are called \textit{branch points}. 
\end{definition}

This definition is inspired by the construction described in \cite[Lemma 3.12]{Weaver18}. Note that \textit{any} space $X$ admits the (trivial) tree-like decomposition $\{X\}$. Given a tree-like decomposition, we can then define the following.

\begin{definition}\label{D:decomp_path}
Suppose $X$ is a metric space with tree-like decomposition $\{X_n\}_{n\in N}$. Given points $x,y\in X$, a sequence $(z_i)_{i\in I}$ (with indexing set $I = \{0,1,\dots \max(I)\} \subset \N$) is a \textit{decomposition path} from $x$ to $y$ provided that 
\begin{enumerate}
    \item{$z_0=x$ and $z_{\max(I)}=y$,}
    \item{$\{z_i\,|\,1\leq i\leq \max(I)-1\}$ consists solely of branch points, and}
    \item{for each $1\leq i\leq \max(I)$, there exists $n_i\in N$ such that $\{z_{i-1},z_{i}\}\subset X_{n_i}$.}
\end{enumerate}
Furthermore, a decomposition path $(z_i)_{i\in I}$ is \textit{minimal} if there does not exist any proper subset $I'\subsetneq I$ such that $(z_i)_{i\in I'}$ forms a decomposition path from $z_0$ to $z_{\max(I)}$. Obviously, any decomposition path from $x$ to $y$ contains a minimal decomposition path from $x$ to $y$.
\end{definition}

\begin{remark}\label{R:minimal}
We note that $(z_i)_{i\in I}$ is minimal if and only if $z_j\not\in X_{n_i}$ for all $i<j$ in $I$.
\end{remark}

Via the above definitions we are now able to state the following. 

\begin{definition}\label{D:decomp}
Given a metric space $X$ and a constant $C\geq 1$, we say that $\{X_n\}_{n\in N}$ is a \emph{$C$-geometric} tree-like decomposition provided that 
\begin{enumerate}
    \item{$\{X_n\}_{n\in N}$ is a tree-like decomposition,}
    \item{for any minimal decomposition path $(z_i)_{i\in I}$, we have $$ d(z_0,z_{\max(I)})\leq C\max_{i< \max(I)}d(z_{i},z_{i+1}),$$ and,}
    \item{given $x,y\in X$, there exists a \textit{short} decomposition path $(z_i)_{i\in I}$ from $x$ to $y$. That is, a decomposition path satisfying 
    $$C^{-1}\sum_{i< \max(I)}     d(z_{i},z_{i+1})\leq d(z_0,z_{\max(I)}).$$}\label{E:short}
\end{enumerate}
\end{definition}

% \df{
% \begin{remark}
% Note that, by Remark \ref{R:minimal}, item (3) of Definition \ref{D:decomp} implies the existence of a minimal decomposition path (and so item (2) is not vacuous).
% \end{remark}
% }

\begin{remark} \label{rmk:finitetreedecomp}
Note that, if $C < \infty$ and $\{X_n\}_{n \in \N}$ is a sequence of subsets of $X$ such that $\{X_n\}_{n \leq m}$ is a $C$-geometric tree-like decomposition of $\bigcup_{n \leq m} X_n$ for infinitely many $m \in \N$, then $\{X_n\}_{n \in \N}$ is a $C$-geometric tree-like decomposition of $X$.
\end{remark}

Under the assumption that $X$ admits a geometric tree-like decomposition, we obtain the following results. The first is reminiscent of \cite[Lemma 3.12]{Weaver18} and is implied by Theorem \ref{T:general} in Section \ref{S:Lip_and_QA}. 

\begin{thm}\label{T:geometric_Lip}
Given a metric space $X$ admitting a geometric tree-like decomposition $\{X_n\}_{n\in N}$, the space $\Lip_0(X)$ is weak*-isomorphic to the $\ell^\infty$-sum $\bigoplus_{n\in N}^\infty \Lip_0(X_n)$.
\end{thm}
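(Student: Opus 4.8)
The plan is to construct the weak*-isomorphism $\Phi:\Lip_0(X)\to\bigoplus_{n\in N}^\infty\Lip_0(X_n)$ explicitly by restriction, and its inverse by a "reconstruction along decomposition paths" formula. First I would fix basepoints: declare the global basepoint $x_0$ to lie in $X_0$, use $x_0$ as the basepoint of $X_0$, and for $n\geq 1$ use the branch point $p_n$ as the basepoint of $X_n$. The forward map is then $\Phi(f) = (f|_{X_n} - f(p_n)\cdot\mathbf{1})_{n\in N}$; each coordinate lies in $\Lip_0(X_n)$ with Lipschitz norm at most $\|f\|_{\Lip_0(X)}$, so $\Phi$ is bounded with $\|\Phi\|\le 1$, and it is visibly linear and weak*-continuous (each coordinate functional $f\mapsto f(x)-f(p_n)$ is evaluation against an element of $\F(X_n)$, i.e. weak*-continuous). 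The content is the lower bound and surjectivity, which I would get simultaneously by exhibiting a bounded left inverse $\Psi$ and checking $\Phi\circ\Psi=\mathrm{id}$.

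To define $\Psi$, given $g = (g_n)_{n\in N}$ in the $\ell^\infty$-sum, I want to set $(\Psi g)(x) := \sum_{i<\max(I)} \big(g_{n_i}(z_{i+1}) - g_{n_i}(z_i)\big)$ where $(z_i)_{i\in I}$ is a decomposition path from $x_0$ to $x$ with each $\{z_i,z_{i+1}\}\subset X_{n_i}$. The first thing to check is that this is well-defined, independent of the chosen decomposition path: because the decomposition is tree-like, the combinatorics of branch points forms a genuine (combinatorial) tree, so any two decomposition paths from $x_0$ to $x$ are related by insertions/deletions of back-and-forth detours within a single $X_n$, each of which contributes zero to the telescoping sum; here one uses that within a fixed $X_n$ the relevant partial sums telescope to $g_n(\text{end}) - g_n(\text{start})$. (This is exactly the mechanism behind \cite[Lemma 3.12]{Weaver18}.) Next, using a \emph{short} decomposition path (axiom (3) of Definition \ref{D:decomp}) one bounds $|(\Psi g)(x)-(\Psi g)(y)|$ by $\sup_n\|g_n\|_{\Lip_0(X_n)}\cdot\sum_{i}d(z_i,z_{i+1}) \le C\,\|g\|_\infty\, d(x,y)$, so $\Psi$ is bounded with $\|\Psi\|\le C$; the normalization $(\Psi g)(x_0)=0$ is automatic. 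One then checks $\Psi$ is weak*--weak*-continuous (it suffices that $f\mapsto\langle \Psi g, \mu\rangle$ is weak*-continuous for $\mu$ ranging over a dense set, e.g. finitely supported molecules, where it reduces to finite sums of coordinate evaluations) and that $\Phi\circ\Psi=\mathrm{id}$ and $\Psi\circ\Phi=\mathrm{id}$: the former because restricting $\Psi g$ to $X_n$ and subtracting its value at $p_n$ recovers $g_n$ (again by telescoping along a path that reaches $p_n$ and then moves inside $X_n$), and the latter because $\Psi(\Phi f)$ reconstructs $f - f(x_0)$ by telescoping $f$ along any decomposition path, using axiom (2)/(3) only implicitly since pure telescoping of $f$ needs no geometry.

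The main obstacle I expect is the well-definedness and the telescoping bookkeeping in step two: one must argue carefully that, although a point $x$ may admit many decomposition paths and even lie in several $X_n$, the tree-like structure forces the combinatorial graph on $\{x_0\}\cup\{\text{branch points}\}\cup\{x\}$ to be a tree, and that the value of $\Psi g$ depends only on the reduced (minimal) path — which by Remark \ref{R:minimal} visits each relevant $X_{n_i}$ in a controlled way. Making this rigorous may require an induction on $n$ (as in Remark \ref{rmk:finitetreedecomp}, first treating finite $N$ and then passing to the limit, using that for $f\in\Lip_0(X)$ the partial reconstructions converge pointwise and the norm estimates are uniform in the truncation). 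Everything else — linearity, the norm bounds $\|\Phi\|\le 1$, $\|\Psi\|\le C$, and weak*-continuity of both maps — is routine once the reconstruction formula is shown to be consistent.
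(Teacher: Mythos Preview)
Your proposal is correct and follows essentially the same strategy as the paper's proof (Theorem~\ref{T:general}): the same restriction map $\Phi(f)_n=f|_{X_n}-f(p_n)$, the same inverse gluing Lipschitz pieces together, and the same norm bound $\|\Psi\|\leq C$ via short decomposition paths. The one noteworthy difference is that the paper defines $\Psi$ \emph{recursively} over $m\in N$ --- setting $g_0:=f_0$ and $g_m:=g_{m-1}$ on $\bigcup_{k<m}X_k$, $g_m:=g_{m-1}(p_m)+f_m$ on $X_m$ --- which makes well-definedness immediate (the only overlap is the single point $p_m$, where both prescriptions agree since $f_m(p_m)=0$) and entirely sidesteps the path-independence argument you identify as the main obstacle; your path-sum formula computes exactly the same function, but the recursive presentation is cleaner. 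You also need not verify weak*-continuity of $\Psi$ separately: once $\Phi$ is a weak*-weak*-continuous bounded bijection with bounded inverse, it has a predual isomorphism $\Phi_*$, and $\Phi^{-1}=(\Phi_*^{-1})^*$ is automatically weak*-weak*-continuous.
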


Building on Theorem \ref{T:geometric_Lip}, we also obtain the following result, which is implied by Theorem \ref{T:general_LL} in Section \ref{S:Lip_and_QA}.

\begin{thm}\label{T:geometric_dim}
Given $d\geq1$ and a metric space $X$ admitting a geometric tree-like decomposition $\{X_n\}_{n\in N}$, if there exist uniformly Lipschitz light maps $f_n:X_n\to \mathbb{R}^d$, then $\dim_L(X)\leq d$. 
\end{thm}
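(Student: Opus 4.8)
The plan is to build a single Lipschitz light map $f\colon X \to \mathbb{R}^d$ out of the given uniformly Lipschitz light maps $f_n\colon X_n \to \mathbb{R}^d$ by gluing them along the branch points, using the tree-like structure to control how the pieces fit together. First I would normalize: translate each $f_n$ so that $f_n(p_n) = 0$ for $n \geq 1$ (and fix $f_0$ arbitrarily, or translate so $f_0(p_1)=0$ if one wishes to anchor at a basepoint); since translation does not affect the Lipschitz or lightness constants, the collection $\{f_n\}$ remains uniformly $L$-Lipschitz and $Q$-light. Now define $f$ on $X$ as follows: for $x \in X$, choose a minimal decomposition path $(z_i)_{i\in I}$ from some fixed basepoint $x_0 \in X_0$ to $x$, with consecutive pairs $\{z_{i-1},z_i\} \subset X_{n_i}$, and set
\[
f(x) \;=\; \sum_{i=1}^{\max(I)} \bigl(f_{n_i}(z_i) - f_{n_i}(z_{i-1})\bigr).
\]
One must check this is well-defined independently of the choice of minimal decomposition path; this should follow from the uniqueness of branch points in Definition \ref{D:treelike} (any two minimal decomposition paths to $x$ traverse the same ordered sequence of branch points, by an argument like that behind Remark \ref{R:minimal}), so it is really a telescoping sum whose terms are pinned down by which $X_n$ each segment lies in.

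Next I would verify $f$ is $O(L)$-Lipschitz. Given $x,y \in X$, apply Definition \ref{D:decomp}(3) to get a \emph{short} decomposition path $(w_j)_{j\in J}$ from $x$ to $y$ with $C^{-1}\sum_{j} d(w_j,w_{j+1}) \leq d(x,y)$. Along this path, $f(x) - f(y)$ telescopes (after reconciling with the basepoint-based definition, using that branch points on the $x$-to-$x_0$ and $y$-to-$x_0$ paths cancel past their common portion) into a sum of terms $f_{n_j}(w_{j+1}) - f_{n_j}(w_j)$, each bounded in norm by $L\,d(w_j,w_{j+1})$; hence $|f(x)-f(y)| \leq L \sum_j d(w_j,w_{j+1}) \leq CL\, d(x,y)$. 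The lightness condition is the main obstacle. Fix $r>0$ and $E \subset \mathbb{R}^d$ with $\diam(E) \leq r$; I need to bound the diameter of an $r$-component $V$ of $f^{-1}(E)$. The key geometric idea: if $x,y \in V$ with $d(x,y)$ large, take a short decomposition path between them; I want to show this path is forced to stay inside a bounded region of a \emph{single} $X_n$ (or a short concatenation of a controlled number of pieces), because Definition \ref{D:decomp}(2) says that the endpoints of any minimal decomposition path are within $C$ times the longest single step — so a long path must contain a long step, living in some single $X_{n_i}$, and on that $X_{n_i}$ the $Q$-lightness of $f_{n_i}$ must be invoked. The technical heart is to show that along an $r$-chain in $f^{-1}(E)$, one can produce, within each relevant piece $X_{n}$, an $r'$-chain (for $r' \approx r$ up to the constants $C,L$) landing in a set $E' \supset E$ of diameter $\lesssim r$, so that $Q$-lightness of $f_n$ bounds the diameter of that portion by $\lesssim Qr$; then Definition \ref{D:decomp}(2) assembles these piece-wise bounds into a global bound $\diam(V) \lesssim_{C,L,Q} r$.

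The delicate points I expect to need care are: (a) making precise the claim that an $r$-chain in $X$ restricts to $r'$-chains within the individual pieces $X_n$ — this requires that when a chain crosses a branch point it does so through points close to that branch point, which uses minimality and property (2) of the decomposition; (b) controlling how many pieces an $r$-component can meet, which again uses property (2): if the component had large diameter it would contain a minimal decomposition path with a long step inside one $X_n$, reducing to the single-piece lightness estimate; and (c) bookkeeping the additive constants so that the final light constant $Q'$ depends only on $C$, $L$, $Q$, $d$ and not on the component or on $r$. Once lightness is established with uniform constants, $f\colon X \to \mathbb{R}^d$ witnesses $\dim_L(X) \leq d$, as required. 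I would structure the write-up by first recording the well-definedness and Lipschitz bound as quick lemmas, then devoting the bulk of the argument to the lightness estimate, isolating step (a) as a standalone lemma about how $r$-chains interact with the tree-like decomposition.
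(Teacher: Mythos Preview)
Your proposal is correct and follows essentially the same route as the paper: the map $f$ is built by gluing the $f_n$ along branch points (the paper does this via the inverse $\Psi$ in Theorem~\ref{T:general}), the Lipschitz bound comes from short decomposition paths, and the lightness argument proceeds exactly as you anticipate---the paper's Lemma~\ref{L:gen_chain} is precisely your ``standalone lemma about how $r$-chains interact with the tree-like decomposition,'' producing from any $\delta$-chain a $C\delta$-chain that passes through a minimal decomposition path with each segment confined to a single $X_{n_k}$, after which property~(2) reduces the diameter bound to the $Q$-lightness of a single $f_{n_k}$ on the longest step.
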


\begin{remark}
Via \cite[Theorem 1.7]{CK13}, it follows that a metric space satisfying the hypotheses of Theorem \ref{T:geometric_dim} with $d=1$ admits a bi-Lipschitz embedding into $L^1(Z)$ for some measure space $Z$. Regarding bi-Lipschitz embeddings into Banach spaces, we also point the reader to Corollary \ref{C:Banachembed} in Subsection \ref{s:banach}.
\end{remark}

\section{Preliminaries}

Here we present relevant notation and definitions along with a few requisite lemmas.

\subsection{Dual Spaces and the Weak*-Topology}
We start with a review of general Banach space theory surrounding dual spaces and the weak*-topology. We will use these common notions frequently in Section \ref{S:arc_freespace}, but also in a few other places.

Many Banach spaces $\X$ are canonically isometrically identified with the dual of another Banach space $\X_*$, which we call the canonical \emph{predual} of $\X$. The only examples of such spaces we will consider in this paper are $L^\infty(\mu)_* = L^1(\mu)$ and $\Lip_0(X)_* = \F(X)$ for $\mu$ a $\sigma$-finite measure and $X$ a metric space, and also new spaces with canonical preduals built from these ones, as explained in the next paragraph. We say that a bounded linear map $T: \X \to \Y$ between spaces with canonical preduals is weak*-weak*-continous if it is continuous with respect to the weak*-topologies induced by $\X_*$ and $\Y_*$. By the Krein-Smulian theorem, $T$ is weak*-weak*-continuous if and only if its restriction to the unit ball of $\X$ is weak*-weak*-continuous.

We now present two constructions of new spaces with canonical preduals from old ones. Given a sequence of Banach spaces $\X_n$ and $p \in [1,\infty]$, we denote its $\ell^p$-sum space by $\bigoplus_n^p \X_n$. Given a closed subspace $\Y \subset \X$ of a space with a canonical predual, we write $\Y_\perp := \{x_* \in \X_*: \forall y \in \Y, \: y(x_*) = 0\}$ for the pre-annihilator. If $\X_n$ is a sequence of spaces with canonical preduals $(\X_n)_*$, then $\left(\bigoplus_n^\infty \X_n\right)_* = \bigoplus_n^1 (\X_n)_*$, in the sense that $\bigoplus_n^\infty \X_n$ is isometrically identified with the dual space of $\bigoplus_n^1 (\X_n)_*$ and a bounded linear map $T = (T_n)_n: \Y \to \bigoplus_n^\infty \X_n$, where $\Y$ has a canonical predual, is weak*-weak*-continuous if and only if $T_n$ is weak*-weak*-continuous for all $n$. If $\X$ has a canonical predual and $\Y \subset \X$ is weak*-closed, then $(\Y)_* = (\X_*)/\Y_\perp$, in the sense that $\Y$ is isometrically identified with the dual space of $(\X_*)/\Y_\perp$ and the inclusion $\Y \hookrightarrow \X$ is weak*-weak*-continuous.

A bounded linear map $T: \X \to \Y$ between spaces with canonical preduals is the adjoint of a map $T_*: \Y_* \to \X_*$ if and only if it is weak*-weak*-continuous. In this case, $T_*$ is unique and we call it the \emph{predual} of $T$. If $T$ is weak*-weak* continuous, it holds that $T$ is a $C$-isomorphism if and only if $T_*$ is a $C$-isomorphism, where we call a linear operator $S$ a \emph{$C$-isomorphism} if it is bijective and $\|S\|\|S^{-1}\| \leq C$. We call weak*-weak*-continuous isomorphisms \emph{weak*-isomorphisms.}

Let $X$ be a separable metric space. Then $\F(X)$ is separable as well, and therefore the unit ball of $\Lip_0(X) = \F(X)^*$ equipped with the weak*-topology is metrizable. This implies that continuity of maps on the unit ball of $\Lip_0(X)$ can be checked with weak*-convergent sequences. On the unit ball of $\Lip_0(X)$, the weak*-topology coincides with the topology of pointwise convergence. Putting this all together, and using facts from the preceding paragraphs, we get that, for $T: \Lip_0(X) \to \Y$ a bounded linear map, where $\Y$ has a canonical predual, the following are equivalent:
\begin{itemize}
    \item $T$ is weak*-weak*-continuous.
    \item There exists a predual map $T_*: \Y_* \to \F(X)$.
    \item $T(f_n)$ weak*-converges to $T(f)$ whenever $f_n$ is a sequence in $\Lip_0(X)$ pointwise-converging to some $f \in \Lip_0(X)$ such that $\sup_n \|f_n\|_{\Lip_0(X)}$ $\leq 1$.
\end{itemize}

\subsection{Sets and Sequences}
We write $\{x_i\}_{i\in I}$ to denote a set of points indexed by elements of $I$, where $I$ denotes either a finite set of non-negative integers $\{0,1,\dots,\max(I)\}$ or the infinite set $\{n\in \mathbb{Z}\,|\,n\geq0\}$. On occasion we may set $I=\{1,\dots,\max(I)\}$, but, unless stated otherwise, the minimal element of any index set is $0$. 

Given a set $\{x_i\}_{i\in I}\subset X$, we define the sequence $(x_i)_{i\in I}$ to be $(x_0,\dots,$ $x_{\max(I)})$. We define the concatenation of two sequences as 
\[(x_i)_{i\in I}*(y_j)_{j\in J}:=\begin{cases}
    (x_0,\dots,x_{\max(I)},y_0,\dots,y_{\max(J)})    & x_{\max(I)}\not=y_0\\
    (x_0,\dots,x_{\max(I)},y_1,\dots,y_{\max(J)})    & x_{\max(I)}=y_0.
    \end{cases}\]

At times it will be convenient to concatenate decomposition paths. This is accomplished as follows. Suppose $x,y,y',z$ are points in $X$ such that $\{y,y'\}$ is contained in a single element $X_m\in\{X_n\}_{n\in N}$. Given decomposition paths $(z_i)_{i\in I}$ from $x$ to $y$ and $(w_j)_{j\in J}$ from $y'$ to $z$, the concatenation 
\[(z_i)_{i\in I}\hat{*}(w_j)_{j\in J}=(u_k)_{k\in K}\]
is defined by removing at most two points, namely $y$ and/or $y'$, from $(z_i)_{i\in I}*(w_j)_{j\in J}$ such that $(u_k)_{k\in K}$ forms a decomposition path from $x$ to $z$. That is, $(z_i)_{i\in I}\hat{*}(w_j)_{j\in J}$ is the maximal subsequence of $(z_i)_{i\in I}*(w_j)_{j\in J}$ that forms a decomposition path from $x$ to $z$.

\subsection{Tree-Like Decompositions}\label{SS:QA}
Suppose $(z_i)_{i\in I}$ is a decomposition path in $X$ such that $z_0=z_{\max(I)}$. Furthermore, suppose $z_i\not=z_j$ and $n_i\not= n_j$ for $1\leq i\not= j\leq \max(I)$ (except perhaps for $n_1 = n_{\max(I)}$). We call such a decomposition path a \textit{simple decomposition loop} with basepoint $z_0$. A simple decomposition loop is \textit{trivial} if $\max(I)=2$ and $X_{n_1}=X_{n_2}$. 

We provide the following lemma to justify our use of the terminology \textit{tree-like} decomposition, and for use in the proof of Lemma \ref{L:gen_chain}.

\begin{lemma}\label{L:unique}
Let $X$ denote a metric space with tree-like decomposition $\{X_n\}_{n\in N}$. Then $X$ contains no non-trivial simple decomposition loops.
\end{lemma}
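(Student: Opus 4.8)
The plan is to argue by contradiction: suppose $(z_i)_{i\in I}$ is a non-trivial simple decomposition loop with basepoint $z_0 = z_{\max(I)}$, and derive a violation of the uniqueness clause in Definition \ref{D:treelike} (the existence of a unique point $p_n \in X_n \cap \bigcup_{m<n} X_m$ for each $n$). First I would record the structure we are working with: we have sets $X_{n_1}, \dots, X_{n_{\max(I)}}$ with $z_{i-1}, z_i \in X_{n_i}$, all the $n_i$ distinct (except possibly $n_1 = n_{\max(I)}$), and all the points $z_1, \dots, z_{\max(I)}$ distinct. Since the loop is non-trivial, either $\max(I) \geq 3$, or $\max(I) = 2$ with $X_{n_1} \neq X_{n_2}$. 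Note each $z_i$ for $1 \le i \le \max(I)$ is a branch point, hence lies in some $X_{k}$ together with all earlier $X_m$'s around it; but more useful is that consecutive sets $X_{n_i}$ and $X_{n_{i+1}}$ share the point $z_i$, and the first and last sets $X_{n_1}, X_{n_{\max(I)}}$ share $z_0$.

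The key step is to look at the index $n^* := \max\{n_1, \dots, n_{\max(I)}\}$, the largest indexing integer appearing in the loop, and consider the set $X_{n^*}$. By Definition \ref{D:treelike}, $X_{n^*} \cap \bigcup_{m < n^*} X_m$ is a single point $p_{n^*}$. Now I would show $X_{n^*}$ meets the rest of the loop in at least two distinct points, giving a contradiction. Indeed, say $n^* = n_j$. The set $X_{n_j}$ contains both endpoints $z_{j-1}$ and $z_j$ of its segment of the loop. Going around the loop in the other direction from $z_{j-1}$ back to $z_j$, the consecutive sets $X_{n_{j-1}}, X_{n_{j-2}}, \dots, X_{n_1}, X_{n_{\max(I)}}, \dots, X_{n_{j+1}}$ form a chain connecting $z_{j-1}$ to $z_j$ through sets all of whose indices are $< n^*$ (by maximality and distinctness of $n^*$), hence all contained in $\bigcup_{m < n^*} X_m$. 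So both $z_{j-1}$ and $z_j$ lie in $X_{n^*} \cap \bigcup_{m < n^*} X_m = \{p_{n^*}\}$, forcing $z_{j-1} = z_j$. I then need to check this genuinely contradicts the hypotheses: if $\max(I) \geq 3$ the points $z_1, \dots, z_{\max(I)}$ are all distinct and $z_0 = z_{\max(I)}$, so $z_{j-1} = z_j$ is impossible unless $\{j-1, j\} = \{0, \max(I)\}$, i.e. the loop has length... here I must be slightly careful, because when $j=1$ we have $z_{j-1} = z_0 = z_{\max(I)}$, which could equal $z_j = z_1$ only if $\max(I)$ points collapse — handled since all $z_i$, $1\le i \le \max(I)$, distinct and $\max(I)\ge 3$ gives $z_1 \ne z_{\max(I)}$. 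If $\max(I) = 2$, then non-triviality forces $X_{n_1} \neq X_{n_2}$, but the argument shows $X_{n^*}$ (one of the two sets) contains the shared endpoints in $\{p_{n^*}\}$; one needs to extract from this that $X_{n_1}$ and $X_{n_2}$ coincide or produce the contradiction directly from $z_0 = z_2$, $z_1$ distinct from $z_0$.

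I expect the main obstacle to be the bookkeeping in the edge cases — specifically the $\max(I) = 2$ case and the wrap-around indexing where $n_1$ may equal $n_{\max(I)}$ — making sure that "going the other way around the loop" really does produce a chain of sets with strictly smaller indices and that the two points landing in $\{p_{n^*}\}$ are genuinely forced to be distinct by the simple-loop hypotheses. A clean way to handle the wrap-around is to first dispose of the case $n_1 = n_{\max(I)}$ separately (then $z_0, z_1, z_{\max(I)-1}, z_{\max(I)} = z_0$ all lie in $X_{n_1} = X_{n_{\max(I)}}$, and one re-indexes to reduce to a shorter loop or argues directly), and then in the main case all $n_i$ are distinct so the maximum $n^*$ is attained uniquely and the "detour" chain is unambiguous. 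Once the combinatorial setup is pinned down, the contradiction with the uniqueness of $p_{n^*}$ is immediate.
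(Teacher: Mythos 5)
Your proposal is correct and takes essentially the same approach as the paper: both reach a contradiction by exhibiting an index $n_j$ for which two distinct loop points lie in $X_{n_j}\cap\bigcup_{m<n_j}X_m$, violating the uniqueness clause in the definition of a tree-like decomposition. The paper first proves the sequence $(n_i)$ is valley-shaped and then examines its endpoints, whereas you go directly for the global maximum $n^*$; these lead to the same case split and contradictions, and the edge cases you flag (the wrap-around $n_1=n_{\max(I)}$ and $\max(I)=2$) are precisely those the paper handles separately, with your ``argue directly'' option amounting to the paper's step that forces $z_1=z_{\max(I)-1}$.
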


\begin{proof}
Suppose $(z_i)_{i\in I}$ is a non-trivial simple decomposition loop in $X$. First assume $\max(I) \geq 3$. We claim that there exists $1 \leq i^* \leq \max(I)$ such that $i \mapsto n_i$ is strictly decreasing on $\{1, \dots i^*\}$ and strictly increasing on $\{i^*, \dots \max(I)\}$. Indeed, otherwise there must exist $2\leq i\leq \max(I)-1$ such that $n_{i-1},n_{i+1} < n_i$. This implies $z_{i-1},z_i \in X_{n_i} \cap \bigcup_{n<n_i} X_n$, from which we get $z_{i-1} = z_i$ by the definition of tree-like decomposition, which in turn contradicts the definition of simple decomposition loop. This proves the claim. Notice that this implies
\begin{equation}\label{E:ineq_alts}
    \text{either } n_{\max(I)} > n_{\max(I)-1} \text{ or } n_1 > n_2.
\end{equation}
If both inequalities in (\ref{E:ineq_alts}) hold, then $z_1 \in X_{n_1} \cap X_{n_2}$ and $z_{\max(I)-1} \in X_{n_{\max(I)-1}} \cap X_{n_{\max(I)}}$. This would then imply that
\begin{equation}\label{eq:branch}
   z_1 \in X_{n_1} \cap \bigcup_{n < n_1} X_n \text{ and } z_{\max(I)-1} \in X_{n_{\max(I)}} \cap \bigcup_{n < n_{\max(I)}} X_n.
\end{equation}

Now we split into two cases: $n_1 = n_{\max(I)}$ and $n_1 \neq n_{\max(I)}$. Assume the first case holds. Then both inequalities of (\ref{E:ineq_alts}) hold, and so \eqref{eq:branch} implies $z_1,z_{\max(I)-1} \in X_{n_1} \cap \bigcup_{n < n_1} X_n$, from which we get $z_{1} = z_{\max(I)-1}$ by the definition of tree-like decomposition. This contradicts the definition of simple decomposition loop (since $\max(I) \geq 3$). Now assume the second case $n_1 \neq n_{\max(I)}$ holds. Then either $n_1 > n_{\max(I)}$ or $n_1<n_{\max(I)}$. We assume $n_1>n_{\max(I)}$ (and note that the alternative can be treated analogously). This assumption implies that $n_1>n_2$ as in (\ref{E:ineq_alts}), and thus that $z_1\in X_{n_1}\cap \bigcup_{n < n_1} X_n$ as in (\ref{eq:branch}). Since $X_{n_{\max(I)}} \ni z_{\max(I)} = z_0 \in X_{n_1}$, we also get $z_{\max(I)} \in X_{n_1} \cap \bigcup_{n < n_1} X_n$. The definition of tree-like decomposition then implies $z_{\max(I)} = z_1$. This contradicts the definition of simple decomposition loop and completes the proof in the case $\max(I) \geq 3$.

Finally, assume $\max(I) = 2$. Then $z_2 = z_0$, and this easily implies $z_0,z_1,z_2 \in X_{n_1} \cap X_{n_2}$. Then the definition of tree-like decomposition implies $z_0=z_1=z_2$, which contradicts the definition of a simple decomposition loop.
\end{proof}

\subsection{The Herron-Meyer Catalogue}
For each $k \geq 0$, let $\D_k$ denote the set of \emph{dyadic intervals} in $[0,1]$ of level $k$, which we refer to as dyadic \textit{$k$-edges}. That is, $\D_k := \{[(j-1)2^{-k},j2^{-k}] \subset [0,1]: 1 \leq j \leq 2^k\}$. Set $\D := \bigcup_{k=0}^\infty \D_k$. We write $\gen([0,1]) := 0$, and, if $e\in \D_k \setminus \D_{k-1}$ for some $k\geq 1$, $\gen(e):=k$. 

For each $k\geq 0$, we write $\mathcal{V}_k$ to denote the collection of endpoints of $k$-edges in $\mathcal{D}_k$. That is, $\mathcal{V}_k=\{j2^{-k}\,:\,0\leq j\leq 2^k\}$. Set $\mathcal{V}=\bigcup_{k=0}^\infty \mathcal{V}_k$. Thus $\mathcal{V}_k$ is the collection of endpoints of dyadic $k$-edges, while $\mathcal{V}$ is the collection of endpoints of all dyadic edges.

We use the language of a dyadic tree to describe edges in $\D$. In particular, given any $e \in \D$, there are exactly two dyadic \textit{children} edges contained in $e$, and $e$ is contained in its unique dyadic \textit{parent} edge. Two children with the same parent are called \textit{siblings}. If $e\in\D$ is strictly contained in  $e'\in\D$, we say that $e$ is a \textit{descendent} of $e'$ and that $e'$ is an \emph{ancestor} of $e$. 

Following \cite{HM12}, we call a function $\Delta:\D\to(0,1]$ a \textit{dyadic diameter function} provided that $\Delta([0,1])=1$ and, for any $e\in\D$, either 
\[\Delta(e_0)=\Delta(e_1)=\frac{1}{2}\Delta(e) \quad \text{or} \quad \Delta(e_0)=\Delta(e_1)=\Delta(e).\] 
Here, and in what follows, $e_0$ and $e_1$ denote the two dyadic children of $e$, with $\max(e_0) = \min(e_1)$. We also require that 
\begin{equation}\label{E:smaller}
\lim_{n\to+\infty}\max\{\Delta(e)\,|\,e\in\mathcal{D}_n\}=0.
\end{equation}
Note that, differing from \cite{HM12}, we omit the parameter $\sigma$ from the definition of $\Delta$. This is because $\sigma=1$ for all dyadic diameter functions utilized in our context. 

Denote the collection of all such dyadic diameter functions as $\mathfrak{D}$. For every $\Delta\in\mathfrak{D}$, the function $d_\Delta$ on $[0,1]\times[0,1]$ is defined as 
\[d_\Delta(x,y):=\inf\sum_{k\in N}\Delta(e^k),\]
where the infimum is taken over all \textit{dyadic chains} $(e^k)_{k\in N}\subset\D$ (with indexing set $N = \{0,1, \dots \max(N)\}$) such that
\begin{enumerate}
    \item{$\{x,y\}\subset\bigcup_{k\in N}e^k$}, and
    \item{for all $k \in N \setminus \{\max(N)\}$, we have $e^{k}\cap e^{k+1}\not=\emptyset$.}
\end{enumerate}
By \cite[Lemma 3.1]{HM12}, the function $d_\Delta$ is a distance, and the metric space $([0,1],d_\Delta)$ is a $1$-bounded turning Jordan arc.
\begin{example}
If $\Delta \in \mathfrak{D}$ is defined by $\Delta(e) := 2^{-n}$ for every $e \in \D_n$, then $d_\Delta$ is simply the Euclidean metric on $[0,1]$.

If $\Delta(e) := 2^{-n/2}$ for all $e \in \D_n$ with $n$ even and $\Delta(e) := 2^{-(n-1)/2}$ for all $e \in \D_n$ with $n$ odd, then $d_\Delta$ is bi-Lipschitz equivalent to the snowflake $([0,1],|\cdot|^\alpha)$ for some $0<\alpha<1$.
\end{example}
We write $\mathcal{S}_1'$ to denote the collection of all such arcs, each given by some dyadic diameter function $\Delta$. That is,
\[\mathcal{S}_1':=\{([0,1],d_\Delta)\,|\,\Delta\in\mathfrak{D}\}.\]
With this notation in hand, we are now ready to state the following result of Herron and Meyer, which is fundamental to our proof of Theorem \ref{T:main_arcs}.

\begin{theorem}[\cite{HM12}]\label{T:HM}
If $\gamma$ is a $B$-bounded turning Jordan arc, then $\gamma$ is $L$-bi-Lipschitz equivalent to an arc in $\mathcal{S}_1'$. Here $L=8B\max\{\diam(\gamma),$ $\diam(\gamma)^{-1}\}$.
\end{theorem}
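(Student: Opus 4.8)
The plan is to reconstruct, from a given $B$-bounded turning Jordan arc $\gamma$ with fixed endpoints $a,b$, a dyadic diameter function $\Delta$ together with a bi-Lipschitz homeomorphism $\varphi:([0,1],d_\Delta)\to\gamma$. First I would fix the "combinatorial address system'' on $\gamma$. Since $\gamma$ is a Jordan arc, for any two points $u,v\in\gamma$ there is a unique subarc $\gamma[u,v]$ joining them, and bounded turning says $\diam\gamma[u,v]\leq B\,d(u,v)$; conversely, $d(u,v)\leq\diam\gamma[u,v]$ trivially, so $\diam\gamma[u,v]$ is comparable to $d(u,v)$ with constant $B$. I would then build a sequence of finite ordered partitions $a=x_0^k<x_1^k<\cdots<x_{2^k}^k=b$ of $\gamma$ (order induced by the arc), refining at each step, where each subarc $\gamma[x_{j-1}^k,x_j^k]$ is split into two subarcs at a point chosen so that the diameters of the two pieces are as equal as possible (e.g.\ bisect in the arc-diameter "gauge'': take the point $y$ on the subarc that minimizes $\max(\diam\gamma[x_{j-1}^k,y],\diam\gamma[y,x_j^k])$, or use a midpoint of the natural length-like parametrization). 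This is exactly the recursive bisection that the dyadic tree $\D$ records, so it identifies the $j$-th $k$-edge $e\in\D_k$ with the subarc $\gamma_e:=\gamma[x_{j-1}^k,x_j^k]$, and $\varphi$ is then forced on $\mathcal V=\bigcup_k\mathcal V_k$ by $\varphi(j2^{-k})=x_j^k$, extended to all of $[0,1]$ by continuity/density.

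Next I would define $\Delta$ so that it satisfies the two required alternatives (children either halve the parent's value or keep it equal) while staying comparable to $\diam\gamma_e$. The naive choice $\Delta(e)=\diam\gamma_e$ need not obey the rigid dyadic-halving dichotomy, so instead I would set $\Delta(e):=2^{-n(e)}$ where $n(e)$ is chosen by a greedy/stopping rule along each root-to-leaf branch: starting from $\Delta([0,1])=1$, when passing from $e$ to a child $e'$, halve the value (i.e.\ $n$ increases by one) precisely when $\diam\gamma_{e'}$ has dropped below, say, half of the "current'' value $\Delta(e)$, and otherwise keep it the same. The doubling-type control coming from bounded turning and from the bisection rule guarantees that after finitely many steps the diameter must drop, so along each branch infinitely many halvings occur, giving \eqref{E:smaller}; and by construction $\Delta(e_0)=\Delta(e_1)$ on each edge, so $\Delta\in\mathfrak D$. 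The key quantitative claim is then: for every $e\in\D$,
\[
  C^{-1}\,\Delta(e)\ \leq\ \diam\gamma_e\ \leq\ C\,\Delta(e),
\]
with $C$ depending only on $B$ (and the normalization $\diam\gamma$), which follows from the stopping rule together with bounded turning.

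From this two-sided comparison I would derive the bi-Lipschitz estimate. For $d_\Delta$: given $x,y\in[0,1]$, the definition of $d_\Delta$ as an infimum over dyadic chains, compared against the chain of edges along the segment $[x,y]$, yields $d_\Delta(\varphi^{-1}\text{-image points})\asymp\sum\Delta(e)$ over a suitable covering family; using the comparison with diameters and the fact that $\gamma_e$ tile $\gamma$ consistently with the arc order, this sum is comparable to $\diam\gamma[\varphi(x),\varphi(y)]$, which by bounded turning is comparable to $d(\varphi(x),\varphi(y))$. Conversely, any dyadic chain realizing $d_\Delta$ near the infimum produces a connected chain of subarcs covering $\gamma[\varphi(x),\varphi(y)]$, forcing $\sum\Delta(e)\gtrsim d(\varphi(x),\varphi(y))$. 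Tracking constants through bounded turning (factor $B$), the diameter normalization (factor $\max\{\diam\gamma,\diam\gamma^{-1}\}$), and the finitely many universal constants from the bisection and stopping rules, one arrives at the claimed bound $L=8B\max\{\diam\gamma,\diam\gamma^{-1}\}$.

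I expect the main obstacle to be the design of the bisection/stopping rule that simultaneously (i) respects the rigid dyadic dichotomy defining $\mathfrak D$, (ii) keeps $\Delta(e)$ two-sidedly comparable to $\diam\gamma_e$ with a constant depending only on $B$, and (iii) forces $\max_{e\in\D_n}\Delta(e)\to0$. Bounded turning alone does not give a clean "halving'' of diameters under arc-bisection (a subarc can split into one large and one tiny piece, or two nearly-equal pieces), so the stopping-time/greedy construction must absorb this irregularity; getting the explicit constant $8B\max\{\diam\gamma,\diam\gamma^{-1}\}$ rather than just "some $L(B)$'' will require careful bookkeeping at the root edge and at each halving step. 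Everything else — the combinatorial address system, the infimum estimates for $d_\Delta$, the passage from $\mathcal V$ to all of $[0,1]$ by density — is routine once the comparison $\Delta(e)\asymp\diam\gamma_e$ is in hand.
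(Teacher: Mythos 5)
Theorem \ref{T:HM} is not proved in this paper; it is imported verbatim from Herron and Meyer \cite{HM12}, so there is no in-paper argument to compare your sketch against. Judged on its own terms, your reconstruction follows the natural route (bisect $\gamma$ at diametric midpoints to get a dyadic family of subarcs $\gamma_e$, assign $\Delta(e)\in\{2^{-k}\}$ by a halving/stopping rule keyed to $\diam\gamma_e$, and show the induced parametrization $\varphi:([0,1],d_\Delta)\to\gamma$ is bi-Lipschitz), and with the details supplied it does reach the claimed constant. But two of your stated worries are misdirected and one key step is only asserted.

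First, under the bisection rule that minimizes $\max(\diam\gamma[x^k_{j-1},y],\diam\gamma[y,x^k_j])$, the two halves always have \emph{exactly} equal diameter, by the intermediate value theorem applied to the two continuous monotone diameter functions of $y$; the scenario ``one large and one tiny piece'' simply does not arise under that rule (and the alternative ``length-like midpoint'' you float is unavailable for non-rectifiable arcs). Second, the equal-diameter bisection plus the stopping rule you propose (halve $\Delta$ at $e'\subset e$ iff $\diam\gamma_{e'}<\Delta(e)/2$) gives the clean two-sided bound $\tfrac12\Delta(e)\leq\diam\gamma_e\leq\Delta(e)$ (for $\diam\gamma=1$) by a one-line induction; this is the comparability you call the ``key quantitative claim'' but leave unverified, and it needs only bisection-at-equal-diameter, not bounded turning. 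Third, condition \eqref{E:smaller} does not come from ``doubling-type control from bounded turning'': it is a compactness/nested-intersection fact. If $\max_{e\in\D_n}\Delta(e)$ stayed $\geq\eps$, a tree-compactness argument produces a branch with $\diam\gamma_{e^n}\geq\eps/2$ for all $n$, and along such a branch one endpoint of $\gamma_{e^n}$ must stabilize while the cut point approaches it, forcing the two halves to have unequal diameters in the limit --- a contradiction with the bisection rule. Finally, tracking constants is not merely ``routine bookkeeping'': after normalizing $\diam\gamma=1$, one direction is $d(\varphi x,\varphi y)\leq\sum_k\diam\gamma_{e^k}\leq\sum_k\Delta(e^k)$ for any dyadic chain, so $d(\varphi x,\varphi y)\leq d_\Delta(x,y)$; the other uses \cite[Lemma 3.5]{HM12} to find $[a,b]\in\D$ with $[a,b]\subset[x,y]$ and $d_\Delta(x,y)\leq 4\Delta([a,b])$, then $\Delta([a,b])\leq 2\diam\gamma_{[a,b]}\leq 2\diam\gamma[\varphi x,\varphi y]\leq 2B\,d(\varphi x,\varphi y)$, giving $d_\Delta(x,y)\leq 8B\,d(\varphi x,\varphi y)$; undoing the normalization produces the $\max\{\diam\gamma,\diam\gamma^{-1}\}$ factor, matching $L$ exactly.
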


The following lemma is analogous to \cite[Lemma 8.41]{Weaver18}. It provides a ``local-to-global" estimate for the Lipschitz constant of a function on a QC arc. The doubling property is crucial in this estimate. Moreover, this is the only lemma which directly relies upon the doubling property.

\begin{lemma}\label{L:dyadic}
Suppose $\Delta$ is a dyadic diameter function such that $([0,1],d_\Delta)$ is doubling, $f:[0,1]\to\mathbb{R}$ is a continuous function, and $L\geq1$ is a constant such that
\[|f(x)-f(y)|\leq L\,d_\Delta(x,y)=L\,\Delta([x,y])\]
whenever $[x,y]\in\mathcal{D}$. Then $f$ is $L'L$-Lipschitz with respect to $d_\Delta$, where $L'$ depends only on the doubling constant of $([0,1],d_\Delta)$.
\end{lemma}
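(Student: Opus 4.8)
The plan is to bound $|f(x)-f(y)|$ by $L'L\,d_\Delta(x,y)$ for arbitrary $x<y$ in $[0,1]$ by covering $[x,y]$ efficiently with dyadic edges and then summing the estimate hypothesized for dyadic edges, using doubling to control how many edges appear at each relevant scale. First I would fix $x<y$ and choose a dyadic edge $e_0$ of minimal generation that is ``comparable'' to $[x,y]$ in the sense that $\Delta(e_0)$ is within a bounded factor of $d_\Delta(x,y)$; more precisely, since $\Delta$ halves or stays constant as we pass to children and tends to $0$, there is a dyadic edge $e$ containing a point of $[x,y]$ with $\Delta(e)\le d_\Delta(x,y)$ but with $\Delta(\text{parent}(e))> \tfrac12 d_\Delta(x,y)$ or similar. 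The key geometric point is that the Euclidean interval $[x,y]$ can be written as a union of dyadic edges $e$ for which $\Delta(e)\lesssim d_\Delta(x,y)$ and which form a chain; then by the continuity of $f$ and the telescoping estimate, $|f(x)-f(y)|\le \sum |f(\min e)-f(\max e)| \le L\sum\Delta(e)$, so it remains to show $\sum\Delta(e)\le L' d_\Delta(x,y)$.

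To estimate $\sum \Delta(e)$, I would organize the chosen dyadic edges by the value of $\Delta$. For each value $t$ taken by $\Delta$ on these edges, let $m_t$ be the number of edges $e$ in the chain with $\Delta(e)=t$. Because consecutive edges in the chain are adjacent and $d_\Delta$-diameters add up at most to the $\Delta$-sum, each block of $m_t$ consecutive edges of common $\Delta$-value $t$ contributes a subarc of $d_\Delta$-diameter $\le m_t t$; but I also want a \emph{lower} bound on $d_\Delta(x,y)$ in terms of $m_t t$. Here is where doubling enters: if $([0,1],d_\Delta)$ is $D$-doubling, then a $d_\Delta$-ball of radius comparable to $d_\Delta(x,y)$ cannot contain too many disjoint edges of $d_\Delta$-diameter $\ge t$ unless $t$ is itself comparable to $d_\Delta(x,y)$ — quantitatively, the number of edges of $\Delta$-value $t$ inside $[x,y]$ is $\lesssim (d_\Delta(x,y)/t)^{s}$ for a dimension exponent $s=\log_2 D$ coming from the doubling property (this uses that edges of a fixed generation with $\Delta$-value $t$ are $d_\Delta$-separated by roughly $t$). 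Hence $m_t t \lesssim d_\Delta(x,y)^{s} t^{1-s}$. Summing over the dyadic scales $t = t_0, t_0/2, t_0/4,\dots$ with $t_0 \asymp d_\Delta(x,y)$ gives a geometric series $\sum_j d_\Delta(x,y)^s (t_0 2^{-j})^{1-s} \asymp d_\Delta(x,y)$ provided $s>1$ (and the case $s\le 1$, i.e. nearly the Euclidean situation, is easier and handled directly), so $\sum\Delta(e)\le L' d_\Delta(x,y)$ with $L'=L'(D)$.

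The main obstacle I anticipate is the counting step: making precise that within the portion of $[0,1]$ spanned by $[x,y]$ there are at most $O((d_\Delta(x,y)/t)^{s})$ dyadic edges of $\Delta$-value $t$, uniformly in $t$ and in $[x,y]$. This requires knowing that two disjoint dyadic edges of the same $\Delta$-value $t$ (or of $\Delta$-value $\ge t$) have mutual $d_\Delta$-distance bounded below by a fixed multiple of $t$ — which should follow because any dyadic chain connecting them must traverse at least one full such edge, or an adjacent edge of comparable size, so its $\Delta$-sum is $\gtrsim t$ — together with the standard fact that a $D$-doubling metric space of diameter $R$ contains at most $C(D)(R/\rho)^{\log_2 D}$ points that are pairwise $\rho$-separated. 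A secondary technical point is selecting the covering chain of dyadic edges cleanly: I would take, for the minimal-generation edge $e^*$ with $\Delta(e^*)\le d_\Delta(x,y)$ meeting $[x,y]$ on each side, the maximal dyadic subdivision of $[x,y]$ into edges of $\Delta$-value $\le d_\Delta(x,y)$, which is finite and forms a chain by construction; continuity of $f$ then lets us telescope over this finite chain without error terms. Once these two points are in place, the summation is routine.
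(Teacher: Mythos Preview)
Your summation step contains a sign error that breaks the argument. You claim that $\sum_j d_\Delta(x,y)^s (t_0 2^{-j})^{1-s} \asymp d_\Delta(x,y)$ for $s>1$, but when $s>1$ the exponent $1-s$ is negative, so $(t_0 2^{-j})^{1-s}=t_0^{1-s}2^{j(s-1)}$ \emph{grows} geometrically in $j$ and the series diverges. Since the interesting case (snowflaked arcs) has Assouad dimension $s>1$, this is precisely the regime you need, and the packing bound $m_t\lesssim (d_\Delta(x,y)/t)^s$ is simply too weak: it vastly overcounts how many edges the decomposition actually uses at small scales. Your separation claim (disjoint edges of $\Delta$-value $\ge t$ are $\gtrsim t$ apart) is also false as stated, since adjacent siblings share an endpoint.

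What saves the strategy is a \emph{structural} bound on $m_t$, not a packing bound. For dyadic $x,y$, the coarsest dyadic partition of $[x,y]$ contains at most two edges of each generation (the generations form a ``V'' as in a binary expansion), and doubling --- via \cite[Lemma~3.7]{HM12} --- yields an $n_0$ depending only on the doubling constant such that $\Delta$ must halve at least once every $n_0$ generations. Hence at most $2n_0$ edges share any given $\Delta$-value, and $\sum_t m_t\,t\lesssim n_0\sum_t t\lesssim n_0\,d_\Delta(x,y)$. The paper's proof packages this even more efficiently: it first reduces to dyadic $x,y$ by density and continuity, then starts from a near-optimal dyadic chain for $d_\Delta(x,y)$ (which already has $\sum_j\Delta(e^j)\le 4\,d_\Delta(x,y)$ by definition of $d_\Delta$), and only needs to refine the first and last edges so the chain begins exactly at $x$ and ends at $y$. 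That refinement is a binary-expansion decomposition inside $e^0$ and $e^{\max(N)}$, whose $\Delta$-sum is controlled by the $n_0$ bound above, giving $\sum_j\Delta(e^j)\le 8n_0\,d_\Delta(x,y)$ and hence $L'=8n_0$.
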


\begin{proof}
By continuity of $f$ and density of $\mathcal{V} \subset [0,1]$, it suffices to prove the Lipschitz condition for pairs $x,y \in \mathcal{V}$. Let $x,y\in\mathcal{V}$ be such that $x\not=y$. Let $(e^j)_{j\in N}=([x_j,y_j])_{j\in N}$ denote a dyadic chain such that 
\begin{enumerate}
    \item{$\{x,y\}\subset \bigcup_{j\in N}e^j$,}
    \item{for all $j \in N \setminus \{\max(N)\}$, we have $e^j\cap e^{j+1}=\{y_j\}=\{x_{j+1}\}$, and}
    \item{$\sum_{j\in N}\Delta(e^j)\leq 2d_\Delta(x,y)$.}
\end{enumerate}
The existence of such a collection follows from the definition of $d_\Delta(x,y)$. We may assume that $x\in e^0$ and $y\in e^{\max(N)}$. Furthermore, by splitting $e^0$ into $e^0_0,e^0_1$ if necessary, we may assume that $0<\max(N)$ and $\sum_{j\in N}\Delta(e^j)\leq 4d_\Delta(x,y)$.  

We may also assume that $x\not=y_0$ (else we discard the edge $e^0$). If $x$ is contained in the interior of $e^0$, since $x\in \mathcal{V}$, there exists a finite sequence of dyadic edges $(\hat{e}^k)_{k\in M}$ such that $x=\hat{x}_0<\hat{x}_1<\dots<\hat{x}_{\max(M)}=y_0$ and, for each $k\in M$, we have $\hat{e}^k=[\hat{x}_{k-1},\hat{x}_k]\subset e^0$. Furthermore, each $\hat{e}^k$ is an $n_k$-edge for some $n_k$, where the sequence $(n_k)_{k\in M}$ is strictly decreasing. Here we note that obtaining the sequence $(\hat{x}_k)_{k\in M}$ is akin to obtaining the binary expansion of a dyadic number. 

It follows from the definition of a dyadic diameter function that, for $k\in M$, we have
\begin{equation}\label{E:max}
\Delta(\hat{e}^k)\leq\Delta(e^0).
\end{equation}
Since $([0,1],d_\Delta)$ is doubling, by \cite[Lemma 3.7]{HM12} there exists $n_0\in \mathbb{N}$ (depending only on the doubling constant) such that, for any dyadic $n$-edge $e$ and any dyadic $(n+n_0)$-edge $e'\subset e$, we have 
\begin{equation}\label{E:doubling}
\Delta(e')\leq \frac{1}{2}\Delta(e).
\end{equation}
Suppose $\max(M)\leq n_0$. Then, by (\ref{E:max}), we have
\[\sum_{k\in M}\Delta(\hat{e}^k)\leq \max(M)\Delta(e^0)\leq n_0\Delta(e^0).\]
On the other hand, suppose $\max(M)>n_0$. Given $\alpha>0$, write $\lfloor \alpha\rfloor$ to denote the greatest integer not greater than $\alpha$, and $\lceil \alpha\rceil$ to denote the least integer not less than $\alpha$. Using this notation, define $n_0':=\lceil \max(M)/n_0\rceil$. By (\ref{E:doubling}), we have
\[\sum_{k\in M}\Delta(\hat{e}^k)\leq \sum_{k\in M}2^{-\lfloor k/n_0\rfloor}\Delta(e^0)=n_0\Delta(e^0)\sum_{0\leq l\leq n_0'}2^{-l}\leq 2n_0\Delta(e^0).\]
Via a parallel argument applied to the edge $e^{\max(N)}$, we may assume $(e^j)_{j\in N}=([x_j,y_j])_{j\in N}$ is such that $x=x_0$, $y=y_{\max(N)}$, and $\sum_{j\in N}\Delta(e^j)\leq 8n_0d_\Delta(x,y)$.

Finally, we observe that
\begin{align*}
|f(x)-f(y)|\leq\sum_{j\in N}|f(x_j)-f(y_j)|&\leq\sum_{j\in N} Ld_\Delta(x_j,y_j)\\
&=L\sum_{j\in N}\Delta(e^j)\leq 8Ln_0d_\Delta(x,y).
\end{align*}
The conclusion follows with $L'=8n_0$.
\end{proof}

The closing lemma of this section is a technical result used to compare distances coming from different dyadic diameter functions. It is applied in the proof of Proposition \ref{prop:ddn}\eqref{item:ddn3}. 

\begin{lemma} \label{lem:d1=d2}
Suppose $\Delta_1,\Delta_2 \in \mathfrak{D}$ are two dyadic diameter functions. Let $L(\Delta_1,\Delta_2)$ (resp. $R(\Delta_1,\Delta_2)$) denote the closure of the set of points that are left (resp. right) endpoints of dyadic edges $e' \in \D$ with $\Delta_1(e) = \Delta_2(e)$ for all dyadic ancestors $e \supset e'$. Let $\mathcal{P}(\Delta_1,\Delta_2)$ denote the set of all intervals $[y,z]$ such that $y \in L(\Delta_1,\Delta_2)$ and $z \in R(\Delta_1,\Delta_2)$. Then $d_{\Delta_1}(y,z) = d_{\Delta_2}(y,z)$ for all $[y,z] \in \mathcal{P}(\Delta_1,\Delta_2)$.
\end{lemma}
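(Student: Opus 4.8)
The plan is to fix $[y,z] \in \mathcal{P}(\Delta_1,\Delta_2)$ and show the two distances agree by symmetry — since $\Delta_1$ and $\Delta_2$ play symmetric roles, it suffices to prove $d_{\Delta_1}(y,z) \leq d_{\Delta_2}(y,z)$ (the reverse being identical with indices swapped). To get this inequality, I would take a near-optimal dyadic chain $(e^k)_{k \in N}$ realizing $d_{\Delta_2}(y,z)$ up to a factor $1+\eps$, and show that each edge $e^k$ in this chain can be replaced by a chain of edges on which $\Delta_1$ and $\Delta_2$ agree, without increasing the total $\Delta_1$-length beyond a controlled bound. First, though, I would set up the geometry: by definition of $L(\Delta_1,\Delta_2)$ and $R(\Delta_1,\Delta_2)$, the point $y$ is a limit of left-endpoints of "good" edges (edges all of whose ancestors $e$ satisfy $\Delta_1(e)=\Delta_2(e)$) and $z$ is a limit of right-endpoints of good edges. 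A good observation to record first: if $e'$ is a good edge and $e$ is any dyadic edge with $e \subsetneq e'$, then $e$ is also good (its ancestors are a subset of $e'$'s ancestors together with $e'$ itself, all of which are good since $e'$ is); moreover on a good edge $e'$ one has $d_{\Delta_1}|_{e'} = d_{\Delta_2}|_{e'}$, because any dyadic chain inside $e'$ consists of good edges and the two diameter functions agree on all of them.

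The key combinatorial step is to reduce to the case $y \in L$ is itself a left-endpoint of a good edge and $z \in R$ is a right-endpoint of a good edge — i.e., to remove the closures. For this I would use continuity: both $d_{\Delta_1}$ and $d_{\Delta_2}$ are genuine metrics inducing the same topology as (in fact, bi-Lipschitz-dominating, via bounded turning) makes them continuous in each variable with respect to the Euclidean topology, so I can approximate $y$ by left-endpoints $y_j \to y$ of good edges and $z$ by right-endpoints $z_j \to z$ of good edges and pass to the limit, provided the equality is known for each pair $(y_j, z_j)$. So the heart of the matter is: \emph{if $y$ is the left-endpoint of a good edge and $z$ is the right-endpoint of a good edge, then $d_{\Delta_1}(y,z) = d_{\Delta_2}(y,z)$.} Given such $y,z$, consider an almost-optimal $\Delta_2$-chain $(e^k)_{k\in N}$ for $d_{\Delta_2}(y,z)$ with consecutive edges meeting, $y \in e^0$, $z \in e^{\max(N)}$. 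Each intermediate edge $e^k$ that is not already good can be shrunk: replace it by the two (or iteratively more) of its dyadic children that actually lie on the path from $y$ to $z$; since on a non-good edge the diameter function can only have "branched" at some ancestor, I need to argue that descending far enough along the specific locations near $y$ and $z$ lands inside good edges — which holds precisely because $y$ (resp. $z$) is the left- (resp. right-) endpoint of a good edge, so the dyadic edges abutting $y$ from the right and abutting $z$ from the left at all sufficiently small scales are good. Combining the good pieces near the two endpoints with the bulk of the chain (rerouting through good subedges where needed) gives a $\Delta_1$-chain whose length is at most $d_{\Delta_2}(y,z) + \eps$, and letting $\eps \to 0$ yields $d_{\Delta_1}(y,z) \leq d_{\Delta_2}(y,z)$.

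The step I expect to be the main obstacle is the rerouting argument: showing that an arbitrary near-optimal $\Delta_2$-chain can be modified into a chain of good edges with comparable (indeed, $\leq$, up to $\eps$) total $\Delta_1$-length. The subtlety is that $\Delta_1$ and $\Delta_2$ may disagree wildly on edges that are "off to the side" of the geodesic $[y,z]$, yet a $\Delta_2$-chain is allowed to wander through such edges; I must argue that any such detour is suboptimal, or can be replaced, so that effectively the optimal chain stays within $[y,z]$ — where, by the good-endpoint hypothesis, the relevant small-scale edges are good. A clean way to handle this is to first observe (or cite, via the structure of $d_\Delta$ on a bounded turning arc from \cite[Lemma 3.1]{HM12}) that for endpoints $y,z$ of dyadic edges, the infimum defining $d_{\Delta_2}(y,z)$ is attained by chains contained in the smallest dyadic edge containing both $y$ and $z$, or more carefully by a monotone chain along $[y,z]$; then the whole chain lies in a region where goodness is inherited downward from the good edges witnessing $y \in L$ and $z \in R$, and the replacement is immediate. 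I would make sure to invoke only the edge-additivity structure of $d_\Delta$ and the downward-closure of goodness, keeping the argument free of any appeal to doubling (which is not assumed here).
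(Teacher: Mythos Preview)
Your reduction via continuity to the case where $y$ is the left endpoint of a good edge $e^{(y)}$ and $z$ is the right endpoint of a good edge $e^{(z)}$ is correct and matches the paper. However, the ``good observation'' you record --- that if $e'$ is good and $e \subsetneq e'$ then $e$ is also good --- is false, and this error propagates through the rest of your plan. If $e \subsetneq e'$, the ancestors of $e$ include all the intermediate edges strictly between $e$ and $e'$, not merely $e'$ and its ancestors; there is no reason $\Delta_1$ and $\Delta_2$ agree on those intermediate edges. Consequently your claim that $d_{\Delta_1}\res_{e'} = d_{\Delta_2}\res_{e'}$ for good $e'$ is unfounded, and your rerouting argument --- which descends to smaller edges hoping they are good --- cannot work. (Even if it did, subdividing an edge $e$ into its children can double the $\Delta$-sum when $\Delta(e_0)=\Delta(e_1)=\Delta(e)$, so refining a chain need not preserve near-optimality.)

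The correct observation goes the other way: goodness of $e'$ means precisely that $\Delta_1(e)=\Delta_2(e)$ for every ancestor $e \supset e'$, so it is the edges \emph{above} $e^{(y)}$ and $e^{(z)}$ on which the two diameter functions agree. The paper's proof exploits this by citing the structural description of near-optimal dyadic chains from \cite[Lemmas~3.2,~3.3]{Freeman20} (monotone decreasing then increasing generations, no sibling pairs) and then showing that every edge $e^i$ in such a chain from $y$ to $z$ must be an ancestor of $e^{(y)}$ or of $e^{(z)}$, or a sibling of such an ancestor. Since $\Delta(e_0)=\Delta(e_1)$ always holds for siblings, $\Delta_1(e^i)=\Delta_2(e^i)$ for every $e^i$ in the chain, and the equality $d_{\Delta_1}(y,z)=d_{\Delta_2}(y,z)$ follows. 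Your final paragraph nearly arrives at this structure (monotone chains along $[y,z]$), but the conclusion you draw from it --- that goodness is inherited \emph{downward} in that region --- is the wrong direction; the chain edges are large, not small, relative to $e^{(y)}$ and $e^{(z)}$.
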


\begin{proof}
Let $y,z$ be as in the statement of the lemma with $y < z$. By continuity of $d_{\Delta_1}$ and $d_{\Delta_2}$, it suffices to assume that $y$ is the left endpoint of a dyadic edge $e^{(y)}$ and $z$ is the right endpoint of a dyadic edge $e^{(z)}$ satisfying $\Delta_1(\tilde{e}) = \Delta_2(\tilde{e})$ for any dyadic edge $\tilde{e}$ that is an ancestor of $e^{(y)}$ or an ancestor of $e^{(z)}$.

Let $\Delta \in \mathfrak{D}$ be arbitrary. We claim that the definition of $d_\Delta(y,z)$ remains unchanged if we take the infimum over all dyadic chains $(e^i)_{i\in I}$ with $\{y,z\} \in \bigcup_{i\in I} e^i$ \emph{and}, for all $i \in I$,
\begin{align} \label{eq:d1=d2}
    &e^i \text{ equals, or is a dyadic sibling of,} \\
\nonumber    &\text{ an ancestor of } e^{(y)} \text{ or an ancestor of } e^{(z)}.
\end{align}
Applying this claim to $\Delta = \Delta_1$ and then to $\Delta = \Delta_2$, and noting that $\Delta(e_0)=\Delta(e_1)$ always holds for dyadic siblings $e_0,e_1$, we get that $d_{\Delta_1}(y,z) = d_{\Delta_2}(y,z)$. Thus it remains to verify this claim.

By \cite[Lemmas 3.2 and 3.3]{Freeman20}, the definition of $d_\Delta(y,z)$ remains unchanged if we take the infimum over all dyadic chains $(e^i)_{i\in I}$ with $\{y,z\} \in \bigcup_{i\in I} e^i$ satisfying the following properties.
\begin{enumerate}
    \item\label{item:d1=d2(1)} $\max(e^{i-1}) = \min(e^i)$ for all $1 \leq i \leq \max{I}$.
    \item\label{item:d1=d2(2)} $e^i$ is not a sibling of $e^j$ for any $i \neq j \in I$.
    \item\label{item:d1=d2(3)} There exists a unique $i^* \in I$ such that one of the following cases holds.
    \begin{enumerate}
        \item\label{item:d1=d2(3a)} The function $\gen: I \to \N$ is strictly decreasing on $\{0,\dots i^*\}$ and strictly increasing on $\{i^*,\dots \max(I)\}$.
        \item\label{item:d1=d2(3b)} The function $\gen: I \to \N$ is strictly decreasing on $\{0,\dots i^*\}$ and strictly increasing on $\{i^*+1,\dots \max(I)\}$, and $\gen(i^*) = \gen(i^*+1)$.
    \end{enumerate}
\end{enumerate}
Additionally, by discarding an initial and/or terminal segment of the chain, the definition of $d_\Delta(y,z)$ is also unchanged if we require
\begin{enumerate}[resume]
    \item\label{item:d1=d2(4)} $y \in e^0 \setminus \max(e^0)$ and $z \in e^{\max(I)} \setminus \min(e^{\max(I)})$.
\end{enumerate}

We will prove \eqref{eq:d1=d2} only for $i \in \{0,\dots i^*\}$ -- a symmetric argument covers the remaining case $i \in \{i^*+1,\dots \max(I)\}$. Let $j \in \{0,\dots i^*\}$ with $j \leq i^*-1$. We note that $e^j$ must be to the right of its dyadic sibling $e^j_{sib}$ and not to the left. Indeed, we otherwise have by \eqref{item:d1=d2(1)},\eqref{item:d1=d2(3)} that the intersection $(e^j \cup e^j_{sib}) \cap e^{j+1} = e^j_{sib}$ is equal to neither the empty set, a single point, $e^j\cup e^j_{sib}$, nor $e^{j+1}$. This is a contradiction since both $e^j \cup e^j_{sib}$ and $e^{j+1}$ are dyadic edges. Since $e^j_{sib}$ is to the left of $e^j$, \eqref{item:d1=d2(1)},\eqref{item:d1=d2(3)} imply $e^0 \subset e_{sib}^j$. Thus, if we can show $e^{(y)} \subset e^0$, then we get \eqref{eq:d1=d2} for $i=j$.

Assume towards a contradiction that $e^{(y)} \not\subset e^0$. If $y$ is contained in the interior of $e^0$, then it must happen that $e^{(y)} \subset e^0$, a contradiction. Thus, it must be the case that $y$ is not in the interior of $e^0$. Then by \eqref{item:d1=d2(4)}, $y = \min(e^0)$. Thus, the two dyadic edges $e^0,e^{(y)}$ have a common left endpoint (namely $y$), implying one is contained in the other. By assumption, $e^{(y)} \not\subset e^0$, and so $e^0 \subsetneq e^{(y)}$. But this implies $e^0$ is to the left of $e^0_{sib}$, contradicting the previous paragraph. Therefore, our assumption that $e^{(y)} \not\subset e^0$ is incorrect, and by the previous paragraph, \eqref{eq:d1=d2} holds for all $i \in \{0,\dots i^*-1\}$. It remains to prove that \eqref{eq:d1=d2} holds for $i = i^*$.

If $e^{i^*}$ is to the right of its sibling and not to the left, then as before \eqref{item:d1=d2(1)},\eqref{item:d1=d2(3)} and the fact that $e^{(y)} \subset e^0$ imply $e^{(y)} \subset e^0 \subset e_{sib}^{i^*}$, and we are done. Assume, then, that $e^{i^*}$ is to the left of its sibling. There are two cases to consider: either \eqref{item:d1=d2(3a)} holds or \eqref{item:d1=d2(3b)} holds. If \eqref{item:d1=d2(3b)} holds, then the assumption that $e^{i^*}$ is the left of its sibling implies $e^{i^*+1}$ is the sibling of $e^{i^*}$, contradicting \eqref{item:d1=d2(2)}. Hence, \eqref{item:d1=d2(3a)} must hold. Then \eqref{item:d1=d2(1)},\eqref{item:d1=d2(3a)} and the assumption that $e^{i^*}$ is to the left of $e_{sib}^{i^*}$ imply $e^{\max(I)} \subset e_{sib}^{i^*}$. Analogously as above, we also obtain $e^{(z)}\subset e^{\max(I)}$, and this in combination with $e^{\max(I)}\subset e_{sib}^{i^*}$ proves \eqref{eq:d1=d2} for $i = i^*$.
\end{proof}

\section{The Lipschitz Free Space of a QC Arc}\label{S:arc_freespace}

The goal of this section is to identify the linear isomorphism type of $\F(\gamma)$ when $\gamma$ is a QC arc. We achieve this in Corollary \ref{cor:isoL1}. As an added bonus, we are able to use intermediate results to deduce a rectifiable/purely unrectifiable decomposition (Theorem \ref{thm:rp1udecomp}) for QC arcs.

Since bi-Lipschitz homeomorphisms induce linear isomorphisms on the corresponding Lipschitz free spaces, Theorem \ref{T:HM} allows us to consider only QC arcs in $\mathcal{S}_1'$. Towards this end, we fix a dyadic diameter function $\Delta \in \mathfrak{D}$ such that $([0,1],d_\Delta)$ is doubling for the remainder of this section. Since $\Delta$ is fixed, we suppress notation and simply write ``$d$" and ``$\diam$" instead of ``$d_\Delta$" and ``$\diam_\Delta$". By passing to a 2-bi-Lipschitz equivalent metric, we may and do assume that $\Delta([0,\tfrac{1}{2}]) = \Delta([\tfrac{1}{2},1]) = \Delta([0,1]) = 1$.

Our first main conceptual tool is a filtration of $\sigma$-algebras on $[0,1]$ that selects sets on which the diameter $\diam$ is ``close to" the Euclidean diameter.

\begin{definition}[The Filtration]\label{D:filt}
Fix $n \geq 0$. Define $\A_n$ to be the $\sigma$-algebra on $[0,1]$ generated by Lebesgue-null sets and the collection of closed sets $\{e \in \D: \Delta(e) \leq 2^{n-\gen(e)}\}$. Note that $(\A_n)_{n\geq0}$ forms a filtration in the sense that $\A_n \subset \A_{n+1}$, and also note that $\A_0 = \{N,[0,1] \setminus N: N \text{ Lebesgue-null}\}$ by the last assumption in the previous paragraph.
\end{definition}

The next important concept is the distinction between the diffuse and atomic parts of the $\sigma$-algebras in the filtration.

\begin{definition}[Diffuse and Atomic Parts]\label{D:diff_atoms}
We decompose $\A_n$ into its atoms and its diffuse part, where the \emph{atomic part} is the collection of atoms $\At_n := \{e \in \D \cap \A_n: (\D\cap\A_n) \ni e' \subset e \Rightarrow e' = e\}$, and the \emph{diffuse part} is $\Diff_n := [0,1] \setminus \left(\bigcup_{e \in \At_n} \intr(e)\right)$. Here, $\intr(e)$ denotes the interior of $e$. For example, $\At_0 = \{[0,1]\}$ and $\Diff_0 = \{0,1\}$. Note that $\At_n$ is a collection of closed sets while $\Diff_n$ is itself a closed set.
\end{definition}

In the next proposition, we summarize, without proof, some immediate properties of the atomic and diffuse parts that will be used throughout this section.

\begin{proposition}[Basic Properties of Atomic and Diffuse Parts]
Let $n\geq0$. Then the following hold.
\begin{itemize}
    \item For each $e \in \At_n$, $e \setminus \intr(e)$ is Lebesgue-null and thus $\intr(e),\Diff_n \in \A_n$.
    \item The collection $\At_n \cup \{\Diff_n\}$ forms an essential partition of $[0,1]$ in the sense that $[0,1] = (\cup \At_n) \cup \Diff_n$ and for any $A,B \in \At_n \cup$ $\{\Diff_n\}$, either $A=B$ or $A \cap B$ is Lebesgue-null.
    \item If $e \in \At_n$, then $e \in \D$ and $\Delta(e_0) = \Delta(e_1) = \Delta(e) = 2^{n-\gen(e)}$.
    \item If $e \in \D \cap \A_n$, then $e \notin \At_n$ if and only if $e_i \in \A_n$ for some dyadic child $e_i \subset e$ (equivalently, for all dyadic children $e_i \subset e$).
\end{itemize}
\end{proposition}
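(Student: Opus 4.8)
The plan is to determine exactly which dyadic edges lie in $\A_n$, since all four assertions are bookkeeping once that is known. For $e \in \D$ set $\rho(e) := 2^{\gen(e)}\Delta(e)$, so that the family generating $\A_n$ in Definition~\ref{D:filt} is $\mathcal{G}_n := \{e \in \D : \rho(e) \leq 2^n\}$. From the defining dichotomy of a dyadic diameter function one reads off at once that $\rho([0,1]) = 1$, that every $\rho(e)$ is a non-negative integer power of $2$, that $\rho(e_i) \in \{\rho(e), 2\rho(e)\}$ for each child $e_i$ of $e$, and that siblings have equal $\rho$. Hence $\rho$ is non-decreasing along descending chains of edges, and $\mathcal{G}_n$ is closed under passing to ancestors and contains both or neither child of each of its members; in other words, $\mathcal{G}_n$ is a rooted full binary subtree of $\D$.

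The core step is the identification $\D \cap \A_n = \mathcal{G}_n$, which in turn shows that $\At_n$ is precisely the set of leaves of $\mathcal{G}_n$ (equivalently, the minimal members of $\mathcal{G}_n$). The inclusion $\supseteq$ is immediate. For $\subseteq$, I would first note that $\A_n = \{A \triangle N : A \in \sigma(\mathcal{G}_n),\ N \text{ Lebesgue-null}\}$ (the right-hand side is a $\sigma$-algebra containing all generators, since symmetric differences with null sets survive countable Boolean operations), so it suffices to rule out that a dyadic edge $e$ with $\rho(e) > 2^n$ agrees mod null with a member of $\sigma(\mathcal{G}_n)$. Walking from $e$ up toward the root $[0,1]$, which has $\rho = 1 \leq 2^n$, let $f$ be the shortest dyadic edge properly containing $e$ with $f \in \mathcal{G}_n$; then $e \subsetneq f$ and $f$ is a leaf of $\mathcal{G}_n$. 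The trace of $\sigma(\mathcal{G}_n)$ on $f$ is generated by $\{g \cap f : g \in \mathcal{G}_n\}$, and because $\mathcal{G}_n$ is laminar and $f$ is a leaf, each $g \cap f$ equals $f$ (when $g \supseteq f$, or when $g \subseteq f$, which forces $g = f$) or else is empty or an endpoint of $f$; hence every set in this trace is null or co-null in $f$. Since both $e$ and $f \setminus e$ have positive Lebesgue measure, $e$ is not in this trace, so $e \notin \A_n$.

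Granting $\D \cap \A_n = \mathcal{G}_n$ and $\At_n = \{\text{leaves of }\mathcal{G}_n\}$, the four bullets follow quickly. Third bullet: if $e \in \At_n$ then $e \in \mathcal{G}_n \subset \D$ and both children have $\rho(e_i) > 2^n \geq \rho(e)$; since $\rho(e_i) \in \{\rho(e), 2\rho(e)\}$ and $\rho$-values are powers of $2$, this forces $\rho(e_i) = 2\rho(e)$ and $\rho(e) = 2^n$, i.e. $\Delta(e_0) = \Delta(e_1) = \Delta(e) = 2^{n-\gen(e)}$ (the non-shrinking alternative). Fourth bullet: a child lying in $\A_n$ is a proper dyadic sub-edge in $\A_n$, so $e \notin \At_n$; conversely, if $e \in \mathcal{G}_n$ is not minimal, it contains a proper dyadic sub-edge from $\mathcal{G}_n$, which lies inside a child $e_i$, and ancestor-closure of $\mathcal{G}_n$ puts $e_i$, and hence (equal $\rho$) also the sibling child, in $\mathcal{G}_n \subset \A_n$. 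First bullet: $e \in \At_n \subset \D$ is a closed interval, so $e \setminus \intr(e)$ consists of its two endpoints and is null; thus $\intr(e) \in \A_n$, and since $\At_n$ is a countable subfamily of $\D$, $\bigcup_{e \in \At_n}\intr(e)$ and its complement $\Diff_n$ belong to $\A_n$. Second bullet: $(\cup\At_n)\cup\Diff_n \supseteq \big(\bigcup_{e\in\At_n}\intr(e)\big)\cup\Diff_n = [0,1]$ by definition of $\Diff_n$; two distinct members of $\At_n$ are dyadic edges neither containing the other (minimality), hence meet in at most a point; and $e \cap \Diff_n \subset e \setminus \intr(e)$ is null for $e \in \At_n$.

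I expect the only non-routine point to be the inclusion $\D \cap \A_n \subseteq \mathcal{G}_n$ — ruling out that the $\sigma$-algebra operations producing $\A_n$ manufacture a dyadic edge not already among the generators. The trace-$\sigma$-algebra computation above, resting on the laminarity of $\mathcal{G}_n$ and on the fact that a leaf of $\mathcal{G}_n$ contains no smaller generator, is the mechanism that settles it; after that, identifying $\At_n$ with the leaves of $\mathcal{G}_n$ reduces everything to the elementary observations about $\rho$.
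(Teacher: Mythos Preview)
Your proof is correct. The paper itself omits any proof of this proposition, stating only that it summarizes ``immediate properties of the atomic and diffuse parts.'' Your argument supplies what the paper leaves tacit: the identification $\D \cap \A_n = \mathcal{G}_n$ via the monotone quantity $\rho(e) = 2^{\gen(e)}\Delta(e)$, together with the trace-$\sigma$-algebra argument showing that no dyadic edge outside $\mathcal{G}_n$ can sneak into $\A_n$ by way of null-set modification. Once $\At_n$ is identified with the leaves of the rooted full binary subtree $\mathcal{G}_n$, all four bullets are indeed routine, and your verification of each is accurate. The one point worth flagging is that the paper, in later lemmas (e.g., Lemma~\ref{lem:notAn}), implicitly relies on exactly this ``leaf'' picture of $\At_n$ without ever articulating it; your framework makes that structure explicit and could in fact streamline several of the subsequent arguments.
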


Here we establish a useful characterization of dyadic edges not belonging to $\A_n$.

\begin{lemma} \label{lem:notAn}
Let $n\geq0$ and $e' \in \D$. If $e' \not\in \A_n$, then there exists $e \in \At_n$ such that $e' \subsetneq e$.
\end{lemma}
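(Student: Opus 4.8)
The plan is to walk up the chain of dyadic ancestors of $e'$ and stop at the first one that lands in $\A_n$. First I would set $m := \gen(e')$ and list the ancestors $e' = f_0 \subsetneq f_1 \subsetneq \cdots \subsetneq f_m = [0,1]$, with $f_j$ the dyadic parent of $f_{j-1}$, so that $\gen(f_j) = m-j$. Since $[0,1] \in \A_n$ (being the whole space --- and in any case $\Delta([0,1]) = 1 \le 2^{n}$ places it in the generating family of Definition \ref{D:filt}) while $f_0 = e' \notin \A_n$ by hypothesis, the index $k := \min\{j \ge 0 : f_j \in \A_n\}$ is well defined and satisfies $k \ge 1$. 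I would then set $e := f_k$. Immediately $e \in \D \cap \A_n$, $e' \subsetneq e$ (because $k \ge 1$), and the dyadic child $f_{k-1}$ of $e$ does not belong to $\A_n$ by minimality of $k$.

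The remaining step is to upgrade ``$e \in \A_n$ with a child outside $\A_n$'' to ``$e \in \At_n$''. Here I would invoke the last bullet of the preceding proposition on basic properties of the atomic and diffuse parts: for $e \in \D \cap \A_n$, one has $e \notin \At_n$ if and only if \emph{some} dyadic child of $e$ lies in $\A_n$, equivalently, \emph{all} dyadic children of $e$ lie in $\A_n$. Since $f_{k-1}$ is a dyadic child of $e$ with $f_{k-1} \notin \A_n$, not all children of $e$ lie in $\A_n$; by the stated equivalence, no child of $e$ lies in $\A_n$, whence $e \in \At_n$. Together with $e' \subsetneq e$, this is exactly the desired conclusion.

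I expect the only delicate point to be this last upgrade: a priori, knowing that the single child $f_{k-1}$ is outside $\A_n$ says nothing about its sibling, whereas we need both children outside $\A_n$. The ``some child $\iff$ all children'' dichotomy is precisely what bridges this gap, and it reflects the structural fact that, modulo Lebesgue-null sets, $\A_n$ either subdivides a dyadic edge completely into finer generating pieces or does not cut it at all. Should one wish to avoid citing the (unproved) proposition, I would instead read this dichotomy straight off Definition \ref{D:filt}, using that $\A_n$ is, modulo null sets, the $\sigma$-algebra generated by a subfamily of $\D$; that is the one place where I would spell out the details.
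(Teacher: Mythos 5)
Your proof is correct, and it takes a cleaner, more direct route than the paper's. The paper proves the contrapositive by induction on $\gen(e')$: it passes to the parent $\hat{e'}$, applies the inductive hypothesis to get $\hat{e'} \in \A_n$, observes $\hat{e'} \notin \At_n$, extracts a proper dyadic descendant $f \subsetneq \hat{e'}$ in $\A_n$, and then pushes $\A_n$-membership from $f$ down to $e'$ using two unproved observations — that $\A_n$-membership propagates to dyadic ancestors, and that siblings are in $\A_n$ together. You instead climb the ancestor chain of $e'$ explicitly, stop at the first ancestor $e = f_k$ landing in $\A_n$ (which exists since $[0,1] \in \A_n$), and read off from the ``some child $\iff$ all children'' dichotomy in the fourth bullet of the basic-properties proposition that $e$ must be an atom, since its child $f_{k-1}$ is not in $\A_n$. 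Both approaches ultimately rest on the same structural facts about $\A_n$ (ancestor closure and sibling pairing, packaged inside the proposition you cite), but your argument avoids the contrapositive, avoids the induction, and is appreciably shorter. Your final caveat — that one could read the dichotomy directly off Definition \ref{D:filt} rather than citing the unproved proposition — is well taken; the paper's own proof is effectively doing exactly that spelling-out, so a referee would accept either.
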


\begin{proof}
We prove the contrapositive by induction on $\gen(e')$. The base case $\gen(e') = 0$ obviously holds. Assume that, for some $k\geq1$, the lemma holds whenever $\gen(e') < k$. Let $e' \in \D$ with $\gen(e') = k$ and $e'$ is not a dyadic descendant of any $e \in \At_n$. Applying the inductive hypothesis to $\hat{e'}$, where $\hat{e'}$ is the dyadic parent of $e'$, we get that $\hat{e'} \in \A_n$. Also, since $\hat{e'}$ is not a dyadic descendant of any $e \in \At_n$, a fortiori $\hat{e'} \notin \At_n$. Then, since $\hat{e'} \in \A_n \setminus \At_n$, there must exist a proper dyadic descendant $f \subsetneq \hat{e'}$ with $f \in \A_n$, by definition of $\At_n$. Note that $f \subsetneq \hat{e'}$ implies $f \subset e'$ or $f \subset e'_{sib}$, where $e'_{sib}$ denotes the dyadic sibling of $e'$. It can be easily checked that if $e' \in \D \cap \A_n$ and $e \in \D$ is a dyadic ancestor of $e'$, then also $e \in \A_n$. Hence, it must hold that either $e' \in \A_n$ or $e'_{sib} \in \A_n$. It follows easily from the definition of $\A_n$ and the facts that $\Delta(e) = \Delta(e_{sib})$ and $\gen(e) = \gen(e_{sib})$ for any pair of dyadic siblings $e,e_{sib}$ that $e' \in \A_n$ if and only if $e'_{sib} \in \A_n$. Therefore, we must have $e' \in \A_n$. This completes the inductive step.
\end{proof}

The next two lemma flesh out some basic measure-theoretic properties of the diffuse parts.

\begin{lemma}[Restriction of Filtration to Diffuse Parts] \label{lem:LebDiffn}
For every $n\geq0$ and Lebesgue measurable set $A \subset [0,1]$, $A \cap \Diff_n \in \A_n$.
\end{lemma}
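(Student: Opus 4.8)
The plan is to reduce the statement to the claim that $\Diff_n$ itself belongs to $\A_n$, and then combine this with the fact that every Lebesgue measurable set is $\A_n$-measurable modulo the null sets $\A_n$ contains. Concretely, first I would recall from the Basic Properties proposition that for each atom $e \in \At_n$ we have $e \setminus \intr(e)$ Lebesgue-null, so $\intr(e) \in \A_n$ and hence $\Diff_n = [0,1] \setminus \bigcup_{e \in \At_n}\intr(e) \in \A_n$ (a countable union, since $\D$ is countable). Next, since $\A_n$ contains all Lebesgue-null sets and is generated by those together with certain closed dyadic edges, every Lebesgue measurable set $A$ differs from an $\A_n$-measurable set by a Lebesgue-null set; more precisely, I would argue that $\A_n$ actually contains \emph{every} Lebesgue measurable set, because the Borel $\sigma$-algebra on $[0,1]$ is generated by the dyadic intervals, each dyadic interval $e$ is either in $\At_n$ or a descendant of one (Lemma~\ref{lem:notAn}) or lies in $\A_n$ by definition, and in the one remaining case where $e$ is a proper descendant of some atom $a \in \At_n$ we have $e \subset a$ with $a \setminus \intr(a)$ null, so $\intr(e) \subset \intr(a) \in \A_n$...

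Actually, let me reconsider the cleanest route. The slicker argument: $\Diff_n \in \A_n$ as above. Now take an arbitrary Lebesgue measurable $A$. On $\Diff_n$, I want $A \cap \Diff_n \in \A_n$. I would show every dyadic interval $e$, when intersected with $\Diff_n$, gives a set in $\A_n$: if $e \in \A_n$ then $e \cap \Diff_n \in \A_n$ trivially; if $e \notin \A_n$, then by Lemma~\ref{lem:notAn} there is an atom $a \in \At_n$ with $e \subsetneq a$, whence $e \cap \Diff_n \subset a \cap \Diff_n \subset a \setminus \intr(a)$, which is Lebesgue-null and therefore in $\A_n$. Since the dyadic intervals generate the Borel $\sigma$-algebra of $[0,1]$ and $B \mapsto (B \cap \Diff_n \in \A_n)$ defines a $\sigma$-algebra (it is closed under complements relative to $[0,1]$ using $\Diff_n \in \A_n$, and under countable unions), we conclude $B \cap \Diff_n \in \A_n$ for every Borel set $B$. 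Finally, an arbitrary Lebesgue measurable $A$ equals $B \mathbin{\triangle} N$ for some Borel $B$ and Lebesgue-null $N$, so $A \cap \Diff_n = (B \cap \Diff_n) \mathbin{\triangle} (N \cap \Diff_n)$, and both pieces are in $\A_n$ (the first by the above, the second as a subset of a null set).

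The only mildly delicate point — and the one I would be most careful about — is the verification that the dyadic intervals generate the Borel $\sigma$-algebra restricted appropriately, and that the collection $\{B : B \cap \Diff_n \in \A_n\}$ really is a $\sigma$-algebra on $[0,1]$ containing all the generators; this is routine but uses $\Diff_n \in \A_n$ in an essential way for the complementation step. Everything else is bookkeeping with the already-established properties of $\At_n$ and Lemma~\ref{lem:notAn}.
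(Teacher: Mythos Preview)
Your proposal is correct and follows essentially the same approach as the paper: both reduce to checking that $e \cap \Diff_n \in \A_n$ for each dyadic edge $e$ via the case split provided by Lemma~\ref{lem:notAn} (either $e \in \A_n$, or $e$ lies strictly inside some atom so that $e \cap \Diff_n$ is null), and then extend to all Lebesgue sets by a $\sigma$-algebra argument using $\Diff_n \in \A_n$. The only cosmetic difference is that the paper treats null sets and dyadic edges simultaneously as generators of the Lebesgue $\sigma$-algebra, whereas you first handle Borel sets and then pass to Lebesgue via $A = B \mathbin{\triangle} N$.
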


\begin{proof}
Let $n\geq0$. The collection $\{A \cap \Diff_n: A \subset [0,1] \text{ Lebesgue}\}$ is a $\sigma$-algebra generated by $\{E \cap \Diff_n: E \subset [0,1] \text{ Lebesgue-null or } E \in \D\}$. Hence, it suffices to show $E \cap \Diff_n \in \A_n$ whenever $E$ is Lebesgue-null or $E \in \D$. When $E$ is Lebesgue-null, so is $E \cap \Diff_n$, and thus it belongs to $\A_n$ by definition.

Assume, then, that $E \in \D$. By Lemma \ref{lem:notAn}, it must hold that $E \in \A_n$ or $E$ is a dyadic descendant of some $e \in \At_n$. Assume that the first case holds. Then since $\Diff_n$ is also in $\A_n$, we get $E \cap \Diff_n \in \A_n$, as needed. Now assume the second case holds, and let $e \in \At_n$ be a dyadic ancestor of $E$. Then $E \cap \Diff_n \subset E \setminus \intr(e)$ is a Lebesgue null set, and hence belongs to $\A_n$, as needed.
\end{proof}

\begin{lemma}[Atoms are Contained in Atoms] \label{lem:atom-in-atom}
For every $n\geq1$ and $e' \in \At_n$, there exists $e \in \At_{n-1}$ with $e \supset e'$.
\end{lemma}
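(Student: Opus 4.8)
The plan is to show that if $e' \in \At_n$, then its dyadic parent $\hat{e'}$ (or $e'$ itself when $e' = [0,1]$) lies in $\A_{n-1}$, and then pass to the minimal $\A_{n-1}$-atom above $e'$. First I would dispose of the trivial case $e' = [0,1]$, where $e' \in \At_{n-1}$ as well. So assume $e'$ has generation $k = \gen(e') \geq 1$, and let $\hat{e'}$ denote its dyadic parent. Since $e' \in \At_n$, by the third bullet of the Basic Properties proposition we have $\Delta(e') = 2^{n-\gen(e')}$; in particular $\Delta(e') \leq 2^{n - \gen(e')}$, so $e' \in \A_n$, hence $\hat{e'} \in \A_n$ (ancestors of $\A_n$-edges are in $\A_n$, as noted in the proof of Lemma \ref{lem:notAn}).

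The key computation is to relate $\Delta(\hat{e'})$ to $\Delta(e')$. By definition of a dyadic diameter function, either $\Delta(e') = \frac{1}{2}\Delta(\hat{e'})$ or $\Delta(e') = \Delta(\hat{e'})$. In the first case, $\Delta(\hat{e'}) = 2\Delta(e') = 2 \cdot 2^{n - k} = 2^{(n-1) - (k-1)} = 2^{(n-1) - \gen(\hat{e'})}$, so $\hat{e'} \in \A_{n-1}$. In the second case, $\Delta(\hat{e'}) = \Delta(e') = 2^{n-k}$; but if $\Delta(e_0') = \Delta(e_1') = \Delta(\hat{e'})$ then, tracing up the dyadic tree, there is some highest ancestor $e^* \supset e'$ at which a halving last occurred (or $e^* = [0,1]$), and $\Delta(e^*) = 2^{n-k}$ with $\gen(e^*) \leq k$; since the diameter function halves at the parent of $e^*$ (or $e^* = [0,1]$ and $\Delta = 1 = 2^0$, forcing $n = k$), one checks that $e^*$ already satisfies $\Delta(e^*) \leq 2^{(n-1) - \gen(e^*)}$ unless $\gen(e^*) = k$ and no halving ever occurs along $[0,1] \supset \dots \supset e'$, which would force $\Delta(e') = 1$ and hence $n = k$, i.e. $n - 1 \geq \gen(e') - 1 = k - 1$ still gives room — more carefully, in all subcases one locates a dyadic edge $f$ with $e' \subseteq f$ and $f \in \A_{n-1}$. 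Rather than belabor this, the cleanest route is: once we know $\hat{e'} \in \A_n$, either $\hat{e'} \in \A_{n-1}$ (done, take the $\A_{n-1}$-atom above it), or $\hat{e'} \notin \A_{n-1}$, in which case Lemma \ref{lem:notAn} applied at level $n-1$ gives $e \in \At_{n-1}$ with $\hat{e'} \subsetneq e$, and then $e' \subset \hat{e'} \subsetneq e$ gives the desired containment.

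So the real structure is: (i) show $\hat{e'} \in \A_n$; (ii) if $\hat{e'} \in \A_{n-1}$, let $e$ be the smallest element of $\D \cap \A_{n-1}$ containing $\hat{e'}$ (which exists and lies in $\At_{n-1}$ by minimality, using that $[0,1] \in \A_{n-1}$), and note $e \supseteq \hat{e'} \supseteq e'$ — we need $e \supset e'$, and if $e = e'$ then $e' \in \At_{n-1}$ directly; (iii) if $\hat{e'} \notin \A_{n-1}$, invoke Lemma \ref{lem:notAn} to get $e \in \At_{n-1}$ with $\hat{e'} \subsetneq e$, whence $e' \subsetneq e$. The main obstacle is the bookkeeping in case (ii) to ensure the atom we extract genuinely contains $e'$ as a proper or improper subset consistent with the statement $e \supset e'$; this is handled by observing $e \supseteq \hat{e'} \supsetneq e'$ whenever $e \neq [0,1]$ forces $e \neq e'$, and treating $e' = [0,1]$ separately at the start. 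I expect no genuine difficulty beyond careful case analysis on whether the diameter function halves at the parent of $e'$.
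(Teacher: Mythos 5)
Your overall strategy --- reduce to an application of Lemma \ref{lem:notAn} --- is the right one, and the paper's proof is exactly a short contrapositive application of that lemma. But routing through the parent $\hat{e'}$ is an unnecessary detour, and your treatment of case (ii) has a genuine gap.

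The gap: you assert that the smallest element of $\D\cap\A_{n-1}$ containing $\hat{e'}$ ``lies in $\At_{n-1}$ by minimality.'' That does not follow. When $\hat{e'}\in\A_{n-1}$, the smallest such element is simply $\hat{e'}$ itself; but minimality among \emph{ancestors} of $\hat{e'}$ says nothing a priori about the absence of proper dyadic $\A_{n-1}$-\emph{subsets} of $\hat{e'}$, which is what membership in $\At_{n-1}$ requires. To close this you would need to know that $e'$ and its dyadic sibling are not in $\A_{n-1}$ --- and once you observe $e'\notin\A_{n-1}$, the parent becomes irrelevant.

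The direct route, which is what the paper does: $e'\in\At_n$ already forces $e'\notin\A_{n-1}$. Indeed, if $e'\in\A_{n-1}$, then either $e'\in\At_{n-1}$, which by the third bullet of the Basic Properties proposition gives $\Delta(e')=2^{n-1-\gen(e')}$, contradicting $\Delta(e')=2^{n-\gen(e')}$; or else $e'$ has a proper dyadic $\A_{n-1}$-subset, which also lies in $\A_n$ and contradicts $e'\in\At_n$. Having $e'\notin\A_{n-1}$, Lemma \ref{lem:notAn} at level $n-1$ immediately yields $e\in\At_{n-1}$ with $e'\subsetneq e$. (The paper phrases this contrapositively: assume no $e\in\At_{n-1}$ satisfies $e\supset e'$; Lemma \ref{lem:notAn} then forces $e'\in\A_{n-1}$, contradicting $e'\in\At_n$.)
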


\begin{proof}
Let $n\geq1$ and $e' \in \At_n$, and assume towards a contradiction that $e \not\supset e'$ for every $e \in \At_{n-1}$. Then by Lemma \ref{lem:notAn}, $e' \in \A_{n-1}$, which contradicts $e' \in \At_n$. 
\end{proof}

\begin{lemma}[Intervals with Endpoints in Diffuse Part] \label{lem:DiffnAn}
For every $n\geq0$ and $x,y \in \Diff_n$ with $x \leq y$, $[x,y] \in \A_n$.
\end{lemma}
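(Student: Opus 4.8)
The plan is to reduce the statement to the key observation that the diffuse part $\Diff_n$ intersected with any dyadic edge $e'$ is a well-understood object: either $e'$ (and all of its dyadic structure) survives into $\A_n$, or $e'$ is eventually swallowed by an atom $e \in \At_n$, in which case $\intr(e) \cap \Diff_n = \emptyset$. Concretely, fix $x \le y$ in $\Diff_n$. First I would dispose of the case $x = y$, where $[x,y]$ is a single point, hence Lebesgue-null, hence in $\A_n$. So assume $x < y$. The goal is to write $[x,y]$ as a countable union of sets known to be in $\A_n$.

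The main step is to cover $(x,y)$ by dyadic edges contained in $[x,y]$ and track which ones lie in $\A_n$. By the standard binary-expansion argument (as used in the proof of Lemma \ref{L:dyadic}), the open interval $(x,y)$ is a countable union $\bigcup_k \intr(e^k)$ together with a Lebesgue-null set of dyadic rationals, where each $e^k \in \D$ satisfies $e^k \subset [x,y]$. For each such $e^k$, Lemma \ref{lem:notAn} gives a dichotomy: either $e^k \in \A_n$, or $e^k \subsetneq e$ for some $e \in \At_n$. I claim the second alternative cannot occur. Indeed, if $e^k \subsetneq e$ with $e \in \At_n$, then since $e^k \subset [x,y]$ and $e^k$ is a proper dyadic descendant of $e$, at least one of the endpoints of $e$ lies in the open interval $(x,y)$; but that endpoint would then lie in $\intr(e) \subset [0,1] \setminus \Diff_n$... — wait, more carefully: the issue is that $x$ and $y$ themselves are in $\Diff_n$, so neither $x$ nor $y$ lies in $\intr(e)$ for any $e \in \At_n$. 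If $e^k \subsetneq e \in \At_n$ and $e^k \subset [x,y]$, then $e \cap [x,y] \supsetneq e^k$ has nonempty interior, so $\intr(e) \cap [x,y] \neq \emptyset$; since $\intr(e)$ is an open interval meeting $[x,y]$ but containing neither $x$ nor $y$ (as $x,y \in \Diff_n$ and $\intr(e)$ is disjoint from $\Diff_n$), we must have $\intr(e) \subset (x,y) \subset [x,y]$, and in fact $e \subset [x,y]$. But then $e \in \At_n \subset \A_n$ contains $e^k$, and one may simply replace $e^k$ by $e$ in the cover; so WLOG every edge in the cover is either in $\A_n$ or equals some $e \in \At_n$ with $e \subset [x,y]$ — in both cases the edge lies in $\A_n$. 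Thus $(x,y) = \bigcup_k \intr(e^k) \cup (\text{Lebesgue-null})$ with every $\intr(e^k) \in \A_n$ (interiors of $\A_n$-sets, using the first bullet of the Basic Properties proposition for atoms, and noting an edge and its interior differ by a null set), so $(x,y) \in \A_n$, and therefore $[x,y] = (x,y) \cup \{x,y\} \in \A_n$ since $\{x,y\}$ is Lebesgue-null.

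I expect the main obstacle to be the bookkeeping in the claim that no edge of the covering is \emph{properly} contained in an atom — i.e., making precise that an atom meeting $[x,y]$ in a set with nonempty interior must itself be contained in $[x,y]$, which is exactly where the hypothesis $x,y \in \Diff_n$ (rather than arbitrary endpoints) is used. Everything else is routine $\sigma$-algebra manipulation together with the already-established Lemmas \ref{lem:notAn}, \ref{lem:LebDiffn}, and the Basic Properties proposition.
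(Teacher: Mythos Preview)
Your argument is correct, and the key geometric observation---that an atom $e \in \At_n$ whose interior meets $[x,y]$ must satisfy $e \subset [x,y]$ because $x,y \in \Diff_n$ cannot lie in $\intr(e)$---is exactly the same one the paper uses. However, the paper's route is shorter: rather than covering $(x,y)$ by dyadic edges and invoking Lemma~\ref{lem:notAn} on each one, the paper writes $[x,y] = (\Diff_n \cap [x,y]) \cup \bigcup_{e \in \At_n} (\intr(e) \cap [x,y])$ directly from the partition $\{\Diff_n\} \cup \{\intr(e)\}_{e \in \At_n}$, applies the same observation to reduce the union to $\bigcup_{e \in \At_n,\, e \subset [x,y]} \intr(e)$, and handles the first piece by Lemma~\ref{lem:LebDiffn}. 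Your approach trades Lemma~\ref{lem:LebDiffn} for Lemma~\ref{lem:notAn} plus a dyadic cover, which works but adds bookkeeping (the cover, the replacement of $e^k$ by $e$, the null-set remainder) that the paper avoids entirely.
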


\begin{proof}
Let $n\geq0$ and $x,y \in \Diff_n$ with $x \leq y$. Since $\{\Diff_n\} \cup \{\intr(e)\}_{e \in \At_n}$ forms a partition of $[0,1]$, $[x,y] = (\Diff_n \cap [x,y]) \cup \bigcup_{e \in \At_n} (\intr(e) \cap [x,y])$. Since $x,y \in \Diff_n$, either $\intr(e) \subset [x,y]$ or $\intr(e) \cap [x,y] = \emptyset$ for every $e \in \At_n$. Thus,
\begin{equation*} \label{eq:DiffnAn}
    [x,y] = (\Diff_n \cap [x,y]) \cup \bigcup_{e \in \At_n; e \subset [x,y]} \intr(e).
\end{equation*}
By Lemma \ref{lem:LebDiffn}, this shows $[x,y] \in \A_n$.
\end{proof}

\begin{remark}\label{rmk:Anmeasurability}
For $n\geq0$, Lemma \ref{lem:LebDiffn} implies that a function $g: [0,1] \to \R$ is $\A_n$-measurable if and only if it is Lebesgue-measurable and, for every $e \in \At_n$, $g\res_e$ equals a constant almost everywhere. 
\end{remark}

In the following lemma, we establish a technical point about the containment of dyadic edges in atomic intervals, which is needed in the proof of Theorem \ref{thm:rp1udecomp} (through Lemma \ref{lem:Up1u}). In particular, this technical point implies the density of $\bigcup_{n\geq0} \Diff_n$ in $[0,1]$, which will be used on multiple occasions throughout this section.

\begin{lemma}[Dyadic Edges Have Endpoints in a Diffuse Part] \label{lem:DDiff}
For every $[u,v] \in \D \setminus \{[0,1]\}$,  there exist $k\geq0$ and $e \in \At_k$ such that $[u,v] \subsetneq e$ and $\{u,v\} \subset \Diff_{k+1}$. Consequently, $\bigcup_{n\geq0} \Diff_n$ is dense in $[0,1]$.
\end{lemma}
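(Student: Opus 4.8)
The plan is to pin down, for a given $[u,v] \in \D \setminus \{[0,1]\}$, the largest index $k$ at which $[u,v]$ fails to be $\A_k$-measurable, and then to read off both the required atom and the inclusion $\{u,v\} \subset \Diff_{k+1}$ from the structure already in place. Write $g := \gen([u,v]) \geq 1$. First I would record that $[u,v] \in \A_n$ for every $n \geq g$: if not, Lemma \ref{lem:notAn} produces an atom $e \in \At_n$ with $[u,v] \subsetneq e$; the basic properties of atoms give $\Delta(e) = 2^{n-\gen(e)}$, while $[u,v] \subsetneq e$ forces $\gen(e) < g \leq n$ and hence $\Delta(e) \geq 2$, impossible for a dyadic diameter function. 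Since also $[u,v] \notin \A_0$ (it is neither Lebesgue-null nor co-null, or alternatively it is a proper descendant of the unique atom $[0,1] \in \At_0$) and $\A_n \subset \A_{n+1}$, the set $\{n \geq 0 : [u,v] \notin \A_n\}$ is a nonempty, downward-closed subset of $\{0,\dots,g-1\}$. Let $k$ be its maximum, so $[u,v] \notin \A_k$ but $[u,v] \in \A_{k+1}$.

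Applying Lemma \ref{lem:notAn} at level $k$ gives $e \in \At_k$ with $[u,v] \subsetneq e$, which is the atom demanded by the statement. It then remains to check $u, v \in \Diff_{k+1}$, i.e., that neither lies in $\intr(f)$ for any $f \in \At_{k+1}$; by symmetry I treat only $u$. Suppose $u \in \intr(f)$ for some $f \in \At_{k+1}$. Since $u = \min([u,v])$ lies strictly inside the dyadic interval $f$, points immediately to the right of $u$ lie in $\intr(f) \cap \intr([u,v])$, so these two dyadic intervals have intersecting interiors and are therefore nested; as the left endpoint of $f$ lies strictly to the left of $u = \min([u,v])$, $f$ cannot be properly contained in $[u,v]$, so $[u,v] \subseteq f$, and in fact $[u,v] \subsetneq f$ because $u \in \intr(f)$. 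But then $[u,v]$ is a proper dyadic descendant of the atom $f \in \At_{k+1}$, so $[u,v] \notin \A_{k+1}$ by minimality of atoms in $\D \cap \A_{k+1}$, contradicting $[u,v] \in \A_{k+1}$. Hence $u \in \Diff_{k+1}$, and likewise $v \in \Diff_{k+1}$.

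For the density consequence: $0$ and $1$ already lie in $\Diff_0$, and every other dyadic point of $[0,1]$ is an endpoint of some $[u,v] \in \D \setminus \{[0,1]\}$, hence lies in $\bigcup_{n\geq0}\Diff_n$ by the first part; since $\bigcup_{k} \mathcal{V}_k$ is dense in $[0,1]$, so is $\bigcup_{n\geq0}\Diff_n$. The only genuinely delicate point is the geometric step upgrading ``$u \in \intr(f)$ for $f \in \At_{k+1}$'' to ``$[u,v]$ is a proper dyadic descendant of $f$'': it relies on $u$ being an \emph{endpoint} of $[u,v]$ rather than an interior point, together with the fact that two dyadic intervals with overlapping interiors are comparable. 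Everything else is bookkeeping with Definitions \ref{D:filt} and \ref{D:diff_atoms}, Lemma \ref{lem:notAn}, and the basic properties of the atomic and diffuse parts.
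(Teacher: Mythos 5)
Your proof is correct, and it is a mild but genuine streamlining of the paper's argument. Both proofs hinge on Lemma \ref{lem:notAn}, but you parameterize the level differently: you take $k := \max\{n : [u,v] \notin \A_n\}$, whereas the paper takes $k' := \max\{n : \exists\, e \in \At_n \text{ with } [u,v] \subset e\}$. Your choice automatically produces an atom that \emph{properly} contains $[u,v]$, so a single application of Lemma \ref{lem:notAn} settles the existence of $e$. The paper's choice allows $[u,v]$ to coincide with its containing atom, which forces a case split and an appeal to Lemma \ref{lem:atom-in-atom} to descend one level; your route dispenses with both. You also make explicit the geometric step the paper compresses into ``it follows from the maximality of $k'$'': namely that two dyadic intervals with overlapping interiors are nested, and that $u = \min([u,v]) \in \intr(f)$ with $f \in \At_{k+1}$ forces $[u,v] \subsetneq f$ and hence $[u,v] \notin \A_{k+1}$. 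Finally, your justification that the maximum defining $k$ exists --- via $[u,v] \in \A_n$ for all $n \geq \gen([u,v])$, using $\Delta(e) = 2^{n-\gen(e)} \leq 1$ for $e \in \At_n$ --- is sound and spells out a finiteness point the paper leaves tacit.
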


\begin{proof}
Given the first sentence of the lemma, we note that the second sentence follows from the density of dyadic rationals in $[0,1]$.

Let $[u,v] \in \D \setminus \{[0,1]\}$, and choose $k' := \max\{n \geq 0: \exists \: e \in \At_n \text{ s.t. } [u,v]$ $\subset e\}$ (note that this maximum exists since $[0,1] \in \A_0$). Let $e' \in \At_{k'}$ be such that $[u,v]\subset e'$. If $[u,v] \subsetneq e'$, then it follows from the maximality of $k'$ that $\{u,v\}\subset \Diff_{k'+1}$ and thus $k := k'$ and $e := e'$ satisfies the conclusion. Assume, then, that $[u,v]=e'$. Since $[u,v] \neq [0,1]$, $k' \geq 1$. Then by Lemma \ref{lem:atom-in-atom}, there exists a dyadic ancestor $e \supset e'$ with $e \in \At_{k'-1}$. In this case $k := k'-1$ and $e$ also satisfy the desired conclusion.
\end{proof}

The next definitions are central to the remainder of the section. In them, we define a sequence of metrics $d_0 \leq d_1 \leq d_2 \leq \dots d$ such that $d_0$ is the Euclidean metric, each $d_n$ is bi-Lipschitz equivalent to $d_0$, and $d_n$ converges to $d$ as $n \to \infty$. The metrics $d_n$ are defined so as to agree with $d$ on $\Diff_n$ and to be a multiple of the Euclidean metric on each atom $e \in \At_n$ (Proposition \ref{prop:ddn}). Thus, we view $([0,1],d_n)$ as a rectifiable approximation of $([0,1],d)$ with controlled behavior on the intervals in $\A_n$. Being rectifiable, we may bring to bear the tools of 1-dimensional calculus to study their Lipschitz function theory (Lemma \ref{lem:Kirchheim}, Propositions \ref{prop:Lebesgue}, \ref{prop:ftc}, and \ref{prop:In}). 

\begin{definition}[Metrics and Measures]
Fix $n \geq 0$. We define a new dyadic diameter function $\Delta_n \in \mathfrak{D}$ by $\Delta_n(e') := 2^{\gen(e)-\gen(e')}\Delta(e)$ whenever $e' \subset e \in \At_n$ and $\Delta_n(e') := \Delta(e')$ otherwise. Let $\diam_n$ and $d_n$ denote the associated diameter and distance functions $\diam_{\Delta_n}$ and $d_{\Delta_n}$. Note that $d_n \leq d_{n+1} \leq d$ for every $n\geq0$. Let $\H^1_n$ denote the Hausdorff 1-measure with respect to the distance $d_n$.
\end{definition}

\begin{remark}\label{rmk:Delta=Delta_n}
By Lemma \ref{lem:notAn} and the fact that $\Delta(e) = 2^{n-\gen(e)}$ for $e \in \At_n$, we may equivalently define $\Delta_n$ by $\Delta_n(e') := 2^{n-\gen(e')}$ whenever $e' \not\in \A_n$ and $\Delta_n(e') := \Delta(e')$ whenever $e' \in \A_n$. From this it can be easily seen that $\{e \in \D: \Delta(e)=\Delta_n(e)\} = \A_n$.
\end{remark}

\begin{remark} \label{rmk:dnbiLipschitz}
From the previous remark, it's clear that, for every $n \geq 0$ and $x,y \in [0,1]$, $|x-y| \leq d_n(y,x) \leq 2^n|x-y|$. Hence, the $\sigma$-ideal of $\H_n^1$-null sets in $[0,1]$ coincides with the $\sigma$-ideal of Lebesgue-null sets, and in particular is independent of $n$. Thus, we may unambiguously qualify an event as happening \emph{almost everywhere} without reference to any specific measure $\H_n^1$.
\end{remark}

Because of the remarked upon bi-Lipschitz equivalence and the following lemma, the familiar fundamental theorems of Lebesgue calculus (Propositions \ref{prop:Lebesgue} and \ref{prop:ftc}) hold. Before getting to these, we recall three classical results of geometric measure theory. Throughout this section, whenever $(X,\A,\mu)$ is a measure space, $A \in \A$ with $0<\mu(A)<\
\infty$, and $f \in L^1(\mu)$, we use the notation $\fint_A f \, d\mu := \frac{1}{\mu(A)}\int_A f \, d\mu$ to denote the average value of $f$ over $A$.

\begin{lemma}[Metric and Measure Differentiation] \label{lem:Kirchheim}
Suppose $\rho$ is a metric on $[0,1]$ bi-Lipschitz equivalent to the Euclidean metric. Let $\H_\rho^1$ and $\diam_\rho$ denote the Hausdorff 1-measure and diameter functions with respect to $\rho$. Let $g \in L^\infty([0,1])$. Then for a.e. $x \in [0,1]$,
\begin{itemize}
    \item \cite[Theorem 1.8]{Heinonen} $\displaystyle{\lim_{y \to x^-}\fint_{[y,x]}g\:d\H_\rho^1 = g(x)}$,
    \item \cite[Theorem 2]{Kirchheim} the limit $\displaystyle{\lim_{y \to x}\dfrac{\rho(y,x)}{|x-y|}}$ exists in $(0,\infty)$, and
    \item \cite[Theorem 2.10.18(3)]{Federer} $\displaystyle{\limsup_{y \to x}\dfrac{\H_\rho^1([y,x])}{\diam_\rho([y,x])} \leq 1}$, and thus \\
    $\displaystyle{\lim_{y \to x}\dfrac{\H_\rho^1([y,x])}{\diam_\rho([y,x])} = 1}$ since $\H^1_{\rho}(A) \geq \diam_{\rho}(A)$ if $A$ is connected.
\end{itemize}
\end{lemma}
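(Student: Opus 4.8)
The plan is to obtain each of the three bullet points as a direct consequence of the classical theorem cited beside it, so that the proof amounts to an upfront reduction plus three short deductions. First I would record the one structural fact that makes everything applicable: since $\rho$ is $L$-bi-Lipschitz equivalent to the Euclidean metric for some $L\geq1$, and a bi-Lipschitz map scales Hausdorff $1$-measure by at most the Lipschitz constant, we get $L^{-1}\mathcal{L}^1(A)\leq \H^1_\rho(A)\leq L\,\mathcal{L}^1(A)$ for every Borel $A\subset[0,1]$ (using $\H^1_{|\cdot|}=\mathcal{L}^1$ on $[0,1]$). Hence $\H^1_\rho$ is a finite doubling Borel measure on $[0,1]$ whose null sets are exactly the Lebesgue-null sets, ``almost everywhere'' is unambiguous, and it suffices to produce for each bullet a full-measure set on which the claim holds and then intersect them.

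For the first bullet I would invoke the Lebesgue differentiation theorem for doubling measures \cite[Theorem 1.8]{Heinonen}, applied to $g\in L^\infty([0,1])=L^\infty(\H^1_\rho)$ and the doubling measure $\H^1_\rho$: for $\H^1_\rho$-a.e.\ $x$ one has $\fint_{A_i}|g-g(x)|\,d\H^1_\rho\to0$ along any sequence $A_i\ni x$ with $\diam(A_i)\to0$ whose $\H^1_\rho$-measure is at least a fixed multiple of that of the ball about $x$ of radius $\diam(A_i)$; the intervals $A_i=[y,x]$ with $y\to x^-$ meet this comparability (both quantities are comparable to $|x-y|$ by the measure bounds above), which yields $\fint_{[y,x]}g\,d\H^1_\rho\to g(x)$. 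For the second bullet I would apply Kirchheim's metric differentiation theorem \cite[Theorem 2]{Kirchheim} to the identity map $([0,1],|\cdot|)\to([0,1],\rho)$, which is Lipschitz since $\rho\leq L\,|\cdot|$: this produces for a.e.\ $x$ a metric derivative $\mathrm{md}(x)\geq0$ with $\rho(y,x)=\mathrm{md}(x)\,|y-x|+o(|y-x|)$ as $y\to x$, so $\rho(y,x)/|x-y|\to\mathrm{md}(x)$, and the two-sided estimate $L^{-1}\leq\rho(y,x)/|y-x|\leq L$ forces $\mathrm{md}(x)\in[L^{-1},L]\subset(0,\infty)$.

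For the third bullet I would regard $([0,1],\rho)$ as a compact metric space with $\H^1_\rho([0,1])<\infty$ and invoke Federer's density estimate \cite[Theorem 2.10.18(3)]{Federer}, which gives, for $\H^1_\rho$-a.e.\ $x$, that the upper diameter-density of $[0,1]$ at $x$ is at most $1$, i.e.\ $\limsup\H^1_\rho(S)/\diam_\rho(S)\leq1$ over sets $S\ni x$ with $\diam_\rho(S)\to0$; specializing to $S=[y,x]$ gives the stated $\limsup$, and the matching lower bound $\H^1_\rho([y,x])\geq\diam_\rho([y,x])$ is the elementary fact that $\H^1$ of a connected set dominates its diameter (compose with a $1$-Lipschitz distance function, which cannot increase $\H^1$), so the ratio converges to $1$. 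Intersecting the three full-measure sets completes the proof. I do not anticipate any real obstacle, since the lemma is a repackaging of standard results; the only points that need a little care are verifying that the cited theorems are quoted in forms covering what is used — in particular the one-sided interval version of Lebesgue differentiation and the diameter-normalized (rather than ball-normalized) upper density — and both of these reductions are made possible precisely by the comparability $L^{-1}\mathcal{L}^1\leq\H^1_\rho\leq L\,\mathcal{L}^1$, which transfers all ``almost everywhere'' and density statements between $\H^1_\rho$ and Lebesgue measure.
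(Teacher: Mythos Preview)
Your proposal is correct and matches the paper's approach exactly: the paper gives no proof of this lemma beyond the three citations in the statement itself, and your write-up simply supplies the routine verifications (doubling of $\H^1_\rho$, applicability of one-sided intervals, positivity of the metric derivative via the bi-Lipschitz bounds) needed to see that those cited theorems apply in this setting.
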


\begin{proposition}[Lebesgue Differentiation] \label{prop:Lebesgue}
Let $n \geq 0$ and $f \in$ \\ $\Lip_0([0,1],d_n)$. Then there exists a Lebesgue-null set $N \subset [0,1]$ such that the limit $\displaystyle{\lim_{y \to x^-}\dfrac{f(x)-f(y)}{d_n(y,x)}}$ exists for every $x \in [0,1] \setminus N$. Furthermore, the map
$\displaystyle{x \mapsto \lim_{y \to x^-}\dfrac{f(x)-f(y)}{d_n(y,x)}}$ defines a Lebesgue-measurable function on $[0,1] \setminus N$.
\end{proposition}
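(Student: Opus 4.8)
The plan is to deduce this from the classical one-dimensional Lebesgue differentiation theorem via the bi-Lipschitz equivalence between $d_n$ and the Euclidean metric recorded in Remark \ref{rmk:dnbiLipschitz}. Since $f \in \Lip_0([0,1],d_n)$ and $|x-y| \leq d_n(y,x) \leq 2^n|x-y|$, the function $f$ is also Lipschitz with respect to the Euclidean metric, hence (by the classical Lebesgue/Rademacher theorem in dimension one) differentiable almost everywhere as a function on the Euclidean interval. So first I would fix the Lebesgue-null set $N_0$ off of which $f'(x) := \lim_{y\to x}\frac{f(x)-f(y)}{x-y}$ exists in $\R$, and off of which, by Lemma \ref{lem:Kirchheim} applied to $\rho = d_n$, the limit $\ell(x) := \lim_{y\to x}\frac{d_n(y,x)}{|x-y|}$ also exists in $(0,\infty)$; call the union $N$, still Lebesgue-null.

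Next, for $x \notin N$ and $y < x$, I would write
\[
\frac{f(x)-f(y)}{d_n(y,x)} = \frac{f(x)-f(y)}{x-y}\cdot\frac{x-y}{d_n(y,x)},
\]
and take the limit as $y \to x^-$. The first factor tends to $f'(x)$ and the second to $1/\ell(x) \in (0,\infty)$, so the product converges to $f'(x)/\ell(x)$. (One must be slightly careful with signs: $x - y > 0$ and $d_n(y,x) > 0$ for $y < x$, so the second factor is positive and the manipulation is legitimate; $f'(x)$ may be of either sign or zero, which is fine.) This establishes existence of the one-sided limit $\lim_{y\to x^-}\frac{f(x)-f(y)}{d_n(y,x)}$ for every $x \in [0,1]\setminus N$, and identifies its value as $f'(x)/\ell(x)$.

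For the measurability claim, I would observe that the limiting function on $[0,1]\setminus N$ is a pointwise (hence a.e.-defined) limit of a countable family of Lebesgue-measurable functions: for instance, $x \mapsto \lim_{k\to\infty}\frac{f(x)-f(x - 1/k)}{d_n(x-1/k,\,x)}$, where each map $x \mapsto \frac{f(x)-f(x-1/k)}{d_n(x-1/k,x)}$ is continuous (indeed $f$ is continuous and $d_n$ is a continuous metric), and the pointwise limit exists for all $x \in [0,1]\setminus N$ by the first part of the proof. A pointwise limit of measurable functions is measurable, so the map $x \mapsto \lim_{y\to x^-}\frac{f(x)-f(y)}{d_n(y,x)}$ is Lebesgue-measurable on $[0,1]\setminus N$.

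I do not anticipate a genuine obstacle here; the only point requiring a little care is the second bullet of Lemma \ref{lem:Kirchheim}, whose conclusion is stated for the two-sided limit $\lim_{y\to x}$, so restricting to $y\to x^-$ is harmless, and one should make sure $\ell(x)$ is recorded as lying in $(0,\infty)$ (not merely $[0,\infty]$) so that division by $\ell(x)$ is valid — but this is exactly what Kirchheim's theorem provides. The upshot is that the ``metric derivative'' $x \mapsto \lim_{y\to x^-}\frac{f(x)-f(y)}{d_n(y,x)}$ is well-defined a.e.\ and measurable, setting the stage for the fundamental-theorem-of-calculus statement in Proposition \ref{prop:ftc}.
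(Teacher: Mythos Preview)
Your proof is correct and follows essentially the same approach as the paper's: reduce to Euclidean differentiability via the bi-Lipschitz equivalence of $d_n$ with $|\cdot|$, invoke Rademacher, then use Kirchheim's metric differentiation (the second bullet of Lemma~\ref{lem:Kirchheim}) to pass from the Euclidean difference quotient to the $d_n$-difference quotient. The measurability argument is likewise the same in spirit; the paper simply cites a general fact from \cite{Stein} in place of your explicit sequence $y=x-1/k$.
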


\begin{proof}
Let $n \geq 0$ and $f \in \Lip_0([0,1],d_n)$. Since $d_n$ is bi-Lipschitz equivalent to the Euclidean metric, $f$ is Lipschitz with respect to the Euclidean metric. Then by Rademacher's theorem, the limit
$$\lim_{y \to x^-}\dfrac{f(x)-f(y)}{|x-y|}$$
exists for a.e. $x \in [0,1]$. Then by Lemma \ref{lem:Kirchheim}, the limit
$$\lim_{y \to x^-}\dfrac{f(x)-f(y)}{d_n(y,x)} = \lim_{y \to x^-}\dfrac{f(x)-f(y)}{|x-y|}\cdot\lim_{y\to x}\dfrac{|x-y|}{d_n(y,x)}$$ exists
for a.e. $x \in [0,1]$. This proves the first part. The second part follows from basic measure theory (\cite[Chapter 1.4 Property 4]{Stein}) and the facts that $x \mapsto f(x)-f(y)$ and $x \mapsto d_n(y,x)$ are continuous for each fixed $y$.
\end{proof}

With the previous proposition in hand, we may safely define the derivative of a Lipschitz function in $\Lip_0([0,1],d_n)$.

\begin{definition}[Derivatives]
Fix $n \geq 0$ and $f \in \Lip_0([0,1],d_n)$. We define the \emph{derivative of $f$ with respect to $d_n$} by
    $$f^{(n)}(x) := \lim_{y \to x^-}\dfrac{f(x)-f(y)}{d_n(y,x)}$$
    at each $x \in [0,1]$ where the limit exists, and $f^{(n)}(x) :=0$ whenever the limit does not exist. We remark that the superscript ``$(n)$" indicates that a first order derivative (and not $n$th order derivative) is being taken with respect to the metric $d_n$. By Proposition \ref{prop:Lebesgue}, $f^{(n)}$ is a well-defined element of $L^\infty([0,1])$.
\end{definition}

\begin{proposition}[Fundamental Theorems of Calculus] \label{prop:ftc}
Let $n\geq0$, $f \in \Lip_0([0,1],d_n)$, and $g \in L^\infty([0,1])$. If we define $i_n(g): [0,1] \to \R$ by $i_n(g)(x) := \int_{[0,x]} g \, d\H_n^1$, then the following hold.
\begin{enumerate}
    \item\label{item:ftc2} $i_n(g) \in \Lip_0([0,1],d_n)$ and $i_n(g)^{(n)} = g$ almost everywhere.
    \item\label{item:ftc3} $\int_{[y,x]} f^{(n)} \: d\H_n^1 = i_n(f^{(n)})(x)-i_n(f^{(n)})(y)= f(x)-f(y)$ for every $[y,x] \subset [0,1]$.
\end{enumerate}
\end{proposition}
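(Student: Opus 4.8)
The plan is to prove Proposition \ref{prop:ftc} by reducing everything to the classical one-dimensional fundamental theorem of calculus via the bi-Lipschitz equivalence between $d_n$ and the Euclidean metric (Remark \ref{rmk:dnbiLipschitz}), using Lemma \ref{lem:Kirchheim} to convert between the metric-differentiation data with respect to $d_n$ and the usual Lebesgue data. The key auxiliary object is the function $\theta_n(x) := \lim_{y\to x}\frac{d_n(y,x)}{|x-y|}$, which by Lemma \ref{lem:Kirchheim} exists in $(0,\infty)$ a.e. and, since $d_n$ is bi-Lipschitz to $|\cdot|$, is bounded above and below by positive constants; moreover one should check (again from Lemma \ref{lem:Kirchheim}, the third bullet) that $\theta_n$ is the Radon-Nikodym density of $\H^1_n$ with respect to Lebesgue measure, i.e. $\int_{[y,x]} g\,d\H^1_n = \int_y^x g(t)\theta_n(t)\,dt$ for $g \in L^\infty$. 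Granting this, both parts become bookkeeping.

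For part \eqref{item:ftc2}, first I would show $i_n(g) \in \Lip_0([0,1],d_n)$: it obviously vanishes at the basepoint $0$, and for $y < x$ we have $|i_n(g)(x) - i_n(g)(y)| = |\int_{[y,x]} g\,d\H^1_n| \leq \|g\|_\infty \H^1_n([y,x]) = \|g\|_\infty \diam_n([y,x]) = \|g\|_\infty d_n(y,x)$, using that $[y,x]$ is connected so $\H^1_n([y,x]) = \diam_n([y,x])$, and that $d_n(y,x) = \diam_n([y,x]) = \Delta_n([y,x])$ when $[y,x]$ is a subinterval — actually for general $y<x$ one simply notes $\H^1_n([y,x]) = d_n(y,x)$ since $[y,x]$ is a geodesic-like connected set and $d_n$ restricted to it has total length $\H^1_n([y,x])$; this is where a small lemma-level remark may be needed, but it follows from the HM-construction of $d_\Delta$ as an infimum of chain-sums. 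Then $i_n(g)^{(n)}(x) = \lim_{y\to x^-}\frac{\int_{[y,x]}g\,d\H^1_n}{d_n(y,x)} = \lim_{y\to x^-}\frac{\int_{[y,x]}g\,d\H^1_n}{\H^1_n([y,x])} = g(x)$ for a.e. $x$ by the first bullet of Lemma \ref{lem:Kirchheim} applied with $\rho = d_n$.

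For part \eqref{item:ftc3}, given $f \in \Lip_0([0,1],d_n)$, I would consider $h := f - i_n(f^{(n)})$. By part \eqref{item:ftc2}, $h \in \Lip_0([0,1],d_n)$ and $h^{(n)} = f^{(n)} - f^{(n)} = 0$ a.e. It then suffices to show that $h^{(n)} = 0$ a.e. forces $h$ to be constant (hence identically $0$, since $h(0)=0$), which gives $f = i_n(f^{(n)})$ and thus $f(x) - f(y) = i_n(f^{(n)})(x) - i_n(f^{(n)})(y) = \int_{[y,x]}f^{(n)}\,d\H^1_n$. To see that $h^{(n)} \equiv 0$ a.e. implies $h$ constant: since $d_n$ is bi-Lipschitz to the Euclidean metric, $h$ is Euclidean-Lipschitz, and by Lemma \ref{lem:Kirchheim} (second bullet) the Euclidean one-sided derivative $h'(x) = h^{(n)}(x)\cdot\theta_n(x) = 0$ for a.e. $x$; then the classical statement that a Lipschitz (hence absolutely continuous) function on $[0,1]$ with a.e.-zero derivative is constant finishes the argument.

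The main obstacle I anticipate is the precise justification that $\H^1_n([y,x]) = d_n(y,x)$ for all subintervals $[y,x] \subset [0,1]$ — equivalently that the Hausdorff $1$-measure of a subinterval in the metric $d_n$ equals its $d_n$-diameter — together with the companion fact that $\H^1_n$ has a bounded, bounded-below density against Lebesgue measure. This requires unwinding the chain-infimum definition of $d_{\Delta_n}$ and checking that subintervals are "geodesic" in the appropriate sense; while morally clear (the arc $([0,1],d_n)$ is a $1$-bounded-turning Jordan arc built additively from dyadic pieces), it is the one place where a genuine argument rather than a citation is needed, and some care is required to handle the limiting behavior as the dyadic level goes to infinity. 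Everything downstream — the two applications of Lemma \ref{lem:Kirchheim} and the appeal to classical absolute continuity — is routine once this measure-geometric identity is in hand.
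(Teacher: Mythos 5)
Your high-level plan matches the paper's: both parts are reduced to classical one-dimensional calculus via the bi-Lipschitz equivalence of $d_n$ with the Euclidean metric and Lemma~\ref{lem:Kirchheim}, and your treatment of part \eqref{item:ftc3} (set $h := f - i_n(f^{(n)})$, deduce $h^{(n)}=0$ a.e., transfer to a Euclidean-Lipschitz function with a.e.\ vanishing derivative, conclude $h$ is constant) is exactly the paper's argument and is correct. However, your treatment of part \eqref{item:ftc2} rests on the claim $\H^1_n([y,x]) = d_n(y,x)$ for \emph{every} subinterval $[y,x] \subset [0,1]$, and this claim is false. You half-concede the issue (``where a small lemma-level remark may be needed''), but the identity genuinely fails: for a general dyadic diameter function, the metric $d_n$ is not a geodesic metric on $[0,1]$. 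Connectedness only gives $\H^1_n([y,x]) \geq \diam_n([y,x]) = d_n(y,x)$, and the inequality can be strict. Concretely, if $e = e_0 \cup e_1 \in \A_n$ is a dyadic edge lying well inside $\A_n$ (not contained in any atom of $\At_n$) with $\Delta(e_0) = \Delta(e_1) = \Delta(e)$, then $\Delta_n$ agrees with $\Delta$ on $e,e_0,e_1$, so $\H^1_n(e) = \H^1_n(e_0) + \H^1_n(e_1) \geq \Delta(e_0) + \Delta(e_1) = 2\Delta(e) > \Delta(e) = d_n(\min e, \max e)$. Such ``flat levels'' in $\Delta$ are perfectly permissible.

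This gap infects both places you use the identity in part \eqref{item:ftc2}. For the Lipschitz bound, no exact identity is needed: the bi-Lipschitz comparison $\H^1_n([y,x]) \leq 2^n|x-y| \leq 2^n d_n(y,x)$ already gives $i_n(g) \in \Lip_0([0,1],d_n)$ (with a worse constant, which is all the proposition claims). For the statement $i_n(g)^{(n)} = g$ a.e., the paper replaces your exact identity with the \emph{asymptotic} one from the third bullet of Lemma~\ref{lem:Kirchheim}: $\lim_{y\to x}\H^1_\rho([y,x])/\diam_\rho([y,x]) = 1$ for a.e.\ $x$. Writing
\[
\frac{1}{d_n(y,x)}\int_{[y,x]} g\,d\H^1_n \;=\; \frac{\H^1_n([y,x])}{\diam_n([y,x])}\cdot \fint_{[y,x]} g\,d\H^1_n
\]
and sending $y\to x^-$, the first factor converges to $1$ a.e.\ by the third bullet of Lemma~\ref{lem:Kirchheim}, and the second converges to $g(x)$ a.e.\ by the first bullet (the Lebesgue density theorem). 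This is precisely what the paper does, and it avoids the false claim entirely. In short: your approach and the paper's are the same, but you need to drop the exact measure-diameter identity and use the a.e.\ asymptotic version supplied by Lemma~\ref{lem:Kirchheim}.
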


\begin{proof}
For \eqref{item:ftc2}, that $i_n(g) \in \Lip_0([0,1],d_n)$ follows easily from the Euclidean counterpart and the fact that $d_n$ is bi-Lipschitz equivalent to the Euclidean metric. Then we have, for a.e. $x \in [0,1]$,
\begin{align*}
    i_n(g)^{(n)}(x) &= \lim_{y \to x^-} \dfrac{i_n(g)(x)-i_n(g)(y)}{\diam_n([y,x])} \\
    &= \lim_{y \to x^-} \dfrac{1}{\diam_n([y,x])}\int_{[y,x]} g \, d\H_n^1 \\
    &\overset{\text{Lem }\ref{lem:Kirchheim}}{=} \lim_{y \to x^-} \fint_{[y,x]} g \, d\H_n^1 \\
    &\overset{\text{Lem }\ref{lem:Kirchheim}}{=} g(x).
\end{align*}

For \eqref{item:ftc3}, first consider an arbitrary Lipschitz function $h \in \Lip_0([0,1],d_n)$. By the proof of Proposition \ref{prop:Lebesgue}, $h$ is Lipschitz with respect to the Euclidean metric and the usual Euclidean derivative $h'$ is 0 almost everywhere if and only if $h^{(n)}$ is 0 almost everywhere. Hence, if $h^{(n)} = 0$ a.e., then $h$ is constant. We wish to apply this to the function $h$ defined by $h := i_n(f^{(n)}) - f$. We have by \eqref{item:ftc2} and linearity of the derivative that $h^{(n)} = 0$ a.e., and thus $i_n(f^{(n)})(y) - f(y) = i_n(f^{(n)})(x) - f(x)$ for every $[y,x] \subset [0,1]$, which proves \eqref{item:ftc3} after rearranging terms.
\end{proof}

The following lemma illustrates how $d_n$ behaves on the diffuse and atomic parts of $\A_n$.

\begin{proposition}[Restriction of $d_n$ to Atomic and Diffuse Parts] \label{prop:ddn}
Let $n\geq1$. Then the following hold.
\begin{enumerate}
    \item\label{item:ddn1} For every $e \in \At_{n-1}$, $e_i \in \{e_0,e_1\}$, and $[x,y] \subset e_i$, $d_{n}(x,y) = 2^n|y-x|$.
    \item\label{item:ddn2} For every $1 \leq k \leq n$ and $x,y \in e' \in \At_n$, $d_n(x,y) = 2^{n-k}d_k(x,y)$.
    \item\label{item:ddn3} For every $x,y \in \Diff_n$, $d_n(x,y) = d(x,y)$.
\end{enumerate}
\end{proposition}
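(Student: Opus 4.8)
The plan is to prove the three statements in order, using the definition of $\Delta_n$ together with the structural lemmas about $\At_n$ and $\Diff_n$ already established. For \eqref{item:ddn1}, fix $e \in \At_{n-1}$ and a child $e_i$. By Lemma \ref{lem:atom-in-atom} (applied in the contrapositive) together with the basic properties of atomic parts, no dyadic descendant of $e$ lies in $\A_n$ once we drop below level $n$; more precisely, since $e \in \At_{n-1}$ we have $\Delta(e) = 2^{n-1-\gen(e)}$, so $\Delta(e_i) = 2^{n-1-\gen(e)} = 2^{n-\gen(e_i)}$, meaning $e_i \notin \A_{n-1}$ but $e_i$ may or may not be in $\At_n$. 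In either case, for every dyadic $e' \subsetneq e_i$ we have $\Delta_n(e') = 2^{n-\gen(e')}$ by Remark \ref{rmk:Delta=Delta_n} (since such $e'$ is not in $\A_n$, being a proper descendant of an atom-or-sub-atom at scale forcing $\Delta(e') < 2^{n-\gen(e')}$ or equal — one checks $\Delta_n$ restricted to the subtree below $e_i$ is exactly $2^{n-\gen(\cdot)}$). Consequently $\Delta_n$ on the dyadic subtree rooted at $e_i$, after rescaling, is the Euclidean diameter function scaled by $2^n$, and the infimum defining $d_{\Delta_n}$ over chains inside $e_i$ yields $d_n(x,y) = 2^n|y-x|$ for $[x,y] \subset e_i$. (One must note the chain realizing $d_n(x,y)$ for $[x,y] \subset e_i$ can be taken inside $e_i$, which follows from bounded turning / the fact that $e_i$ is a dyadic edge and Lemma \ref{L:unique}-type considerations, or directly from \cite[Lemmas 3.2, 3.3]{Freeman20} as used in Lemma \ref{lem:d1=d2}.)

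For \eqref{item:ddn2}, fix $1 \leq k \leq n$ and $e' \in \At_n$. By Lemma \ref{lem:atom-in-atom} applied repeatedly, $e'$ is contained in some $e \in \At_k$. On the dyadic subtree rooted at $e$, by the argument above $d_k$ equals $2^k|\cdot|$ restricted to each child-subtree of $e$, hence (since $e$ is a single atom) $d_k(x,y) = 2^k|y-x|$ for $x,y \in e$ — wait, more carefully: $d_k$ restricted to $e \in \At_k$ behaves like the Euclidean metric scaled by $2^k$ by a direct computation from $\Delta_k(e'') = 2^{k-\gen(e'')}$ for all $e'' \subseteq e$. Similarly $d_n(x,y) = 2^n|y-x|$ for $x,y \in e'$, using that $\Delta_n(e'') = 2^{n-\gen(e'')}$ for $e'' \subseteq e'$. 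Dividing gives $d_n(x,y) = 2^{n-k}d_k(x,y)$. The point to be careful about is again that the optimal chains stay within $e$ (resp. $e'$); this is where one invokes the structure that a dyadic edge is "bounded-turning-convex" inside any $d_\Delta$, i.e. the restriction of the metric to a dyadic edge can be computed using only chains inside that edge.

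For \eqref{item:ddn3}, let $x,y \in \Diff_n$ with $x < y$. I would apply Lemma \ref{lem:d1=d2} with $\Delta_1 := \Delta$ and $\Delta_2 := \Delta_n$. By Remark \ref{rmk:Delta=Delta_n}, $\{e : \Delta(e) = \Delta_n(e)\} = \A_n$, so the set $L(\Delta,\Delta_n)$ (resp. $R(\Delta,\Delta_n)$) in that lemma is the closure of the set of left (resp. right) endpoints of dyadic edges all of whose ancestors lie in $\A_n$. It remains to check $x \in L(\Delta,\Delta_n)$ and $y \in R(\Delta,\Delta_n)$. Since $x \in \Diff_n$, $x$ is not in the interior of any atom $e \in \At_n$; I claim $x$ is a limit of left endpoints of dyadic edges whose ancestors are all in $\A_n$. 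Indeed, consider the dyadic edges $e^{(m)} \in \D_m$ containing $x$ as their left endpoint if $x$ is a dyadic rational of the appropriate form, or approximate $x$ by such; each proper dyadic ancestor $e$ of an edge having $x$ as an endpoint either contains $x$ in its interior — in which case $e \notin \At_n$ (as $x \in \Diff_n$), hence $e \in \A_n$ by Lemma \ref{lem:notAn} — or has $x$ as an endpoint. Carefully chaining this (using Lemma \ref{lem:DDiff} for density and to produce edges of every small scale with endpoints in diffuse parts, hence with all ancestors non-atomic, hence in $\A_n$) shows $x \in L(\Delta,\Delta_n)$ and symmetrically $y \in R(\Delta,\Delta_n)$. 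Then Lemma \ref{lem:d1=d2} gives $d(x,y) = d_n(x,y)$.

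The main obstacle I anticipate is the bookkeeping in \eqref{item:ddn3}: verifying that every endpoint $x \in \Diff_n$ actually lies in $L(\Delta,\Delta_n)$ requires showing that arbitrarily short dyadic edges with $x$ as (a limit of) left endpoint have \emph{all} ancestors in $\A_n$, and the subtlety is an ancestor $e$ of such an edge with $x \in \intr(e)$: one must rule out $e \in \At_n$, which is exactly the content of $x \in \Diff_n$, but one must also handle ancestors for which $x$ is an endpoint — here Lemma \ref{lem:notAn} and the sibling-symmetry of membership in $\A_n$ do the work. Statements \eqref{item:ddn1} and \eqref{item:ddn2} should be comparatively routine once the principle "the metric $d_\Delta$ restricted to a dyadic edge is computed by chains within that edge" is invoked, which itself follows from the cited results of \cite{Freeman20} already used in the proof of Lemma \ref{lem:d1=d2}.
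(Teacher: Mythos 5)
Your proposal follows the same route as the paper: item \eqref{item:ddn1} is elementary, item \eqref{item:ddn2} is reduced to \eqref{item:ddn1} via Lemma \ref{lem:atom-in-atom} (applied twice, at levels $n-1$ and $k-1$), and item \eqref{item:ddn3} is handled by applying Lemma \ref{lem:d1=d2} after showing $\Diff_n \cap [0,1) \subset L(\Delta,\Delta_n)$ and $\Diff_n \cap (0,1] \subset R(\Delta,\Delta_n)$. So the strategy matches throughout.

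One inference in your sketch of item \eqref{item:ddn3} is stated incorrectly: from $x \in \intr(e)$ and $x \in \Diff_n$ you conclude ``$e \notin \At_n$, hence $e \in \A_n$ by Lemma \ref{lem:notAn}.'' Lemma \ref{lem:notAn} does not give $\D = \At_n \cup \A_n$; it says that $e \notin \A_n$ forces $e$ to be a \emph{proper descendant} of some atom. The correct deduction is that $x \in \intr(e) \cap \Diff_n$ rules out $e$ being a proper descendant of any $\At_n$-atom (since otherwise $x$ would lie in the interior of that atom), and only then does the contrapositive of Lemma \ref{lem:notAn} yield $e \in \A_n$. The remaining ``carefully chaining'' step, including the handling of ancestors for which $x$ is an endpoint, is the crux; the paper makes this precise by introducing the half-open intervals $\D_k^n = \{L(e): e \in \D_k \cap \A_n\}$, proving $\bigcap_{k\geq1}(\cup\D_k^n) \subset L(\Delta,\Delta_n)$ via nested dyadic edges, and then showing $[0,1)\setminus L(\Delta,\Delta_n) \subset \bigcup_{e\in\At_n}\intr(e)$ by taking the minimal level $k_*$ at which the chain leaves $\A_n$ and identifying its parent as an atom containing $x$ in its interior. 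Your invocation of Lemma \ref{lem:DDiff} here is not what does the work (that lemma produces atoms at uncontrolled scales $k$, not at the fixed $n$); the nested-$\D_k^n$ argument is the needed mechanism.
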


\begin{proof}
Item \eqref{item:ddn1} follows quickly from the definitions of $\At_{n-1}$ and $d_n$. For \eqref{item:ddn2}, let $1 \leq k \leq n$ and $x,y \in e' \in \At_n$. By Lemma \ref{lem:atom-in-atom}, $e' \subset e_i$ for some $e \in \At_{n-1}$ and $e_i \in \{e_0,e_1\}$, and hence by \eqref{item:ddn1}
\begin{equation} \label{eq:ddn1}
    d_{n}(x,y) = 2^n|y-x|.
\end{equation}
By Lemma \ref{lem:atom-in-atom} and induction, $e' \subset \hat{e}_i$ for some $\hat{e} \in \At_{k-1}$ and $\hat{e}_i \in \{e_0,e_1\}$, and hence by \eqref{item:ddn1} again,
\begin{equation} \label{eq:ddn2}
    d_{k}(x,y) = 2^k|y-x|.
\end{equation}
Equations \eqref{eq:ddn1} and \eqref{eq:ddn2} imply $d_n(x,y) = 2^{n-k}d_k(x,y)$.

For \eqref{item:ddn3}, we claim that
\begin{equation*}
    \Diff_n \cap [0,1) \subset L(\Delta,\Delta_n) \:\:\:\text{ and }\:\:\: \Diff_n \cap (0,1] \subset R(\Delta,\Delta_n),
\end{equation*}
where $L(\Delta,\Delta_n)$ and $R(\Delta,\Delta_n)$ are defined as in Lemma \ref{lem:d1=d2}.
Once the above containments are proved, the conclusion follows immediately from Lemma \ref{lem:d1=d2}. Only the first containment will be verified, as the second follows from a similar argument. We will prove this by showing
\begin{equation} \label{eq:d=dn}
    [0,1) \setminus L(\Delta,\Delta_n) \subset \bigcup_{e \in \At_n} \intr(e).
\end{equation}

If the left-hand-side is empty, the containment is vacuous. Otherwise, let $x \in [0,1) \setminus L(\Delta,\Delta_n)$. For each dyadic edge $e \in \D$ with $e = [u,v]$, define $L(e) := [u,v)$. Then for each $k \geq 0$, $\{L(e)\}_{e \in \D_k}$ forms a partition of $[0,1)$ into half-open intervals. We then define $\D_k^n := \{L(e): e \in \D_k \text{ and } \Delta(e) = \Delta_n(e)\}$. Note that Remark \ref{rmk:Delta=Delta_n} implies
\begin{equation}\label{eq:ddn3}
    \D_k^n = \{L(e): e \in \D_k \cap \A_n\}.
\end{equation}
We make the following claim: \\
\underline{Claim}: $\bigcap_{k\geq1} \left(\cup \D_k^n\right) \subset L(\Delta,\Delta_n)$.

If $\bigcap_{k\geq1}\left(\cup\D_k^n\right)=\emptyset$, then the claim holds vacuously. Otherwise, let $w\in \bigcap_{k\geq1}\left(\cup\D_k^n\right)$. Then there exist intervals $\{L(e^k)\}_{k\geq1}$ with
\begin{itemize}
    \item $L(e^k) \in \D_k^n$ for all $k\geq1$,
    \item $e^{k}$ is the dyadic parent of $e^{k+1}$ for all $k\geq1$, and
    \item $\{w\}=\bigcap_{k\geq1} L(e^k)$.
\end{itemize}
Fix $k\geq1$, and let $\hat{e}$ be a dyadic ancestor of $e^k$. Then $\hat{e} = e^j$ for some $1 \leq j \leq k$, or $\hat{e} = [0,1]$. We will show that, in either case, $\Delta(\hat{e}) = \Delta_n(\hat{e})$. This obviously holds if $\hat{e} = [0,1]$, so assume the first case holds. Then  $L(\hat{e}) \in \D_j^n$ for some $j \geq 1$, and so $\Delta(\hat{e}) = \Delta_n(\hat{e})$ by definition of $\D_j^n$. Since $\hat{e}$ is an arbitrary dyadic ancestor of $e^k$, this shows $\min(e^k) \in L(\Delta,\Delta_n)$ by definition of $L(\Delta,\Delta_n)$. Since $\{w\}=\bigcap_{k\geq1} L(e^k)$, we have $w = \lim_{k \to \infty} \min(e^k)$, and thus $w \in L(\Delta,\Delta_n)$ since $L(\Delta,\Delta_n)$ is closed. This proves the claim.

By the claim, there exists $k\geq1$ such that $L(e^k_x) \notin \D_k^n$, where $L(e^k_x)$ is the unique element of $\{L(e)\}_{e \in \D_k}$ containing $x$. Let $k_*$ be the minimum of all such $k$. Then, letting $\hat{e}$ denote the dyadic parent of $e^{k_*}_x$ (which exists since $k_* \geq 1$), we must have that $\hat{e} \in \At_n$, because otherwise $e^{k_*}_x \in \A_n$ which contradicts $L(e^{k_*}_x) \notin \D_{k_*}^n$ by \eqref{eq:ddn3}. Furthermore, due to the minimality of $k_*$, it must hold that $L(\tilde{e})$ belongs to $\D_{j}^n$ whenever $j \leq k_*-1$ and $\tilde{e} \in \D_j$ is a dyadic ancestor of $\hat{e}$. Together with the fact that $x \notin L(\Delta,\Delta_n)$, this implies $x$ is not the left endpoint of $\hat{e}$. But since $x \in L(\hat{e})$, it must be that $x \in \intr(\hat{e})$. This proves \eqref{eq:d=dn}.
\end{proof}

Now that our main measure-theoretic objects have been established, we introduce the Banach spaces and operators that will help us determine the isomorphism type of $\F([0,1],d)$. We begin with the function spaces of Lebesgue.

\begin{definition}[$L^p$ spaces]
Fix $n \geq 0$. For $p \in [1,\infty]$, we write $L^p(\A_n)$ for the Banach space of (equivalence classes of) $p$-integrable functions over the measure space $([0,1],\A_n,\H_n^1)$. Note that, since each of the measure spaces have the same $\sigma$-ideal of null sets, the containment $L^\infty(\A_n) \subset L^\infty(\A_{n+1})$ holds isometrically.
\end{definition}

As usual, the filtration $(\A_n)_{n\geq0}$ affords us bounded linear maps $L^\infty(\A_{n})$ $\to L^\infty(\A_{n-1})$ known as conditional expectations. Here and throughout this section, $1_A$ denotes the indicator function of a set $A$.

\begin{definition}[Conditional Expectation]
Fix $n \geq 1$. The \emph{conditional expectation}, $\E^{n-1}: L^\infty(\A_{n}) \to L^\infty(\A_{n-1})$, is defined by
$$\E^{n-1}(g) := g1_{\Diff_{n-1}} + \sum_{e \in \At_{n-1}} \left(\fint_e g \: d\H^1_{n}\right)1_e.$$
Note that $\E^{n-1}(g)$ is indeed $\A_{n-1}$-measurable by Remark \ref{rmk:Anmeasurability}. We adopt the convention that $L^\infty(\A_{-1}) = \{1_\emptyset\}$ is the space consisting of only the constant function 0 and $\E^{-1}: L^\infty(\A_0) \to L^\infty(\A_{-1})$ is the 0 map.
\end{definition}

Notice that there is a slight peculiarity in the definition. The underlying measures $\H_{n}^1,\H_{n-1}^1$ in the domain and codomain of 
\[\E^{n-1}: L^\infty([0,1],\A_{n},\H_{n}^1) \to L^\infty([0,1],\A_{n-1},\H_{n-1}^1)\]
are different, and we have chosen to take the average of $g$ over each $e \in \At_{n-1}$ with respect to $\H_{n}^1$. However, due to Proposition~\ref{prop:ddn}\eqref{item:ddn1}, $\H^1_{n} \llcorner e = 2^{n}\L^1 \llcorner e$ for each $e \in \At_{n-1}$, where $\L^1$ is 1-dimensional Lebesgue measure. Thus, the definition remains unchanged if the average over each $e$ is taken with respect to $\L^1$, i.e.,
\begin{equation*} \label{eq:Hnaverage}
    \fint_e g \: d\H^1_{n} = \fint_e g \: d\L^1
\end{equation*}
for all $n \geq 1$, $g \in L^\infty(\A_{n})$, and $e \in \At_{n-1}$. In particular, we see that $\E^{n-1}$ is the usual conditional expectation with respect to Lebesgue measure, and thus the familiar properties of conditional expectation hold. We summarize them in the following proposition, omitting the standard measure-theoretic proof.

\begin{proposition} \label{prop:Eprops}
Let $n \geq 0$. Then the following hold.
\begin{enumerate}
    \item\label{item:Eprops1} For all $g \in L^\infty(\A_{n-1})$, $\E^{n-1}(g) = g$.
    \item\label{item:Eprops2} For all $0\leq k\leq n$, $g \in L^\infty(\A_n)$, and $g_{n-1}: [0,1] \to \R$, $g_{n-1} = \E^{n-1}(g)$ a.e. if and only if $g_{n-1} \in L^\infty(\A_{n-1})$ and $\int_{[0,1]} g_{n-1}h \, d\H_k^1 = \int_{[0,1]} gh \, d\H_k^1$ for all $h \in L^1(\A_{n-1})$.
    \item\label{item:Eprops3} $\E^{n-1} \circ \E^{n} = \E^{n-1}$.
    \item\label{item:Eprops4} $\E^{n-1}$ is linear, contractive, and weak*-weak* continuous.
\end{enumerate}
\end{proposition}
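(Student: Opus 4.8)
The plan is to reduce all four assertions to the classical theory of conditional expectations on $L^\infty$-spaces, as the discussion preceding the proposition already indicates. Two structural facts make this reduction work. First, because $\L^1$, $\H^1_n$ and $\H^1_{n-1}$ are mutually absolutely continuous with bounded Radon--Nikodym derivatives and share a common $\sigma$-ideal of null sets (Remark \ref{rmk:dnbiLipschitz}), the Banach spaces $L^\infty(\A_n)$ and $L^\infty(\A_{n-1})$ — with their $\|\cdot\|_\infty$-norms and their weak*-topologies — do not depend on which of these measures is used to define them. Second, the conditional expectation of a bounded function onto $\A_{n-1}$ does not depend on which of the measures $\{\L^1\}\cup\{\H^1_k : 0 \le k \le n\}$ is taken as the base measure. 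This second fact is the only non-formal point, and I would prove it first. On $\Diff_{n-1}$ there is nothing to check: for every $g \in L^\infty(\A_n)$ the function $g1_{\Diff_{n-1}}$ is already $\A_{n-1}$-measurable (Remark \ref{rmk:Anmeasurability}, since $1_{\Diff_{n-1}}$ vanishes on the interior of each atom), so any conditional expectation onto $\A_{n-1}$ — and also $\E^{n-1}$ itself, by its defining formula — acts as the identity there. On a fixed atom $e \in \At_{n-1}$, iterating Lemma \ref{lem:atom-in-atom} shows $e$ is contained in an atom $\tilde e \in \At_{k-1}$ for every $1 \le k \le n$ (the case $k=0$ being $\H^1_0 = \L^1$), and then Proposition \ref{prop:ddn}\eqref{item:ddn1} gives $\H^1_k \llcorner \tilde e = 2^k\,\L^1 \llcorner \tilde e$, hence $\H^1_k \llcorner e = 2^k\,\L^1 \llcorner e$; together with $\H^1_n \llcorner e = 2^n\,\L^1 \llcorner e$ (the remark preceding the proposition) this makes all the base measures mutually proportional on $e$, so that $\fint_e g\,d\H^1_k$ is independent of $k$ (and equals $\fint_e g \, d\L^1$). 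Summing the diffuse and atomic contributions, I obtain the testing identity $\int_{[0,1]} \E^{n-1}(g)\,h\;d\H^1_k = \int_{[0,1]} g\,h\;d\H^1_k$, valid for all $0 \le k \le n$, $g \in L^\infty(\A_n)$, and $h \in L^1(\A_{n-1})$, which will be the workhorse.

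With this in place the four items follow quickly. Items \eqref{item:Eprops1} and \eqref{item:Eprops3} (idempotence on $\A_{n-1}$-measurable functions and the tower property), and linearity together with the contraction bound $\|\E^{n-1}(g)\|_\infty \le \|g\|_\infty$ in item \eqref{item:Eprops4}, are immediate from the defining averaging formula, using that $1_{\Diff_{n-1}}$ and $\{1_e\}_{e \in \At_{n-1}}$ have essentially disjoint supports covering $[0,1]$ and that $|\fint_e g\,d\H^1_n| \le \|g\|_\infty$; for the tower property one instead tests against $h \in L^1(\A_{n-1}) \subset L^1(\A_n)$ and applies the testing identity twice. For item \eqref{item:Eprops2}, the forward direction is precisely the testing identity (plus Remark \ref{rmk:Anmeasurability} to see $\E^{n-1}(g) \in L^\infty(\A_{n-1})$); conversely, if $g_{n-1} \in L^\infty(\A_{n-1})$ also satisfies that identity, then $g_{n-1} - \E^{n-1}(g) \in L^\infty(\A_{n-1}) \subset L^1(\A_{n-1})$ integrates to $0$ against every $h \in L^1(\A_{n-1})$, and taking $h$ to be the sign of $g_{n-1} - \E^{n-1}(g)$ forces $g_{n-1} = \E^{n-1}(g)$ almost everywhere (which is measure-independent by Remark \ref{rmk:dnbiLipschitz}); the point that the characterization holds simultaneously for all $k \in \{0,\dots,n\}$ is exactly what the measure-independence from the first paragraph provides. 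Finally, for weak*-weak* continuity in item \eqref{item:Eprops4} I would recognize $\E^{n-1}$ as an adjoint: the map $\iota \colon L^1(\A_{n-1}) \to L^1(\A_n)$ given by multiplication by $d\H^1_{n-1}/d\H^1_n$ (which is $1$ on $\Diff_{n-1}$ and $\tfrac12$ on each atom of $\At_{n-1}$, hence a bounded operator) satisfies $\langle \E^{n-1}(g), h\rangle = \langle g, \iota h\rangle$ — this being the testing identity with $k = n-1$ — so $\E^{n-1} = \iota^{*}$, which is automatically weak*-weak* continuous; alternatively one verifies the sequential criterion from the Preliminaries directly, since dominated convergence of the averages $\fint_e g_m\,d\H^1_n$ turns pointwise-bounded convergence $g_m \to g$ into pointwise-bounded convergence $\E^{n-1}(g_m) \to \E^{n-1}(g)$.

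The only step I expect to require genuine care is the one in the first paragraph: the uniform-in-$k$ proportionality of $\L^1$, $\H^1_k$ and $\H^1_n$ on the atoms of $\At_{n-1}$, and the resulting independence of the conditional expectation from the choice of base measure among $\{\L^1\}\cup\{\H^1_k : 0 \le k \le n\}$. Everything else is either formal Banach space theory or the textbook measure theory of conditional expectation — which is presumably why the authors omit the proof — and even the delicate step is short once Proposition \ref{prop:ddn}\eqref{item:ddn1} and Lemma \ref{lem:atom-in-atom} are granted.
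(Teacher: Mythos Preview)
The paper explicitly omits the proof, calling it ``the standard measure-theoretic proof'' and noting only (in the paragraph preceding the proposition) that $\E^{n-1}$ coincides with the usual conditional expectation with respect to Lebesgue measure. Your proposal is a correct expansion of that remark: you identify that the only non-formal step is the independence of the conditional expectation from the choice of base measure among $\L^1$ and $\{\H^1_k : 0 \le k \le n\}$, and you verify this via Lemma~\ref{lem:atom-in-atom} and Proposition~\ref{prop:ddn}\eqref{item:ddn1}. The deduction of items \eqref{item:Eprops1}--\eqref{item:Eprops4} from the resulting testing identity is sound, and your primary argument for weak*-weak* continuity, recognizing $\E^{n-1}$ as the adjoint of the isometric inclusion $\iota\colon L^1(\A_{n-1}) \hookrightarrow L^1(\A_n)$, $h \mapsto \tfrac{d\H^1_{n-1}}{d\H^1_n}\,h$, via the testing identity at $k = n-1$, is correct.

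One small caution: your \emph{alternative} argument for weak*-weak* continuity in item~\eqref{item:Eprops4} does not work as stated. Weak* convergence in $L^\infty(\A_n)$ means testing against $L^1(\A_n)$, not pointwise-a.e.\ convergence; a bounded sequence can weak*-converge without converging pointwise anywhere (e.g.\ Rademacher functions in $L^\infty([0,1])$), so ``pointwise-bounded convergence $g_m \to g$'' is not the hypothesis you would need to check. The sequential criterion you invoke from the Preliminaries is specific to $\Lip_0(X)$, whose predual $\F(X)$ is spanned by point evaluations. Since your adjoint argument already settles the matter, simply drop the alternative.
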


\begin{definition}[Martingale Difference Sequences]
A sequence $(D_n)_{n \geq 0}$ is a \emph{martingale difference sequence} if $D_n \in L^\infty(\A_n)$ and $\E^{n-1}(D_{n}) = 0$ for every $n \geq 0$. A martingale difference sequence $(D_n)_{n \geq 0}$ is $L^\infty$-\emph{bounded} if $\sup_{n \geq 0} \|D_n\|_\infty < \infty$. The set of $L^\infty$-bounded martingale difference sequences forms a weak*-closed linear subspace of the $\ell^\infty$-sum $\bigoplus^\infty_{n \geq 0} L^\infty(\A_n)$. It is precisely the subspace $\bigoplus^\infty_{n \geq 0} \ker(\E^{n-1})$, where $\ker(\E^{n-1})$ is the kernel of the conditional expectation $L^\infty(\A_{n}) \to L^\infty(\A_{n-1})$. Each space $\ker(\E^{n-1})$ is weak*-closed by Proposition \ref{prop:Eprops}\eqref{item:Eprops4}.
\end{definition}

\begin{example}\label{ex:mds}
By Proposition \ref{prop:Eprops}, for any sequence $(g_n)_{n\geq0} \in$ \\ $\bigoplus_{n\geq0}^\infty L^\infty(\A_n)$, the corresponding sequence $(g_n-\E^{n-1}(g_n))_{n\geq0}$ is an $L^\infty$-bounded martingale difference sequence with $\sup_{n \geq 0} \|g_n-\E^{n-1}(g_n)\|_{L^\infty} \leq 2\sup_{n \geq 0} \|g_n\|_{L^\infty}$.
\end{example}

Here we introduce the affinization of a function, which can intuitively be thought of as the ``integrated" version of conditional expectation. We make this intuition precise in Lemma \ref{lem:E^(n-1)}.

\begin{definition}[Affinization]
Fix $n\geq0$ and $f \in \Lip_0([0,1],d)$. Define the $n$th \emph{affinization} of $f$ to be the unique function $f_{\aff(n)}: [0,1] \to \R$ such that
\begin{itemize}
    \item $f_{\aff(n)}$ is continuous,
    \item $f_{\aff(n)}\res_{\Diff_n} = f\res_{\Diff_n}$, and
    \item $f_{\aff(n)}\res_e$ is affine for each $e \in \At_n$.
\end{itemize}
Obviously, $f \mapsto f_{\aff(n)}$ is linear. We also define $f_{\aff(-1)}$ to be the 0 function.
\end{definition}

Like conditional expectation, affinization enjoys the tower property. The proof is obvious and we omit it.

\begin{lemma} \label{lem:afftower}
For every $n\geq k\geq0$ and $f \in \Lip_0([0,1],d)$, we have $(f_{\aff(k)})_{\aff(n)} = f_{\aff(k)}$.
\end{lemma}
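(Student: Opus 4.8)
The statement to prove is Lemma \ref{lem:afftower}: for every $n\geq k\geq0$ and $f \in \Lip_0([0,1],d)$, we have $(f_{\aff(k)})_{\aff(n)} = f_{\aff(k)}$.

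The plan is to verify directly that $g := f_{\aff(k)}$ already satisfies the three defining properties of the $n$th affinization of itself; since $g_{\aff(n)}$ is characterized as the unique function with those three properties (relative to $g$), this forces $g_{\aff(n)} = g$. First, $g = f_{\aff(k)}$ is continuous by definition of the $k$th affinization, so the first property holds. Second, I need $g\res_{\Diff_n} = g\res_{\Diff_n}$, which is trivially true — so the restriction-agreement property is automatic once we recall that in the definition of $g_{\aff(n)}$ the middle bullet reads ``$g_{\aff(n)}\res_{\Diff_n} = g\res_{\Diff_n}$'', and here the target function is $g$ itself. The only property with actual content is the third: I must check that $g\res_e$ is affine for every $e \in \At_n$.

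For that third point, the key input is Lemma \ref{lem:atom-in-atom} (atoms are contained in atoms): iterating it from level $n$ down to level $k$ shows that every $e \in \At_n$ with $n \geq k$ is contained in some atom $\hat e \in \At_k$ (if $n = k$ take $\hat e = e$; if $n > k$ apply the lemma $n-k$ times). By definition of the $k$th affinization, $g = f_{\aff(k)}$ restricted to $\hat e$ is affine. Since $e \subset \hat e$ is a subinterval and the restriction of an affine function to a subinterval is affine, $g\res_e$ is affine, as required. This establishes that $g$ has all three defining properties of $g_{\aff(n)}$, and uniqueness gives $(f_{\aff(k)})_{\aff(n)} = f_{\aff(k)}$.

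I do not anticipate a genuine obstacle here; the lemma is essentially bookkeeping about the nested structure of the atoms $\At_n \subset \At_{n-1} \subset \cdots$. The one point deserving a sentence of care is the edge case involving $f_{\aff(-1)}$: the statement restricts to $n \geq k \geq 0$, so the degenerate definition $f_{\aff(-1)} = 0$ is never invoked, and no separate argument is needed. (For completeness one could also note the base case $n = k$ is immediate.) Thus the proof is just: recall uniqueness in the definition of affinization, observe continuity and the restriction condition are automatic, and use iterated Lemma \ref{lem:atom-in-atom} plus ``affine restricted to a subinterval is affine'' to get the affineness on each $e \in \At_n$.
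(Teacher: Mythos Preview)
Your proof is correct and is exactly the natural argument; the paper simply omits the proof, stating ``The proof is obvious and we omit it.'' Your use of iterated Lemma~\ref{lem:atom-in-atom} to nest $\At_n$-atoms inside $\At_k$-atoms, together with uniqueness in the definition of affinization, is precisely the intended verification.
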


One of the key features of the $n$th affinization map is that it improves smoothness by transforming a $d$-Lipschitz function into a $d_n$-Lipschitz function, to which we can apply our calculus tools.

\begin{lemma}[$n$th Affinization is $d_n$-Lipschitz]\label{lem:affndnLip}
For every $n\geq0$ and $f \in \Lip_0([0,1],d)$, $f_{\aff(n)} \in \Lip_0([0,1],d_n)$ with $\|f_{\aff(n)}\|_{\Lip_0([0,1],d_n)} \leq$ $3\|f\|_{\Lip_0([0,1],d)}$. Moreover, if $f_i$ is a sequence in $\Lip_0([0,1],d)$ converging pointwise to some $f \in \Lip_0([0,1],d)$ as $i \to \infty$, then $(f_i)_{\aff(n)}$ converges pointwise to $f_{\aff(n)}$ as $i \to \infty$.
\end{lemma}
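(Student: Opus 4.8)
I want to show two things: (1) $f_{\aff(n)} \in \Lip_0([0,1],d_n)$ with the norm bound $\|f_{\aff(n)}\|_{\Lip_0([0,1],d_n)} \leq 3\|f\|_{\Lip_0([0,1],d)}$, and (2) the pointwise-convergence statement. For (1), since $f_{\aff(n)}$ is continuous and (by definition) affine on each atom $e \in \At_n$, Lemma \ref{L:dyadic} applied to the dyadic diameter function $\Delta_n$ reduces the problem to bounding $|f_{\aff(n)}(x) - f_{\aff(n)}(y)|$ by a constant times $\Delta_n([x,y])$ only for dyadic edges $[x,y] \in \D$. (I will need to observe that $([0,1],d_n)$ is doubling: this follows since $d_n$ is bi-Lipschitz equivalent to the Euclidean metric by Remark \ref{rmk:dnbiLipschitz}, or one can check the constant depends only on the doubling constant of $([0,1],d)$ — I expect the former is what the paper intends, but the $L'$ in Lemma \ref{L:dyadic} might then depend on $n$; I'll want the version where doubling of $d_n$ is uniform in $n$, which I believe holds because $\Delta_n \leq \Delta$ pointwise and $([0,1],d)$ is doubling, but this is a point to be careful about).

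**The dyadic-edge estimate.** Fix $[x,y] \in \D$. There are two cases by Remark \ref{rmk:Delta=Delta_n} and Lemma \ref{lem:notAn}. If $[x,y] \in \A_n$, then $\Delta_n([x,y]) = \Delta([x,y]) = d([x,y])$, and I claim $|f_{\aff(n)}(x) - f_{\aff(n)}(y)| \leq 3\|f\|_{\Lip_0([0,1],d)}\, d(x,y)$. By Lemma \ref{lem:DDiff}, $x$ and $y$ can each be written as a limit of points in $\bigcup_k \Diff_k$; more usefully, since $[x,y] \in \A_n$ is a dyadic edge, I want to locate nearby points of $\Diff_n$. If $x,y \in \Diff_n$ this is immediate: $f_{\aff(n)}(x) = f(x)$, $f_{\aff(n)}(y) = f(y)$, and we're done with constant $1$. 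If $x$ (say) lies in $\intr(e)$ for some $e \in \At_n$, then because $[x,y] \in \A_n$ and $e \in \At_n$ we must have $e \subset [x,y]$ with $x = \min(e)$ (an endpoint of $e$), and $\min(e), \max(e) \in \Diff_n$; on $e$, $f_{\aff(n)}$ is affine interpolating the values $f(\min e), f(\max e)$, so $|f_{\aff(n)}(x) - f_{\aff(n)}(\min e)| \leq |f(\min e) - f(\max e)| \leq \|f\|\, d(\min e, \max e) \leq \|f\|\, d(x,y)$ (using $d = d_n$ on $\Diff_n$ and monotonicity of $d$ along the arc). Combining the contributions from the endpoints $x$ and $y$ gives a bound like $3\|f\|\,d(x,y)$. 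If instead $[x,y] \notin \A_n$, then by Lemma \ref{lem:notAn} there is $e \in \At_n$ with $[x,y] \subsetneq e$; on $e$, $f_{\aff(n)}$ is affine, so $|f_{\aff(n)}(x) - f_{\aff(n)}(y)| = \frac{|y-x|}{|e|}|f(\min e) - f(\max e)| \leq \frac{|y-x|}{|e|}\|f\|\,\diam(e) = \|f\|\cdot\frac{|y-x|}{|e|}\Delta(e) = \|f\|\,\Delta_n([x,y])$ by the very definition of $\Delta_n$ on subedges of an atom. So in every case the hypothesis of Lemma \ref{L:dyadic} holds with constant $3\|f\|_{\Lip_0([0,1],d)}$, and Lemma \ref{L:dyadic} gives $\|f_{\aff(n)}\|_{\Lip_0([0,1],d_n)} \leq 3L'\|f\|_{\Lip_0([0,1],d)}$. (If the stated bound is literally $3\|f\|$ with no $L'$, then the paper must be using $d = d_n$ itself via a direct triangle-inequality argument over a near-optimal dyadic chain rather than invoking Lemma \ref{L:dyadic}; I'd fall back to that: take a dyadic chain for $d(x,y)$, and on each chain-edge use the case analysis above. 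Either way the structural content is the same.)

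**Pointwise convergence.** This part is soft. Fix $x \in [0,1]$. If $x \in \Diff_n$, then $(f_i)_{\aff(n)}(x) = f_i(x) \to f(x) = f_{\aff(n)}(x)$ trivially. If $x \in \intr(e)$ for $e \in \At_n$, write $e = [a,b]$ with $a,b \in \Diff_n$; then $(f_i)_{\aff(n)}(x)$ is the affine interpolant at $x$ of the two values $f_i(a), f_i(b)$, i.e.\ an explicit convex combination $\lambda f_i(a) + (1-\lambda)f_i(b)$ with $\lambda = (b-x)/(b-a)$ independent of $i$, which converges to $\lambda f(a) + (1-\lambda)f(b) = f_{\aff(n)}(x)$. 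That exhausts $[0,1] = \Diff_n \cup \bigcup_{e \in \At_n}\intr(e)$, so $(f_i)_{\aff(n)} \to f_{\aff(n)}$ pointwise.

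**Expected main obstacle.** The genuinely delicate point is the uniformity of the doubling constant of $([0,1],d_n)$ in $n$ (so that the output Lipschitz bound does not degrade with $n$), and matching the clean constant $3$ in the statement. If the intended route is via Lemma \ref{L:dyadic}, one must verify that the $L'$ there can be absorbed or that the paper's normalization makes it $1$; if the intended route is a direct chain argument, the bookkeeping of endpoint contributions along the chain needs care to land exactly on $3\|f\|$. The rest — the case analysis on dyadic edges and the pointwise-convergence argument — is routine once the right lemmas (\ref{lem:notAn}, \ref{lem:DDiff}, \ref{lem:atom-in-atom}, the definition of $\Delta_n$, and Proposition \ref{prop:ddn}\eqref{item:ddn3}) are lined up.
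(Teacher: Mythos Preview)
Your pointwise-convergence argument is exactly the paper's. For the Lipschitz bound, however, the paper does \emph{not} go through Lemma~\ref{L:dyadic} at all: it argues directly for an arbitrary pair $y<x$ in $[0,1]$ by splitting $[y,x]$ into at most three pieces using the boundary points of the atoms of $\At_n$ containing $x$ and $y$. Concretely, if $y\in(u_y,v_y)\in\At_n$ and $x\in(u_x,v_x)\in\At_n$ with $v_y\leq u_x$, one writes
\[
|f_{\aff(n)}(x)-f_{\aff(n)}(y)|\leq |f_{\aff(n)}(v_y)-f_{\aff(n)}(y)|+|f(u_x)-f(v_y)|+|f_{\aff(n)}(x)-f_{\aff(n)}(u_x)|,
\]
bounds each term by $\|f\|_{\Lip_0([0,1],d)}$ times the corresponding $d_n$-distance (using Proposition~\ref{prop:ddn}\eqref{item:ddn3} on the middle term and the affine-on-atoms structure on the outer terms), and concludes with $d_n(y,v_y)+d_n(v_y,u_x)+d_n(u_x,x)\leq 3\,d_n(y,x)$ since $([0,1],d_n)$ is $1$-bounded turning. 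This yields the clean constant $3$ with no appeal to doubling.

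Your primary route via Lemma~\ref{L:dyadic} is valid but produces $3L'\|f\|$ (in fact $L'\|f\|$, since your own case analysis shows the constant on a single dyadic edge is $1$: when $[x,y]\in\A_n$ the endpoints necessarily lie in $\Diff_n$, so your sub-case ``$x\in\intr(e)$'' cannot occur), and that $L'$ depends on the doubling constant of $([0,1],d_n)$. You correctly flag this as the obstacle. Your fallback ``direct chain'' suggestion is closer to the paper's proof, but the paper's point is that no chain is needed: a single three-piece split already works for arbitrary $x,y$. The upshot is that your approach is sound but strictly weaker in the constant; the paper's direct argument is both simpler and sharper.
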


\begin{proof}
Let $n\geq0$ and $f \in \Lip_0([0,1],d)$. By Proposition \ref{prop:ddn}\eqref{item:ddn1}, $d_n$ is a multiple of the Euclidean metric on each $e \in \At_n$. Together with the fact $f_{\aff(n)}$ is affine on any $[u,v] \in \At_n$, this easily implies
$$\dfrac{|f_{\aff(n)}(x)-f_{\aff(n)}(y)|}{d_n(y,x)} = \dfrac{|f_{\aff(n)}(v)-f_{\aff(n)}(u)|}{d_n(u,v)}$$
for every $x \neq y \in [u,v]$. Since $u,v \in \Diff_n$,
$$f_{\aff(n)}(v)-f_{\aff(n)}(u) = f(v)-f(u),$$
and then combining these two equations gives us
\begin{align} \label{eq:Dn1}
    \dfrac{|f_{\aff(n)}(x)-f_{\aff(n)}(y)|}{d_n(y,x)} &= \dfrac{|f_{\aff(n)}(v)-f_{\aff(n)}(u)|}{d_n(u,v)} = \dfrac{|f(v)-f(u)|}{d_n(u,v)} \\ 
\nonumber    &\overset{\text{Prop } \ref{prop:ddn}\eqref{item:ddn3}}{=} \dfrac{|f(v)-f(u)|}{d(u,v)} \leq \|f\|_{\Lip_0([0,1],d)}.
\end{align}
Now let $[y,x] \subset [0,1]$ be arbitrary. There are three cases to consider: $x,y \in \bigcup_{e \in \At_n} \intr(e)$, $\{x,y\} \cap \Diff_n \neq \emptyset$ and $\{x,y\} \cap \bigcup_{e \in \At_n} \intr(e) \neq \emptyset$, and $x,y \in \Diff_n$. We will only treat the first case - the others yield no worse bounds. Let $[u_y,v_y],[u_x,v_x] \in \At_n$ such that $y \in (u_y,v_y)$ and $x \in (u_x,v_x)$. There are two subcases to consider: $v_y \leq u_x$ or $[u_y,v_y] = [u_x,v_x]$. Again, we treat the first case only as the second gives better bounds. Then we have
\begin{align*}
    &|f_{\aff(n)}(x)-f_{\aff(n)}(y)| \\
    & \leq |f_{\aff(n)}(v_y)-f_{\aff(n)}(y)| + |f_{\aff(n)}(u_x)-f_{\aff(n)}(v_y)| + |f_{\aff(n)}(x)-f(u_x)|  \\
    &\overset{\eqref{eq:Dn1}}{\leq} \|f\|_{\Lip_0([0,1],d)}d_n(y,v_y) + |f_{\aff(n)}(u_x)-f_{\aff(n)}(v_y)| + \|f\|_{\Lip_0([0,1],d)}d_n(u_x,x) \\
    &= \|f\|_{\Lip_0([0,1],d)}d_n(y,v_y) + |f(u_x)-f(v_y)| + \|f\|_{\Lip_0([0,1],d)}d_n(u_x,x) \\
    &\overset{\text{Prop }\ref{prop:ddn}\eqref{item:ddn3}}{\leq} \|f\|_{\Lip_0([0,1],d)}(d_n(y,v_y) + d_n(v_y,u_x) + d_n(u_x,x)) \\
    &\leq 3\|f\|_{\Lip_0([0,1],d)}d_n(y,x).
\end{align*}
The first equality above follows from the definiton of $f_{\aff(n)}$ and the fact that $u_x,v_y \in \Diff_n$, and the last inequality follows from the containments $[y,v_y],[v_y,u_x],[u_x,x] \subset [y,x]$ and the definition of $d_n$. This proves the first sentence in the case $x,y \in \bigcup_{e \in \At_n} \intr(e)$. The second and third cases can be treated similarly, and in fact we get the improved bounds $\|f_{\aff(n)}\|_{\Lip_0([0,1],d_n)} \leq 2\|f\|_{\Lip_0([0,1],d)}$ and $\|f_{\aff(n)}\|_{\Lip_0([0,1],d_n)} \leq \|f\|_{\Lip_0([0,1],d)}$, respectively. 

For the second sentence, let $x \in [0,1]$. If $x \in \Diff_n$, then
$$(f_i)_{\aff(n)}(x) = f_i(x) \overset{i\to\infty}{\to} f(x) = f_{\aff(n)}(x).$$
If $x = (1-t)u+tv \in [u,v] \in \At_n$, then $$(f_i)_{\aff(n)}(x) = (1-t)f_i(u) + tf_i(v) \overset{i\to\infty}{\to} (1-t)f(u)+tf(v) = f_{\aff(n)}(x).$$
\end{proof}

Lemma \ref{lem:affndnLip} and Proposition \ref{prop:Lebesgue} imply that the derivative $f_{\aff(n)}^{(n)}$ exists almost everywhere and is Lebesgue-measurable. We can say more.

\begin{lemma} \label{lem:Anmeasurability}
For every $n\geq0$ and $f \in \Lip_0([0,1],d)$, $f_{\aff(n)}^{(n)}$ is $\A_n$-measurable and $\|f_{\aff(n)}^{(n)}\|_{L^\infty(\A_n)} \leq \|f\|_{\Lip_0([0,1],d)}$.
\end{lemma}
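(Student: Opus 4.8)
The plan is to establish two things: the $\A_n$-measurability of $f_{\aff(n)}^{(n)}$, and the $L^\infty$-bound. For the measurability, by Remark \ref{rmk:Anmeasurability} it suffices to check that $f_{\aff(n)}^{(n)}$ is Lebesgue-measurable (which we already know from Lemma \ref{lem:affndnLip} and Proposition \ref{prop:Lebesgue}) and that, for each $e = [u,v] \in \At_n$, $f_{\aff(n)}^{(n)}$ equals a constant almost everywhere on $e$. This last point is the heart of the matter: since $f_{\aff(n)}$ is affine on $e$, say $f_{\aff(n)}(x) = f(u) + \frac{f(v)-f(u)}{v-u}(x-u)$ for $x \in e$, and since $d_n$ is a constant multiple of the Euclidean metric on $e$ by Proposition \ref{prop:ddn}\eqref{item:ddn1} (namely $d_n(x,y) = 2^n|x-y|$ for $[x,y] \subset e$ when $e \in \At_{n-1}$—and each atom of $\At_n$ sits inside a child of an atom of $\At_{n-1}$, so the same scaling applies), the difference quotient $\frac{f_{\aff(n)}(x) - f_{\aff(n)}(y)}{d_n(y,x)}$ is literally constant on the interior of $e$. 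Hence $f_{\aff(n)}^{(n)}$ equals this constant a.e. on $e$, giving $\A_n$-measurability.

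For the norm bound, the cleanest route is to use the almost-everywhere identity $|f_{\aff(n)}^{(n)}(x)| = \lim_{y \to x^-}\frac{|f_{\aff(n)}(x)-f_{\aff(n)}(y)|}{d_n(y,x)}$ and reduce to the estimate already contained in the proof of Lemma \ref{lem:affndnLip}. Specifically, on each atom $e \in \At_n$ the computation \eqref{eq:Dn1} shows that this difference quotient (for $x \neq y$ in $e$, with endpoints $u,v$ of $e$) equals $\frac{|f(v)-f(u)|}{d(u,v)} \leq \|f\|_{\Lip_0([0,1],d)}$. On the diffuse part $\Diff_n$, for a.e.\ $x \in \Diff_n$ one can take $y \to x^-$ through points with $y \in \Diff_n$ as well (this is legitimate because, for a.e.\ $x$, either $x$ is in the interior of an atom—excluded here—or $x$ is approached from the left by points of $\Diff_n$, using density of $\bigcup_k \Diff_k$ via Lemma \ref{lem:DDiff}; more carefully, one uses that $\Diff_n$ is closed and that the only left-limit points of $[0,1]$ not approachable from within $\Diff_n$ are left endpoints of atoms, which form a countable hence null set), and there $f_{\aff(n)}$ agrees with $f$ and $d_n$ agrees with $d$ by Proposition \ref{prop:ddn}\eqref{item:ddn3}, so again the quotient is bounded by $\|f\|_{\Lip_0([0,1],d)}$. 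Taking essential suprema over the essential partition $\At_n \cup \{\Diff_n\}$ yields $\|f_{\aff(n)}^{(n)}\|_{L^\infty(\A_n)} \leq \|f\|_{\Lip_0([0,1],d)}$.

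Alternatively, and perhaps more robustly, one avoids the delicate left-approach argument on $\Diff_n$ by invoking Proposition \ref{prop:ftc}\eqref{item:ftc3}: for any $[y,x] \subset [0,1]$ with $y,x \in \Diff_n$ we have $\int_{[y,x]} f_{\aff(n)}^{(n)} \, d\H_n^1 = f_{\aff(n)}(x) - f_{\aff(n)}(y) = f(x)-f(y)$, whence $\left|\int_{[y,x]} f_{\aff(n)}^{(n)} \, d\H_n^1\right| \leq \|f\|_{\Lip_0([0,1],d)}\, d(x,y) = \|f\|_{\Lip_0([0,1],d)}\, d_n(x,y) \leq \|f\|_{\Lip_0([0,1],d)}\, \H_n^1([y,x])$, using $d_n = d$ on $\Diff_n$ and $d_n(x,y) = \diam_n([y,x]) \leq \H_n^1([y,x])$. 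Since this holds for all such subintervals and $f_{\aff(n)}^{(n)}$ is already known to be constant a.e.\ on each atom with that constant also bounded by $\|f\|_{\Lip_0([0,1],d)}$ (via \eqref{eq:Dn1}), a Lebesgue-point / averaging argument (Lemma \ref{lem:Kirchheim}, first bullet) pins $|f_{\aff(n)}^{(n)}| \leq \|f\|_{\Lip_0([0,1],d)}$ a.e.\ on $\Diff_n$ too.

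**The main obstacle** is the analysis on the diffuse part: one must be careful that the left difference quotient defining $f_{\aff(n)}^{(n)}$ can be evaluated along sequences $y \to x^-$ that stay inside $\Diff_n$ (or, equivalently, control its behavior at left endpoints of atoms), so that Proposition \ref{prop:ddn}\eqref{item:ddn3} and the equality $f_{\aff(n)} = f$ on $\Diff_n$ genuinely apply. Using the fundamental-theorem-of-calculus formulation from Proposition \ref{prop:ftc}\eqref{item:ftc3} sidesteps this entirely by working with integral averages rather than pointwise one-sided limits, and is the approach I would write up. Everything else—the measurability reduction and the constancy on atoms—is immediate from the affine structure of $f_{\aff(n)}$ and the Euclidean-scaling of $d_n$ on atoms established in Proposition \ref{prop:ddn}\eqref{item:ddn1}.
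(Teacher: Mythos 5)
Your first route is essentially the paper's own proof: identical measurability reduction via Remark \ref{rmk:Anmeasurability}, the explicit constant $\frac{f(v)-f(u)}{d(u,v)}$ on $\intr(e)$ using Proposition \ref{prop:ddn}\eqref{item:ddn1} and \eqref{item:ddn3}, and the one-sided difference quotient on $\Diff_n$ restricted to $y \in \Diff_n$. You are in fact a bit more careful than the paper here: the paper attributes the claim that a.e. $x \in \Diff_n$ is a left limit point of $\Diff_n$ and the limit exists wholly to Proposition \ref{prop:Lebesgue}, whereas you correctly isolate the first part as a separate (countability) fact. One small slip: the points of $\Diff_n$ that are not approachable from the left within $\Diff_n$ are the \emph{right} endpoints of atoms, not the left ones, since excluding a left-interval $(x-\eps,x)$ from $\Diff_n$ forces it inside $\intr(e)$ for a single $e \in \At_n$ with $x = \max(e)$. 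The conclusion (countable, hence null) is unaffected.

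Your second (preferred) route via Proposition \ref{prop:ftc}\eqref{item:ftc3} is genuinely different from what the paper does and is attractive, but as written it does not yet ``sidestep the delicate left-approach argument'' as claimed. The Lebesgue differentiation statement in Lemma \ref{lem:Kirchheim} (first bullet) takes $y \to x^-$ over \emph{all} $y$, so a bound on $\fint_{[y,x]}f_{\aff(n)}^{(n)}\,d\H_n^1$ only for $y \in \Diff_n$ still requires you to pass to a sublimit along $\Diff_n \ni y \to x^-$, which reintroduces the right-endpoint issue. To make the route work cleanly, you should upgrade to arbitrary intervals: for any $[y,x]\subset[0,1]$, split at $y':=\min(\Diff_n\cap[y,x])$ and $x':=\max(\Diff_n\cap[y,x])$ (when nonempty); the tails $[y,y'],[x',x]$ lie in atoms where $|f_{\aff(n)}^{(n)}|\leq\|f\|$ already holds pointwise a.e., and the middle piece $[y',x']$ gets your $\Diff_n$-endpoint estimate. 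Adding gives $\left|\int_{[y,x]}f_{\aff(n)}^{(n)}\,d\H_n^1\right|\leq\|f\|_{\Lip_0}\H_n^1([y,x])$ for \emph{all} intervals, and now the Lebesgue differentiation theorem applies unrestricted to yield $|f_{\aff(n)}^{(n)}|\leq\|f\|_{\Lip_0}$ a.e. That gluing step is the idea missing from your sketch; with it, the second route is correct and does indeed avoid one-sided approach arguments entirely.
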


\begin{proof}
Let $n\geq0$ and $f \in \Lip_0([0,1],d)$. By the preceding discussion $f_{\aff(n)}^{(n)}$ is Lebesgue-measurable, and thus by Remark \ref{rmk:Anmeasurability}, $f_{\aff(n)}^{(n)}$ is $\A_n$-measurable if, for every $e \in \At_n$, $f_{\aff(n)}^{(n)}\res_e$ equals a constant almost everywhere. Because $f_{\aff(n)}\res_e$ is, by definition, affine, and $d_n$ restricted to $e$ is a multiple of the Euclidean metric (Proposition \ref{prop:ddn}\eqref{item:ddn1}), it is obvious that
\begin{equation} \label{eq:Anmeasurability}
    f_{\aff(n)}^{(n)}(x) = \dfrac{f_{\aff(n)}(v)-f_{\aff(n)}(u)}{d_n(u,v)} \overset{\text{Prop }\ref{prop:ddn}\eqref{item:ddn3}}{=}\dfrac{f(v)-f(u)}{d(u,v)}
\end{equation}
for every $x \in \intr(e)$, where $e = [u,v]$. This proves $\A_n$-measurability.

For the second part, note that \eqref{eq:Anmeasurability} also proves that $|f_{\aff(n)}^{(n)}(x)| \leq$ \\ $\|f\|_{\Lip_0([0,1],d)}$ for a.e. $x \in \cup \At_n$, so it remains to show that $|f_{\aff(n)}^{(n)}(x)| \leq \|f\|_{\Lip_0([0,1],d)}$ for a.e. $x \in \cup \Diff_n$. Let $x \in \Diff_n$ such that $x$ is a left limit point of $\Diff_n$ and the limit defining $f_{\aff(n)}^{(n)}(x)$ exists. Note that by Proposition \ref{prop:Lebesgue} this constitutes a full measure subset of $\Diff_n$. Then we have
\begin{align*}
    |f_{\aff(n)}^{(n)}(x)| &= \lim_{y \to x^-} \dfrac{|f_{\aff(n)}(x)-f_{\aff(n)}(y)|}{d_n(y,x)} \\
    &= \lim_{\Diff_n \ni y \to x^-} \dfrac{|f_{\aff(n)}(x)-f_{\aff(n)}(y)|}{d_n(y,x)} \\
    &\overset{\text{Prop }\ref{prop:ddn}\eqref{item:ddn3}}{=} \lim_{\Diff_n \ni y \to x^-} \dfrac{|f(x)-f(y)|}{d(y,x)} \\
    &\leq \|f\|_{\Lip_0([0,1],d)}.
\end{align*}
\end{proof}

As another consequence of Lemma \ref{lem:affndnLip}, we get that the $n$th affinization operators converge to the identity as $n\to\infty$.

\begin{lemma}\label{lem:affconverge}
For every $f \in \Lip_0([0,1],d)$, $f_{\aff(n)}$ converges to $f$ pointwise as $n\to\infty$.
\end{lemma}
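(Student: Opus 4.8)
The plan is to fix $f \in \Lip_0([0,1],d)$ and a point $x \in [0,1]$, and show that $f_{\aff(n)}(x) \to f(x)$ as $n \to \infty$. The key input is Lemma \ref{lem:DDiff}, which guarantees that $\bigcup_{n \geq 0} \Diff_n$ is dense in $[0,1]$. Fix $\eps > 0$. Using density of $\bigcup_n \Diff_n$ together with continuity of $f$ with respect to $d$ (indeed $f$ is $d$-Lipschitz, and $d$ induces the Euclidean topology on $[0,1]$), choose $m \geq 0$ and a point $w \in \Diff_m$ with $d(x,w) < \eps$. Since the $\Diff_n$ are nested upward in the sense that atoms are contained in atoms (Lemma \ref{lem:atom-in-atom} implies $\Diff_{m} \subset \Diff_{n}$ for $n \geq m$), we have $w \in \Diff_n$ for all $n \geq m$, and hence $f_{\aff(n)}(w) = f(w)$ for all such $n$ by the defining property of affinization.

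Now I would estimate, for $n \geq m$,
\[
|f_{\aff(n)}(x) - f(x)| \leq |f_{\aff(n)}(x) - f_{\aff(n)}(w)| + |f(w) - f(x)|.
\]
The second term is at most $\|f\|_{\Lip_0([0,1],d)}\, d(x,w) \leq \|f\|_{\Lip_0([0,1],d)}\,\eps$. For the first term, Lemma \ref{lem:affndnLip} gives $f_{\aff(n)} \in \Lip_0([0,1],d_n)$ with $\|f_{\aff(n)}\|_{\Lip_0([0,1],d_n)} \leq 3\|f\|_{\Lip_0([0,1],d)}$, so since $d_n \leq d$ we get
\[
|f_{\aff(n)}(x) - f_{\aff(n)}(w)| \leq 3\|f\|_{\Lip_0([0,1],d)}\, d_n(x,w) \leq 3\|f\|_{\Lip_0([0,1],d)}\, d(x,w) \leq 3\|f\|_{\Lip_0([0,1],d)}\,\eps.
\]
Combining, $|f_{\aff(n)}(x) - f(x)| \leq 4\|f\|_{\Lip_0([0,1],d)}\,\eps$ for all $n \geq m$. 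Since $\eps > 0$ was arbitrary, $f_{\aff(n)}(x) \to f(x)$.

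I expect the only genuine subtlety to be verifying the nesting $\Diff_m \subset \Diff_n$ for $n \geq m$ (i.e.\ that once a point lies in a diffuse part it stays there), which follows from Lemma \ref{lem:atom-in-atom}: if $x \in \intr(e')$ for some $e' \in \At_n$ with $n \geq m$, then $e'$ is contained in some $e \in \At_m$, so $x \in \intr(e)$ and $x \notin \Diff_m$; contrapositively $\Diff_m \subset \Diff_n$. Everything else is a routine three-term estimate using the uniform Lipschitz bound from Lemma \ref{lem:affndnLip} and the inequality $d_n \leq d$. An alternative, essentially equivalent, route avoids choosing $w$: directly bound $|f_{\aff(n)}(x)-f(x)|$ by the $d$-oscillation of $f$ over the atom of $\At_n$ containing $x$ (or note $x \in \Diff_n$ makes the difference vanish), and observe that by Lemma \ref{lem:DDiff} these atoms shrink in Euclidean diameter, hence in $d$-diameter by continuity, along the subsequence where $x$ lies strictly inside an atom; I would present the first argument since it is cleaner.
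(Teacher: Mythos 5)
Your proof is correct and takes essentially the same approach as the paper: uniform Lipschitz bounds from Lemma \ref{lem:affndnLip} together with $d_n \leq d$, density of $\bigcup_{n\geq0}\Diff_n$ from Lemma \ref{lem:DDiff}, the nesting $\Diff_m \subset \Diff_n$, and the fact that $f_{\aff(n)}$ agrees with $f$ on $\Diff_m$ for $n \geq m$. You merely spell out the routine three-term density estimate and the verification of the nesting via Lemma \ref{lem:atom-in-atom}, both of which the paper leaves implicit.
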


\begin{proof}
Let $f \in \Lip_0([0,1],d)$. By Lemma \ref{lem:affndnLip} and the fact that $d_n \leq d$ for every $n\geq0$, $\sup_{n\geq0} \|f_{\aff(n)}\|_{\Lip_0([0,1],d)} < \infty$. Hence, it suffices to prove pointwise convergence on the dense subset $\bigcup_{n\geq0} \Diff_n \subset [0,1]$. But this obviously happens since $f_{\aff(k)}(x) = f(x)$ whenever $x \in \Diff_n$ and $k \geq n$, by the definition of affinization and the fact that $\Diff_n \subset \Diff_k$.
\end{proof}

We are now ready to introduce the linear map $D$. We will see in Theorem \ref{thm:Diso} that $D$ takes $\Lip_0([0,1],d)$ isomorphically onto the space of $L^\infty$-bounded martingale difference sequences.

\begin{definition}[Derivative Martingale Difference Sequences]
For $n\geq0$, define the map $D_n: \Lip_0([0,1],d) \to \ker(\E^{n-1})$ by $D_n(f) := f_{\aff(n)}^{(n)} - \E^{n-1}(f_{\aff(n)}^{(n)})$. Note that, by Lemma \ref{lem:Anmeasurability} and Example \ref{ex:mds}, $D_n(f)$ indeed belongs to $\ker(\E^{n-1})$ and that $\|D_n(f)\|_{L^\infty(\A_n)} \leq 2 \|f\|_{\Lip_0([0,1],d)}$. Define $D: \Lip_0([0,1],d) \to \bigoplus_{n\geq0}^\infty \ker(\E^{n-1})$ by $D(f) := (D_n(f))_{n\geq0}$.
\end{definition}

\begin{theorem}[Weak*-Continuity of $D$] \label{thm:Dweak*}
$D$ is linear, 2-bounded, and weak*-weak*-continuous.
\end{theorem}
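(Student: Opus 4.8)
The plan is to verify the three properties in turn, with the main work being weak*-weak*-continuity. Linearity of $D$ is immediate: each $D_n$ is a composition of $f \mapsto f_{\aff(n)}$ (linear by definition of affinization), $g \mapsto g^{(n)}$ (linear as a pointwise limit of difference quotients), and $g \mapsto g - \E^{n-1}(g)$ (linear by Proposition \ref{prop:Eprops}\eqref{item:Eprops4}). Since $\bigl(\bigoplus_{n\geq0}^\infty \ker(\E^{n-1})\bigr)_* = \bigoplus_{n\geq0}^1 \ker(\E^{n-1})_*$, a bounded linear map $T = (T_n)_n$ into $\bigoplus_{n\geq0}^\infty \ker(\E^{n-1})$ is weak*-weak*-continuous if and only if each coordinate $T_n$ is, so it suffices to treat each $D_n$ separately.

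For the norm bound, I would simply invoke the estimate already recorded in the definition of $D$: by Lemma \ref{lem:Anmeasurability}, $\|f_{\aff(n)}^{(n)}\|_{L^\infty(\A_n)} \leq \|f\|_{\Lip_0([0,1],d)}$, and since $\E^{n-1}$ is contractive (Proposition \ref{prop:Eprops}\eqref{item:Eprops4}), $\|D_n(f)\|_{L^\infty(\A_n)} \leq 2\|f\|_{\Lip_0([0,1],d)}$ for every $n$. Taking the supremum over $n$ gives $\|D(f)\|_{\bigoplus^\infty} \leq 2\|f\|_{\Lip_0([0,1],d)}$, so $D$ is $2$-bounded.

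The heart of the matter is weak*-weak*-continuity of each $D_n: \Lip_0([0,1],d) \to \ker(\E^{n-1}) \subset L^\infty(\A_n)$. Since $[0,1]$ is separable, by the discussion in the Preliminaries it is enough to show: whenever $f_i \to f$ pointwise with $\sup_i \|f_i\|_{\Lip_0([0,1],d)} \leq 1$, we have $D_n(f_i) \to D_n(f)$ in the weak*-topology of $L^\infty(\A_n)$ (i.e., tested against $L^1(\A_n)$). Because $\E^{n-1}$ is itself weak*-weak*-continuous, it suffices to show $f_i{}_{\aff(n)}^{(n)} \to f_{\aff(n)}^{(n)}$ weak* in $L^\infty(\A_n)$. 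By Lemma \ref{lem:affndnLip}, $(f_i)_{\aff(n)} \to f_{\aff(n)}$ pointwise on $[0,1]$ and $\sup_i \|(f_i)_{\aff(n)}\|_{\Lip_0([0,1],d_n)} \leq 3$, so we are reduced to the following: if $g_i \to g$ pointwise in $\Lip_0([0,1],d_n)$ with uniformly bounded Lipschitz norms, then $g_i^{(n)} \to g^{(n)}$ weak* in $L^\infty$. The clean way to see this is via the fundamental theorem of calculus (Proposition \ref{prop:ftc}\eqref{item:ftc3}): for any interval $[y,x]$, $\int_{[y,x]} g_i^{(n)} \, d\H_n^1 = g_i(x) - g_i(y) \to g(x) - g(y) = \int_{[y,x]} g^{(n)} \, d\H_n^1$; since the functions $1_{[y,x]}$ have dense linear span in $L^1(\H_n^1)$ and the $g_i^{(n)}$ are uniformly bounded in $L^\infty$, a standard density-plus-equiboundedness argument upgrades this to $\int h\, g_i^{(n)} \, d\H_n^1 \to \int h\, g^{(n)} \, d\H_n^1$ for all $h \in L^1(\H_n^1)$, hence a fortiori for all $h \in L^1(\A_n)$.

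The main obstacle is the last step: passing from convergence of integrals over intervals to weak*-convergence against arbitrary $L^1$ densities, and being careful that the relevant measure is $\H_n^1$ while the target sigma-algebra is $\A_n$ — one must observe that restricting a test function to $\A_n$-measurability only shrinks the class of densities, so the interval-based argument is more than enough. The potential subtlety (which dissolves under Proposition \ref{prop:ftc}\eqref{item:ftc3}, since that identity holds for \emph{every} subinterval, not just a.e.) is that pointwise convergence of the $g_i$ alone — without uniform Lipschitz control — would not suffice; the uniform bound from Lemma \ref{lem:affndnLip} is exactly what makes the density argument go through.
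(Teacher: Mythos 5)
Your proof is correct. The treatment of linearity, the 2-bound, and the reduction to each $D_n$ (and then to weak*-continuity of the derivative map $g \mapsto g^{(n)}$ on $\Lip_0([0,1],d_n)$) matches the paper's structure exactly. Where you diverge from the paper is in the final step: the paper approximates $h \in L^1(\H_n^1)$ by a smooth test function $\phi$ vanishing at $0,1$, verifies a Leibniz rule $(f_i \phi)^{(n)} = f_i^{(n)}\phi + f_i \phi^{(n)}$ for the $d_n$-derivative, performs integration by parts to move the derivative from $f_i$ to $\phi$, and then applies dominated convergence. You instead approximate $h$ by a step function (a finite linear combination of interval indicators) and apply Proposition \ref{prop:ftc}\eqref{item:ftc3} directly, since that identity already expresses $\int_{[y,x]} g_i^{(n)}\,d\H_n^1$ as $g_i(x)-g_i(y)$, which converges by pointwise convergence; the uniform $L^\infty$-bound on $g_i^{(n)}$ (from Lemma \ref{lem:Anmeasurability}) then lets you upgrade to all of $L^1$. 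Your route is slightly leaner: it avoids invoking the Leibniz rule and integration by parts for the nonstandard derivative $\cdot^{(n)}$ (which the paper justifies only by a parenthetical "can be readily verified") and replaces the DCT step with a simpler equiboundedness-plus-density estimate. The paper's smooth-test-function argument is the more traditional duality argument, but both get exactly the same uniform bound $C\eps$ on the limsup and hence the same conclusion. Your observation that testing against $L^1(\H_n^1)$ a fortiori covers $L^1(\A_n)$ is correct and worth noting, as the paper handles this silently.
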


\begin{proof}
That $D$ is linear is clear, and that $D$ is 2-bounded follows from the comment in the definition of $D_n$. It remains to show weak*-weak*-continuity. By definition of the weak*-topology on $\ell^\infty$-sums, this happens if and only if $D_n$ is weak*-weak*-continuous for every $n\geq0$.

Let $n\geq0$. Since $\E^{n-1}$ and $f \mapsto f_{\aff(n)}$ are weak*-weak*-continuous (Proposition \ref{prop:Eprops}\eqref{item:Eprops4} and Lemma \ref{lem:affndnLip}), it remains to show that $f \mapsto f^{(n)}: \Lip_0([0,1],d_n) \to L^\infty([0,1],\H_n^1)$ is weak*-weak*-continuous.

Let $(f_i)_{i\geq0}$ be a sequence in $\Lip_0([0,1],d_n)$ with $\sup_{i\geq0} \|f_i\|_{\Lip([0,1],d_n)} < \infty$ and $f_i \to f_\infty$ pointwise for some $f_\infty \in \Lip_0([0,1],d_n)$. Let $h \in L^1([0,1],\H_n^1)$. Let $\eps>0$, and choose a smooth function $\phi: [0,1] \to \R$ with $\phi(0) = \phi(1) = 0$ and $\|h-\phi\|_{L^1([0,1],\H_n^1)} < \eps$. It can be readily verified using the standard proof of the Leibniz rule that, for $i^*\geq0$ or $i^*=\infty$, $(f_{i^*}\phi)^{(n)} = f_{i^*}^{(n)}\phi + f_{i^*}\phi^{(n)}$ almost everywhere. Together with Proposition \ref{prop:ftc}\eqref{item:ftc3} and the fact that $\phi(0)=\phi(1) = 0$, we get the integration-by-parts formula
\begin{equation}\label{eq:int-by-parts}
    \int_{[0,1]} f_{i^*}^{(n)}\phi \, d\H_n^1 = -\int_{[0,1]} f_{i^*}\phi^{(n)} \, d\H_n^1.
\end{equation}
Then we have, for $C:=\|f_\infty\|_{\Lip_0([0,1],d_n)}+\sup_{i\geq0}\|f_i\|_{\Lip_0([0,1],d_n)}$,

\begin{align*}
    \limsup_{i\to\infty}\left|\int_{[0,1]} (f_i^{(n)}-f^{(n)}_\infty)h \, d\H_n^1\right| &\leq C\eps + \limsup_{i\to\infty}\left|\int_{[0,1]} (f_i^{(n)}-f^{(n)}_\infty)\phi \, d\H_n^1\right| \\
    &\overset{\eqref{eq:int-by-parts}}{=} C\eps + \limsup_{i\to\infty}\left|\int_{[0,1]} (f_i-f_\infty)\phi^{(n)} \, d\H_n^1\right| \\
    &\overset{\text{DCT}}{=} C\eps.
\end{align*}
Since $C<\infty$ and $h \in L^1([0,1],\H_n^1)$ and $\eps>0$ were arbitrary, this shows $f_i^{(n)}$ weak*-converges to $f_\infty^{(n)}$ as $i \to \infty$.
\end{proof}

Here we give an alternate characterization of $D_n$ that will serve us in the proof of Theorem \ref{thm:Diso}.

\begin{lemma}\label{lem:E^(n-1)}
For every $n\geq0$ and $f \in \Lip_0([0,1],d)$, $\E^{n-1}(f_{\aff(n)}^{(n)}) = f_{\aff(n-1)}^{(n)}$ almost everywhere. Consequently, $D_n(f) = (f_{\aff(n)}-f_{\aff(n-1)})^{(n)}$.
\end{lemma}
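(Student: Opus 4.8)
The plan is to dispose of the case $n=0$ first, where $\E^{-1}$ is the zero map and $f_{\aff(-1)}$ is the zero function by convention, so both sides of the asserted identity vanish and both parts of the lemma hold trivially (note $D_0(f) = f_{\aff(0)}^{(0)} = (f_{\aff(0)}-f_{\aff(-1)})^{(0)}$). For $n \geq 1$, I would compute $\E^{n-1}(f_{\aff(n)}^{(n)})$ directly from its defining formula $\E^{n-1}(g) = g\,1_{\Diff_{n-1}} + \sum_{e \in \At_{n-1}}\big(\fint_e g\,d\H_n^1\big)1_e$ and check that it agrees a.e. with $f_{\aff(n-1)}^{(n)}$ on $\Diff_{n-1}$ and on each atom $e \in \At_{n-1}$ separately; since $\At_{n-1} \cup \{\Diff_{n-1}\}$ is an essential partition of $[0,1]$, this yields the identity. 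Two preliminaries are worth recording: $f_{\aff(n-1)} \in \Lip_0([0,1],d_{n-1}) \subset \Lip_0([0,1],d_n)$ because $d_{n-1} \leq d_n$, so every derivative appearing below is well-defined by Proposition \ref{prop:Lebesgue}; and $\Diff_{n-1} \subset \Diff_n$, which follows from Lemma \ref{lem:atom-in-atom}.

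On an atom $e = [u,v] \in \At_{n-1}$, the endpoints $u,v$ are not interior to any atom and hence lie in $\Diff_{n-1} \subset \Diff_n$, so $f_{\aff(n)}$ and $f_{\aff(n-1)}$ both agree with $f$ at $u$ and at $v$. Since $f_{\aff(n-1)}$ is affine on $e$ while, by Proposition \ref{prop:ddn}\eqref{item:ddn1}, $d_n$ is $2^n$ times the Euclidean metric on each dyadic child of $e$, evaluating the left difference quotient of $f_{\aff(n-1)}$ at an interior point of a child gives $f_{\aff(n-1)}^{(n)} = \frac{f(v)-f(u)}{2^n(v-u)}$ a.e. on $e$ (in particular $f_{\aff(n-1)}^{(n)}$ is $\A_{n-1}$-measurable by Remark \ref{rmk:Anmeasurability}). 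On the other hand, the fundamental theorem of calculus (Proposition \ref{prop:ftc}\eqref{item:ftc3}) applied to $f_{\aff(n)} \in \Lip_0([0,1],d_n)$ gives $\int_e f_{\aff(n)}^{(n)}\,d\H_n^1 = f_{\aff(n)}(v)-f_{\aff(n)}(u) = f(v)-f(u)$, and since $\H_n^1\llcorner e = 2^n\,\L^1\llcorner e$ (as noted after the definition of conditional expectation, equivalently Proposition \ref{prop:ddn}\eqref{item:ddn1}) this gives $\fint_e f_{\aff(n)}^{(n)}\,d\H_n^1 = \frac{f(v)-f(u)}{2^n(v-u)} = f_{\aff(n-1)}^{(n)}\res_e$, as needed.

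On $\Diff_{n-1}$ I would argue as in the proof of Lemma \ref{lem:Anmeasurability}. For a.e. $x \in \Diff_{n-1}$ the point $x$ is a left limit point of $\Diff_{n-1}$ (the exceptions form a countable set, namely $0$ together with the right endpoints of atoms) and both limits defining $f_{\aff(n)}^{(n)}(x)$ and $f_{\aff(n-1)}^{(n)}(x)$ exist by Proposition \ref{prop:Lebesgue}. For such an $x$, each of these limits may be computed along the cofinal family $\{y \in \Diff_{n-1}: y \to x^-\}$; and since $\Diff_{n-1} \subset \Diff_n$, both $f_{\aff(n)}$ and $f_{\aff(n-1)}$ coincide with $f$ on $\Diff_{n-1}$, so each derivative equals $\lim_{\Diff_{n-1}\ni y\to x^-}\frac{f(x)-f(y)}{d_n(y,x)}$; in particular they agree. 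Thus $f_{\aff(n)}^{(n)} = f_{\aff(n-1)}^{(n)}$ a.e. on $\Diff_{n-1}$, and combined with the previous paragraph we conclude $\E^{n-1}(f_{\aff(n)}^{(n)}) = f_{\aff(n-1)}^{(n)}$ a.e. The final assertion then follows immediately: $D_n(f) = f_{\aff(n)}^{(n)} - \E^{n-1}(f_{\aff(n)}^{(n)}) = f_{\aff(n)}^{(n)} - f_{\aff(n-1)}^{(n)} = (f_{\aff(n)}-f_{\aff(n-1)})^{(n)}$, the last step using that $g \mapsto g^{(n)}$ is linear a.e. on $\Lip_0([0,1],d_n)$ (one-sided limits add wherever both exist, and the exceptional set is null) and that $f_{\aff(n)},f_{\aff(n-1)} \in \Lip_0([0,1],d_n)$.

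The step I expect to require the most care is the comparison on $\Diff_{n-1}$: one must justify that the one-sided $d_n$-derivative of $f_{\aff(n)}$ at $x$, when it exists, can be recovered by restricting to increments $y \in \Diff_{n-1}$, which rests on the elementary but measure-theoretic fact that a.e. point of $\Diff_{n-1}$ is approached from the left within $\Diff_{n-1}$. By contrast, the atomic part is a routine combination of the fundamental theorem of calculus with the scaling relation $\H_n^1\llcorner e = 2^n\L^1\llcorner e$ on atoms of $\A_{n-1}$.
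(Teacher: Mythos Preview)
Your proposal is correct and follows essentially the same approach as the paper: handle $n=0$ by convention, then for $n\geq1$ verify the identity separately on $\Diff_{n-1}$ and on each atom $e\in\At_{n-1}$, using the fundamental theorem of calculus together with Proposition~\ref{prop:ddn}\eqref{item:ddn1} on the atomic part. Your treatment of the diffuse part is in fact more careful than the paper's: the paper simply asserts that $f_{\aff(n)}^{(n)}1_{\Diff_{n-1}}=f_{\aff(n-1)}^{(n)}1_{\Diff_{n-1}}$ because the two affinizations agree on $\Diff_{n-1}$, whereas you correctly observe that equality of functions on a set does not immediately give equality of one-sided derivatives there, and you fill this in by noting that a.e.\ point of $\Diff_{n-1}$ is a left limit point of $\Diff_{n-1}$, so the existing limits may be computed along increments in $\Diff_{n-1}$.
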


\begin{proof}
The case $n=0$ is clear since both functions are 0 everywhere by definition. Let $n\geq1$ and $f \in \Lip_0([0,1],d)$. By definition of $\E^{n-1}$, we need to show that $f_{\aff(n)}^{(n)}1_{\Diff_{n-1}} = f_{\aff(n-1)}^{(n)}1_{\Diff_{n-1}}$ almost everywhere and, for every $e \in \At_{n-1}$, $\left(\fint_e f_{\aff(n)}^{(n)} \, d \H_n^1\right)1_e = f_{\aff(n-1)}^{(n)}1_{e}$ almost everywhere. This first equation is clear since
$$f_{\aff(n)}1_{\Diff_{n-1}} = f1_{\Diff_{n-1}}= f_{\aff(n-1)}1_{\Diff_{n-1}}$$
by definition of affinization. Now let $e = [u,v] \in \At_{n-1}$ with $e_0 = [u,m]$ and $e_1 = [m,v]$. Since $f_{\aff(n-1)}$ is affine on each $e$ and $d_n$ is a multiple of the Euclidean metric on each $e_i \in \{e_0,e_1\}$ (Proposition \ref{prop:ddn}\eqref{item:ddn1}), it holds that
\begin{equation} \label{eq:E^(n-1)1}
    \dfrac{f_{\aff(n-1)}(m)-f_{\aff(n-1)}(u)}{d_n(u,m)} = f_{\aff(n-1)}^{(n)}(x) = \dfrac{f_{\aff(n-1)}(v)-f_{\aff(n-1)}(m)}{d_n(m,v)}
\end{equation}
for every $x \in e \setminus \min(e)$ (which constitutes a co-null subset), and
\begin{equation} \label{eq:E^(n-1)2}
    d_n(u,m) = d_n(m,v) = \frac{1}{2}\H_n^1(e).
\end{equation}
Then we have
\begin{align*}
    \fint_e f_{\aff(n)}^{(n)} \, d \H_n^1 &\overset{\text{Prop }\ref{prop:ftc}\eqref{item:ftc3}}{=} \dfrac{f_{\aff(n)}(v)-f_{\aff(n)}(u)}{\H_n^1(e)} \\
    &= \dfrac{f(v)-f(u)}{\H_n^1(e)} \\
    &= \dfrac{f_{\aff(n-1)}(v)-f_{\aff(n-1)}(u)}{\H_n^1(e)} \\
    &= \dfrac{f_{\aff(n-1)}(v)-f_{\aff(n-1)}(m)}{\H_n^1(e)}+\dfrac{f_{\aff(n-1)}(m)-f_{\aff(n-1)}(u)}{\H_n^1(e)} \\
    &\overset{\eqref{eq:E^(n-1)2}}{=} \dfrac{f_{\aff(n-1)}(v)-f_{\aff(n-1)}(m)}{2d_n(u,m)}+\dfrac{f_{\aff(n-1)}(m)-f_{\aff(n-1)}(u)}{2d_n(m,v)} \\
    &\overset{\eqref{eq:E^(n-1)1}}{=}f_{\aff(n-1)}^{(n)}(x)
\end{align*}
for a.e. $x \in e$. This proves the first sentence. The second sentence follows from the first, the definition of $D_n$, and the linearity of the derivative.
\end{proof}

At this point we want to construct an inverse to $D$. Naturally, it should be an integral operators of sorts. We define individual integral operators $I_n$, investigate their essential properties, and then define the total integral operator $I$ and see that it inverts $D$.

\begin{definition}[$n$th Integrals]
Fix $n \geq 0$. Define the \emph{$n$th integral} $I_n: L^\infty(\A_n) \to \Lip_0([0,1],d)$ by
$$I_n(g)(x) := \int_{[0,x]} g \: d\H_n^1.$$
\end{definition}

\begin{proposition}[Basic Properties of $I_n$] \label{prop:In}
Let $n \geq 1$ and $g \in \ker(\E^{n-1})$. Then the following hold.
\begin{enumerate}
    \item\label{item:In1} $I_n(g) = I_n(g)_{\aff(n)}$.
    \item\label{item:In2} $I_n(g)$ vanishes on $\Diff_{n-1}$.
    \item\label{item:In3} $\|I_n(g)\|_{\Lip_0([0,1],d_n)} \leq 4\|g\|_{L^\infty(\A_n)}$.
\end{enumerate}
\end{proposition}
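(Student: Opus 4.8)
The plan is to establish the three items essentially in the order stated, after first extracting the two consequences of the hypothesis $g \in \ker(\E^{n-1})$ that power the whole argument. Write $M := \|g\|_{L^\infty(\A_n)}$. Since $\At_{n-1} \cup \{\Diff_{n-1}\}$ is an essential partition and $\E^{n-1}$ is ordinary conditional expectation, the equation $\E^{n-1}(g) = 0$ forces simultaneously that $g = 0$ almost everywhere on $\Diff_{n-1}$ and that $\int_e g \, d\H_n^1 = 0$ for every $e \in \At_{n-1}$. I will also use two elementary geometric facts: for $e \in \At_{n-1}$ the endpoints of $e$ lie in $\Diff_{n-1}$ (distinct atoms have disjoint interiors and $e$ is a nondegenerate interval); and, by Proposition~\ref{prop:ddn}\eqref{item:ddn1}, $d_n$ restricted to any $e' \in \At_n$, and to each half of any $e \in \At_{n-1}$, coincides with $2^n$ times the Euclidean metric, so that $\H_n^1$ agrees with $2^n \L^1$ on every such set.

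Items \eqref{item:In1} and \eqref{item:In2} are then short. For \eqref{item:In1}: $I_n(g)$ is continuous (it is an indefinite integral against the finite measure $\H_n^1$), it agrees with itself on $\Diff_n$, and on each $e = [u,v] \in \At_n$ one has $I_n(g)(x) = I_n(g)(u) + \int_{[u,x]} g \, d\H_n^1$, which is affine in $x$ because $g\res_e$ is a.e.\ constant (Remark~\ref{rmk:Anmeasurability}) and $\H_n^1 \llcorner e = 2^n \L^1 \llcorner e$; hence $I_n(g)$ satisfies the three defining conditions of $I_n(g)_{\aff(n)}$, and they coincide by the uniqueness clause in the definition of affinization. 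For \eqref{item:In2}: if $x \in \Diff_{n-1}$ then, using $0 \in \Diff_0 \subset \Diff_{n-1}$ and the essential partition, $[0,x]$ equals $([0,x] \cap \Diff_{n-1}) \cup \bigcup\{\intr(e) : e \in \At_{n-1},\, e \subset [0,x]\}$ up to a null set, so $I_n(g)(x) = \int_{[0,x]\cap\Diff_{n-1}} g\, d\H_n^1 + \sum_{e\subset[0,x]} \int_e g\, d\H_n^1 = 0$ by the two consequences above.

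Item \eqref{item:In3} is the substantive part. The naive bound $|I_n(g)(y) - I_n(g)(x)| \le M\,\H_n^1([x,y])$ is by itself useless, since $\H_n^1$ of a subarc can vastly exceed its $d_n$-diameter; the role of \eqref{item:In2} is precisely to let us discard almost all of $[x,y]$. First I would prove the local bound: if $[a,b] \subset e$ for some $e \in \At_{n-1}$, then $|I_n(g)(b) - I_n(g)(a)| \le M\,\H_n^1([a,b]) = 2^n M\,|b-a| \le 2M\,d_n(a,b)$, the last step because $d_n$ is $2^n$ times Euclidean on each half of $e$ and $([0,1],d_n) \in \mathcal{S}_1'$ is $1$-bounded turning, so that at the midpoint $m$ of $e$ one has $d_n(a,b) \ge \max\{d_n(a,m), d_n(m,b)\} \ge 2^{n-1}|b-a|$. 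Then, for arbitrary $x < y$, I would write $I_n(g)(y) - I_n(g)(x) = \int_{[x,y]} g\, d\H_n^1$ and decompose the domain along $\{\Diff_{n-1}\} \cup \{\intr(e)\}_{e\in\At_{n-1}}$: the part over $\Diff_{n-1}$ vanishes since $g$ does, and every $e \in \At_{n-1}$ with $e \subset [x,y]$ contributes $\int_e g\, d\H_n^1 = 0$, so only the at most two atoms $e_x, e_y \in \At_{n-1}$ whose interiors contain $x$, resp.\ $y$, can contribute. If $e_x = e_y$ then $[x,y] \subset e_x$ and the local bound gives the estimate with constant $2$. Otherwise, writing $v_x := \max(e_x) \in \Diff_{n-1}$, the $e_x$-contribution equals $\int_{[x,v_x]} g\, d\H_n^1 = I_n(g)(v_x) - I_n(g)(x) = -I_n(g)(x)$ by \eqref{item:In2}, and $|I_n(g)(x)| = |I_n(g)(v_x) - I_n(g)(x)| \le 2M\,d_n(x,v_x) \le 2M\,d_n(x,y)$, the last inequality because $[x,v_x] \subset [x,y]$ and, $([0,1],d_n)$ being $1$-bounded turning, $d_n$ is monotone along the arc; the $e_y$-contribution is handled symmetrically. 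Summing the two contributions gives $|I_n(g)(y) - I_n(g)(x)| \le 4M\,d_n(x,y)$, and since $I_n(g)(0) = 0$ this is exactly \eqref{item:In3}.

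The crux, and the only point that is not bookkeeping, is obtaining a constant in \eqref{item:In3} that does not depend on $n$: a local-to-global device such as Lemma~\ref{L:dyadic} cannot be invoked directly here, because the doubling constant of $([0,1],d_n)$ degenerates as $n \to \infty$. The mechanism that resolves this is that $g \in \ker(\E^{n-1})$ annihilates every ``interior'' atom, so the increment of $I_n(g)$ across $[x,y]$ is supported on two end atoms coming from the coarser partition $\At_{n-1}$, on which $d_n$ is a uniform rescaling of the Euclidean metric and on which $\H_n^1$ is comparable to $d_n$ up to the absolute constant $2$.
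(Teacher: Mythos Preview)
Your proof is correct and follows essentially the same strategy as the paper: items \eqref{item:In1} and \eqref{item:In2} match the paper's argument directly, and for \eqref{item:In3} both proofs use \eqref{item:In2} to annihilate all but the two boundary atoms of $\At_{n-1}$ and then compare $\H_n^1$ with $d_n$ locally. Your treatment of \eqref{item:In3} is a mild streamlining: where the paper splits each boundary atom $e$ at its midpoint and tracks four subcases, you observe once that $\H_n^1([a,b]) = 2^n|b-a| \le 2\,d_n(a,b)$ on all of $e$, which absorbs the midpoint bookkeeping into a single constant.
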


\begin{proof}
The first item follows easily from the facts that, for each $e \in \At_n$, $g$ is a.e. a constant on $e$ (by virtue of $\A_n$-measurability) and $d_n$ is a multiple of the Euclidean metric on $e$ (Proposition \ref{prop:ddn}\eqref{item:ddn1}).

For \eqref{item:In2}, let $x \in \Diff_{n-1}$. By Lemma \ref{lem:DiffnAn}, $[0,x] \in \A_{n-1}$. Together with the assumption that $g \in \ker(\E^{n-1})$, we get
\begin{align*}
    I_n(g)(x) = \int_{[0,x]} g \, \H_n^1 \overset{\text{Prop }\ref{prop:Eprops}\eqref{item:Eprops2}}{=} \int_{[0,x]} \E^{n-1}(g) \, \H_n^1 = \int_{[0,x]} 0 \, \H_n^1 = 0.
\end{align*}
Finally, we prove \eqref{item:In3}. Fix $e \in \At_{n-1}$ and $e_i \in \{e_0,e_1\}$. Set $f := I_n(g)$. By Proposition \ref{prop:ddn}\eqref{item:ddn1}, $d_n$ restricted to $e_i$ is a multiple of the Euclidean metric, which implies that, for every $[u,v] \in e_i$,
\begin{equation} \label{eq:In1}
    |f(v)-f(u)| \leq \|g\|_{L^\infty}d_n(u,v).
\end{equation}

Now let $[x,y] \subset [0,1]$ be arbitrary. As in the proof of Lemma \ref{lem:affndnLip}, there are three cases to consider: $x,y \in \bigcup_{e \in \At_{n-1}} \intr(e)$, $\{x,y\} \cap \Diff_{n-1} \neq \emptyset$ and $\{x,y\} \cap \bigcup_{e \in \At_{n-1}} \intr(e) \neq \emptyset$, and $x,y \in \Diff_{n-1}$. We will treat the first case only, the other cases follow from a similar argument (with no worse bounds). Assume $x,y \in \bigcup_{e \in \At_{n-1}} \intr(e)$. Let $[u_x,v_x],[u_y,v_y] \in \At_{n-1}$ such that $x \in (u_x,v_x)$ and $y \in (u_y,v_y)$. There are two more cases to consider: $v_x \leq u_y$ or $[u_x,v_x] = [u_y,v_y]$. We treat the first of these only - the second results in no worse bounds. By \eqref{item:In2},
\begin{equation} \label{eq:In2}
    f(v_x) = f(u_y) = 0.
\end{equation}
Let $m_x$ be the midpoint of $[u_x,v_x]$, so that $[u_x,v_x]_0 = [u_x,m_x]$ and $[u_x,v_x]_1 = [m_x,v_x]$. Similarly, let $m_y$ be the midpoint of $[u_y,v_y]$. There are four subcases to consdier: $x \leq m_x$ or $x \geq m_x$, and $y \leq m_y$ or $y \geq m_y$. We will only treat treat the case $x \leq m_x$ and $y \geq m_y$ - the other cases can be treated with a similar argument and yield possibly better bounds. Then we have
\begin{align*}
    |f(y)-f(x)| &\leq |f(m_x)-f(x)| + |f(v_x)-f(m_x)| + |f(u_y)-f(v_x)| \\
    &\:\:\:\:+ |f(u_y)-f(m_y)| + |f(y)-f(m_y)| \\
    &\overset{\eqref{eq:In2}}{=} |f(m_x)-f(x)| + |f(v_x)-f(m_x)| \\
    &\:\:\:\:+ |f(u_y)-f(m_y)| + |f(y)-f(m_y)| \\
    &\overset{\eqref{eq:In1}}{\leq} \|g\|_{L^\infty}d_n(x,m_x) + \|g\|_{L^\infty}d_n(m_x,v_x) \\
    &\:\:\:\:+ \|g\|_{L^\infty}d_n(m_y,u_y) + \|g\|_{L^\infty}d_n(m_y,y) \\
    &\leq 4\|g\|_{L^\infty}d_n(x,y). 
\end{align*}
Here, the final inequality relies on the fact that $([0,1],d_n)$ is $1$-bounded turning.
\end{proof}

\begin{definition}[Total Integral]
Define the \emph{total integral} map \\ $I:$ $\bigoplus^\infty_{n\geq 0} \ker(\E^{n-1}) \to \Lip_0([0,1],d)$ by $I := \sum_{n\geq 0} I_n$.
\end{definition}

The proof of the next theorem requires an application of Lemma \ref{L:dyadic}, and this is the one and only time when the doubling property is used.

\begin{theorem}[Boundedness and Weak*-Continuity of $I$] \label{thm:Ibndd}
The sum defining $I$ converges pointwise absolutely, $I$ is $8L'$-bounded, and $I$ is weak*-weak* continuous, where $L'$ is the constant from Lemma \ref{L:dyadic}.
\end{theorem}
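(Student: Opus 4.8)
The plan is to verify the three assertions in order. \emph{Absolute convergence and continuity of $I(g)$.} The crucial estimate is that, for $n\geq1$ and $g\in\ker(\E^{n-1})$,
\[\|I_n(g)\|_{C([0,1])}\leq 4\|g\|_{L^\infty(\A_n)}\,2^{-\lfloor (n-1)/n_0\rfloor},\]
where $n_0$ is the constant of \cite[Lemma~3.7]{HM12} used in Lemma~\ref{L:dyadic} (it depends only on the doubling constant). To prove this, fix $x\in[0,1]$: if $x\in\Diff_{n-1}$ then $I_n(g)(x)=0$ by Proposition~\ref{prop:In}\eqref{item:In2}; otherwise $x$ lies in the interior of a unique $e'=[a,b]\in\At_{n-1}$, whose endpoints lie in $\Diff_{n-1}$ (a dyadic endpoint of an atom cannot be interior to another atom), so $I_n(g)(a)=0$ and Proposition~\ref{prop:In}\eqref{item:In3} gives
\[|I_n(g)(x)|\leq 4\|g\|_{L^\infty}\,d_n(a,x)\leq 4\|g\|_{L^\infty}\,\Delta_n(e')=4\|g\|_{L^\infty}\,\Delta(e'),\]
the last step since $e'\in\A_{n-1}\subset\A_n$ (Remark~\ref{rmk:Delta=Delta_n}). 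As $e'\in\At_{n-1}$ forces $\Delta(e')=2^{(n-1)-\gen(e')}\leq1$, we get $\gen(e')\geq n-1$, and iterating \cite[Lemma~3.7]{HM12} up the dyadic ancestors of $e'$ yields $\Delta(e')\leq 2^{-\lfloor\gen(e')/n_0\rfloor}\leq 2^{-\lfloor(n-1)/n_0\rfloor}$. Together with the trivial bound $\|I_0(g)\|_{C([0,1])}\leq\|g\|_{L^\infty}$, summing the resulting geometric series shows $\sum_{n}\|I_n(g_n)\|_{C([0,1])}<\infty$ for every $(g_n)_n$ in the domain. Hence $\sum_n I_n(g_n)$ converges absolutely and uniformly; in particular $I(g):=\sum_n I_n(g_n)$ is continuous, $I$ is linear, and $I(g)(0)=0$.

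\emph{The norm bound.} Since $I(g)$ is continuous, Lemma~\ref{L:dyadic} reduces the estimate $\|I(g)\|_{\Lip_0([0,1],d)}\leq 8L'\sup_n\|g_n\|_{L^\infty(\A_n)}$ to checking $|I(g)(v)-I(g)(u)|\leq 8\sup_n\|g_n\|_{L^\infty}\,\Delta([u,v])$ for all $[u,v]=e\in\D$. Write $I(g)(v)-I(g)(u)=\sum_n\int_e g_n\,d\H_n^1$ and let $n_1$ be the largest $n$ with $e$ contained in an atom of $\At_n$ (finite, as atoms shrink). For $n\leq n_1$ one has $\H_n^1=2^n\L^1$ on the atom of $\At_n$ containing $e$ (Proposition~\ref{prop:ddn}\eqref{item:ddn1} and Lemma~\ref{lem:atom-in-atom}), hence $|\int_e g_n\,d\H_n^1|\leq\|g_n\|_{L^\infty}2^{n-\gen(e)}$, and summing over $n\leq n_1$ and using $2^{n_1-\gen(e)}=\Delta_{n_1}(e)\leq\Delta(e)$ (since $\Delta$ decreases by at most a factor $2$ per dyadic generation) bounds this part by $2\sup_n\|g_n\|_{L^\infty}\Delta(e)$. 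For $n>n_1$, decompose $e$ along the partition $\At_{n-1}\cup\{\Diff_{n-1}\}$: the hypothesis $\E^{n-1}(g_n)=0$ forces $g_n=0$ a.e.\ on $\Diff_{n-1}$ and $\fint_f g_n=0$ for every $f\in\At_{n-1}$, so every piece of $e$ that is contained in $\Diff_{n-1}$ or is an atom $f\subset e$ contributes $0$; the only other possibility is an atom of $\At_{n-1}$ strictly containing $e$, which occurs only for $n=n_1+1$ and contributes at most $\|g_n\|_{L^\infty}2^{n_1+1-\gen(e)}\leq 2\sup_n\|g_n\|_{L^\infty}\Delta(e)$. Adding the two contributions gives $|I(g)(v)-I(g)(u)|\leq 4\sup_n\|g_n\|_{L^\infty}\Delta(e)$, and Lemma~\ref{L:dyadic} then shows $I$ is $8L'$-bounded.

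\emph{Weak*-continuity.} The domain $\bigoplus_{n\geq0}^\infty\ker(\E^{n-1})$ has a separable canonical predual, so by Krein--Smulian it suffices to check sequential weak*-weak*-continuity of $I$ on the unit ball. If $g^{(j)}\to g^{(\infty)}$ weak* with $\sup_j\sup_n\|g^{(j)}_n\|_{L^\infty}<\infty$, then, applying coordinate projections and the weak*-continuous inclusions $\ker(\E^{n-1})\hookrightarrow L^\infty(\A_n)$, we get $g^{(j)}_n\to g^{(\infty)}_n$ weak* in $L^\infty(\A_n)$ for each $n$. For fixed $x$, the functional $g\mapsto I_n(g)(x)$ is weak*-continuous on $L^\infty(\A_n)$: if $x\in\Diff_n$ then $[0,x]\in\A_n$ (Lemma~\ref{lem:DiffnAn}) and $I_n(g)(x)=\int g\,1_{[0,x]}\,d\H_n^1$ with $1_{[0,x]}\in L^1(\A_n)$; if $x$ is interior to an atom $e'\in\At_n$ then, by Proposition~\ref{prop:In}\eqref{item:In1}, $I_n(g)(x)$ is an affine combination of $I_n(g)(\min e')$ and $I_n(g)(\max e')$, each covered by the previous case. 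Hence $I_n(g^{(j)}_n)(x)\to I_n(g^{(\infty)}_n)(x)$ for every $n$, and combining this with the uniform tail estimate from the first step gives $I(g^{(j)})(x)\to I(g^{(\infty)})(x)$ for every $x$; since $\{I(g^{(j)})\}$ is norm-bounded by the second step, this is precisely weak* convergence in $\Lip_0([0,1],d)$.

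The main obstacle is the dyadic-edge estimate in the second step: it requires simultaneously handling the distinct measures $\H^1_n$, exploiting the mean-zero (martingale-difference) property of the $g_n$ to see that only boundedly many of the terms $\int_e g_n\,d\H_n^1$ survive, and comparing $\Delta_n$ with $\Delta$; it is exactly here --- through Lemma~\ref{L:dyadic} and \cite[Lemma~3.7]{HM12} --- that the doubling hypothesis enters.
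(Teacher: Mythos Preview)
Your proof is correct. The overall strategy---reduce the Lipschitz estimate to dyadic edges and invoke Lemma~\ref{L:dyadic}---matches the paper's, but you handle two of the three assertions differently, and it is worth recording the contrast.

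For pointwise convergence, the paper observes (via Lemma~\ref{lem:DDiff} and Proposition~\ref{prop:In}\eqref{item:In2}) that $f_N(x)$ is \emph{eventually constant} on the dense set $\bigcup_n\Diff_n$, and combines this with the uniform Lipschitz bound on partial sums. You instead extract a quantitative sup-norm tail estimate $\|I_n(g_n)\|_{C([0,1])}\leq 4\|g_n\|_{L^\infty}2^{-\lfloor(n-1)/n_0\rfloor}$ from the doubling property, yielding uniform (hence pointwise absolute) convergence directly. Your route is more explicit and buys you the dominated-convergence step in the weak*-continuity argument; the paper's route uses doubling only once (in Lemma~\ref{L:dyadic}) rather than twice.

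For the dyadic-edge estimate, the paper bounds each $|I_n(g_n)(v)-I_n(g_n)(u)|$ via the Lipschitz constant of $I_n$ (Proposition~\ref{prop:In}\eqref{item:In3}) and then sums $d_n(u,v)$ as a geometric series using Proposition~\ref{prop:ddn}\eqref{item:ddn2}. You instead bound $|\int_e g_n\,d\H_n^1|$ directly, exploiting the mean-zero condition to kill all terms with $n>n_1+1$ and computing $\H_n^1(e)=2^{n-\gen(e)}$ on atoms for $n\leq n_1+1$. Your argument is a bit sharper (you get $4$ rather than $8$ before applying Lemma~\ref{L:dyadic}) and avoids Proposition~\ref{prop:In}\eqref{item:In3} altogether at this stage. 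One minor clarification: your claim ``$\H_n^1=2^n\L^1$ on the atom of $\At_n$ containing $e$'' is correct but is really justified on the \emph{child of the atom of $\At_{n-1}$} containing that atom (which is what Proposition~\ref{prop:ddn}\eqref{item:ddn1} gives), together with the strict containment $\At_n\ni f\subsetneq f'\in\At_{n-1}$ coming from Lemma~\ref{lem:atom-in-atom}.
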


\begin{proof}
Let $(g_n)_{n \geq 0} \in \bigoplus^\infty_{n\geq 0} \ker(\E^{n-1})$. We will control the Lipschitz constants of the partial sums $\sum_{n=0}^N I_n(g_n)$ when restricted to the endpoints of dyadic edges (independent of $N$ and the edge). From there we reach the desired conclusion with the help of Lemma \ref{L:dyadic}.

Set $A := \sup_{n \geq 0} \|g_n\|_{L^\infty} < \infty$. Fix $N \geq 0$, and let $f_N := \sum_{n=0}^N I_n(g_n)$. Obviously, $f_N$ is continuous. Let $[u,v] \in \D$. By Lemma \ref{lem:DDiff}, there exists $k\geq 0$ and $e \in \At_k$ such that $\{u,v\} \subset e \cap \Diff_{k+1}$. By Proposition \ref{prop:In}\eqref{item:In2}, this implies $I_n(g_n)(u) = I_n(g_n)(v) = 0$ for all $n \geq k+2$. Hence, if we define $M := \min\{N,k+1\}$, we have
\begin{equation} \label{eq:Ibndd1}
    f_N(v)-f_N(u) = \sum_{n=0}^M I_n(g_n)(v) - I_n(g_n)(u).
\end{equation}
Furthermore, by Proposition \ref{prop:In}\eqref{item:In3} and the definition of $A$, for each $0 \leq n \leq M$ we have
\begin{equation} \label{eq:Ibndd2}
    |I_n(g_n)(v) - I_n(g_n)(u)| \leq 4Ad_n(u,v).
\end{equation}
Combining these gives us
\begin{align*}
    |f_N(v)-f_N(u)| &\overset{\eqref{eq:Ibndd1}}{\leq} \sum_{n=0}^M |I_n(g_n)(v) - I_n(g_n)(u)| \overset{\eqref{eq:Ibndd2}}{\leq} 4A\sum_{n=0}^M d_n(u,v) \\
    &\overset{\text{Prop }\ref{prop:ddn}\eqref{item:ddn2}}{=} 4A\sum_{n=0}^{M} 2^{n-M}d_M(u,v) \leq 8Ad(u,v).
\end{align*}

Since $[u,v] \in \D$ was arbitrary, Lemma \ref{L:dyadic} implies that the Lipschitz constant of $f_N$ is bounded by $8L'A$. Since, by Lemma \ref{lem:DDiff} and Proposition \ref{prop:In}\eqref{item:In2}, $f_N(x)$ is eventually (in $N$) constant for $x$ in the dense subset $\bigcup_{n\geq 0} \Diff_n \subset [0,1]$, the first two claims follow.

For the third claim, let $g^i = (g^i_n)_{n\geq 0} \in \bigoplus^\infty_{n\geq 0} \ker(\E^{n-1})$ with \\ $\sup_i\sup_n\|g^i_n\|_{L^\infty}$ $< \infty$ and $(g^i_n)_{n\geq 0}$ weak*-converging to $g = (g_n)_{n\geq 0}$. This means that for every $(h_n)_{n \geq 0}$ with $h_n \in L^1(\A_n)$ and $\sum_{n\geq 0} \|h_n\|_{L^1} < \infty$,
\begin{equation} \label{eq:weak*def}
    \lim_{i \to \infty} \sum_{n\geq 0} \int g^i_nh_n \:d\H_n^1 = \sum_{n\geq 0} \int g_nh_n \:d\H_n^1.
\end{equation}
We need to show that $I(g^i)$ converges pointwise to $I(g)$. Since \[\sup_i \|I(g^i)\|_{\Lip_0([0,1],d)}<\infty \qquad \text{and} \qquad \|I(g)\|_{\Lip_0([0,1],d)} < \infty\]
(by boundedness of $I$), it suffices to prove pointwise convergence on the dense subset $\bigcup_{n\geq 0} \Diff_n \subset [0,1]$.

Fix $k \geq 0$ and $x \in \Diff_k$. Define $(h_n)_{n\geq 0}$ by $h_n = 1_{[0,x]}$ if $k \leq n$ and $h_n = 0$ if $n > k$. Then by \eqref{eq:weak*def}, we get
\begin{align*}
    \lim_{i\to\infty} I(g^i)(x)
    = \lim_{i\to\infty} \sum_{n\geq0} I_n(g^i_n)(x)
    = \lim_{i\to\infty} \sum_{n=0}^k I_n(g^i_n)(x) \\
    = \lim_{i\to\infty} \sum_{n=0}^k \int_{[0,x]}g^i_n \: d\H_n^1
    = \lim_{i\to\infty} \sum_{n\geq0} \int g^i_nh_n \: d\H_n^1 \\
    = \sum_{n\geq 0} \int g_nh_n \:d\H_n^1
    = \sum_{n=0}^k \int_{[0,x]}g_n \: d\H_n^1 \\
    = \sum_{n=0}^k I_n(g_n)(x)
    = \sum_{n\geq0} I_n(g_n)(x)
    = I(g)(x).
\end{align*}
\end{proof}

We conclude this subsection with our main theorem.

\begin{theorem}[Main Theorem] \label{thm:Diso}
The maps $D: \Lip_0([0,1],d) \to$ \\ $\bigoplus^\infty_{n\geq 0} \ker(\E^{n-1})$ and $I: \bigoplus^\infty_{n\geq 0} \ker(\E^{n-1}) \to \Lip_0([0,1],d)$ are inverses, and thus $D$ is an isomorphism. Moreover, the isomorphism constant depends only on the doubling constant of $([0,1],d)$.
\end{theorem}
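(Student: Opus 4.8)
The plan is to show that $I$ and $D$ are two-sided inverses by separately verifying $I \circ D = \mathrm{id}$ on $\Lip_0([0,1],d)$ and $D \circ I = \mathrm{id}$ on $\bigoplus_{n\geq0}^\infty \ker(\E^{n-1})$; once this is done, the fact that both $D$ (by Theorem \ref{thm:Dweak*}) and $I$ (by Theorem \ref{thm:Ibndd}) are bounded with constants depending only on the doubling constant immediately gives that $D$ is an isomorphism with the claimed quantitative control. Since all maps in sight are weak*-weak*-continuous and bounded, and both sides are preduals of separable spaces, it actually suffices to check the two identities on a weak*-dense (indeed, any norm-dense) subset if that is convenient; but in fact direct pointwise computations will work without needing density reductions.

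First I would prove $I(D(f)) = f$. By Lemma \ref{lem:E^(n-1)}, $D_n(f) = (f_{\aff(n)} - f_{\aff(n-1)})^{(n)}$, so $I_n(D_n(f))(x) = \int_{[0,x]} (f_{\aff(n)} - f_{\aff(n-1)})^{(n)} \, d\H_n^1$, which by Proposition \ref{prop:ftc}\eqref{item:ftc3} (applied to $f_{\aff(n)} - f_{\aff(n-1)} \in \Lip_0([0,1],d_n)$, using that $f_{\aff(n)}$ is $d_n$-Lipschitz by Lemma \ref{lem:affndnLip} and $f_{\aff(n-1)}$ is $d_{n-1}$-Lipschitz hence $d_n$-Lipschitz) equals $f_{\aff(n)}(x) - f_{\aff(n-1)}(x)$. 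Summing over $n$, the series telescopes: $\sum_{n=0}^N I_n(D_n(f))(x) = f_{\aff(N)}(x) - f_{\aff(-1)}(x) = f_{\aff(N)}(x)$, and by Lemma \ref{lem:affconverge} this converges pointwise to $f(x)$ as $N \to \infty$. Since $I(D(f))$ is defined as the pointwise absolutely convergent sum $\sum_n I_n(D_n(f))$, we conclude $I(D(f)) = f$.

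Next I would prove $D(I(g)) = g$ for $g = (g_n)_{n\geq0} \in \bigoplus_{n\geq0}^\infty \ker(\E^{n-1})$. Write $f := I(g) = \sum_{m\geq0} I_m(g_m)$. I need to compute $D_n(f) = f_{\aff(n)}^{(n)} - \E^{n-1}(f_{\aff(n)}^{(n)})$. The key observation is that affinization interacts predictably with the terms $I_m(g_m)$: by Proposition \ref{prop:In}\eqref{item:In1}, $I_m(g_m) = I_m(g_m)_{\aff(m)}$, so for $m \leq n$ one has $(I_m(g_m))_{\aff(n)} = I_m(g_m)$ by Lemma \ref{lem:afftower}, while for $m \geq n+1$, Proposition \ref{prop:In}\eqref{item:In2} says $I_m(g_m)$ vanishes on $\Diff_{m-1} \supset \Diff_n$, so $(I_m(g_m))_{\aff(n)} = 0$. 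By the pointwise convergence and linearity/continuity of affinization (the bound in Lemma \ref{lem:affndnLip} lets one pass the limit through, or one argues on $\Diff_n$ where everything is finite), $f_{\aff(n)} = \sum_{m=0}^n I_m(g_m)$. Differentiating with respect to $d_n$ and using that, for $m < n$, $I_m(g_m) = (I_m(g_m))_{\aff(m)} = (I_m(g_m))_{\aff(n-1)}$ so its $d_n$-derivative is $\A_{n-1}$-measurable (Lemma \ref{lem:Anmeasurability}) and hence annihilated by $g \mapsto g - \E^{n-1}(g)$, while for $m = n$ one has $(I_n(g_n))^{(n)} = g_n$ almost everywhere by Proposition \ref{prop:ftc}\eqref{item:ftc2} and $\E^{n-1}(g_n) = 0$ since $g_n \in \ker(\E^{n-1})$, we get $D_n(f) = g_n - \E^{n-1}(g_n) = g_n$. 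Hence $D(I(g)) = g$.

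The main obstacle I anticipate is the bookkeeping in the second identity: carefully justifying the interchange of the infinite sum $\sum_m I_m(g_m)$ with the (nonlinear-looking but actually linear) affinization operator and with the $d_n$-derivative, and in particular making precise the claim $f_{\aff(n)} = \sum_{m=0}^n I_m(g_m)$. This is where Proposition \ref{prop:In} parts \eqref{item:In1} and \eqref{item:In2} and Lemma \ref{lem:afftower} do the real work, but one must be attentive that the derivative $f_{\aff(n)}^{(n)}$ is taken after truncating the sum, not before, and that the contributions of the tail terms $m > n$ genuinely vanish after $n$th affinization rather than merely being small. Everything else — the telescoping in the first identity, and assembling the isomorphism constant from the bounds $\|D\| \leq 2$ and $\|I\| \leq 8L'$ with $L'$ depending only on the doubling constant — is routine.
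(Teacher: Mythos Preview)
Your proof is correct and follows essentially the same strategy as the paper. The $I \circ D = \mathrm{id}$ direction is identical: Lemma \ref{lem:E^(n-1)} plus Proposition \ref{prop:ftc}\eqref{item:ftc3} gives $I_n(D_n(f)) = f_{\aff(n)} - f_{\aff(n-1)}$, then telescope and invoke Lemma \ref{lem:affconverge}.

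For $D \circ I = \mathrm{id}$ the paper takes a mildly different path: it uses weak*-weak*-continuity of $D$ and $I$ to reduce to $g$ supported at a single index $k$, which sidesteps the infinite-sum bookkeeping you flag as the main obstacle. Your direct computation is also valid, and the key identity $f_{\aff(n)} = \sum_{m=0}^n I_m(g_m)$ follows exactly as you indicate (the finite sum agrees with $f$ on $\Diff_n$ since the tail vanishes there by Proposition \ref{prop:In}\eqref{item:In2}, and it is affine on each atom of $\At_n$ by Proposition \ref{prop:In}\eqref{item:In1} and Lemma \ref{lem:afftower}). Two remarks. First, your citation of Lemma \ref{lem:Anmeasurability} for the $\A_{n-1}$-measurability of $(I_m(g_m))^{(n)}$ when $m<n$ is slightly off: that lemma gives $\A_n$-measurability of an $n$th affinization's $d_n$-derivative, not $\A_{n-1}$-measurability. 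What you actually need is that this term is killed by $h \mapsto h - \E^{n-1}(h)$, and that follows more cleanly from Lemma \ref{lem:E^(n-1)} applied to $I_m(g_m)$, since $(I_m(g_m))_{\aff(n)} = (I_m(g_m))_{\aff(n-1)}$ for $m \leq n-1$. Second, and even simpler: once you have the partial-sum formula for both $n$ and $n-1$, subtraction gives $f_{\aff(n)} - f_{\aff(n-1)} = I_n(g_n)$ directly, whence $D_n(f) = (I_n(g_n))^{(n)} = g_n$ by Lemma \ref{lem:E^(n-1)} and Proposition \ref{prop:ftc}\eqref{item:ftc2}, with no measurability discussion needed at all.
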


\begin{proof}
First we prove that $D \circ I$ is the identity. By linearity and weak*-weak* continuity of $D$ and $I$ (Theorems \ref{thm:Dweak*} and \ref{thm:Ibndd}), and the fact that the weak*-closed linear span of $\bigcup_{n \geq 0} \ker(\E^{n-1})$ is all of $\bigoplus^\infty_{n \geq 0} \ker(\E^{n-1})$, it suffices to prove that $D \circ I$ is the identity when restricted to an arbitrary $\ker(\E^{k-1})$. Let $k \geq 0$ and $g = (g_n)_{n\geq 0} \in \bigoplus^\infty_{n \geq 0} \ker(\E^{n-1})$ with $g_n = 0$ for every $n \neq k$. Set $f := I(g) = I_k(g_k)$. Then by Proposition \ref{prop:In}\eqref{item:In1} and Lemma \ref{lem:afftower}, $f_{\aff(n)} = f$ for all $n \geq k$, and by Proposition \ref{prop:In}\eqref{item:In2}, $f$ vanishes on $\Diff_{k-1}$. It is easy to see by definition of affinization and the nesting property $\Diff_n \subset \Diff_{n+1}$ that this second fact implies $f_{\aff(n)} = 0$ for all $n < k$. The last two sentences together with Lemma \ref{lem:E^(n-1)} imply $D_k(f) = f^{(k)}$ and $D_n(f) = 0$ for all $n \neq k$. Then by Proposition \ref{prop:ftc}\eqref{item:ftc2} and the definition of $f$, $D_k(f) = g_k$ and $D_n(f) = 0$ for all $n \neq k$. That is, $D(I(g))=g$.

Now we prove that $I \circ D$ is the identity. Let $f \in \Lip_0([0,1],d)$ First note that Lemma \ref{lem:E^(n-1)} and Proposition \ref{prop:ftc}\eqref{item:ftc3} imply
\begin{equation} \label{eq:Diso1}
    I_n(D_n(f)) = f_{\aff(n)} - f_{\aff(n-1)}.
\end{equation}
Then by the definition of $I$ and Theorem \ref{thm:Ibndd},
\begin{equation} \label{eq:Diso2}
    I(D(f)) = \lim_{N\to\infty}\sum_{n=0}^N I_n(D_n(f))
\end{equation}
where the convergence is pointwise. Then we get
$$I(D(f)) \overset{\eqref{eq:Diso2}}{=} \lim_{N\to\infty}\sum_{n=0}^N I_n(D_n(f)) \overset{\eqref{eq:Diso1}}{=} \lim_{N\to\infty} f_{\aff(N)} \overset{\text{Lem }\ref{lem:affconverge}}{=} f.$$

The isomorphism constant of $D$ is the product of $\|D\|$ and $\|I\|$. By Theorem \ref{thm:Dweak*}, $\|D\| \leq 2$, and by Theorem \ref{thm:Ibndd}, $\|I\| \leq 8L'$, where $L'$ depends only on the doubling constant of $([0,1],d)$.
\end{proof}

\subsection{Isomorphism of Free Space to $L^1$-space}
In this subsection, we investigate the weak*-isomorphism type of the spaces $\ker(\E^{n-1})$ and use the results to identity the isomorphism type of $\F([0,1],d)$.

\begin{lemma} \label{lem:iso1}
For every $n \geq 1$, there exist finite measures $\mu_j$ on measurable spaces $Y_j$ such that $\ker(\E^{n-1})$ is weak*-isometric to $\bigoplus^\infty_j K_j$, where $K_j$ is the weak*-closed subspace of $L^\infty(\mu_j)$ consisting of all $g \in L^\infty(\mu_j)$ with $\int g \, d\mu_j = 0$.
\end{lemma}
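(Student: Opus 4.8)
The plan is to decouple the condition $\E^{n-1}(g)=0$ along the essential partition $\{\Diff_{n-1}\}\cup\At_{n-1}$ of $[0,1]$, thereby writing $\ker(\E^{n-1})$ as a countable $\ell^\infty$-sum of ``mean-zero'' subspaces indexed by the atoms of $\A_{n-1}$, and then to recognize each summand as a space of the form $K_j$. Concretely, for $g\in L^\infty(\A_n)$ the equation $\E^{n-1}(g)=0$ is, by the formula defining $\E^{n-1}$ and the fact that $\At_{n-1}\cup\{\Diff_{n-1}\}$ is an essential partition, equivalent to: $g=0$ a.e.\ on $\Diff_{n-1}$ and $\int_e g\,d\H_n^1=0$ for every $e\in\At_{n-1}$. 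Enumerate $\At_{n-1}$ as $\{e_j\}_j$, a countable (possibly finite or empty) subcollection of $\D$; if it is empty then $\Diff_{n-1}=[0,1]$, $\ker(\E^{n-1})=\{0\}$, and the statement is vacuous, so assume it is nonempty. For each $j$ put $Y_j:=(e_j,\A_n|_{e_j})$ with the trace $\sigma$-algebra, and $\mu_j:=\H_n^1\llcorner e_j$; this is a finite measure since, by Proposition \ref{prop:ddn}\eqref{item:ddn1}, $d_n$ restricted to $e_j$ is a constant multiple of the Euclidean metric (indeed $\H_n^1\llcorner e_j = 2^n\L^1\llcorner e_j$). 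Let $K_j\subset L^\infty(\mu_j)$ be the mean-zero subspace from the statement; it is weak*-closed, being the kernel of the functional $g\mapsto\int_{e_j}g\,d\mu_j$, which is weak*-continuous because $1_{e_j}\in L^1(\mu_j)$ as $\mu_j$ is finite.

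Next I would set up the candidate isomorphism $\Phi=(\Phi_j)_j\colon\ker(\E^{n-1})\to\bigoplus_j^\infty K_j$, $\Phi_j(g):=g|_{e_j}$. The decoupling above shows $\Phi$ is a well-defined linear map into $\bigoplus_j^\infty K_j$, and it is isometric: since $g=0$ a.e.\ on $\Diff_{n-1}$ and the interiors of the $e_j$ together with $\Diff_{n-1}$ essentially partition $[0,1]$, the essential supremum of $|g|$ over $[0,1]$ equals $\sup_j\|g|_{e_j}\|_{L^\infty(\mu_j)}$. Surjectivity is proved by gluing: given $(g_j)_j\in\bigoplus_j^\infty K_j$, choose $\A_n$-measurable representatives and set $g:=\sum_j g_j\,1_{\intr(e_j)}$ (and $0$ on $\Diff_{n-1}$); the $\intr(e_j)$ are pairwise disjoint members of $\A_n$ and $\sup_j\|g_j\|_\infty<\infty$, so $g\in L^\infty(\A_n)$, and $\E^{n-1}(g)=0$ because $g=0$ on $\Diff_{n-1}$ and $\fint_{e_j}g\,d\H_n^1=\fint_{e_j}g_j\,d\mu_j=0$; clearly $\Phi(g)=(g_j)_j$.

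The remaining, and fiddliest, point is to upgrade $\Phi$ to a weak*-isometry. Since a bijective linear isometry between spaces with canonical preduals is a weak*-homeomorphism once it is weak*-weak*-continuous in one direction (it is then a $1$-isomorphism, so its predual exists and is a $1$-isomorphism, and the inverse is the adjoint of that predual's inverse, hence weak*-continuous), it suffices to show $\Phi$ is weak*-weak*-continuous; and by the description of the weak*-topology on $\ell^\infty$-sums this reduces to weak*-weak*-continuity of each $\Phi_j$. For fixed $j$, consider the restriction map $\psi_j\colon L^\infty(\A_n)\to L^\infty(\mu_j)$, $g\mapsto g|_{e_j}$, with candidate predual the ``extension by zero'' map $\lambda_j\colon L^1(\mu_j)\to L^1(\A_n)$; the identity $\int_{e_j}(g|_{e_j})\,h\,d\mu_j=\int_{[0,1]}g\,\lambda_j(h)\,d\H_n^1$ exhibits $\psi_j=(\lambda_j)^*$, so $\psi_j$ is weak*-weak*-continuous. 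Composing with the weak*-continuous inclusion $\ker(\E^{n-1})\hookrightarrow L^\infty(\A_n)$ (valid since $\ker(\E^{n-1})$ is weak*-closed) and corestricting to the weak*-closed subspace $K_j\subset L^\infty(\mu_j)$ yields weak*-weak*-continuity of $\Phi_j$, completing the proof. The main thing to watch is the mismatch between $\H_n^1$ on $[0,1]$ and $\mu_j$ on $e_j$, and between $\A_n$ and $\A_n|_{e_j}$; but Proposition \ref{prop:ddn}\eqref{item:ddn1} and the fact that $\intr(e_j)\in\A_n$ make all identifications routine. I do not anticipate a genuine obstacle here — only careful bookkeeping with these measures and $\sigma$-algebras.
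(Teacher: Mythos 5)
Your proof is correct and follows essentially the same route as the paper's: enumerate $\At_{n-1}$, take $Y_j$ to be $e_j$ equipped with $\H_n^1\llcorner e_j$ (and a $\sigma$-algebra which — your trace $\sigma$-algebra $\A_n|_{e_j}$ is the same as the paper's ``generated by Lebesgue subsets of $\Diff_n\cap e_j$ and $\At_n\llcorner e_j$'' via Remark \ref{rmk:Anmeasurability}), and then restrict/glue using that $g\in\ker(\E^{n-1})$ iff $g=0$ a.e.\ on $\Diff_{n-1}$ and $\fint_{e_j}g\,d\H_n^1=0$ for each $j$. The paper's proof is terser (it asserts that $\Phi(g)=(g1_e)_e$ is a weak*-weak*-continuous isometric embedding with image $\bigoplus^\infty_e K_e$ and leaves the verification to the reader), whereas you spell out surjectivity and the weak*-continuity via the predual extension-by-zero maps; this is just more detail, not a different argument.
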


\begin{proof}
Let $n\geq1$. For each $e \in \At_{n-1}$, let $Y_e$ be the measure space with underlying set $e$, underlying $\sigma$-algebra generated by the Lebesgue subsets of $\Diff_n \cap \,e$ and $\At_n \llcorner e$, and underlying measure $\H_n^1 \llcorner e$. Then we get a linear isometric embedding $\Phi: \ker(\E^{n-1}) \to \bigoplus^\infty_{e \in \At_{n-1}} L^\infty(Y_e)$ defined by $\Phi(g) := (g1_e)_{e \in \At_{n-1}}$. It is straightforward to verify from the definitions that $\Phi$ is linear, contractive, and weak*-weak*-continuous, and it is an isometric embedding since $g1_{\Diff_{n-1}} = 0$ a.e. for every $g \in \ker(\E^{n-1})$ and $[0,1] \setminus \Diff_{n-1} = \cup \At_{n-1}$ up to a Lebesgue-null set. By definition of $\ker(\E^{n-1})$, the image of $\Phi$ equals $\bigoplus^\infty_{e \in \At_{n-1}} K_e$, where $K_e = \{g \in L^\infty(\H_n^1 \llcorner e): \int g \, d\H_n^1 = 0\}$, proving the lemma.
\end{proof}

\begin{lemma} \label{lem:iso2}
For every $n \geq 0$, the predual space $(\ker(\E^{n-1}))_*$ is 128-isomorphic to $L^1(X_n)$ for some measure space $X_n$.
\end{lemma}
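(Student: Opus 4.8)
The plan is to feed the structural description from Lemma~\ref{lem:iso1} into the duality machinery recalled in the preliminaries and then reduce everything to one self-contained fact about hyperplanes of $L^1$-spaces. The case $n=0$ is a triviality: $\ker(\E^{-1}) = L^\infty(\A_0) = \R\mathbf 1$, so $(\ker(\E^{-1}))_*$ is one-dimensional, hence an $L^1$-space. So assume $n\geq 1$. By Lemma~\ref{lem:iso1}, $\ker(\E^{n-1})$ is weak*-isometric to $\bigoplus^\infty_j K_j$, where $\mu_j$ is a finite measure on a measurable space $Y_j$ and $K_j = \{g\in L^\infty(\mu_j):\int g\,d\mu_j = 0\}$. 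Predualizing this weak*-isometry and using $\big(\bigoplus^\infty_j K_j\big)_* \cong \bigoplus^1_j (K_j)_*$, we get that $(\ker(\E^{n-1}))_*$ is isometric to $\bigoplus^1_j (K_j)_*$; hence if each $(K_j)_*$ is $C$-isomorphic to an $L^1$-space $L^1(\nu_j)$ with $C$ independent of $j$, then $(\ker(\E^{n-1}))_*$ is $C$-isomorphic to $L^1(X_n)$ with $X_n := \bigsqcup_j\nu_j$. Finally, since $K_j$ is the kernel of the weak*-continuous functional $g\mapsto\int g\,d\mu_j = \langle g,\mathbf 1\rangle$ on $L^\infty(\mu_j)$, it is weak*-closed with pre-annihilator $(K_j)_\perp = \R\mathbf 1 \subset L^1(\mu_j)$ (testing $h\in L^1(\mu_j)$ against $\mathbf 1_A - \tfrac{\mu_j(A)}{\mu_j(Y_j)}\mathbf 1 \in K_j$ over all measurable $A$ forces $h$ to be a.e.\ constant), and so $(K_j)_* = L^1(\mu_j)/\R\mathbf 1$.

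Next I would replace this quotient by a subspace. The averaging projection $P_0 h := h - \big(\fint_{Y_j} h\,d\mu_j\big)\mathbf 1$ maps $L^1(\mu_j)$ onto the mean-zero hyperplane $L^1_0(\mu_j) := \{h\in L^1(\mu_j):\int h\,d\mu_j=0\}$ and satisfies $\|P_0\|\leq 2$; therefore the quotient map $L^1(\mu_j)\to L^1(\mu_j)/\R\mathbf 1$ restricts to an isomorphism of $L^1_0(\mu_j)$ onto $L^1(\mu_j)/\R\mathbf 1$ with isomorphism constant at most $2$. So the statement reduces to the following assertion, uniform over finite measures $\mu$: the mean-zero hyperplane $L^1_0(\mu)$ of an $L^1$-space is isomorphic to an $L^1$-space, with an absolute isomorphism constant.

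This last assertion is the crux, and it is where essentially all of the work lies. If $L^1(\mu)$ is finite-dimensional, say $k$-dimensional, then $L^1_0(\mu)\cong\ell^1_{k-1}$, which is an $L^1$-space, so assume $L^1(\mu)$ is infinite-dimensional. I would then invoke the Pe\l czy\'nski decomposition method with $X = L^1(\mu)$ and $Y = L^1_0(\mu)$, using: (a) $Y$ is $2$-complemented in $X$ via $P_0$; (b) $X$ is isomorphic to a complemented subspace of $Y$ --- concretely, pick a measurable set $A$ with $0<\mu(A)<\mu(Y)$ for which $L^1(\mu\llcorner A)\cong L^1(\mu)$ (arrangeable in every infinite-dimensional case by the isomorphic classification of $L^1$-spaces, e.g.\ by halving the non-atomic part or dropping one atom), observe that $g\mapsto g\mathbf 1_A - \tfrac{1}{\mu(Y\setminus A)}\big(\int_A g\big)\mathbf 1_{Y\setminus A}$ is an isomorphism of $L^1(\mu\llcorner A)$ onto the subspace $\{h\in L^1_0(\mu): h|_{Y\setminus A}\equiv\mathrm{const}\}$ with constant $\leq 2$, and that this subspace is the range of a norm-$\leq 2$ projection of $L^1_0(\mu)$; and (c) every infinite-dimensional $L^1$-space $Z$ is isometric to the $\ell^1$-sum $\bigoplus_{k\in\N}^1 Z$ of countably many copies of itself. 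The decomposition method then gives $L^1_0(\mu)\cong L^1(\mu)$ with an absolute constant built from the complementation constants in (a) and (b); propagating these constants (together with the factor $2$ from the quotient-to-subspace reduction, the identity above being isometric) yields the stated bound $128$.

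The main obstacle is precisely step (b): one must confirm, case by case in the classification of $L^1$-spaces (non-atomic, $\ell^1$-type, $L^1\oplus_1\ell^1$-type and mixtures, including non-separable ones), that a set $A$ with $L^1(\mu\llcorner A)\cong L^1(\mu)$ genuinely exists, and then track the resulting isomorphism constant carefully through the decomposition method to land at $128$ rather than a larger power of $2$. Everything preceding it is routine predual/annihilator bookkeeping. An alternative to the decomposition method --- which may well be the route the authors take, since it keeps constants fully explicit --- is to construct an isomorphism $L^1_0(\mu)\to L^1(\nu)$ by hand, by fixing a countable partition of $Y$ into pieces of geometrically decreasing measure and ``telescoping'' the mean-zero constraint across the pieces; this is elementary but fiddly and needs that no atom of $\mu$ be too large relative to $\mu(Y)$, a condition automatically satisfied by the spaces $Y_j=Y_e$ from Lemma~\ref{lem:iso1} since every atom $f\in\At_n$ with $f\subset e\in\At_{n-1}$ has $\H^1_n(f)\leq\tfrac12\H^1_n(e)$.
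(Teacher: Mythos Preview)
Your reduction is exactly the paper's: handle $n=0$ trivially, predualize Lemma~\ref{lem:iso1} to reach $\bigoplus^1_j (K_j)_*$, identify $(K_j)_* \cong L^1(\mu_j)/\R\mathbf 1$, and then pass to the mean-zero hyperplane $L^1_0(\mu_j)$ via the averaging projection (the paper packages this last step as Lemma~\ref{lem:L1/1R}). The divergence is only in how you finish off ``$L^1_0(\mu)$ is an $L^1$-space with an absolute constant''.

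The paper takes your \emph{alternative} route, not Pe\l czy\'nski: it proves (Lemma~\ref{lem:kernel}) that $L^1_0(\mu)$ is $64$-isomorphic to $L^1(\mu\llcorner X)$ for some measurable $X\subset Y$, by an explicit case split. If $\mu$ has an atom $A$, drop it: $g\mapsto g|_{Y\setminus A}$ with inverse $h\mapsto h\mathbf 1_{Y\setminus A} - \mu(A)^{-1}\bigl(\int h\bigr)\mathbf 1_A$ gives constant $2$. If $\mu$ is atomless, fix a partition $\{A_n\}$ with $\mu(A_n)=2^{-n-1}\mu(Y)$ and telescope. So your ``no atom too large'' hypothesis is not needed, and your finite-dimensional case (which you dispatched a bit glibly as ``$L^1_0(\mu)\cong\ell^1_{k-1}$'' without a constant) is absorbed into the atomic branch with constant $2$. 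Combining with the factor $2$ from the quotient-to-hyperplane step yields exactly $128$.

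Your primary route via Pe\l czy\'nski decomposition is correct in principle and would establish the qualitative statement, but it is heavier and makes the constant opaque: step~(b) invokes the full isomorphic classification of $L^1$-spaces to manufacture $A$ with $L^1(\mu\llcorner A)\cong L^1(\mu)$, and the decomposition method then multiplies several complementation constants together in a way that does not obviously collapse to $128$. The paper's hands-on construction avoids all of this, is completely elementary, and keeps the constant explicit. You correctly anticipated this as the likely approach.
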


\begin{proof}
The case $n=0$ is trivial since $\ker(\E^{n-1})$ is 1-dimensional. Let $n \geq 1$. By Lemma \ref{lem:iso1}, $(\ker(\E^{n-1}))_*$ is isometric to $\bigoplus^1_j(K_j)_*$, where $K_j = \{g \in L^\infty(\mu_j): \int g \, d\mu_j = 0\}$ for some finite measures $\mu_j$ on measurable spaces $Y_j$. Since $(K_j)_*$ is isometric to $L^1(\mu_j)/\R1$, where $\R1 \subset L^1(\mu_j)$ denotes the constant functions, Lemma \ref{lem:L1/1R} implies $(K_j)_*$ is 128-isomorphic to $L^1(X_n^j)$ for some measure space $X_n^j$. Then $(\ker(\E^{n-1}))_*$ is 128-isomorphic to $L^1(X_n)$, where $X_n := \bigsqcup_j X_n^j$.
\end{proof}

Recall that a metric space is \emph{purely $k$-unrectifiable} if it contains no bi-Lipschitz copy of a positive measure subset of $\R^k$.

\begin{corollary}[Main Corollary] \label{cor:isoL1}
For every QC arc $\gamma$, $\F(\gamma)$ is isomorphic to $L^1(Z)$ for some measure space $Z$, where the isomorphism constant depends only on the bounded turning and doubling constants of $\gamma$. Moreover,  $Z$ is purely atomic if and only if $\gamma$ is purely 1-unrectifiable.
\end{corollary}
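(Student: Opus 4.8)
The plan is to reduce to the model arc fixed in Section~\ref{S:arc_freespace}, assemble the isomorphism from Theorem~\ref{thm:Diso} and Lemma~\ref{lem:iso2}, and then characterize pure atomicity of the resulting measure space by inspecting the diffuse parts that enter the construction. First I would reduce to the case $\gamma = ([0,1],d)$ with $d = d_\Delta$ for some $\Delta \in \mathfrak{D}$ making $([0,1],d)$ doubling. Rescaling the metric changes neither the isomorphism type of $\F(\gamma)$, nor the doubling and bounded-turning constants, nor pure $1$-unrectifiability, so we may assume $\diam(\gamma) = 1$; Theorem~\ref{T:HM} then produces an $8B$-bi-Lipschitz homeomorphism of $\gamma$ onto an arc in $\mathcal{S}_1'$, which is necessarily doubling with constant controlled by $B$ and the doubling constant of $\gamma$. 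Since bi-Lipschitz homeomorphisms induce linear isomorphisms of free spaces with comparable constants and pure $1$-unrectifiability is a bi-Lipschitz invariant, this reduction is harmless, and every constant produced below will depend only on the doubling constant of $([0,1],d)$, hence ultimately only on the bounded-turning and doubling constants of $\gamma$.

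With the reduction in place, the isomorphism follows from machinery already built. By Theorem~\ref{thm:Diso}, $D \colon \Lip_0([0,1],d) \to \bigoplus^\infty_{n\geq0}\ker(\E^{n-1})$ is a weak*-isomorphism with constant depending only on the doubling constant. Each $\ker(\E^{n-1})$ is weak*-closed and so has a canonical predual $(\ker(\E^{n-1}))_*$, the canonical predual of an $\ell^\infty$-sum is the $\ell^1$-sum of the preduals, and predualizing a weak*-isomorphism yields an isomorphism with the same constant; hence $\F([0,1],d)$ is isomorphic to $\bigoplus^1_{n\geq0}(\ker(\E^{n-1}))_*$. By Lemma~\ref{lem:iso2} each summand is $128$-isomorphic to $L^1(X_n)$ for some measure space $X_n$, and since an $\ell^1$-sum of $L^1$-spaces is the $L^1$-space of the disjoint union of the underlying measure spaces, $\F([0,1],d)$ is isomorphic to $L^1(Z)$ with $Z := \bigsqcup_{n\geq0}X_n$. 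This proves the first assertion with the asserted dependence of the isomorphism constant.

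For the ``moreover'' I would first show that $Z$ is purely atomic if and only if $\bigcup_{n\geq0}\Diff_n$ is Lebesgue-null. Tracing through the proofs of Lemmas~\ref{lem:iso1} and~\ref{lem:iso2}, $X_n$ is, up to isomorphism, the disjoint union over $e \in \At_{n-1}$ of the measure spaces underlying $L^1(\H^1_n\llcorner e)/\R1$, where $e$ carries the $\sigma$-algebra generated by the Lebesgue subsets of $\Diff_n\cap e$ together with $\At_n\llcorner e$; such a space is purely atomic exactly when $\Diff_n\cap e$ is Lebesgue-null, since quotienting by the one-dimensional space of constants cannot destroy a non-atomic part. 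As $\{\Diff_n\cap e : e\in\At_{n-1}\}$ is, up to a null set, a partition of $\Diff_n\setminus\Diff_{n-1}$, summing over $n$ gives the claim. It then remains to prove that $\bigcup_{n\geq0}\Diff_n$ is Lebesgue-null if and only if $\gamma = ([0,1],d)$ is purely $1$-unrectifiable. If some $\Diff_n$ has positive Lebesgue measure, then $d$ agrees with $d_n$ on $\Diff_n$ by Proposition~\ref{prop:ddn}\eqref{item:ddn3} and $d_n$ is bi-Lipschitz equivalent to the Euclidean metric, so $(\Diff_n,d)$ is a bi-Lipschitz copy of a positive-measure subset of $\R$ inside $\gamma$, whence $\gamma$ is not purely $1$-unrectifiable. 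Conversely, the same bi-Lipschitz equivalence shows that $R := \bigcup_{n\geq0}\Diff_n$ is countably $1$-rectifiable, while $U := \gamma\setminus R$ is purely $1$-unrectifiable by Lemma~\ref{lem:Up1u} (part of the proof of Theorem~\ref{thm:rp1udecomp}); if $R$ is Lebesgue-null then it is $\H^1_d$-null, since $\H^1_d\llcorner\Diff_n \leq 2^n\,\L^1\llcorner\Diff_n$, and a union of an $\H^1$-null set with a purely $1$-unrectifiable set is purely $1$-unrectifiable, because any bi-Lipschitz image of a positive-measure subset of $\R$ has positive $\H^1$ and hence, restricted to its intersection with $U$, would give a bi-Lipschitz copy of a positive-measure subset of $\R$ lying inside $U$.

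I expect the main obstacle to be Lemma~\ref{lem:Up1u}, i.e.\ the pure $1$-unrectifiability of $U = \gamma\setminus\bigcup_{n\geq0}\Diff_n$. Heuristically, at each $x \in U$ the nested dyadic atoms $e\in\At_n$ with $x\in\intr(e)$ satisfy $\diam_d(e) = 2^{n}|e|$, so the ratio of $d$-diameter to Euclidean length along this shrinking family diverges; this ought to prevent any $1$-rectifiable set from passing through $x$ with positive lower $\H^1_d$-density. Making this rigorous will require the dyadic combinatorics packaged in Lemma~\ref{lem:DDiff}, the restriction formulas of Proposition~\ref{prop:ddn}, and a density or covering argument for $\H^1_d$ on $U$.
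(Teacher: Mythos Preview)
Your argument for the first assertion---the isomorphism $\F(\gamma)\cong L^1(Z)$ with controlled constant---is essentially identical to the paper's: reduce via Theorem~\ref{T:HM}, apply Theorem~\ref{thm:Diso}, predualize, invoke Lemma~\ref{lem:iso2}, and take $Z$ to be the disjoint union.

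For the ``moreover'' clause you take a genuinely different route. The paper argues abstractly via an isomorphic invariant: $L^1(Z)$ has the Schur property if and only if $Z$ is purely atomic, and by \cite[Theorem~C]{AGPP} the free space $\F([0,1],d)$ has the Schur property if and only if $([0,1],d)$ is purely $1$-unrectifiable; since $\F([0,1],d)\cong L^1(Z)$, the two conditions are equivalent. Your approach instead unpacks the construction of $Z$ through Lemmas~\ref{lem:iso1}, \ref{lem:iso2}, and \ref{lem:L1/1R} to show that $Z$ is purely atomic exactly when each $\Diff_n$ is Lebesgue-null, and then links this to pure $1$-unrectifiability using Proposition~\ref{prop:ddn}\eqref{item:ddn3} and Lemma~\ref{lem:Up1u}. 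This is correct and has the virtue of being self-contained within the paper, avoiding the external result from \cite{AGPP}; the price is that you must trace the measure spaces through the appendix (your assertion that ``quotienting by constants cannot destroy a non-atomic part'' is exactly what the proof of Lemma~\ref{lem:kernel} establishes, and you should cite it rather than leave it as a remark). Note also that Lemma~\ref{lem:Up1u} is fully proved in the paper, so your closing paragraph flags as an obstacle something that is already available to you; you may simply cite it.
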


\begin{proof}
Let $(\gamma,\rho)$ be a $B$-bounded turning $QC$-arc. Since, for any scalar $c \in (0,\infty)$, $(\gamma, c\rho)$ is $B$-bounded turning and $\F(\gamma,c\rho)$ is isometric to $\F(\gamma,\rho)$, we may assume that $\diam(\gamma) = 1$. Then by Theorem \ref{T:HM}, $\gamma$ is $8B$-bi-Lipschitz equivalent to $([0,1],d)$ for some $([0,1],d) \in \mathcal{S}_1'$, and hence $\F(\gamma)$ is $8B$-isomorphic to $\F([0,1],d)$. By Theorem \ref{thm:Diso}, $\Lip_0([0,1],d)$ is weak*-isomorphic to $\bigoplus^\infty_{n\geq0} \ker(\E^{n-1})$, where the isomorphism constant depends only on the doubling constant of $([0,1],d)$ (which in turn depends only on $B$ and the doubling constant of $\gamma$). By predualizing, we get that $\F([0,1],d)$ is isomorphic to $\bigoplus^1_{n\geq0} (\ker(\E^{n-1}))_*$. Then by Lemma \ref{lem:iso2}, $\F([0,1],d)$ is 128-isomorphic to $\bigoplus^1_{n\geq0} L^1(X_n)$ for some measure spaces $X_n$. Since $\bigoplus^1_{n\geq0} L^1(X_n)$ is isometric to $L^1(Z)$ for $Z := \bigsqcup_{n\geq0} X_n$, the first sentence follows.

The second sentence follows from general Banach and Lipschitz free space theory. Recall that a Banach space has the \emph{Schur property} if every weakly-convergent sequence is norm-convergent, and note that this property is an isomorphic invariant. The Banach space $L^1(Z)$ has the Schur property if and only if $Z$ is purely atomic \cite[Section 4]{JL}, and by \cite[Theorem C]{AGPP}, $\F([0,1],d)$ has the Schur property if and only if $([0,1],d)$ is purely 1-unrectifiable.
\end{proof}

\subsection{Rectifiable/Purely Unrectifiable Decomposition} \label{SS:RP1U}
In this subsection, we no longer assume that $([0,1],d)$ is doubling. As mentioned before the statement of Theorem \ref{thm:Ibndd}, the only place where the doubling property is used is in the proof of Theorem \ref{thm:Ibndd} (through Lemma \ref{L:dyadic}). In particular, the results we use in this subsection (Lemma \ref{lem:DDiff}, Remark \ref{rmk:dnbiLipschitz}, and Propositions \ref{prop:ddn} and \ref{prop:In}) hold in this generality.

Recall that a metric space $X$ is \emph{countably $k$-rectifiable} if there exist countable collections of subsets $A_i \subset \R^k$ and Lipschitz maps $f_i: A_i \to X$ such that $\H^k\left(X \setminus \left(\bigcup_i f(A_i)\right)\right) = 0$, where $\H^k$ denotes the Hausdorff $k$-measure. By results from \cite{Kirchheim}, $X$ is purely $k$-unrectifiable if and only if every countably $k$-rectifiable subset is $\H^k$-null (see \cite[Section 1.3]{AGPP} for further explanation in the case $k=1$). 

There is a well-known decomposition theorem in geometric measure theory stating that any $\H^k$-$\sigma$-finite metric space is the union of a countably $k$-rectifiable subset and a purely $k$-unrectifiable subset (\cite[15.6 Theorem]{Mattila}). Although QC arcs need not be $\H^1$-$\sigma$-finite, the next theorem shows that they enjoy the same decomposition. Before proving the theorem, we review the definition of locally flat Lipschitz functions and their relationship to rectifiability.

A Lipschitz function $f: X \to \R$ on a metric space $(X,d)$ is \emph{locally flat} if
$$\lim_{x,y \to z} \dfrac{|f(x)-f(y)|}{d(x,y)} = 0$$
for every $z \in X$. We denote the vector space of locally flat Lipschitz functions by $\lip(X).$ We say that $\lip(X)$ \emph{separates points uniformly} if there exists $C < \infty$ such that for every $x,y \in X$, there is an $f \in \lip(X)$ with $\|f\|_{\Lip} \leq C$ and $d(x,y) \leq C|f(y)-f(x)|$. It follows from Lebesgue's density theorem and fundamental theorem of calculus that metric spaces whose points are uniformly separated by locally flat Lipschitz functions must be purely 1-unrectifiable (see the first paragraph of \cite[Section 2]{AGPP} for a discussion of the proof).

\begin{lemma} \label{lem:Up1u}
Let $U := [0,1] \setminus \bigcup_{n\geq0} \Diff_n$. Then $\lip(U)$ separates points uniformly. Consequently, $U$ is purely 1-unrectifiable.
\end{lemma}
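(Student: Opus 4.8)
The plan is to construct, for each pair of points $x,y \in U$, an explicit locally flat Lipschitz function on $U$ that separates them uniformly, and to build it out of the metrics $d_n$ and the integral operators $I_n$ already developed in this section. The guiding intuition is that $U$ consists of points that, at every scale, lie in the \emph{interior} of some atom $e \in \At_n$, so that near any point of $U$ the metric $d$ looks like a large multiple of the Euclidean metric; this is precisely the regime in which Lipschitz functions can be made locally flat. First I would fix $x < y$ in $U$ and choose $n$ large enough that $d_n(x,y) \geq \tfrac{1}{2} d(x,y)$, which is possible since $d_n \uparrow d$ pointwise. Then I would take the Euclidean-arclength function $g := 1_{[x,y]} \in L^\infty(\A_n)$ (or rather its image after subtracting the conditional expectation to land in $\ker(\E^{n-1})$, if we want to feed it into $I$), and look at $I_n(g)$ or a suitable primitive with respect to $\H_n^1$; by Proposition \ref{prop:In}\eqref{item:In3} this is a $d_n$-Lipschitz function, hence $d$-Lipschitz with a uniformly bounded constant, and it separates $x$ from $y$ up to the factor $2$ by the fundamental theorem of calculus (Proposition \ref{prop:ftc}\eqref{item:ftc3}).

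The remaining — and genuinely delicate — point is \emph{local flatness of the restriction to $U$}. The function $I_n(g)$ is certainly not locally flat on all of $[0,1]$ (it is a multiple of a Euclidean-linear function on each atom of $\At_n$, where the $d$-metric is highly contracted relative to Euclidean distance, so difference quotients blow up there). But along $U$ we never approach such an atom "from inside at a fixed scale": if $z \in U$, then for every $n$ the point $z$ lies in the interior of some $e^{(n)} \in \At_n$, and by Lemma \ref{lem:atom-in-atom} these atoms are nested with $\operatorname{gen}(e^{(n)}) \to \infty$, so $\Delta(e^{(n)}) \to 0$ by \eqref{E:smaller}; consequently $d(z,w)/|z-w| \to \infty$ as $w \to z$ within $[0,1]$. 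Since each $I_m$ is Euclidean-Lipschitz with constant comparable to $2^m$ (Remark \ref{rmk:dnbiLipschitz}), the difference quotient $|I_m(g)(z) - I_m(g)(w)|/d(z,w)$ is bounded by $C 2^m |z-w|/d(z,w) \to 0$ for each fixed $m$. The subtlety is that our separating function uses a single fixed $m = n$ depending on the pair $(x,y)$, so for the pair $(x,y)$ I only need local flatness of \emph{that one} function $I_n(g)$ at every point $z \in U$ — and that follows from the displayed estimate with $m = n$ fixed. So in fact the argument is cleaner than it first appears: for each pair I produce one function, uniformly bounded in Lipschitz norm, with the required separation, and its local flatness on $U$ is a consequence of $d(z,w)/|z-w| \to \infty$ for $z \in U$.

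Concretely, the steps I would carry out are: (1) record that $z \in U$ iff for every $n$ there is $e^{(n)} \in \At_n$ with $z \in \intr(e^{(n)})$, and deduce via Lemma \ref{lem:atom-in-atom} and \eqref{E:smaller} that $\lim_{w \to z} |z - w|/d(z,w) = 0$ for every $z \in U$ (using $d \geq d_n \geq$ a Euclidean multiple on the relevant small atom, together with $\Delta(e^{(n)}) \to 0$); (2) given $x < y$ in $U$, pick $n$ with $d_n(x,y) \geq \tfrac12 d(x,y)$ and set $f := I_n(1_{[x,y]} - \E^{n-1}(1_{[x,y]})) \in \Lip_0([0,1],d)$, which by Proposition \ref{prop:In}\eqref{item:In3} has $\|f\|_{\Lip_0([0,1],d)} \leq \|f\|_{\Lip_0([0,1],d_n)} \leq 4$; (3) check via Proposition \ref{prop:ftc}\eqref{item:ftc3} (and Proposition \ref{prop:In}\eqref{item:In2}, which kills the conditional-expectation correction term on $[x,y]$ since $x,y$ will be near a $\Diff_{n-1}$ point, or else just estimate it directly) that $|f(y) - f(x)| = \H_n^1([x,y]) \geq d_n(x,y) \geq \tfrac12 d(x,y)$; (4) verify $f|_U \in \lip(U)$ using step (1) and the Euclidean bound $|f(a) - f(b)| \leq C 2^n |a - b|$ for $a,b$ near $z$. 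This yields $\lip(U)$ separating points uniformly with constant $C = 8$ (say), and then pure $1$-unrectifiability of $U$ follows from the cited consequence of Lebesgue density and the fundamental theorem of calculus recalled just before the lemma.

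The main obstacle I anticipate is the bookkeeping in step (3): making sure the indicator $1_{[x,y]}$, after the conditional-expectation correction, still genuinely separates $x$ and $y$ — this requires knowing that $[x,y]$, or at least a cofinal piece of it, is $\A_n$-measurable, which one gets by further enlarging $n$ so that $x,y$ are sufficiently close to (or equal to) points of $\bigcup_m \Diff_m$ and invoking Lemma \ref{lem:DiffnAn}; alternatively, one sidesteps the correction term entirely by not routing through $\ker(\E^{n-1})$ and instead working with $I_n(1_{[x,y]})$ directly as a $d_n$-Lipschitz, hence $d$-Lipschitz, function, at the cost of a slightly larger but still uniform separation constant. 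Either way the estimates are routine once the geometric fact in step (1) — that points of $U$ see the $d$-metric as infinitely dilated relative to Euclidean distance — is in hand, and that fact is an immediate consequence of the definitions of $\At_n$, $\Diff_n$, and property \eqref{E:smaller}.
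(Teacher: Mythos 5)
Your step (1) — that for $z \in U$ the Euclidean distance is locally flat against $d$, and hence any single $I_n(g)$, being Euclidean-Lipschitz, restricts to a locally flat function on $U$ — is correct and is essentially the same observation the paper makes (the paper phrases it as each $d_k$ being locally flat against $d$ on $U$, using Proposition~\ref{prop:ddn}\eqref{item:ddn2}; your Euclidean version is the $k=0$ instance).

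The separation estimate in step (3), however, has a genuine gap that is not a matter of routine bookkeeping. With $g = 1_{[x,y]} - \E^{n-1}(1_{[x,y]})$ you get
\[
f(y)-f(x) = \H_n^1([x,y]) - \int_{[x,y]} \E^{n-1}(1_{[x,y]})\,d\H_n^1,
\]
and the correction term is not small: if $[x,y]$ happens to lie inside a single atom $e \in \At_{n-1}$ (which is entirely possible, since you chose $n$ only to make $d_n(x,y)$ comparable to $d(x,y)$, with no control on where $[x,y]$ sits relative to $\At_{n-1}$), then $\E^{n-1}(1_{[x,y]}) = (\H_n^1([x,y])/\H_n^1(e))1_e$ and $f(y)-f(x) = \H_n^1([x,y])\bigl(1 - \H_n^1([x,y])/\H_n^1(e)\bigr)$, which can be arbitrarily small even though $\H_n^1([x,y]) \geq \tfrac12 d(x,y)$. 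Your proposed fallback of "sidestepping the correction" by taking $I_n(1_{[x,y]})$ directly also fails: Proposition~\ref{prop:In}\eqref{item:In3} requires $g \in \ker(\E^{n-1})$, and without that hypothesis $\|I_n(g)\|_{\Lip_0([0,1],d_n)}$ is only bounded by the arclength-to-diameter ratio of $([0,1],d_n)$, which can be as large as $2^n$ (e.g.\ for the snowflake). Finally, "enlarging $n$ so that $x,y$ are close to $\bigcup_m\Diff_m$" does not help with $\A_n$-measurability of $[x,y]$, since $x,y \in U$ are never in any $\Diff_m$.

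The ingredient you are missing is the reduction to a good dyadic interval before choosing the test function: the paper invokes \cite[Lemma~3.5]{HM12} to find a dyadic $[u,v] \subset [x,y]$ with $d(x,y) \leq 4\,d(u,v)$, then Lemma~\ref{lem:DDiff} to find $k$ and $e \in \At_k$ with $[u,v] \subsetneq e$ and $\{u,v\} \subset \Diff_{k+1}$. This places $[u,v]$ inside one dyadic child $e_0$ of $e$, and then the Haar-type function $g = 1_{e_0}-1_{e_1}$ lies in $\ker(\E^k)$ \emph{exactly} (so Proposition~\ref{prop:In}\eqref{item:In3} applies verbatim), is identically $1$ on all of $[u,v]$ (so the correction term vanishes and $f(v)-f(u) = \H_{k+1}^1([u,v]) \geq d(u,v)$), and the separation for $(x,y)$ follows from $d(x,y) \leq 4\,d(u,v)$. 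Without this dyadic reduction and the symmetric choice of $g$, the separation bound does not hold uniformly.
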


\begin{proof}
We will prove the stronger statement that the Lipschitz functions locally flat on $U$ separate the points of $X$ uniformly, meaning that for every $x,y \in X$ there exists a $C$-Lipschitz function $f: X \to \R$ with $f\res_U \in \lip(U)$ and $d(x,y) \leq C|f(y)-f(x)|$. We claim that if a function $f$ is $C$-Lipschitz with respect to $d_k$ for some $k\geq0$, then $f$ is $C$-Lipschitz and locally flat on $U$ with respect to $d$. Indeed, the first part follows from the fact that $d_k \leq d$. The second part follows from the fact that $d_k$ is locally flat with respect to $d$ on $U$, meaning
$$\lim_{x,y \to z} \dfrac{d_k(x,y)}{d(x,y)} = 0$$
for every $z \in U$. To see this, let $z \in U$. Observe that, by definition of $U$ and $\Diff_n$, $U = \bigcap_{n\geq0} \bigcup_{e \in \At_n} \intr(e)$. Let $n \geq k$ be arbitrary. Then by the previous sentence we can find $e_{z,n} \in \At_n$ such that $z \in \intr(e_{z,n})$. Then we have
$$\lim_{x,y \to z} \dfrac{d_k(x,y)}{d(x,y)} \leq \sup_{x,y \in \intr(e_{z,n})} \dfrac{d_k(x,y)}{d(x,y)} \leq \sup_{x,y \in e_{z,n}} \dfrac{d_k(x,y)}{d_n(x,y)} \overset{\text{Prop }\ref{prop:ddn}\eqref{item:ddn2}}{=} 2^{k-n}.$$
Since $n\geq k$ was arbitrary, this proves
$$\lim_{x,y \to z} \dfrac{d_k(x,y)}{d(x,y)} = 0.$$
In conclusion, it suffices to find, for each $x,y \in [0,1]$, a function $f$ that is $C$-Lipschitz with respect to some $d_k$ satisfying $d(x,y) \leq C|f(y)-f(x)|$.

Let $x,y \in X$. By \cite[Lemma 3.5]{HM12}, there exists $[u,v] \in \D$ such that $[u,v] \subset [x,y]$ and $d(x,y) \leq 4d(u,v)$. Without loss of generality, we may assume $[u,v] \neq [0,1]$. Then it suffices to find a function $f$ that is $C$-Lipschitz with respect to some $d_k$ satisfying $d(u,v) \leq C|f(v)-f(u)|$ (this is because, by the definition of $d_k$, such a function $f$ can be redefined to satisfy $f(x) = f(u)$ and $f(y) = f(v)$ without increasing the Lipschitz constant). By Lemma \ref{lem:DDiff}, we can find $k\geq0$ and $e \in \At_k$ such that $[u,v] \subsetneq e$ and $\{u,v\} \subset \Diff_{k+1}$. Since $[u,v] \in \D$ and $[u,v] \subsetneq e$, it holds that either $[u,v] \subset e_0$ or $[u,v] \subset e_1$. Without loss of generality, assume $[u,v] \subset e_0$. Define $g: [0,1] \to \R$ by $g := 1_{e_0} - 1_{e_1}$. Since $e \in \A_k$, it is easy to see from the definition of $\A_k$ that $e_0,e_1 \in \A_{k+1}$, and thus $g \in L^\infty(\A_{k+1})$.
Additionally,
$$\E^k(g) = \left(\fint_e1_{e_0}-1_{e_1} \, d\H_{k+1}^1\right)1_e = \dfrac{\H_{k+1}^1(e_0) -\H_{k+1}^1(e_1)}{\H_{k+1}^1(e)} = 0.$$
Hence, $g \in \ker(\E^k)$. Let $f:=I_{k+1}(g)$. By definition of $I_{k+1}$ and $g$, we have
$$f(v)-f(u) = \int_{[u,v]}g\,d\H_{k+1}^1 = \H_{k+1}^1([u,v]) \geq d_{k+1}(u,v)  \overset{\text{Prop }\ref{prop:ddn}\eqref{item:ddn3}}{=} d(u,v).$$
By Proposition \ref{prop:In}\eqref{item:In3}, $f$ is 4-Lipschitz with respect to $d_k$.  This completes the proof.
\end{proof}

\begin{theorem}[Rectifiable/Purely Unrectifiable Decomposition] \label{thm:rp1udecomp}
For all bounded turning Jordan arcs $\gamma$, there exist $R,U \subset \gamma$ such that $R$ is countably 1-rectifiable, $U$ is purely 1-unrectifiable, and $\gamma = R \cup U$.
\end{theorem}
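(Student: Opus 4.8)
The plan is to reduce, via the Herron-Meyer catalogue (Theorem \ref{T:HM}), to the case $\gamma = ([0,1],d)$ with $d = d_\Delta$ for some $\Delta \in \mathfrak{D}$, and then to take as the decomposition the complementary pair $R := \bigcup_{n\geq 0}\Diff_n$ and $U := [0,1]\setminus\bigcup_{n\geq 0}\Diff_n$, the latter being exactly the set already analyzed in Lemma \ref{lem:Up1u}.

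First I would record that both countable $1$-rectifiability and pure $1$-unrectifiability are bi-Lipschitz invariants: a Lipschitz image of an $\H^1$-null set is $\H^1$-null, and a bi-Lipschitz copy of a positive-measure subset of $\R$ sitting inside the target pulls back through the inverse homeomorphism to one sitting inside the source. Combined with Theorem \ref{T:HM}, this reduces the statement to $\gamma = ([0,1],d)$ with $d = d_\Delta \in \mathcal{S}_1'$; the doubling hypothesis plays no role here, and after a further $2$-bi-Lipschitz change of metric we may assume the normalization $\Delta([0,\tfrac12]) = \Delta([\tfrac12,1]) = \Delta([0,1]) = 1$, so that all the constructions of this section up through Proposition \ref{prop:In} are available.

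With $R$ and $U$ as above, $[0,1] = R \cup U$ is automatic, and Lemma \ref{lem:Up1u} gives that $U$ is purely $1$-unrectifiable; so it remains only to show that $R$ is countably $1$-rectifiable. Using the nesting $\Diff_0 \subset \Diff_1 \subset \Diff_2 \subset \cdots$, one has $R = \bigcup_{n\geq 1}\Diff_n$, and since a countable union of countably $1$-rectifiable sets is again countably $1$-rectifiable, it suffices to show that each $\Diff_n$ with $n\geq 1$ is $1$-rectifiable. For this, Proposition \ref{prop:ddn}\eqref{item:ddn3} gives that $d = d_n$ on $\Diff_n$, and Remark \ref{rmk:dnbiLipschitz} gives that the identity map is a $2^n$-bi-Lipschitz equivalence between $\Diff_n$ with the Euclidean metric and $(\Diff_n,d_n)$. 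Hence $(\Diff_n,d)$ is a bi-Lipschitz, in particular Lipschitz, image of a subset of $\R$, i.e., $1$-rectifiable, and we are done.

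I do not anticipate a genuine obstacle here: the substantive content has all been front-loaded, namely into Lemma \ref{lem:Up1u} (pure $1$-unrectifiability of $U$, proved using the locally flat Lipschitz functions $I_{k+1}(g)$ furnished by the martingale-difference machinery) and into Proposition \ref{prop:ddn}\eqref{item:ddn3} (the identity $d = d_n$ on the diffuse part). The only points needing a sentence of justification are the bi-Lipschitz invariance of the two rectifiability notions and the stability of countable $1$-rectifiability under countable unions, both of which are standard.
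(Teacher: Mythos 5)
Your proposal is correct and takes exactly the same route as the paper: reduce to $([0,1],d_\Delta)$ via Theorem \ref{T:HM}, set $R := \bigcup_{n\geq 0}\Diff_n$ and $U := [0,1]\setminus R$, deduce rectifiability of $R$ from Proposition \ref{prop:ddn}\eqref{item:ddn3} and Remark \ref{rmk:dnbiLipschitz}, and pure unrectifiability of $U$ from Lemma \ref{lem:Up1u}. The paper's proof (Theorem \ref{thm:rp1udecomp}) is just a terser version of what you wrote.
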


\begin{proof}
Since the conclusion of the corollary is invariant under bi-Lipschitz equivalences, it suffices to prove this when $\gamma = ([0,1],d)$, by Theorem \ref{T:HM}. By Remark \ref{rmk:dnbiLipschitz} and Proposition \ref{prop:ddn}\eqref{item:ddn3}, $R := \bigcup_{n\geq0} \Diff_n$ is countably 1-rectifiable. By Lemma \ref{lem:Up1u}, $U := [0,1] \setminus R$ is purely 1-unrectifiable.
\end{proof}

\section{Lipschitz Maps and Geometric Tree-Like Decompositions}\label{S:Lip_and_QA}

\subsection{Lipschitz Maps}
We begin this section by describing the relationship between Lipschitz functions on a metric space $X$ and Lipschitz functions on pieces of its geometric tree-like decomposition. In particular, the following result demonstrates that Lipschitz functions on the pieces of a geometric tree-like decomposition can be ``glued together'' to form a Lipschitz function on $X$.

Given a metric space $X$ with basepoint $x_0 \in X$ and a Banach space $\B$, we write $\Lip_0(X;\B)$ to denote the Banach space of Lipschitz maps $f:X \to \B$ (equipped with the Lipschitz norm) such that $f(x_0) = 0$.

\begin{theorem}\label{T:general}
Let $X$ be a metric space with $C$-geometric tree-like decomposition $\{X_n\}_{n\in N}$. Let $p_0$ be any point in $X_0$, and for $1 \leq n \in N$, let $p_n$ be the unique point in $X_n\cap\bigcup_{m<n}X_m$. For each $n \in N$, equip $X_n$ with basepoint $p_n$. Let $\B$ be a Banach space. Then the map $\Phi:\Lip_0(X;\B)\to\bigoplus_{n\in N}^\infty\Lip_0(X_n;\B)$ defined by $\Phi(f)_n:=f\res_{X_n}-f(p_n)$ is a linear $C$-isomor-\\phism. Moreover, if $\B = \R$, then $\Phi$ is weak*-weak* continuous.
\end{theorem}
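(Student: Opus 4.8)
The plan is to verify that $\Phi$ is a well-defined linear map into $\bigoplus_{n\in N}^\infty\Lip_0(X_n;\B)$, then construct an inverse $\Psi$ by gluing, and finally check the norm bounds in both directions using the two defining properties of a $C$-geometric tree-like decomposition. For well-definedness of $\Phi$, note that $\Phi(f)_n = f\res_{X_n} - f(p_n)$ is $\|f\|_{\Lip}$-Lipschitz and vanishes at the basepoint $p_n$, so $\|\Phi(f)\|_{\bigoplus^\infty} = \sup_n \|\Phi(f)_n\|_{\Lip_0(X_n;\B)} \leq \|f\|_{\Lip_0(X;\B)}$; linearity is immediate. So $\|\Phi\| \leq 1$.

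The heart of the argument is the construction of $\Psi$. Given $(g_n)_{n\in N} \in \bigoplus_{n\in N}^\infty\Lip_0(X_n;\B)$, define $\Psi((g_n)_n): X \to \B$ as follows: for $x \in X$, pick a minimal decomposition path $(z_i)_{i\in I}$ from $p_0$ to $x$ with associated indices $(n_i)_{1 \leq i \leq \max(I)}$ (so $\{z_{i-1},z_i\} \subset X_{n_i}$), and set $\Psi((g_n)_n)(x) := \sum_{i=1}^{\max(I)} \bigl(g_{n_i}(z_i) - g_{n_i}(z_{i-1})\bigr)$. The first task is to check this is well-defined, i.e. independent of the chosen minimal decomposition path; here Lemma \ref{L:unique} (no non-trivial simple decomposition loops) is the key input — any two decomposition paths with the same endpoints differ by a concatenation of simple decomposition loops, and on a trivial loop the telescoping sum contributes zero because $g_{n}$ and $g_{n'}$ agree at the two shared endpoints when $X_n = X_{n'}$ (and more generally the loop must be trivial). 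One then checks $\Psi((g_n)_n)\res_{X_m} - \Psi((g_n)_n)(p_m) = g_m$ (extend a minimal path to $p_m$ by one more step in $X_m$), which shows $\Phi \circ \Psi = \mathrm{id}$; and $\Psi \circ \Phi = \mathrm{id}$ follows by telescoping $\sum_i (f(z_i) - f(z_{i-1})) = f(x) - f(p_0)$, noting $f(p_0)$ is fixed (if $p_0 = x_0$ it is $0$; in general one absorbs the constant, or simply takes $p_0 = x_0$ as allowed by the hypothesis).

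For the norm bound $\|\Psi\| \leq C$: given $x,y \in X$, use property (3) to choose a \emph{short} decomposition path from $x$ to $y$, say $(w_k)_{k\in K}$; then
\[
\|\Psi((g_n)_n)(x) - \Psi((g_n)_n)(y)\|_\B \leq \sum_{k<\max(K)} \|g_{m_k}(w_k) - g_{m_k}(w_{k+1})\|_\B \leq \sup_n\|g_n\|_{\Lip}\sum_{k<\max(K)} d(w_k,w_{k+1}) \leq C\,\sup_n\|g_n\|_{\Lip}\, d(x,y),
\]
where the first inequality uses path-independence to evaluate $\Psi$ on a common path through $x$ and $y$ (concatenate the minimal path to $x$, the short path from $x$ to $y$, and reverse). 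Hence $\|\Psi\| \leq C$, so $\Phi$ is a $C$-isomorphism. (Property (2), via Lemma \ref{L:gen_chain}-type reasoning, is what guarantees short paths behave well, but for this theorem property (3) is used directly.) The main obstacle I anticipate is the well-definedness/path-independence of $\Psi$: making the reduction "two decomposition paths with common endpoints differ by simple decomposition loops, each of which is trivial" fully rigorous requires a careful induction on path length together with Lemma \ref{L:unique}, and the concatenation bookkeeping (using the $\hat{*}$ operation) must be handled with some care.

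Finally, for the weak*-weak* continuity when $\B = \R$: by the criterion recorded in the preliminaries, it suffices to show that if $f_j \to f$ pointwise on $X$ with $\sup_j \|f_j\|_{\Lip_0(X)} \leq 1$, then $\Phi(f_j) \to \Phi(f)$ weak* in $\bigoplus_{n\in N}^1\Lip_0(X_n)^* $-predual sense, i.e. coordinatewise weak* in each $\Lip_0(X_n)$, which amounts to pointwise convergence $\Phi(f_j)_n \to \Phi(f)_n$ on each $X_n$ with uniformly bounded Lipschitz constants. But $\Phi(f_j)_n(x) = f_j(x) - f_j(p_n) \to f(x) - f(p_n) = \Phi(f)_n(x)$ for every $x \in X_n$, and $\|\Phi(f_j)_n\|_{\Lip} \leq 1$, which is exactly what is needed. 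This last part is essentially immediate once the isomorphism is established.
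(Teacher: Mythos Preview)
Your proposal is correct, and the norm bound and weak*-continuity arguments match the paper's exactly. However, the paper constructs the inverse $\Psi$ by a different and considerably simpler device: instead of defining $\Psi((g_n)_n)(x)$ via a sum along a minimal decomposition path from $p_0$ to $x$ (which, as you rightly flag, forces you to prove path-independence using Lemma~\ref{L:unique} and some delicate bookkeeping), the paper builds $\Psi$ \emph{recursively} over the index set $N$. Namely, set $g_0 := f_0$ on $X_0$, and having defined $g_{m-1}$ on $\bigcup_{n \leq m-1} X_n$, extend to $X_m$ by declaring $g_m := g_{m-1}(p_m) + f_m$ there. Well-definedness is then a one-line check: the two domains overlap only at $p_m$ (by the definition of tree-like decomposition), and $f_m(p_m) = 0$. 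This completely sidesteps the path-independence obstacle you identified.

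What each approach buys: your path-based definition makes the Lipschitz estimate $\|\Psi\| \leq C$ slightly more transparent, since the value $\Psi(g)(x) - \Psi(g)(y)$ is already expressed as a sum along a decomposition path and one simply swaps in a short one. The paper's recursive construction trades that minor convenience for a trivial well-definedness proof, and then still needs the short-path estimate for the Lipschitz bound (which it carries out exactly as you do). The paper also remarks afterward that property~(2) of Definition~\ref{D:decomp} is never used here---only properties~(1) and~(3)---which is consistent with your parenthetical observation.
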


\begin{proof}
As in the statement of the theorem, define $\Phi(f)_n:=f\res_{X_n}-f(p_n)$. It is clear that $\Phi(f)\in\bigoplus_{n\in N}^\infty\Lip_0(X_n)$, $\|\Phi\|\leq 1$, and that $\Phi_n$ preserves pointwise convergence (hence $\Phi$ is weak*-weak*-continuous when $\B = \R$).

Now we construct a linear map $\Psi: \bigoplus_{n\in N}^\infty\Lip_0(X_n;\B)\to\Lip_0(X;\B)$ that inverts $\Phi$ and has operator norm bounded by $C$. Let $f=(f_n)_{n\in N} \in\bigoplus_{n\in N}^\infty\Lip_0(X_n;\B)$. We will recursively define (over $m \in N$) functions $g_m: \bigcup_{n \leq m} X_n \to \B$ that extend one another, thus allowing us to define $\Psi(f)\res_{X_m}$ $:= g_m\res_{X_m}$.

For the base case, define $g_0 :=f_0$. For the inductive step, let $1 \leq m \in N$, and assume $g_{m-1}: \bigcup_{n \leq m-1} X_n \to \B$ has been defined. Then we define $g_m$ to agree with $g_{m-1}$ on $\bigcup_{n \leq m-1} X_n$ and to equal $g_{m-1}(p_m) + f_m$ on $X_m$. Since these two domains intersect exactly at the point $p_m$, $g_m$ is well-defined if and only if $g_{m-1}(p_m) = g_{m-1}(p_m) + f_m(p_m)$. This holds since $f_m \in \Lip_0(X_m;\B)$ and $p_m$ is the basepoint of $X_m$. This completes the recursive definition.

It is straightforward to check that $\Psi$ inverts $\Phi$, and we omit those details. It remains to show that $\Psi$ is bounded. We will show that $\Psi(f)$ is Lipschitz on $X$ with Lipschitz constant no greater than $C\sup_{n \in N}\|f_n\|_{\Lip}$, where $C$ is the geometric constant of the tree-like decomposition $X=\bigcup_{n\in N}X_n$. 

Fix points $x,y\in X$, and let $(z_i)_{i\in I}$ denote a short decomposition path from $x$ to $y$ such that, for each $1\leq i\leq\max(I)$, there exists $n_i\in N$ for which $\{z_{i-1},z_i\}\subset X_{n_i}$. We then get the estimate 
\begin{align*}
\|\Psi(f)(x)-\Psi(f)(y)\|&\leq\sum_{1\leq i\leq \max(I)}\|\Psi(f)(z_{i-1})-\Psi(f)(z_i)\|\\
&=\sum_{1\leq i\leq \max(I)}\|f_{n_i}(z_{i-1})-f_{n_i}(z_i)\|\\
&\leq\sup_{n \in N}\|f_n\|_{\Lip}\sum_{1\leq i\leq \max(I)}d(z_{i-1},z_i)\\
&\leq C\sup_{n \in N}\|f_n\|_{\Lip}d(x,y),
\end{align*}
where in the last inequality, we've used the definition of short decomposition path in a $C$-geometric tree-like decomposition.
\end{proof}

\begin{remark}
Clearly, item (2) from the definition of $C$-geometric tree-like decompositions is not used in the proof. Hence, Theorem \ref{T:general} holds under the weaker assumption that $\{X_n\}_{n \in N}$ satisfies items (1) and (3) in the definition only.
\end{remark}

\subsection{Lipschitz Light Maps} In this subsection, we investigate the relationship between Lipschitz light maps on pieces of a geometric tree-like decomposition of $X$ and Lipschitz light maps on $X$ itself. As with Lipschitz maps (see Theorem \ref{T:general}), we demonstrate how Lipschitz light maps on pieces of a geometric tree-like decomposition can be ``glued together'' to form a globally Lipschitz light map. We then point out how this result can be used to bound the Lipschitz dimension of a space.

Given a metric space $X$, a subset $E\subset X$, and a number $\varepsilon>0$, we define 
\[N(E;\varepsilon)=\{x\in X\,|\,\dist(x,E)<\varepsilon\}.\]

\begin{lemma}\label{L:gen_chain}
Suppose $X$ is a metric space with $C$-geometric tree-like decomposition $\{X_n\}_{n\in N}$. For any $x,y\in X$, $\delta>0$, and $\delta$-chain $(z_i)_{i\in I}$ from $x$ to $y$, there exists a $C\delta$-chain $(w_j)_{j\in J}$ from $x$ to $y$ and a subindexing set $J' \subset J$ such that
\begin{enumerate}
    \item{$\{w_{j}\}_{j\in J}\subset N(\{z_i\}_{i\in I},C\delta)$,}
    \item $(w_{j_k})_{k\in J'}$ is a minimal decomposition path from $x$ to $y$, and
    % \item{there exists a \df{minimal} decomposition path $(w_{j_k})_{k\in J'}\subset(w_j)_{j\in J}$ from $x$ to $y$, and,}
    \item{for $1\leq k\leq\max(J')$, there exists $n_k\in N$ such that $\{w_j\,|\,j_{k-1}\leq j\leq j_k\}\subset X_{n_k}$.}
\end{enumerate}
\end{lemma}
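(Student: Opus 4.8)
The plan is to refine the given $\delta$-chain into a $C\delta$-chain that is adapted to the decomposition, prune it so that no point is repeated, and then read off the required minimal decomposition path from the indices at which the ambient piece changes. First, we may assume no two consecutive points of $(z_i)_{i\in I}$ coincide. For each $1\le i\le\max(I)$ I would apply Definition \ref{D:decomp}(3) to the pair $z_{i-1},z_i$ and then pass to a minimal sub-decomposition-path — still short, since by the triangle inequality deleting a milestone cannot increase the total length — obtaining a minimal decomposition path $(u^i_\ell)_\ell$ from $z_{i-1}$ to $z_i$ with $\sum_\ell d(u^i_{\ell-1},u^i_\ell)\le C\,d(z_{i-1},z_i)\le C\delta$. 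Concatenating these paths over $i$, glued at the common endpoints $z_i$, yields a $C\delta$-chain from $x$ to $y$: each of its edges is an edge of some $(u^i_\ell)_\ell$, hence has length $\le C\delta$ and lies in a single piece. Since every $u^i_\ell$ satisfies $d(u^i_\ell,z_{i-1})\le\sum_\ell d(u^i_{\ell-1},u^i_\ell)\le C\delta$, with strict inequality unless $u^i_\ell\in\{z_{i-1},z_i\}$ (minimality of $(u^i_\ell)_\ell$ forbids consecutive repeats), all points of the concatenated chain lie in $N(\{z_i\}_{i\in I},C\delta)$.

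Next, whenever the resulting chain satisfies $w_a=w_b$ with $a<b$, I delete $w_{a+1},\dots,w_b$; the adjacency thereby created equals the old edge $\{w_b,w_{b+1}\}$, so it still has length $\le C\delta$ and lies in a single piece, and the chain still runs from $x$ to $y$ through $N(\{z_i\}_{i\in I},C\delta)$. Iterating produces a $C\delta$-chain $(w_j)_{j\in J}$ from $x$ to $y$ with pairwise distinct points — this is conclusion (1) — and for each edge $1\le j\le\max(J)$ we may fix a piece $X_{m_j}\supset\{w_{j-1},w_j\}$. Let $J'=\{j_0<\dots<j_K\}$ consist of $0$, $\max(J)$, and every $0<j<\max(J)$ with $m_j\ne m_{j+1}$. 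For $0<k<K$ we have $w_{j_k}\in X_{m_{j_k}}\cap X_{m_{j_k+1}}$ with distinct pieces, so $w_{j_k}$ is a branch point; and for each $1\le k\le K$, since no index strictly between $j_{k-1}$ and $j_k$ lies in $J'$, the pieces $m_{j_{k-1}+1},\dots,m_{j_k}$ all coincide, say with value $n_k$, whence $\{w_j:j_{k-1}\le j\le j_k\}\subset X_{n_k}$. This is conclusion (3), and it also shows $(w_{j_k})_{k\in J'}$ is a decomposition path from $x$ to $y$.

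For conclusion (2), minimality, I would invoke Remark \ref{R:minimal}: it remains to rule out $w_{j_l}\in X_{n_k}$ for $1\le k<l\le K$. Assuming this fails, pick such a pair with $l-k$ minimal; then minimality forces the pieces $n_{k+1},\dots,n_l$ to be pairwise distinct and $n_k$ to differ from each of $n_{k+1},\dots,n_{l-1}$, because any coincidence $n_a=n_b$ (resp. $n_k=n_c$) yields, via $w_{j_b}\in X_{n_a}$ (resp. $w_{j_c}\in X_{n_k}$), a strictly shorter bad pair. Since $w_{j_k}$ and $w_{j_l}$ both lie in $X_{n_k}$, one can close up $w_{j_k}\to w_{j_{k+1}}\to\cdots\to w_{j_l}\to w_{j_k}$ — deleting the final milestone when $n_k=n_l$ so as to avoid a repeated piece (legitimate since then $w_{j_{l-1}}\in X_{n_l}=X_{n_k}$), and noting that $n_k\ne n_l$ automatically when $l=K$, the one case in which $w_{j_l}=y$ might fail to be a branch point — into a decomposition loop whose vertices are pairwise distinct (the $w_j$ are distinct) and whose pieces are pairwise distinct. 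This is a non-trivial simple decomposition loop, contradicting Lemma \ref{L:unique}; hence $(w_{j_k})_{k\in J'}$ is minimal.

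The step I expect to be the main obstacle is reconciling (2) with (3): the stretch of $C\delta$-chain sitting between two consecutive milestones is required to lie in a single piece, which forbids the chain from backtracking into a side branch between milestones, while the milestone subsequence must simultaneously be minimal. The loop-removal step is precisely what dissolves this tension — a chain with no repeated point cannot backtrack — after which the minimality of the transition subsequence is teased out by the minimal-counterexample argument above together with the no-loops property of Lemma \ref{L:unique}; the one genuinely fiddly point is the loop surgery needed to handle a possibly repeated piece and the fact that $y$ need not be a branch point, and it is also worth double-checking that the strict inequality in (1) survives, which is why I work with minimal short decomposition paths from the outset.
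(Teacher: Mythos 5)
Your proof is correct and takes a genuinely different route from the paper. Both arguments insert short decomposition paths furnished by Definition \ref{D:decomp}(3) and rely on Lemma \ref{L:unique} at the crux, but the constructions differ. The paper first partitions $(z_i)_{i\in I}$ into maximal consecutive blocks each lying in a single piece $X_{m_l}$, inserts short decomposition paths only between adjacent blocks, assembles the milestone subsequence from those short paths via the $\hat{*}$ concatenation, and then enforces minimality by an explicit two-case Pruning Procedure. You instead insert a \emph{minimal} short decomposition path between every consecutive pair $z_{i-1},z_i$, concatenate, deduplicate so that the $w_j$'s are pairwise distinct, read off the milestone subsequence from the indices at which the ambient piece changes, and prove minimality in one shot by a minimal-counterexample argument. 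Your route avoids both the block-partitioning step and the pruning procedure, at the cost of a larger intermediate chain before deduplication; the deduplication is what makes the distinct-vertices hypothesis of Lemma \ref{L:unique} automatic. One small remark: your deletion rule ``when $n_k=n_l$'' is actually vacuous, since the same minimality-of-$(l-k)$ argument that makes $n_k$ differ from $n_{k+1},\dots,n_{l-1}$ also rules out $n_k=n_l$ for every $l$, not only $l=K$ (if $n_k=n_l$ then $w_{j_{l-1}}\in X_{n_l}=X_{n_k}$ gives a strictly shorter bad pair when $k<l-1$, while $k=l-1$ contradicts the consecutive-piece distinctness built into $J'$); and it is precisely $n_k\neq n_K$ together with $w_{j_K}\in X_{n_k}\cap X_{n_K}$ that forces $w_{j_K}=y$ to be a branch point, resolving the only case where membership of $y$ in the branch-point set was in doubt.
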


\begin{proof}
Set $X_{m_0}$ to be an element of $\{X_n\}_{n\in N}$ containing $z_{i_0}:=z_0=x$. If $\{z_i\}_{i\in I}\subset X_{m_0}$, then it's clear that $(z_i)_{i\in I}$ satisfies the conclusions of the lemma, with $(w_{j_k})_{k\in J'}=(x,y)$. If $\{z_i\}_{i\in I}\not\subset X_{m_0}$, define $z_{i_1}$ to be the first element of $(z_i)_{i\in I}$ not contained in $X_{m_0}$. Write $X_{m_1}$ to denote an element of $\{X_n\}_{n\in N}$ containing $z_{i_1}$. Inductively, define $z_{i_l}$ to be the first element of $(z_i\,|\,i_{l-1}< i \leq \max(I))$  not contained in $X_{m_{l-1}}$, and write $X_{m_l}$ to denote an element of $\{X_n\}_{n\in N}$ containing $z_{i_l}$. In this way we obtain $\{X_{m_l}\}_{l\in L}$ such that $\{z_i\}_{i_l\leq i<i_{l+1}}\subset X_{m_l}$. Note also that
\[\{z_i\,|\,i_{\max(L)}\leq i\leq \max(I)\}\subset X_{m_{\max(L)}}.\]

Since $\{X_n\}_{n\in N}$ constitutes a $C$-geometric tree-like decomposition of $X$, for each $1\leq l \leq\max(L)$ there exists a short decomposition path $(v^l_m)_{m\in M_l}$ from $z_{i_{l}-1}$ to $z_{i_l}$. Moroever,
\begin{equation}\label{E:close}
\diam\left(\{v_m^l\}_{m\in M_l}\right)\leq Cd(z_{{i_l}-1},z_{i_l})\leq C\delta.
\end{equation}

For each $0\leq l\leq \max(L)-1$, define 
\[(u^l_k)_{k\in K_l}=(z_i\,|\,i_l\leq i< i_{l+1}).\]
Also, define 
\[(u_k^{\max(L)})_{k\in K_{\max(L)}}=(z_i\,|\,i_{\max(L)}\leq i\leq \max(I)).\]
Intuitively, each sequence in  $\mathcal{K}:=\{(u_k^l)_{k\in K_l}\,|\,l\in L\}$ consists of consecutive points from $(z_i)_{i\in I}$ contained in a single piece of the geometric tree-like decomposition, while each sequence in $\mathcal{M}:=\{(v_m^l)_{m\in M_l}\,|\,1\leq l\leq \max(L)\}$ is a short decomposition path joining consecutive sequences in $\mathcal{K}$.

We form $(w_j)_{j\in J}$ out of $\mathcal{K}$ and $\mathcal{M}$ as follows. For each $l\in L$, define the index set $J_{l,1}$ such that $(w^{l,1}_j)_{j\in J_{l,1}}=(u_k^l)_{k\in K_l}$. For each $1\leq l\leq\max(L)$, define the index set $J_{l,0}$ such that $(w^{l,0}_j)_{j\in J_{l,0}}=(v^l_m)_{m\in M_l}$. Define 
\begin{align*}
(w_j)_{j\in J}=(w^{0,1}_j)_{j\in J_{0,1}}&*(w^{1,0}_j)_{j\in J_{1,0}}*(w^{1,1}_j)_{j\in J_{1,1}}*(w^{2,0}_j)_{j\in J_{2,0}}*(w^{2,1}_j)_{j\in J_{2,1}}\dots\\
&* (w^{\max(L),0}_j)_{j\in J_{{\max(L)},0}}*(w^{\max(L),1}_j)_{j\in J_{{\max(L)},1}}.
\end{align*}

We now ensure that $(w_j)_{j\in J}$ satisfies the conclusions of the lemma. First, by (\ref{E:close}), $(w_j)_{j\in J}$ is a $C\delta$-chain. Again by (\ref{E:close}), the sequence $(w_j)_{j\in J}$ satisfies (1) in the statement of the lemma. Next, we note that the sequence 
\[(w_{j_k})_{k\in J'}:=(x)\hat{*}(w^{1,0}_j)_{j\in J_{1,0}}\hat{*}(w^{2,0}_j)_{j\in J_{2,0}}\hat{*}\dots\hat{*} (w^{\max(L),0}_j)_{j\in J_{{\max(L)},0}}\hat{*}(y)\]
is a decomposition path from $x$ to $y$ such that $\{w_{j_k}\}_{k\in J'}\subset \{w_j\}_{j\in J}$. While this decomposition path need not be minimal, we can nevertheless verify (3) in the statement of the lemma as follows: Let $1\leq k'\leq\max(J')$ be fixed. On one hand, it may be the case that $\{w_j\,|\,j_{k'-1}\leq j\leq j_{k'}\}$ consists of only two subsequent points in the decomposition path $(w_{j_k})_{k\in J'}$. On the other hand, it may be the case that
\[\{w_j\,|\,j_{k'-1}\leq j\leq j_{k'}\}=\{w_{j_{k'-1}},w_{j_{k'}}\}\cup\{w_j^{l,1}\}_{j\in J_{l,1}}=\{w_{j_{k'-1}},w_{j_{k'}}\}\cup\{u_k^l\}_{k\in K_l}\]
for some $l\in L$. In either case,  $\{w_j\,|\,j_{k'-1}\leq j\leq j_{k'}\}\subset X_{n_{k'}}$ for some $n_{k'}\in N$.

Finally, suppose the decomposition path $(w_{j_k})_{k\in J'}$ is not minimal. For ease of notation, for each $k\in J'$ we write $x_k:=w_{j_k}$. By Remark \ref{R:minimal}, there exists some $k_0<k_1$ such that $x_{k_1}\in X_{n_{k_0}}$, where $X_{n_{k_0}}$ is the unique element of $\{X_n\}_{n\in N}$ containing $\{x_{k_0-1},x_{k_0}\}$, and $k_1$ is the minimal index greater than $k_0$ for which this occurs. Given such an index $k_1\in J'$, we describe a \textit{Pruning Procedure} that can be applied to $(w_j)_{j\in J}$ while not impacting (1), (3), or the fact that $(w_j)_{j\in J}$ is a $C\delta$-chain. 

If $x_{k_1}=x_{k_0}$, then we simply delete $(w_{j_{k_0+1}},\dots,w_{j_{k_1}})$ from $(w_j)_{j\in J}$ (and thus $w_{j_{k_1}}$ from $(w_{j_k})_{k\in J'}$). Clearly, this deletion does not affect $(1)$ or $(3)$. It also does not change the fact that $(w_j)_{j\in J}$ is a $C\delta$-chain. We refer to this deletion as \textit{Pruning Procedure A}. 

Assume now that $x_{k_1}\not=x_{k_0}$. By the assumption that $k_1$ is minimal, the decomposition path $(x_{k_0},\dots,x_{k_1-1})$ is minimal and, for $k_0\leq l,k\leq k_1-1$, we have $x_l\not=x_k$ if $l\not=k$. Furthermore, $X_{n_l}\not=X_{n_k}$ if $l\not=k$. Therefore, $(x_{k_0},\dots,x_{k_1},x_{k_0})$ is a simple decomposition loop. By Lemma \ref{L:unique}, we must have $k_1=k_0+1$ (that is, $(x_{k_0},\dots,x_{k_1},x_{k_0})$ must be a \textit{trivial} simple decomposition loop), and thus
\begin{equation}\label{E:same_pieces}
X_{n_{k_0+1}}=X_{n_{k_0}}.
\end{equation}
Therefore, we simply delete $k_0$ from the index set $J'$. This leaves $(w_j)_{j\in J}$ unchanged, and therefore does not impact (1) or the fact that $(w_j)_{j\in J}$ is a $C\delta$-chain. Furthermore, since (\ref{E:same_pieces}) implies that $\{w_j\,|\,j_{k_0-1}\leq j\leq j_{k_0+1}\}\subset X_{n_{k_0+1}}$, neither do we affect (3). We refer to the deletion of $k_0$ from $J'$ as \textit{Pruning Procedure B}. 

Since an application of the above Pruning Procedure (version A or B) decreases the cardinality of $J'$ by at least one, the Pruning Procedure can be applied only finitely many times. Since $(w_{j_k})_{k\in J'}$ is minimal if the Pruning Procedure cannot be applied to $(w_j)_{j\in J}$, we can obtain a $C\delta$-chain satisfying (1)-(3).
\end{proof}

\begin{theorem}\label{T:general_LL}
Suppose $d\geq 1$ and $X$ is a metric space with $C$-geometric tree-like decomposition $\{X_n\}_{n\in N}$ and $\mathbb{B}$ is a Banach space. If there exist numbers $L,Q\geq 1$ such that, for each $n\in N$, there exists an $L$-Lipschitz and $Q$-light map $f_n:X_n\to\mathbb{B}$, then there exist $Q'\geq1$ and an $LC$-Lipschitz $Q'$-light map $f:X\to \mathbb{B}$. Here, $Q'$ depends only on $L$, $Q$, and $C$.
\end{theorem}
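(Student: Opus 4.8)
The plan is to first glue the $f_n$ into one map via the already-proven Theorem \ref{T:general}, and then verify lightness of the glued map by pushing chains through the decomposition using Lemma \ref{L:gen_chain}. First I would normalize: translating the codomain of a map changes neither its Lipschitz constant nor its light constant, so I may replace each $f_n$ by $f_n - f_n(p_n)$ and assume $f_n \in \Lip_0(X_n;\mathbb{B})$, with $X_n$ based at the branch point $p_n$ as in Theorem \ref{T:general}. Since $\sup_{n\in N}\|f_n\|_{\Lip} \leq L$, that theorem produces $f := \Psi\bigl((f_n)_{n\in N}\bigr) \in \Lip_0(X;\mathbb{B})$ with $\|f\|_{\Lip} \leq C\sup_n\|f_n\|_{\Lip} \leq LC$, and from the recursive construction of $\Psi$ one reads off the structural identity $f\res_{X_n} = f(p_n) + f_n$ for every $n \in N$. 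It remains to show $f$ is $Q'$-light.

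The substance is checking lightness, and the mechanism is Lemma \ref{L:gen_chain}. Fix $r > 0$ and $E \subset \mathbb{B}$ with $\diam(E) \leq r$, let $A$ be an $r$-component of $f^{-1}(E)$, and fix $x,y \in A$. Choosing an $r$-chain $(z_i)_{i\in I}$ from $x$ to $y$ inside $A \subset f^{-1}(E)$, I would feed it to Lemma \ref{L:gen_chain} with $\delta = r$ to obtain a $Cr$-chain $(w_j)_{j\in J}$ from $x$ to $y$ with $\{w_j\}_{j\in J} \subset N(\{z_i\}_{i\in I}, Cr)$, together with a \emph{minimal} decomposition path $(w_{j_k})_{k\in J'}$ from $x$ to $y$ whose consecutive segments $\{w_j : j_{k-1} \leq j \leq j_k\}$ each lie inside a single piece $X_{n_k}$. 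Because $f$ is $LC$-Lipschitz and every $w_j$ is within $Cr$ of some $z_i \in f^{-1}(E)$, all images $f(w_j)$ lie in the enlarged set $E' := N(E, LC^2 r)$, which has $\diam(E') \leq \rho$, where $\rho := (1 + 2LC^2)\,r$; note also $Cr \leq \rho$ since $L,C \geq 1$.

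Now I would localize to each piece. Fix $1 \leq k \leq \max(J')$ and set $n = n_k$. The segment $\{w_j : j_{k-1} \leq j \leq j_k\}$ is a subchain of $(w_j)_{j\in J}$, hence a $Cr$-chain and a fortiori a $\rho$-chain, and it lies in $X_n$; by the structural identity, its $f_n$-images all lie in the translate $E' - f(p_n)$, still of diameter at most $\rho$. Applying the $Q$-lightness of $f_n$ with parameter $\rho$, the whole segment lies in a single $\rho$-component of $f_n^{-1}(E' - f(p_n))$, so $d(w_{j_{k-1}}, w_{j_k}) \leq Q\rho$. Since $(w_{j_k})_{k\in J'}$ is a minimal decomposition path, item (2) of Definition \ref{D:decomp} then gives
\[
d(x,y) \;\leq\; C\max_{1 \leq k \leq \max(J')} d(w_{j_{k-1}}, w_{j_k}) \;\leq\; CQ\rho \;=\; CQ(1 + 2LC^2)\,r .
\]
As $x,y \in A$ were arbitrary, $\diam(A) \leq Q'r$ with $Q' := CQ(1 + 2LC^2)$, which depends only on $L$, $Q$, and $C$; together with Step one this shows $f$ is $LC$-Lipschitz and $Q'$-light.

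I expect the only real obstacle to be bookkeeping rather than any conceptual difficulty: one must be sure that the chain delivered by Lemma \ref{L:gen_chain} really does permit invoking the lightness of the individual $f_n$ (this is exactly why that lemma is engineered so each segment of the decomposition path sits in one piece), and one must track the inflation of the radius through the steps $r$, then $Cr$, then $\rho$, so that at the localization step the spacing of the chain never exceeds the diameter parameter fed into the definition of lightness. The inequality $Cr \leq \rho$, used to promote a $Cr$-chain to a $\rho$-chain, is precisely what makes this last point go through, and everything else is routine.
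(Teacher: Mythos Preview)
Your proposal is correct and follows essentially the same route as the paper's proof: glue via Theorem~\ref{T:general}, then push an $r$-chain through Lemma~\ref{L:gen_chain}, enlarge $E$ to $E'$ using the $LC$-Lipschitz bound, apply the $Q$-lightness of the individual $f_n$ on each segment of the resulting minimal decomposition path, and finish with item~(2) of Definition~\ref{D:decomp}, arriving at the same constant $Q' = CQ(1+2LC^2)$. The only cosmetic difference is that the paper singles out the maximizing index $k_0$ before invoking lightness, whereas you bound every segment and then take the maximum---these are equivalent.
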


\begin{proof}
By Theorem \ref{T:general}, there exists an $LC$-Lipschitz map $f:X\to\mathbb{B}$ that restricts to a translation of $f_n$ on each subset $X_n$. Then our assumption that $f_n$ is $L$-Lipschitz $Q$-light implies that $f\res_{X_n}$ is $L$-Lipschitz $Q$-light for every $n \in N$. We next verify that $f$ is Lipschitz light on $X$. To this end, let $\delta > 0$ and fix a set $E\subset\mathbb{B}$ such that $\diam(E) < \delta$. Let $U\subset X$ be a $\delta$-component of $f^{-1}(E)$ in $X$. Let $x,y \in U$ be arbitrary and $(z_i)_{i\in I}$ a $\delta$-chain in $U$ from $x$ to $y$.

By Lemma \ref{L:gen_chain}, there exists a $C\delta$-chain $(w_j)_{j\in J}$ from $x$ to $y$ and a subindexing set $J' \subset J$ such that 
\begin{enumerate}[(i)]
    \item{$\{w_{j}\}_{j\in J}\subset N(\{z_i\}_{i\in I},C\delta)$,}
    \item $(w_{j_k})_{k\in J'}$ is a minimal decomposition path from $x$ to $y$, and
    % \item{there exists a \df{minimal} decomposition path $(w_{j_k})_{k\in J'}\subset(w_j)_{j\in J}$ from $x$ to $y$, and,}
    \item{for $1\leq k\leq\max(J')$, there exists $n_k\in N$ such that $\{w_j\,|\,j_{k-1}\leq j\leq j_k\}\subset X_{n_k}$.} 
\end{enumerate}

% By Lemma \ref{L:gen_chain}, there exists a $C\delta$-chain $(w_j)_{j\in J}$ from $x$ to $y$ such that 
% \begin{enumerate}[(i)]
%     \item{$\{w_{j}\}_{j\in J}\subset N(\{z_i\}_{i\in I},C\delta)$,}
%     \item{there exists a \df{minimal} decomposition path $(w_{j_k})_{k\in J'}\subset(w_j)_{j\in J}$ from $x$ to $y$, and,}
%     \item{for $1\leq k\leq\max(J')$, there exists $n_k\in N$ such that $\{w_j\,|\,j_{k-1}\leq j\leq j_k\}\subset X_{n_k}$.} 
% \end{enumerate}
% \df{In light of Remark \ref{R:minimal}, we assume that $(w_{j_k})_{k\in J'}$ is minimal. Moreover, we note that this assumption can be made without affecting item (iii) above. To see this, suppose $(w_{j_k})_{k\in J'}$ contains a minimal decomposition path $(w_{j_k})_{k\in J''}$ such that $J''\subsetneq J'$. Let $\{l_0,\dots,l_1\}\subset J'\setminus J''$ be such that $\{l_0-1,l_1+1\}\subset J''$. Since $\{w_{l_0-1},w_{l_1+1}\}\subset X_{n_{l_1+1}}$, }

Define $k_0\in J'$ such that 
\[d(w_{j_{k_0-1}},w_{j_{k_0}})=\max_{1\leq k\leq \max(J')}d(w_{j_{k-1}},w_{j_k}).\]
Since $(w_{j_k})_{k\in J'}$ is a minimal decomposition path from $x$ to $y$, we have
\begin{equation}\label{E:upper}
d(x,y)\leq C d(w_{j_{k_0-1}},w_{j_{k_0}}).
\end{equation}
Since $f$ is $LC$-Lipschitz, item (i) implies that $\{w_j\}_{j\in J}\subset f^{-1}(E')$, where $E'=N(E;LC^2\delta)$. Set $\delta' := \diam(E)+2LC^2\delta \geq \diam(E')$. Since $C,L\geq1$ and $\delta>\diam(E)$, we have
\begin{equation*}
    C\delta < \diam(E)+2LC^2\delta < (1+2LC^2)\delta,
\end{equation*}
and thus $\delta' = \diam(E)+2LC^2\delta$ gives us
\begin{equation}\label{eq:chain}
    C\delta < \delta' < (1+2LC^2)\delta.
\end{equation}
Since the sequence $(w_j\,|\,j_{k_0-1}\leq j\leq j_{k_0})$ is a $C\delta$-chain, \eqref{eq:chain} and (iii) imply that it is also a $\delta'$-chain in $\left(f\res_{X_{n_{k_0}}}\right)^{-1}(E')$. Since $f\res_{X_{n_{k_0}}}$ is $Q$-light, we conclude 
\[\diam(\{w_j\,|\,j_{k_0}\leq j\leq j_{k_0}\})\leq Q\delta'.\]
Via \eqref{E:upper}, we find that 
\begin{align*}
d(x,y)&\leq Cd(w_{j_{k_0-1}},w_{j_{k_0}})\leq C\diam(\{w_j\,|\,j_{k_0-1}\leq j\leq j_{k_0}\})\leq CQ\delta'.
\end{align*}
Since $x,y \in U$ were arbitrary, it follows from the above inequality and \eqref{eq:chain} that $\diam(U)\leq CQ(1+2LC^2)\delta$, and so $f:X\to\mathbb{B}$ is $LC$-Lipschitz and $Q'$-light, where $Q'=CQ(1+2LC^2)$.
\end{proof}

\begin{corollary}\label{C:Lip_dim_cor}
Suppose $d\geq1$ and $X$ is a metric space with $C$-geometric tree-like decomposition $\{X_n\}_{n\in N}$. If there exist $L,Q\geq1$ such that, for each $n\in N$, there exists a $L$-Lipschitz $Q$-light map $f_n:X_n\to \mathbb{R}^d$, then $\dim_L(X)\leq d$.
\end{corollary}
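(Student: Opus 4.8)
The plan is to derive the corollary as an immediate specialization of Theorem \ref{T:general_LL}. Indeed, the hypotheses of the corollary are exactly the hypotheses of that theorem when one takes the Banach space $\mathbb{B}$ to be $\mathbb{R}^d$: we are handed a $C$-geometric tree-like decomposition $\{X_n\}_{n\in N}$ of $X$, constants $L,Q\geq 1$, and, for every $n\in N$, an $L$-Lipschitz $Q$-light map $f_n:X_n\to\mathbb{R}^d$. So the first (and essentially only) step is to invoke Theorem \ref{T:general_LL} with $\mathbb{B}=\mathbb{R}^d$.

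Carrying this out, Theorem \ref{T:general_LL} produces a constant $Q'\geq 1$ (depending only on $L$, $Q$, and $C$) together with an $LC$-Lipschitz, $Q'$-light map $f:X\to\mathbb{R}^d$. In particular, $f$ is a Lipschitz light map from $X$ into $\mathbb{R}^d$ in the sense of Definition \ref{D:LL}. By the definition of Lipschitz dimension, the existence of such a map is precisely the assertion that $\dim_L(X)\leq d$, so the proof concludes there.

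The only point worth a brief remark is that the Lipschitz light condition appearing in the definition of Lipschitz dimension is exactly Definition \ref{D:LL} (and, for targets of the form $\mathbb{R}^d$, is equivalent to the reformulation recorded in Remark \ref{R:LipLight}), so no reconciliation of definitions is needed. I do not anticipate any obstacle: all of the substantive content---gluing $\delta$-chains through branch points via Lemma \ref{L:gen_chain}, pruning to a minimal decomposition path, and bounding the diameter of a $\delta$-component of $f^{-1}(E)$ by $CQ(1+2LC^2)\delta$---has already been established in the proof of Theorem \ref{T:general_LL}, and the corollary is merely the translation of that output into the language of Lipschitz dimension.
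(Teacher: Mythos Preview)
Your proposal is correct and matches the paper's approach exactly: the corollary is stated immediately after Theorem \ref{T:general_LL} with no proof, so the paper treats it as the direct specialization $\mathbb{B}=\mathbb{R}^d$ together with the definition of Lipschitz dimension, just as you do.
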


\subsection{Bi-Lipschitz Embeddings into $\ell^p$-sums}\label{s:banach}

In the spirit of the previous two subsections, we demonstrate how uniformly bi-Lipschitz embeddings of pieces of a geometric tree-like decomposition can be added up to form a bi-Lipschitz embedding of the entire space.

\begin{theorem} \label{thm:Banachembed}
Let $X = \{X_n\}_{n \in N}$ be a $C$-geometric tree-like decomposition of a metric space $X$. Let $L < \infty$ and $(\B_n,\|\cdot\|_n)_{n \in N}$ be a sequence of Banach spaces such that, for each $n \in N$, $X_n$ $L$-bi-Lipschitz embeds into $\B_n$. Then for every $p \in [1,\infty)$, the space $X$ admits a $CL$-bi-Lipschitz embedding into the $\ell^p$-sum $\bigoplus^p_{n \in N}\B_n$.
\end{theorem}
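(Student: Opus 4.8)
The plan is to mimic the gluing construction from Theorem \ref{T:general}, replacing the target $\bigoplus^\infty_{n \in N} \Lip_0(X_n;\B)$ with $\bigoplus^p_{n \in N} \B_n$ and carefully tracking the bi-Lipschitz (not just Lipschitz) estimates using the two geometric properties (2) and (3) of the decomposition. Fix $L$-bi-Lipschitz embeddings $\iota_n : X_n \hookrightarrow \B_n$; by translating, we may assume $\iota_n(p_n) = 0$ for each $n$, where $p_n$ is the distinguished branch point of $X_n$ (and $p_0 \in X_0$ arbitrary). The idea is to build a map $F : X \to \bigoplus^p_{n\in N}\B_n$ recursively: on $\bigcup_{n\le m}X_n$, define $F$ so that its $n$-th coordinate records $\iota_n$ applied to the ``portion of the path into $X_n$'', i.e., for $x \in X_m$ set $F(x)_n := \iota_n\big(r_n(x)\big)$ where $r_n(x)$ is the point at which the (essentially unique) decomposition path from the basepoint to $x$ enters and exits $X_n$ — concretely, $r_n(x)$ is $p_n$ if the path never reaches $X_n$, is $x$ itself if $x \in X_n$, and is the exit point (a branch point) of $X_n$ otherwise. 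This is well-defined because Lemma \ref{L:unique} guarantees no nontrivial simple decomposition loops, so the relevant path structure is genuinely tree-like.

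**First I would** make the definition of $F$ precise and check well-definedness via the recursion exactly as in Theorem \ref{T:general}: $g_0 := \iota_0$ (placed in coordinate $0$), and given $g_{m-1}$ on $\bigcup_{n\le m-1}X_n$, extend to $X_m$ by keeping all old coordinates fixed at their value at $p_m$ and setting the $m$-th coordinate to $\iota_m(x) - \iota_m(p_m) = \iota_m(x)$ for $x \in X_m$; agreement at the single intersection point $p_m$ is immediate since $\iota_m(p_m)=0$. **Next** I would prove the upper Lipschitz bound. Given $x,y \in X$, take a short decomposition path $(z_i)_{i\in I}$ from $x$ to $y$ (property (3)). Along such a path, $F(z_{i-1})$ and $F(z_i)$ differ in exactly one coordinate, namely $n_i$, by the amount $\|\iota_{n_i}(z_{i-1})-\iota_{n_i}(z_i)\|_{n_i} \le L\,d(z_{i-1},z_i)$. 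Moreover the indices $n_i$ appearing along a \emph{minimal} decomposition path are distinct (Remark \ref{R:minimal}), so after passing to the minimal subpath the coordinate-increments occur in pairwise distinct coordinates; hence
\[
\|F(x)-F(y)\|_p^p = \sum_{i} \|\iota_{n_i}(z_{i-1})-\iota_{n_i}(z_i)\|_{n_i}^p \le L^p \sum_i d(z_{i-1},z_i)^p \le L^p\Big(\sum_i d(z_{i-1},z_i)\Big)^p \le (LC)^p d(x,y)^p,
\]
using $\sum a_i^p \le (\sum a_i)^p$ for nonnegative reals and $p\ge1$, then property (3). (One must first argue that telescoping $F(x)-F(y)$ along the path is legitimate even when the path is not minimal: each coordinate of $F$ depends only on the entry/exit data, which is unchanged by deleting loop-segments, so one may replace the short path by the minimal path it contains without changing the endpoints' $F$-values — this is where Lemma \ref{L:unique} is really used.)

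**The main obstacle**, and the step I expect to require the most care, is the lower bound $d(x,y) \le CL\,\|F(x)-F(y)\|_p$. For this I would take a \emph{minimal} decomposition path $(z_i)_{i\in I}$ from $x$ to $y$ and apply property (2): $d(x,y) \le C\max_{i}d(z_{i},z_{i+1})$. Pick the index $i^*$ realizing this max; then $\{z_{i^*},z_{i^*+1}\}\subset X_{n_{i^*}}$, so $d(z_{i^*},z_{i^*+1}) \le L\,\|\iota_{n_{i^*}}(z_{i^*})-\iota_{n_{i^*}}(z_{i^*+1})\|_{n_{i^*}}$. The crux is to identify this last quantity with (a single coordinate of) $F(x)-F(y)$: I claim that for a minimal path the $n_{i^*}$-th coordinate of $F(x)$ equals $\iota_{n_{i^*}}(z_{i^*})$ and of $F(y)$ equals $\iota_{n_{i^*}}(z_{i^*+1})$ (or vice versa). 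Indeed, minimality forces $z_j \notin X_{n_i}$ for $i<j$ (Remark \ref{R:minimal}), so $X_{n_{i^*}}$ is visited by the path exactly on the segment from $z_{i^*}$ to $z_{i^*+1}$; hence the ``entry/exit into $X_{n_{i^*}}$'' data for $x$ is $z_{i^*}$ and for $y$ is $z_{i^*+1}$. Granting this,
\[
\|F(x)-F(y)\|_p \ge \|\iota_{n_{i^*}}(z_{i^*})-\iota_{n_{i^*}}(z_{i^*+1})\|_{n_{i^*}} \ge L^{-1}d(z_{i^*},z_{i^*+1}) \ge (CL)^{-1}d(x,y),
\]
which finishes the proof after combining with the upper bound to get a $CL$-bi-Lipschitz embedding. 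The delicate points to nail down are that the coordinate functions $x \mapsto F(x)_n$ really are constant on each ``side'' of $X_n$ and that a minimal decomposition path enters each $X_n$ along a single consecutive block; both follow from the tree-like structure but deserve an explicit lemma or at least a careful paragraph.
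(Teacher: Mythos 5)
Your proof is correct and follows the paper's approach essentially verbatim: glue the embeddings via the $\Psi$-construction of Theorem \ref{T:general}, then use the short and minimal decomposition-path properties (together with the observation that increments along a minimal path occupy pairwise distinct $\ell^p$-coordinates) to obtain the Lipschitz and co-Lipschitz bounds. The only small detour is that for the upper bound the paper simply cites the $CL$-Lipschitz estimate already supplied by Theorem \ref{T:general} (which needs nothing but the triangle inequality and a short path), whereas you re-derive it via the $\ell^p$-norm equality — that works, but it then requires passing to a minimal subpath and noting that the subpath remains short, an extra (though easy) step the paper avoids.
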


\begin{proof}
Let $p \in [1,\infty)$. Let $\|\cdot\|_p$ denote the norm on $\bigoplus^p_{n \in N}\B_n$. For each $n \in \N$, let $\phi_n: X_n \to \B_n$ be an $L$-bi-Lipschitz embedding, and let $\iota_n: \B_n \hookrightarrow \bigoplus^p_{n \in N}\B_n$ denote the canonical inclusion. By subtracting $\phi_n(p_n)$ from $\phi_n$, we may assume $\phi_n(p_n) = 0$, so that $\phi_n \in \Lip_0(X_n;\B_n)$. By Theorem \ref{T:general}, there exists a $CL$-Lipschitz map $\phi: X \to \bigoplus^p_{n \in N}\B_n$ such that $\phi \big|_{X_n} - \phi(p_n) = \iota_n \circ \phi_n$. It remains to check the co-Lipschitz constant of $\phi$.

Let $x,y \in X$. Let $(z_i)_{i \in I}$ be a decomposition path from $x$ to $y$ with $\{z_{i-1},z_{i}\} \subset X_{n_i}$. By Remark \ref{R:minimal}, we may assume $(z_i)_{i\in I}$ is minimal. Then we have
\begin{align*}\|\phi(x)-\phi(y)\|_p^p&=\left\|\sum_{1\leq i\in I}(\phi(z_{i-1})-\phi(z_i))\right\|_p^p=\sum_{1\leq i\in I}\|\phi_{n_i}(z_{i-1})-\phi_{n_i}(z_{i})\|_{n_i}^p \\
&\geq\max_{1\leq i\in I}\|\phi_{n_i}(z_{i-1})-\phi_{n_i}(z_i)\|_{n_i}^p\geq \max_{1\leq i\in I}(L^{-1}d(z_{i-1},z_i))^p\\
&\geq((CL)^{-1}d(x,y))^p,
\end{align*}
where the final inequality follows from the minimality of $(z_i)_{i\in I}$. Thus $\phi$ is $CL$-co-Lipschitz.
\end{proof}

The next corollary follows immediately from Theorem \ref{thm:Banachembed} and the facts that $\bigoplus^p_{n \in N} L^p([0,1]) = L^p([0,1])$ and $\bigoplus^p_{n \in N} \ell^p = \ell^p$ isometrically whenever $N$ is a countable indexing set.

\begin{corollary}\label{C:Banachembed}
Let $X = \{X_n\}_{n \in N}$ be a $C$-geometric tree-like decomposition of a metric space $X$. Let $L < \infty$ and $p \in [1,\infty)$. If $X_n$ $L$-bi-Lipschitz embeds into $L^p([0,1])$ (resp. $\ell^p$) for each $n$, then $X$ $CL$-bi-Lipschitz embeds into $L^p([0,1])$ (resp. $\ell^p$).
\end{corollary}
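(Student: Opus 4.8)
The plan is to deduce this directly from Theorem \ref{thm:Banachembed}. First I would apply that theorem to the constant sequence of Banach spaces $\B_n := L^p([0,1])$ for every $n \in N$ (respectively $\B_n := \ell^p$); the hypothesis of the corollary is exactly that each $X_n$ admits an $L$-bi-Lipschitz embedding into $\B_n$, so Theorem \ref{thm:Banachembed} produces a $CL$-bi-Lipschitz embedding
\[
\phi : X \longrightarrow \bigoplus_{n \in N}^p \B_n .
\]

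Next I would note that the indexing set $N$ of any tree-like decomposition is countable: by Definition \ref{D:treelike} it is either a finite initial segment of $\N$ or all of $\N$. Hence the codomain of $\phi$ is a countable $\ell^p$-sum of copies of a single $L^p$-space, and I would invoke the isometric identifications $\bigoplus_{n \in N}^p L^p([0,1]) \cong L^p([0,1])$ and $\bigoplus_{n \in N}^p \ell^p \cong \ell^p$. These hold because a countable $\ell^p$-sum of copies of $L^p(\Omega)$ is isometric to $L^p$ of the disjoint union of countably many copies of $\Omega$; for $\Omega = \N$ that disjoint union is again $\N$, while for $\Omega = [0,1]$ one fixes a partition $[0,1] = \bigsqcup_n I_n$ into subintervals of positive length $\ell_n$ and transports each summand via the rescaling $U_n g := \ell_n^{-1/p}\,(g \circ \psi_n^{-1})$, where $\psi_n : [0,1] \to I_n$ is the increasing affine bijection --- a quick change of variables shows each $U_n$ is an $L^p$-isometry, and together they glue to the required isometry. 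Composing $\phi$ with this isometry yields the asserted $CL$-bi-Lipschitz embedding.

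There is no real obstacle here: the bi-Lipschitz constant $CL$ comes verbatim from Theorem \ref{thm:Banachembed}, and the only point to verify is the isometric identification of the target, which is the standard fact quoted immediately before the corollary statement. The brief change-of-variables computation indicated above is the only thing worth recording in a full write-up.
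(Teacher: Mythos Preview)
Your proposal is correct and matches the paper's approach exactly: the paper simply says the corollary follows immediately from Theorem~\ref{thm:Banachembed} together with the isometric identifications $\bigoplus^p_{n \in N} L^p([0,1]) = L^p([0,1])$ and $\bigoplus^p_{n \in N} \ell^p = \ell^p$ for countable $N$. Your sketch even supplies the rescaling argument for the $L^p([0,1])$ identification that the paper leaves implicit.
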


\section{Applications to QC Trees}\label{S:tree_apps}
In order to apply the results from previous sections to the case that $X$ is a quasiconformal tree $T$, we first establish relevant definitions and notation. Here we closely follow the notation of \cite{DEV21}. For the remainder of this section, fix a $1$-bounded turning $QC$ tree $T$.

We write $\mathcal{L}(T)$ to denote the leaves of $T$. Let $\{\mathcal{N}_n\}_{n\in\mathbb{N}}$ be a sequence such that, for each $n\in\mathbb{N}$, the set $\mathcal{N}_n$ is a $2^{-n}$-net in $\mathcal{L}(T)$ and $\mathcal{N}_n\subset \mathcal{N}_m$ for $n\leq m$. We then define 
\[T_n=\bigcup_{x,y\in \mathcal{N}_n}[x,y].\]
Here $[x,y]$ denotes the unique arc joining $x$ to $y$ in $T$. For $n\geq 2$, we write $\{K_n^j\}_{j\in J_n}$ to denote the collection of pairwise disjoint connected components of the compact set $\overline{T_n\setminus T_{n-1}}$. We also write $K_1^1=T_1$ and $J_1=\{1\}$. For notational convenience, we write the combination of collections $\{K_n^j\}_{j\in J_n}$ as $\{K_n^j\}_{(n,j)\in I}$. In this context,
\[I :=\{(n,j)\,|\,n\in \mathbb{N}, j\in J_n\}.\]

If $\mathcal{L}(T)$ is an infinite set, then we cannot assume that $T=\bigcup_{(n,j)\in I}K_n^j$. However, as pointed out in \cite[Remark 3.2]{DEV21}, it is true that
\begin{equation}\label{eq:L(T)^cdense}
    T\setminus\mathcal{L}(T)\subset \bigcup_{(n,j)\in I}K_n^j.
\end{equation}
Defining \[T_\infty:=\bigcup_{m\in\mathbb{N}}T_m=\bigcup_{(n,j)\in I}K_n^j,\] 
we note that \eqref{eq:L(T)^cdense} implies $T_{\infty} \subset T$ is dense.

\begin{definition}\label{D:DEBV}
Given a QC tree $T$, we call the collection of subtrees $\{K_n^j\}_{(n,j)\in I}$ constructed above a \textit{DEBV decomposition} of the corresponding subtree $T_\infty\subset T$.
\end{definition}

Our goal is to show that any DEBV decomposition of $T_\infty\subset T$ constitutes a geometric tree-like decomposition of $T_\infty$. By Remark \ref{rmk:finitetreedecomp}, it suffices to show that, for each $(n,j) \in I$, the DEBV decomposition $\{K_{n'}^{j'}\}_{(n',j') \leq (n,j)}$ of the finite-leaved truncation 
$\bigcup_{(n',j') \leq (n,j)} K_{n'}^{j'}$ is a $C$-geometric tree-like decomposition, where $C$ is some constant independent of $(n,j)$. Towards this end, we will assume from here till Theorem \ref{thm:arc_qa_decomp} that $T$ has finitely many leaves. We will also rescale the metric on $T$ so that $\diam(T) = 1$, which leaves the bounded turning constant unchanged. To summarize, $T$ is 1-bounded turning, and we will assume that it has finitely many leaves and that $\diam(T) = 1$. These assumptions match those in \cite[Section 3]{DEV21}, and thus we may directly cite results therein. We  begin with the following modified version of \cite[Lemma 3.3]{DEV21}.

\begin{lemma}\label{L:gammas}
Assume that $T$ has finitely many leaves and that $\diam(T) = 1$. Given a subtree $K_n^j$ in a fixed DEBV decomposition of $T$ and a point $p^j_{n,1} \in K_n^j$, the subtree $K_n^j$ consists of at most $C$ arcs $\{\gamma_{n,m}^j\}_{m\in M_n^j}$ (where $C$ depends only on the doubling constant of $T$) such that the indexing satisfies
\begin{itemize}
    \item $M_n^j = \{1,\dots \max(M_n^j)\}$.
    \item $p^j_{n,1} \in \gamma_{n,1}^j$,
    \item for $2\leq m\leq \max(M_n^j)$, there exists a unique point $p_{n,m}^j\in\gamma_{n,m}^j\cap\bigcup_{l<m}\gamma_{n,l}^j$.
\end{itemize}
\end{lemma}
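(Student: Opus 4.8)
The plan is to extract the arcs $\gamma_{n,m}^j$ directly from the internal structure of $K_n^j$ as a metric tree, using the net $\mathcal{N}_n$ and its predecessor $\mathcal{N}_{n-1}$. Recall that $K_n^j$ is a connected component of $\overline{T_n \setminus T_{n-1}}$, where $T_n = \bigcup_{x,y \in \mathcal{N}_n}[x,y]$. The key observation (which I would cite or re-derive from \cite[Section 3]{DEV21}) is that each $K_n^j$ is itself a subtree of $T$ and meets $T_{n-1}$ in at most one point; moreover, since $\mathcal{N}_n$ is a $2^{-n}$-net in $\mathcal{L}(T)$ and $T$ is doubling, the number of ``new'' leaves of $K_n^j$ contributed at level $n$ is bounded by a constant $C$ depending only on the doubling constant. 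I would make this count precise: the leaves of $K_n^j$ are either the single point $K_n^j \cap T_{n-1}$ (if nonempty) or points of $\mathcal{N}_n$, and a doubling/net argument (essentially \cite[Lemma 3.3]{DEV21}, which this lemma is a ``modified version'' of) bounds the number of such leaves by $C$.

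First I would fix the point $p_{n,1}^j \in K_n^j$ and argue that a metric tree with at most $C$ leaves decomposes into at most $C$ arcs in a tree-like fashion rooted at any prescribed point. Concretely: let $\ell_1, \dots, \ell_s$ (with $s \leq C$) enumerate the leaves of $K_n^j$. Set $\gamma_{n,1}^j := [p_{n,1}^j, \ell_1]$ if $p_{n,1}^j$ is not itself a leaf, or $\gamma_{n,1}^j := [\ell_{i_0}, \ell_1]$ for an appropriate leaf $\ell_{i_0}$ otherwise; in any case $p_{n,1}^j \in \gamma_{n,1}^j$. Then recursively, having built $\gamma_{n,1}^j, \dots, \gamma_{n,m-1}^j$ whose union $U_{m-1}$ is a subtree containing $p_{n,1}^j$ and some of the leaves, pick a leaf $\ell$ of $K_n^j$ not in $U_{m-1}$, let $p_{n,m}^j$ be the (unique, by the tree property) point of $U_{m-1}$ closest to $\ell$ — equivalently the unique point in $[q,\ell] \cap U_{m-1}$ for any $q \in U_{m-1}$ — and set $\gamma_{n,m}^j := [p_{n,m}^j, \ell]$. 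Uniqueness of $p_{n,m}^j$ in $\gamma_{n,m}^j \cap \bigcup_{l<m}\gamma_{n,l}^j$ follows from the defining property of a metric tree (unique arcs, so $\gamma_{n,m}^j$ meets the connected set $U_{m-1}$ in exactly one point). The process terminates after at most $s \leq C$ steps because each step absorbs at least one new leaf, and at termination $\bigcup_m \gamma_{n,m}^j = K_n^j$ since every point of a metric tree lies on an arc between two leaves.

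The main obstacle I expect is the quantitative leaf count: showing that $K_n^j$ has at most $C$ leaves with $C$ depending only on the doubling constant of $T$. This is where the geometry — the $2^{-n}$-net structure, the nesting $\mathcal{N}_{n-1} \subset \mathcal{N}_n$, the bounded turning, and doubling — all come into play, and it is essentially the content of \cite[Lemma 3.3]{DEV21}. The rest of the argument (the recursive arc decomposition of a finite-leaved metric tree rooted at a point) is purely combinatorial-topological and uses only the metric tree axioms already recalled in the introduction. I would therefore cite \cite[Lemma 3.3]{DEV21} for the leaf bound, state carefully how the leaves of $K_n^j$ are identified (the at-most-one point of $K_n^j \cap \bigcup_{(n',j')<(n,j)} K_{n'}^{j'}$ together with points of $\mathcal{N}_n$), and then spell out the recursion above, being slightly careful about the base case depending on whether the prescribed point $p_{n,1}^j$ is a leaf or an interior point of $K_n^j$.
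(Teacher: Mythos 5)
Your plan is correct but takes a genuinely different route from the paper. The paper also begins from \cite[Lemma~3.3]{DEV21}, which expresses $K_n^j$ as a union of at most $C'$ arcs $\{\tilde{\gamma}_m\}_m$, but then runs a recursion over those DEV arcs: at each step it selects a new $\tilde{\gamma}_l$ that meets the already-built subtree, splits it into at most three subarcs $[w,x],[x,y],[y,z]$ where $[x,y]$ is the (possibly degenerate) overlap with what has been built, and adds the two outer pieces to the collection, giving $C = 2C'$. You instead deduce from the same arc bound a leaf bound on $K_n^j$ (a union of $C'$ arcs in a dendrite has at most $2C'$ leaves, or alternatively one bounds $|\mathcal{N}_n \cap K_n^j|$ using $\diam(K_n^j) \lesssim 2^{-n}$ and doubling), and then run the cleaner leaf-based recursion intrinsic to the tree: attach $[\,p_{n,m}^j,\ell\,]$ where $\ell$ is an uncovered leaf and $p_{n,m}^j$ is the unique entry point of $[q,\ell]$ into the already-built subtree $U_{m-1}$. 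Uniqueness of the intersection point and the terminal identity $\bigcup_m \gamma_{n,m}^j = K_n^j$ then follow from standard dendrite facts (a subcontinuum of a dendrite containing $x,y$ contains $[x,y]$; and in a finite-leaved dendrite every point lies on some leaf-to-leaf arc). Your version is arguably more transparent, since it avoids the case analysis over degenerate splitting points that the paper's recursion requires, and it makes the leaf structure of $K_n^j$ explicit; the paper's version has the advantage of staying closer to the DEV21 machinery it already needs elsewhere. Both yield $C = 2C'$. The one place you should tighten the write-up is the passage from \cite[Lemma~3.3]{DEV21} to the leaf bound: the cited lemma gives an arc decomposition and a diameter estimate rather than a leaf count directly, so either spell out that a leaf of a dendrite covered by $C'$ arcs must be an endpoint of one of them (hence $\leq 2C'$ leaves), or run the doubling/net count of $\mathcal{N}_n \cap K_n^j$ using the diameter bound; as written, ``essentially the content of Lemma 3.3'' glosses over a step that should be made explicit.
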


\begin{proof}
Let $K_n^j$ be a subtree in a fixed DEBV decomposition of $T$ and $p^j_{n,1} \in K_n^j$. Since $n$ and $j$ are fixed for this proof, we write $K$ in place of $K_n^j$ and $p_1$ in place of $p^j_{n,1}$. By \cite[Lemma 3.3]{DEV21}, $K$ is the union of arcs $\{\tilde{\gamma}_{m}\}_{m \in M}$ with $M = \{1,\dots \max(M)\}$, where $\max(M) \leq C'$ and $C'$ depends only on the doubling constant of $T$. Without loss of generality, we may assume that $T$ is not covered by any proper subcollection. We recursively (in $m$) construct an indexed collection of arcs $\{\gamma_{(l,i)}\}_{(l,i)} \in M_{\leq m} \times \{1,2\}$ (with $M_{\leq m} \times \{1,2\}$ ordered lexicographically)  such that
\begin{enumerate}
    \item\label{item:gammas1} $p_1 \in \gamma_{(1,1)}$,
    \item\label{item:gammas2} for each $(1,1) < (l,i) \leq (m,2)$, there exists a unique point $p_{(l,i)}\in\gamma_{(l,i)}\cap\bigcup_{(l',i')<(l,i)}\gamma_{(l',i')}$, and
    \item\label{item:gammas3} there exists a bijection $\sigma: M_{\leq m} \to M_{\leq m}$ such that $\bigcup_{(l,i) \leq (m,2)} \gamma_{(l,i)}$ is a subtree of $K$ and $\bigcup_{(l,i) \leq (m,2)} \gamma_{(l,i)} = \bigcup_{l \leq m} \tilde{\gamma}_{\sigma(l)}$.
\end{enumerate}

For the base case, let $m_1 \in M$ such that $p_1 \in \tilde{\gamma}_{m_1}$. Then $\tilde{\gamma}_{m_1} = [x,p_1] \cup [p_1,y]$ for some $x,y \in \tilde{\gamma}_{m_1}$ with $[x,p_1) \cap (p_1,y] = \emptyset$. We set $\gamma_{(1,1)} := [x,p_1]$, $\gamma_{(1,2)} := [p_1,y]$, and note that \eqref{item:gammas1}-\eqref{item:gammas3} are trivially satisfied.

Assume that the arcs have been constructed for some $1 \leq m < \max(M)$. Since no proper subcollection of $\{\tilde{\gamma}_{l}\}_{l \in M}$ covers $K$, \eqref{item:gammas3} implies that \\ $\{\gamma_{(l,i)}\}_{(l,i) \leq (m,2)}$ does not cover $K$. Then there must exist an arc $\tilde{\gamma}_{l_{m+1}}$ such that $\tilde{\gamma}_{l_{m+1}} \cap \bigcup_{(l,i) \leq (m,2)} \gamma_{(l,i)} \neq \emptyset$ and $\tilde{\gamma}_{l_{m+1}} \not\subset \bigcup_{(l,i) \leq (m,2)} \gamma_{(l,i)}$. Since \\ $\bigcup_{(l,i) \leq (m,2)} \gamma_{(l,i)}$ is a subtree of the tree $K$, it follows that there exist nonempty closed subarcs $[w,x], [x,y], [y,z]$ of $\tilde{\gamma}_{l_{m+1}}$ such that
\begin{itemize}
    \item $\tilde{\gamma}_{l_{m+1}} = [w,x] \cup [x,y] \cup [y,z]$,
    \item $[w,x), (x,y), (y,z]$ are pairwise disjoint, and
    \item $[x,y] \subset \bigcup_{(l,i) \leq (m,2)} \gamma_{(l,i)}$.
\end{itemize}
Here we allow any of these closed intervals to degenerate into single points, and in such cases, two of the closed subarcs may coincide. We then set $\gamma_{(m+1,1)} := [w,x]$ and $\gamma_{(m+1,2)} := [y,z]$. This completes the recursive construction, and we leave the straightforward verification of \eqref{item:gammas1}-\eqref{item:gammas3} to the reader. The conclusion of the lemma is satisfied by choosing $M_{n}^j := M \times \{1,2\}$ ordered lexicographically, $\gamma_{n,m}^j := \gamma_m$, and $C := 2C'$.
\end{proof}

\begin{lemma}\label{L:arcs_qd}
Assume $T$ has finitely many leaves and $\diam(T) = 1$. If $\{K_n^j\}_{(n,j)\in I}$ is a DEBV decomposition of $T$, then, for any fixed $(n,j)\in I$, the collection of arcs $\{\gamma_{n,m}^j\}_{m\in M_n^j}$ given by Lemma~\ref{L:gammas} constitutes a $C$-geometric tree-like decomposition of $K_n^j$. Here, $C$ depends only on the doubling constant of $T$.
\end{lemma}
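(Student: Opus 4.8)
The plan is to verify the three defining conditions of a $C$-geometric tree-like decomposition (Definition \ref{D:decomp}) for $\{\gamma_{n,m}^j\}_{m \in M_n^j}$, suppressing the fixed indices $n,j$ and writing $K$ for $K_n^j$, $\gamma_m$ for $\gamma_{n,m}^j$, and $p_m$ for $p_{n,m}^j$. Condition (1), that $\{\gamma_m\}_{m}$ is a tree-like decomposition of $K$, is essentially immediate from the construction in Lemma \ref{L:gammas}: $K = \bigcup_m \gamma_m$, and the third bullet of that lemma gives, for each $m \geq 2$, a point $p_m \in \gamma_m \cap \bigcup_{l<m}\gamma_l$; uniqueness of $p_m$ follows because $K$ is a metric tree and two arcs meeting in more than one point would create a cycle (or force one arc to contain a subarc of the other, contradicting that the $\gamma_l$ arise from a minimal covering of $K$). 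So the branch points of the decomposition are exactly the $p_m$.

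For conditions (2) and (3) I would exploit the fact that $K$ is itself a $1$-bounded turning metric tree (being a subtree of the $1$-bounded turning tree $T$), so for any two points $x,y \in K$ there is a \emph{unique} arc $[x,y] \subset K$, and $\diam([x,y]) \leq d(x,y)$ — in fact in a bounded turning \emph{tree} one has $d(x,y) = \diam([x,y])$, and moreover the arc $[x,y]$ is the \emph{only} connected set joining $x$ to $y$ up to adding/removing null pieces, which rigidifies the combinatorics. The key geometric observation is that a minimal decomposition path $(z_i)_{i\in I}$ in $\{\gamma_m\}_m$ traces out (a coarsening of) the arc $[z_0, z_{\max(I)}]$: consecutive pairs $\{z_{i-1},z_i\}$ lie in a common arc $\gamma_{m_i}$, the points $z_i$ (for $0<i<\max(I)$) are branch points, and minimality (Remark \ref{R:minimal}: $z_j \notin \gamma_{m_i}$ for $i<j$) forces the subarcs $[z_{i-1},z_i] \subset \gamma_{m_i}$ to have pairwise disjoint interiors and to concatenate to exactly $[z_0,z_{\max(I)}]$. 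Hence $\sum_i \diam([z_{i-1},z_i]) = \diam([z_0,z_{\max(I)}]) = d(z_0, z_{\max(I)})$, and since $d(z_{i-1},z_i) \leq \diam([z_{i-1},z_i]) \leq \diam(\gamma_{m_i})$, one gets $d(z_0,z_{\max(I)}) \leq \sum_i d(z_{i-1},z_i) \leq \max(M) \cdot \max_i d(z_{i-1},z_i) \leq C \max_i d(z_{i-1}, z_i)$ using $\max(M) \leq C$. That gives condition (2). For condition (3), given $x,y \in K$ take the unique arc $[x,y]$; list the branch points $p_m$ that lie on $[x,y]$ in order, together with $x$ and $y$, to form a decomposition path $(z_i)$ from $x$ to $y$ (each consecutive pair lies in a single $\gamma_m$ because between consecutive branch points on $[x,y]$ the arc stays inside one piece). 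Then $\sum_i d(z_{i-1},z_i) \leq \sum_i \diam([z_{i-1},z_i]) = \diam([x,y]) = d(x,y)$, so even $C^{-1}\sum_i d(z_{i-1},z_i) \leq d(x,y)$ holds with room to spare, giving a short decomposition path.

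The main obstacle I anticipate is the bookkeeping needed to make precise the claim that a minimal decomposition path "is" the unique arc $[z_0, z_{\max(I)}]$ subdivided at branch points — i.e., that the subarcs $[z_{i-1},z_i]$ genuinely have disjoint interiors and union equal to $[z_0,z_{\max(I)}]$. This requires carefully combining: (a) the tree structure of $K$ (no cycles, unique arcs), (b) the tree-like structure of the decomposition (Lemma \ref{L:unique}: no nontrivial simple decomposition loops), and (c) the minimality condition of Remark \ref{R:minimal}. One clean way is: since $z_{i-1}, z_i \in \gamma_{m_i}$ and $\gamma_{m_i}$ is an arc, $[z_{i-1},z_i] \subset \gamma_{m_i}$; the concatenation $\bigcup_i [z_{i-1},z_i]$ is then a connected subset of $K$ containing $z_0$ and $z_{\max(I)}$, hence contains $[z_0,z_{\max(I)}]$; and if some $[z_{i-1},z_i]$ and $[z_{j-1},z_j]$ ($i<j$) had overlapping interiors one produces a point in $\gamma_{m_i} \cap \gamma_{m_j}$ other than a legitimate branch point, and chasing this through the definition of branch points and Lemma \ref{L:unique} yields a nontrivial simple decomposition loop or a violation of minimality — a contradiction. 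Everything else (verifying $C$ is the constant from Lemma \ref{L:gammas}, and that it depends only on the doubling constant of $T$) is routine once this structural picture is nailed down. Finally I would note that all constants are uniform in $(n,j)$, which together with Remark \ref{rmk:finitetreedecomp} is what makes the DEBV decomposition of $T_\infty$ geometric — though that last step belongs to the subsequent theorem, not this lemma.
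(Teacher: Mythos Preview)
Your approach is essentially the paper's, but you have introduced one unnecessary complication and one genuine error.

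The complication: for condition~(2) you work hard to show that a minimal decomposition path traces out the arc $[z_0,z_{\max(I)}]$ with disjoint-interior subarcs. None of this is needed. Minimality (Remark~\ref{R:minimal}) forces the indices $n_i$ to be pairwise distinct, hence $\max(I)\le \max(M)\le C$; then the triangle inequality gives $d(z_0,z_{\max(I)})\le \sum_i d(z_{i-1},z_i)\le C\max_i d(z_{i-1},z_i)$ directly. This is exactly the paper's argument, and it bypasses all the bookkeeping you flag as the ``main obstacle.''

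The error: in your proof of condition~(3) you write $\sum_i \diam([z_{i-1},z_i]) = \diam([x,y])$. This is false in a general $1$-bounded turning tree that is not geodesic (e.g.\ in $([0,1],|\cdot|^{1/2})$ with $x=0$, $z_1=\tfrac12$, $y=1$ one gets $\sqrt{1/2}+\sqrt{1/2}=\sqrt{2}>1$). Diameters of subarcs do not add; only lengths do, and QC~trees need not be rectifiable. The fix is the same counting argument: since the decomposition path has at most $C$ steps and each step satisfies $d(z_{i-1},z_i)\le \diam([x,y])=d(x,y)$ by $1$-bounded turning, one gets $\sum_i d(z_{i-1},z_i)\le C\,d(x,y)$, which is precisely what the paper does.
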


\begin{proof}
Since $n$ and $j$ are fixed for this proof, we write $K$ in place of $K_n^j$ and $\{\gamma_m\}_{m\in M}$ in place of $\{\gamma_{n,m}^j\}_{m\in M_n^j}$. By Lemma \ref{L:gammas}, $\{\gamma_{m}\}_{m\in M}$ is a tree-like decomposition of $K$.

Let $x,y\in K$. Since $K$ is a compact tree, for each $m\in M$, the intersection $\gamma_m\cap[x,y]$ is a compact subarc $[a_m,b_m]$. By re-indexing the points $\{a_m,b_m\}_{m \in M}$, we obtain a sequence $(z_l)_{l\in M'}\subset K$ such that
\begin{enumerate}
    \item{$z_0=x$ and $z_{\max(M')}=y$,}
    \item{for each $1\leq l\leq\max(M')$, there exists $m_l$ such that  $\{z_{l-1},z_l\}\subset\gamma_{m_l}$, and}
    \item{\[C^{-1}\sum_{1\leq l\leq\max(M')}d(z_{l-1},z_l)\leq d(x,y)%\leq C\max_{1\leq l\leq \max(M')}d(z_{l-1},z_l)
    .\]}
\end{enumerate}
The final item is due to the facts that $\max(M') \leq \max(M) \leq C$, where $C$ is the constant in the statement of Lemma \ref{L:gammas}, and the assumption that $T$ is $1$-bounded turning. Thus we find that (1) and (3) of Definition \ref{D:decomp} hold. To verify (2), let $(z_i)_{i\in I}$ denote any minimal decomposition path from $x$ to $y$ in $K$. By Lemma \ref{L:gammas}, there are no more than $C-1$ branch points in $K$. Since $(z_i)_{i\in I}$ consists of pairwise distinct branch points (except possibly $z_0=x$ and $z_{\max(I)}=y$), we have
\begin{enumerate}
    \item[(4)]{\[d(x,y)\leq \sum_{1\leq i\leq \max(I)}d(z_{i-1},z_i)\leq C\max_{1\leq i\leq \max(I)}d(z_{i-1},z_i).\]}
\end{enumerate}
\end{proof}

Given $T$ (under the current assumption that $T$ has finitely many leaves), suppose $\{S_n\}_{n\in N}$ is a tree-like decomposition of $T$ such that each $S_n$ is a sub-tree of $T$. Let $\gamma$ denote any non-degenerate compact arc in $T$. Let $x,y$ be the endpoints of $\gamma$, and equip $\gamma$ with an ordering coming from a homeomorphism to $[0,1]$ such that $x<y$. Write $(S_{n_j})_{j\in J'}$ to denote the (finite) sequence of sub-trees $S_{n_j}$ traversed by $\gamma$ in consecutive order along $\gamma$, where $J':=\{1,\dots,\max(J')\}$. By \textit{traversed}, we mean that $\gamma\cap S_{n_j}$ contains more than one point. Thus, by \cite[Theorem 10.10]{Nadler92}, each intersection $S_{n_j}\cap\gamma$ (being the intersection of two connected sets in $T$) is equal to a single non-degenerate subarc $\gamma_j\subset \gamma$. By \textit{consecutive order along $\gamma$}, we mean that $j < j'$ whenever $j,j' \in J'$ and there exist points $p \in \gamma\cap S_{n_j}$ and $p' \in \gamma\cap S_{n_{j'}}$ with $p < p'$. Furthermore, for $j<\max(J')$, this construction implies that $S_{n_j}\not=S_{n_{j+1}}$. Define $J:=\{0,\dots,\max(J')\}=\{0\}\cup J'$. For $1\leq j\leq\max(J)$, write $\{w_{j-1},w_{j}\}\subset S_{n_j}$ to denote the points of $\gamma$ such that $\gamma_j=[w_{j-1},w_j]$.

It is easy to see that the sequence $(w_j)_{j\in J}$ is a decomposition path in $T$ with respect to $\{S_n\}_{n\in N}$. We refer to $(w_j)_{j\in J}$ as the decomposition path in $\gamma$ \textit{induced by} $\{S_n\}_{n\in N}$, and to the arcs $\{\gamma_j\}_{j\in J'}$ as the subarcs of $\gamma$ \textit{induced by} $\{S_n\}_{n\in N}$.

\begin{lemma}\label{L:path_to_arc}
Suppose $T$ has finitely many leaves and $\{S_n\}_{n\in N}$ is a tree-like decomposition of $T$ such that each $S_n$ is a subtree of $T$. 
\begin{enumerate}
    \item{If $(z_i)_{i\in I}$ is a minimal decomposition path with respect to $\{S_n\}_{n\in N}$, then $\gamma:=\bigcup_{1\leq i\leq \max(I)}[z_{i-1},z_i]$ is an arc, and $\{[z_{i-1},z_i]\}_{1\leq i\leq \max(I)}$ is the collection of subarcs in $\gamma$ induced by $\{S_n\}_{n\in N}$.}
    \item{Conversely, given any compact arc $\gamma\subset T$, if $(w_j)_{j\in J}$ is the decomposition path in $\gamma$ induced by $\{S_n\}_{n\in N}$, then $(w_j)_{j\in J}$ is minimal.}
\end{enumerate}
\end{lemma}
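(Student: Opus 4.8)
The plan is to prove the two parts in sequence, leaning heavily on the tree structure of $T$ and the combinatorial characterization of minimality given in Remark \ref{R:minimal}. For part (1), I would start with a minimal decomposition path $(z_i)_{i\in I}$ and set $\gamma := \bigcup_{1\leq i\leq\max(I)}[z_{i-1},z_i]$. The first task is to show $\gamma$ is an arc. Since each $[z_{i-1},z_i]$ is the unique arc in $T$ joining its endpoints and consecutive arcs share the endpoint $z_i$, the union is certainly a connected, compact subset of the tree $T$; the potential failure mode is that the arcs ``double back'' on one another, i.e. that $[z_{i-1},z_i]$ and $[z_{j-1},z_j]$ overlap in more than the single point $z_i$ (when $j=i+1$) or overlap at all (when $j>i+1$). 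I would rule this out using minimality: if two non-consecutive arcs intersected, or consecutive arcs overlapped in a nondegenerate subarc, one could extract a nontrivial loop in the tree $T$ (contradicting that $T$ is a metric tree) or, more directly, show that some $z_j$ lies in $S_{n_i}$ for $i<j$, contradicting Remark \ref{R:minimal}. Concretely, if $[z_{i-1},z_i]\cap[z_{j-1},z_j]$ contains a point other than the shared $z_i$ in the consecutive case, then because $[z_{i-1},z_i]\subset S_{n_i}$ is connected and meets $S_{n_j}$ in more than a point, an argument like the one before Lemma \ref{L:path_to_arc} forces $z_j \in S_{n_i}$, contradicting minimality. Once $\gamma$ is an arc, order it so that $z_0 < z_1 < \cdots < z_{\max(I)}$; then by construction the $S_{n_i}$ are exactly the subtrees traversed by $\gamma$, traversed in consecutive order, and $[z_{i-1},z_i] = \gamma\cap S_{n_i}$ is precisely the induced subarc. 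The key technical point will be verifying that $\gamma\cap S_{n_i}$ is exactly $[z_{i-1},z_i]$ and not something larger, which again reduces to minimality via Remark \ref{R:minimal}: if $\gamma\cap S_{n_i}$ contained some $z_j$ with $j\neq i-1,i$, that would directly violate minimality.

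For part (2), I would argue the contrapositive, or equivalently invoke Remark \ref{R:minimal} directly. Suppose $(w_j)_{j\in J}$ is the decomposition path in $\gamma$ induced by $\{S_n\}_{n\in N}$, with $\gamma_j = [w_{j-1},w_j] = \gamma\cap S_{n_j}$, and suppose toward a contradiction that it is not minimal. By Remark \ref{R:minimal}, there exist indices $j < j'$ with $w_{j'} \in S_{n_j}$. But $w_{j'}$ is an endpoint of the subarc $\gamma_{j'} = \gamma\cap S_{n_{j'}}$, and $w_{j'} \in \gamma$; so $w_{j'} \in \gamma\cap S_{n_j} = [w_{j-1},w_j]$. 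Since the induced subarcs partition $\gamma$ into consecutive pieces and $w_{j'}$ lies on $\gamma$ strictly past $w_j$ (as $j' > j$ and the ordering along $\gamma$ is consecutive), this places $w_{j'}$ simultaneously inside $[w_{j-1},w_j]$ and strictly to the right of $w_j$, which is impossible unless $w_{j'} = w_j$. If $w_{j'} = w_j$, then the subarcs $\gamma_{j+1},\dots,\gamma_{j'}$ would all degenerate to a single point, contradicting that each induced subarc is non-degenerate (i.e. that $\gamma$ actually traverses each $S_{n_\ell}$). This contradiction establishes minimality.

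I expect the main obstacle to be the first part, specifically the careful bookkeeping needed to show $\gamma$ is a genuine arc rather than a more complicated subtree, and that the decomposition it inherits from $\{S_n\}_{n\in N}$ coincides with $\{[z_{i-1},z_i]\}$. The subtlety is that a priori we only know each $[z_{i-1},z_i]$ is contained in $S_{n_i}$, not that it equals $\gamma\cap S_{n_i}$, and that the $n_i$ need not be distinct (though minimality will give us what we need). I would handle this by first using minimality (Remark \ref{R:minimal}) and the no-loops property of metric trees to show the arcs $[z_{i-1},z_i]$ have pairwise disjoint interiors and meet only at consecutive shared endpoints, so that their union is a simple arc; then, using that $\gamma$ is now an arc with a linear order and that each $S_{n_i}$ is a subtree (hence meets $\gamma$ in a connected set by \cite[Theorem 10.10]{Nadler92} and \cite[Theorem 10.10]{Nadler92}-type connectedness, as in the paragraph preceding the lemma), conclude that $\gamma\cap S_{n_i}$ is a subarc containing $[z_{i-1},z_i]$, and finally invoke minimality once more to forbid it from being strictly larger. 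The rest is routine.
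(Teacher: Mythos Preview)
Your proposal is correct and follows essentially the same route as the paper: both parts hinge on Remark~\ref{R:minimal}, the fact that distinct subtrees in a tree-like decomposition meet in at most one point, and Nadler's result that the intersection of two subtrees of $T$ is connected. The only cosmetic differences are that the paper proves $\gamma$ is an arc via a first-failure index $i_1$ (showing $S_{n_{i_1}}=S_{n_{i_1-1}}$, contradicting minimality) rather than your pairwise-overlap argument, and for part~(2) the paper uses the subtree property to get $[w_{j_0},w_{j_0+1}]\subset S_{n_{j_0}}$ (hence $S_{n_{j_0+1}}=S_{n_{j_0}}$) rather than your direct order argument on $\gamma$.
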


\begin{proof}
Let $(z_i)_{i\in I}$ denote a minimal decomposition path with respect to $\{S_n\}_{n\in N}$. By definition of a decomposition path, for $1\leq i\leq\max(I)$, there exists $n_i\in N$ such that $\{z_{i-1},z_i\}\subset S_{n_i}$. Define $\gamma:=\bigcup_{1\leq i\leq \max(I)}\gamma_i$, where, for $1\leq i\leq \max(I)$, we define $\gamma_i:=[z_{i-1},z_i]$. Since each $S_{n_i}$ is a subtree, we have $\gamma_i\subset S_{n_i}$. 

By way of contradiction, assume $\gamma$ is not an arc. Since $\gamma$ is the union of arcs $\{\gamma_i\}_{i\in I}$, this assumption means that there exists some minimal index $2\leq i_1\leq \max(I)$ such that $\bigcup_{1\leq i<i_1}\gamma_i=[z_0,z_{i_1-1}]$ is an arc and
\[[z_0,z_{i-1}] \cap (z_{i_1-1},z_{i_1}] \not=\emptyset.\]
In particular, $\gamma_{i_1}$ intersects $[z_0,z_{i-1}]$ in at least two points: the point $z_{i_1-1}$ and some other point we denote by $z_{i_1}'$. Since $T$ is a tree, by \cite[Theorem 10.10]{Nadler92} we must have $[z_{i_1}',z_{i_1-1}]\subset \gamma_{i_1}\cap[z_0,z_{i_1-1}]$. In particular, either $[z_{i_1}',z_{i_1-1}]\subset [z_{i_1-2},z_{i_1-1}]$ or $[z_{i_1-2},z_{i_1-1}]\subset [z_{i_1}',z_{i_1-1}]$. In either case, $\gamma_{i_1}\cap \gamma_{i_1-1}$ is non-trivial, and so $S_{n_{i_1}}\cap S_{n_{i_1-1}}$ is non-trivial. Since $\{S_n\}_{n\in N}$ is a tree-like decomposition of $T$, we must have $S_{n_{i_1}}=S_{n_{i_1-1}}$. But this contradicts the fact that $(z_i)_{i\in I}$ is minimal. Therefore, $\gamma$ is an arc.

We claim that, for each $i\in I$, we have $\gamma_i=\gamma\cap S_{n_i}$. By way of contradiction, assume the contrary: there exists $i_0\in I$ such that $\gamma_{i_0}\subsetneq \gamma\cap S_{n_0}$. Since $S_{n_0}$ is a tree and $\gamma$ is an arc, it must be true that $\gamma\cap S_{n_0}$ is an arc (via \cite[Theorem 10.10]{Nadler92}). Denote this arc by $\gamma_{i_0}'$. Since $\gamma_{i_0}'\cap (\gamma\setminus\gamma_{i_0})\not=\emptyset$, we have $\gamma_{i_0}'\cap(z_{i_2-1},z_{i_2})\not=\emptyset$ for some $i_2\not=i_0$. But this implies that $S_{n_{i_0}}\cap S_{n_{i_2}}$ is non-trivial, from which follows a contradiction to the minimality of $(z_i)_{i\in I}$. Therefore, for every $i\in I$, we have $\gamma_i=\gamma\cap S_{n_i}$. In other words, $\{\gamma_i\}_{1\leq i\leq \max(I)}$ is the collection of subarcs in $\gamma$ induced by $\{S_n\}_{n\in N}$.

To prove the converse statement of the lemma, let $\gamma$ denote any compact arc in $T$, and let $(w_j)_{j\in J}$ denote the decomposition path in $\gamma$ induced by $\{S_n\}_{n\in N}$. Suppose, by way of contradiction, that $(w_j)_{j\in J}$ is not minimal. By Remark \ref{R:minimal}, there exist indices $1\leq j_0<j_1\in J$ such that $w_{j_1}\in S_{n_{j_0}}$. Since $S_{n_{j_0}}$ is a tree, we must have $[w_{j_0-1},w_{j_1}]\subset S_{n_{j_0}}$. In particular, $[w_{j_0},w_{j_0+1}]\subset S_{n_{j_0}}$. But then $S_{n_{j_0+1}}=S_{n_{j_0}}$, which contradicts the construction of the sequence $(S_{n_j})_{j\in J}$. Therefore, $(w_j)_{j\in J}$ is minimal. 
\end{proof}

\begin{proposition}\label{P:arc_decomp}
Assume that $T$ has finitely many leaves and that $\diam(T)$ $= 1$. Let $\{K_n^j\}_{(n,j)\in I}$ be a DEBV decomposition of T. Then there exists $C_1 < \infty$, depending only on the doubling constant of $T$, with the following property: Given any compact arc $\gamma \subset T$, the subarcs $\{\gamma_i\}_{i\in I'}\subset \gamma$ induced by $\{K_n^j\}_{(n,j)\in I}$ satisfy
\[C_1^{-1}\sum_{i\in I'} \diam(\gamma_i) \leq \diam(\gamma) \leq C_1 \max_{i\in I'} \diam(\gamma_i).\]
% \begin{enumerate}
%     \item for each $l\in L$, there exist $m\in \mathbb{N}$ and $j\in J_m$ such that $\gamma_l \subset K_m^j$, and
%     \item $C_1^{-1}\sum_{l\in L} \diam(\gamma_l) \leq \diam(\gamma) \leq C_1 \max_{l\in L} \diam(\gamma_l)$.
% \end{enumerate}
\end{proposition}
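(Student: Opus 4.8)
\emph{Overall strategy.} Both inequalities will be derived from the single estimate
$\sum_{i\in I'}\diam(\gamma_i)\le C_1\max_{i\in I'}\diam(\gamma_i)$. Indeed, since each $\gamma_i\subseteq\gamma$ we have $\max_{i\in I'}\diam(\gamma_i)\le\diam(\gamma)$, and by the triangle inequality along the induced decomposition path $\diam(\gamma)\le\sum_{i\in I'}\diam(\gamma_i)$; so the above estimate gives $C_1^{-1}\sum_{i\in I'}\diam(\gamma_i)\le C_1^{-1}\cdot C_1\max_{i\in I'}\diam(\gamma_i)\le\diam(\gamma)$ and $\diam(\gamma)\le\sum_{i\in I'}\diam(\gamma_i)\le C_1\max_{i\in I'}\diam(\gamma_i)$. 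To prove the estimate I will combine the ``valley'' shape of minimal decomposition paths with the quantitative geometry of DEBV decompositions from \cite[Section~3]{DEV21}.

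\emph{Reductions.} Write $x,y$ for the endpoints of $\gamma$. Since $T$ is a dendrite and is $1$-bounded turning, the unique arc between two points lies in every connected set joining them, so $\diam([u,v])=d(u,v)$ for all $u,v\in T$. Let $(w_j)_{j\in J}$, $J=\{0\}\cup I'$, be the decomposition path in $\gamma$ induced by $\{K_n^j\}_{(n,j)\in I}$ (defined just before the proposition), so $w_0=x$, $w_{\max(J)}=y$, $\gamma_i=[w_{i-1},w_i]$, and hence $\diam(\gamma)=d(x,y)$ and $\diam(\gamma_i)=d(w_{i-1},w_i)$ for each $i\in I'$. By Lemma~\ref{L:path_to_arc}(2), $(w_j)_{j\in J}$ is a \emph{minimal} decomposition path for the tree-like decomposition $\{K_n^j\}_{(n,j)\in I}$, where we linearly order $I$ so that every level-$n$ piece precedes every level-$(n+1)$ piece; this is a tree-like decomposition since $K_n^j\cap T_{n-1}$ is a single point (a short argument, or \cite[Section~3]{DEV21}).

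\emph{Valley structure and a geometric-series bound.} Let $n_i$ denote the level of the piece $K_{n_i}^{j_i}\supseteq\gamma_i$. Arguing as in the proof of Lemma~\ref{L:unique}, minimality forbids a strict interior local maximum of $i\mapsto n_i$ (such an index would force $w_{i-1}=w_i$), and consecutive pieces along the path lie at distinct levels (distinct level-$n$ pieces are disjoint and so cannot share $w_i$); therefore $(n_i)_{i\in I'}$ strictly decreases to a minimum $N:=n_{j^*}$ and then strictly increases. By \cite[Section~3]{DEV21} there is $C_0\ge1$, depending only on the doubling constant of $T$, with $\diam(K_n^j)\le C_0\,2^{-n}$ for all $(n,j)\in I$. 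Since the levels on each side of $j^*$ are distinct integers that are $\ge N$, summing two geometric series gives $\sum_{i\in I'}\diam(\gamma_i)\le C_0\sum_{i\in I'}2^{-n_i}\le 4C_0\,2^{-N}$. On the other hand, since $\diam(\gamma_i)\le\diam(\gamma)=d(x,y)$ for each $i$, we also have $\sum_{i\in I'}\diam(\gamma_i)\le|I'|\,d(x,y)$.

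\emph{The crux.} Given the two bounds above, it suffices to produce constants $C_2<\infty$ and $c_0>0$, depending only on the doubling constant, such that \emph{either} $|I'|\le C_2$ \emph{or} $\max_{i\in I'}\diam(\gamma_i)\ge c_0\,2^{-N}$: in the first case $\sum_{i\in I'}\diam(\gamma_i)\le|I'|\max_{i\in I'}\diam(\gamma_i)\le C_2\max_{i\in I'}\diam(\gamma_i)$, and in the second $\sum_{i\in I'}\diam(\gamma_i)\le 4C_0\,2^{-N}\le(4C_0/c_0)\max_{i\in I'}\diam(\gamma_i)$, so $C_1:=\max\{C_2,4C_0/c_0\}$ works. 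To establish this dichotomy I would invoke the finer description of DEBV decompositions in \cite[Section~3]{DEV21}: the two-sided comparison $\diam(K_n^j)\asymp2^{-n}$ together with the relative separation of the attaching points of distinct pieces (a consequence of the $2^{-n}$-separation of the nets $\mathcal{N}_n$). These properties prevent $\gamma$ from ``threading'' through more than a controlled number of pieces without traversing one of them --- in particular the valley-bottom piece $K_N^{j^*}$, or one of the two pieces adjacent to it along the path --- over a definite fraction of its diameter $2^{-N}$. Extracting this non-threading estimate from the net construction in \cite{DEV21} is the main obstacle; everything else is the bookkeeping above. (If \cite{DEV21} already records an estimate of the form $d(u,v)\asymp\max_{(n,j)}\diam([u,v]\cap K_n^j)$ in this generality, one may instead cite it directly, and the proposition then follows at once together with $\diam([u,v])=d(u,v)$.)
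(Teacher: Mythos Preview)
Your reduction of both inequalities to the single estimate $\sum_{i\in I'}\diam(\gamma_i)\le C_1\max_{i\in I'}\diam(\gamma_i)$ is correct and tidy, and your derivation of the valley shape of $(n_i)_{i\in I'}$ from minimality is fine. The problem is that the proof is, as you yourself say, incomplete at the crux: you assert the dichotomy ``either $|I'|\le C_2$ or $\max_i\diam(\gamma_i)\ge c_0\,2^{-N}$'' but do not prove it, and in fact this formulation is not the right one. There is no reason the maximal subarc should have diameter comparable to $2^{-N}$: the arc $\gamma$ may only graze the level-$N$ piece $K_N^{j_{j^*}}$, so that $\diam(\gamma_{j^*})\ll 2^{-N}$, while $|I'|$ is large because the levels climb high on both sides. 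The ``relative separation of attaching points'' you appeal to does not rule this out, since two higher-level pieces can attach to the level-$N$ piece at nearby points.

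The paper avoids this trap by calibrating to $\diam(\gamma)$ rather than to $2^{-N}$. One picks $n$ with $2^{-n-1}\le\diam(\gamma)\le 2^{-n}$ and sets $i_*:=\min\{i:n_i\le n\}$, $i^*:=\max\{i:n_i\le n\}$. The key quantitative input is \cite[Lemmas 3.5 and 3.6]{DEV21}, which give a constant $M$ (depending only on the doubling constant) with $i^*-i_*\le M$. The indices outside $[i_*,i^*]$ have $n_i>n$, and by your valley structure they are strictly monotone on each side, so their contributions form two geometric tails summing to $\lesssim 2^{-n}\asymp\diam(\gamma)$. The middle $M+1$ terms contribute at most $(M+1)\diam(\gamma)$, giving $\sum_i\diam(\gamma_i)\lesssim\diam(\gamma)$. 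For the other direction one enlarges the window: choose an absolute $N'$ with $2\sum_{m>n+N'}2^{2-m}<2^{-2-n}$, and the same splitting shows $\diam(\gamma)\le 2^{-1-n}\le (M+2N')\max_i\diam(\gamma_i)+2^{-2-n}$, whence $\max_i\diam(\gamma_i)\gtrsim\diam(\gamma)$. So the missing ingredient in your argument is precisely the bound from \cite[Lemma 3.5]{DEV21} on the number of \emph{low-level} pieces (levels $\le n$, with $n$ tied to $\diam(\gamma)$) that $\gamma$ can meet; once you have that, your geometric-series bookkeeping finishes the job.
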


\begin{proof}
The proof is essentially contained in \cite{DEV21}, which we follow. Let $\gamma \subset T$ be a compact arc, and let $\{\gamma_i\}_{i\in I'}$ denote the subarcs of $\gamma$ induced by $\{K_n^j\}_{(n,j)\in I}$, where $I'=\{1,\dots,\max(I')\}$. By \cite[Lemma 3.3]{DEV21}, we have $\diam(\gamma_i) \leq \min\{\diam(\gamma),2^{2-m_i}\}$, where $m_i$ is such that $\gamma_i\subset K_{m_i}^{j_i}$. By \cite[Lemma 3.6]{DEV21}, there exists $i_0 \in I'$ such that $m_{i+1} < m_i$ for all $i < i_0$ and $m_{i+1} > m_i$ for all $i \geq i_0$. Choose $n \in \N$ such that $2^{-n-1} \leq \diam(\gamma) \leq 2^{-n}$, and let $i_* := \min\{i: m_i \leq n\}$ and $i^* := \max\{i: m_i \leq n\}$ (or $i_* = i^* = i_0$ if $\{i: m_i\leq n\}=\emptyset$). By \cite[Lemmas 3.5 and 3.6]{DEV21}, there exists $M < \infty$ (depending only on the doubling constant of $T$) such that $i^* - i_* \leq M$. Now we prove the first inequality of the second item in the lemma:
\begin{align*}
    \sum_{i\in I'} \diam(\gamma_i) = \sum_{i < i_*} \diam(\gamma_i) + \sum_{i_*\leq i\leq i^*} \diam(\gamma_i) + \sum_{i > i^*} \diam(\gamma_i) \\
    \leq \sum_{i < i_*} 2^{2-m_i} + \sum_{i_*\leq i\leq i^*} \diam(\gamma) + \sum_{i > i^*} 2^{2-m_i} \\
    \leq 2\sum_{m > n} 2^{2-m} + M\diam(\gamma) \\
    = 2^{3-n} + M \diam(\gamma) \\
    \leq (16+M)\diam(\gamma).
\end{align*}
We conclude by proving the second inequality. Choose $N \in \N$ large enough so that $2\sum_{m > n+N} 2^{2-m} < 2^{-2-n}$. Obviously, such an $N$ exists and can be chosen independently of $n$. Then we have
\begin{align*}
    2^{-1-n} \leq \diam(\gamma) \leq \sum_{i\in I'} \diam(\gamma_i) \\
    = \sum_{i < i_*-N} \diam(\gamma_i) + \sum_{i_*-N\leq i\leq i^*+N} \diam(\gamma_i) + \sum_{i > i^*+N} \diam(\gamma_i) \\
    \leq \sum_{i < i_*-N} 2^{2-m_i} + \sum_{i_*-N\leq i\leq i^*+N} \max_{i\in I} \diam(\gamma_i) + \sum_{i > i^*+N} 2^{2-m_i} \\
    \leq 2 \sum_{m > n+N} 2^{2-m} + (M+2N) \max_{i\in I'} \diam(\gamma_i) \\
    < 2^{-2-n} + (M+2N) \max_{i\in I'} \diam(\gamma_i).
\end{align*}
Solving this inequality for $\max_{i\in I'} \diam(\gamma_i)$ yields
\[\max_{i\in I'} \diam(\gamma_i) \geq \frac{1}{M+2N} 2^{-2-n} \geq \frac{2^{-2}}{M+2N}\diam(\gamma).\]
\end{proof}

\begin{lemma}\label{L:trees_qd}
Assume that $T$ has finitely many leaves and $\diam(T) = 1$. If $\{K_n^j\}_{(n,j)\in I}$ is a DEBV decomposition of $T$, then $\{K_n^j\}_{(n,j)\in I}$ is a $C_1$-geometric tree-like decomposition of $T$, with $C_1$ as in Proposition \ref{P:arc_decomp}.
\end{lemma}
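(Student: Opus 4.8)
The plan is to verify the three conditions in Definition \ref{D:decomp} for the collection $\{K_n^j\}_{(n,j)\in I}$ with constant $C_1$. Condition (1), that $\{K_n^j\}_{(n,j)\in I}$ is a tree-like decomposition, is essentially built into the DEBV construction: each $K_n^j$ is a subtree of $T$, the union over $(n,j) \in I$ is $T$ (since we are in the finite-leaved, $\diam(T)=1$ setting), and the intersection of $K_n^j$ with $\bigcup_{(n',j') < (n,j)} K_{n'}^{j'}$ is a single point because two connected subsets of a tree meet in a connected (hence, here, singleton) set — this is where I would cite \cite[Theorem 10.10]{Nadler92} and the relevant structural lemmas of \cite{DEV21}. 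So condition (1) reduces to bookkeeping already done in the earlier lemmas.

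The heart of the argument is to translate conditions (2) and (3) of Definition \ref{D:decomp} — which are phrased in terms of \emph{minimal decomposition paths} and \emph{short decomposition paths} — into statements about \emph{arcs} and their induced subarcs, so that Proposition \ref{P:arc_decomp} can be applied directly. For condition (3): given $x,y \in T$, I would take $\gamma := [x,y]$, the unique arc in $T$ joining them, and let $(w_j)_{j\in J}$ be the decomposition path in $\gamma$ induced by $\{K_n^j\}_{(n,j)\in I}$, with induced subarcs $\{\gamma_i\}_{i \in I'}$. Then $d(w_{j-1},w_j) = \diam(\gamma_i)$ for the corresponding index (since $T$ is $1$-bounded turning and $\gamma_i = [w_{j-1},w_j]$ is connected, its diameter equals the distance between its endpoints), and $d(x,y) = \diam(\gamma)$ for the same reason. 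The first inequality of Proposition \ref{P:arc_decomp} then gives $C_1^{-1}\sum_{i} d(w_{j-1},w_j) \le d(x,y)$, which is exactly condition (3). For condition (2): let $(z_i)_{i\in I}$ be any minimal decomposition path from $z_0$ to $z_{\max(I)}$. By Lemma \ref{L:path_to_arc}(1), $\gamma := \bigcup_i [z_{i-1},z_i]$ is an arc and the segments $[z_{i-1},z_i]$ are precisely its induced subarcs; applying the second inequality of Proposition \ref{P:arc_decomp} to this $\gamma$ gives $d(z_0,z_{\max(I)}) = \diam(\gamma) \le C_1 \max_i \diam([z_{i-1},z_i]) = C_1 \max_i d(z_{i-1},z_i)$, which is condition (2).

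The main obstacle I anticipate is not any single hard estimate — Proposition \ref{P:arc_decomp} already packages the quantitative content — but rather making the dictionary between "decomposition paths" and "induced subarcs" airtight. In particular, one must be careful that an arbitrary minimal decomposition path really does arise as the induced path of the arc it traces out (so that the subarcs $[z_{i-1},z_i]$ coincide with the $K_n^j \cap \gamma$), which is exactly the content of Lemma \ref{L:path_to_arc}; and conversely that the induced path of $[x,y]$ is minimal, which is Lemma \ref{L:path_to_arc}(2). One also needs the elementary observation that for a connected subset $E$ of a $1$-bounded turning tree, $\diam(E)$ equals the distance between its two "endpoints" when $E$ is an arc — this lets diameters of subarcs be identified with edge lengths of the decomposition path. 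Once these identifications are in place, the proof is a direct transcription of Proposition \ref{P:arc_decomp}. Finally, I would remark that combined with Lemma \ref{L:arcs_qd} and Remark \ref{rmk:finitetreedecomp}, this is the key step needed to drop the finite-leaf assumption in the subsequent Theorem \ref{thm:arc_qa_decomp}.
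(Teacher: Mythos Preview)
Your proposal is correct and follows essentially the same approach as the paper's proof: verify condition (1) via the structural properties from \cite{DEV21}, and for conditions (2) and (3) use Lemma~\ref{L:path_to_arc} to translate between minimal decomposition paths and induced subarcs of an arc, then apply the two inequalities of Proposition~\ref{P:arc_decomp} together with the $1$-bounded turning property to convert diameters to endpoint distances. The paper's proof is exactly this, with condition (1) handled by a direct citation of \cite[Lemma~3.1(ii),(iii)]{DEV21} rather than \cite[Theorem~10.10]{Nadler92}.
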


\begin{proof}
We first apply \cite[Lemma 3.1(ii),(iii)]{DEV21} to conclude that, for each $(1,1)<(n,j)\in I$, there exists a unique point $p_n^j\in K_n^j\cap \bigcup_{(m,i)<(n,j)}K_m^i$. Thus $\{K_n^j\}_{(n,j)\in I}$ constitutes a tree-like decomposition of $T$, and we verify property (1) in Definition \ref{D:decomp}.

To verify (2) in Definition \ref{D:decomp}, let $(z_i)_{i\in J}$ denote a minimal decomposition path with respect to the tree-like decomposition $\{K_n^j\}_{(n,j)\in I}$ joining any two points $x$ and $y$ in $T$. Furthermore, assume that, for $1\leq i\leq\max(J)$, we have $\{z_{i-1},z_i\}\subset K_{n_i}^{j_i}$. Define $\gamma:=\bigcup_{1\leq i\leq \max(J)}\gamma_i$, where, for $1\leq i\leq \max(J)$, we define $\gamma_i:=[z_{i-1},z_i]\subset K_{n_i}^{j_i}$. By Lemma \ref{L:path_to_arc}, $\gamma$ is an arc, and $\{\gamma_i\}_{1\leq i\leq \max(J)}$ is the collection of subarcs in $\gamma$ induced by $\{K_n^j\}_{(n,j)\in I}$. By Proposition \ref{P:arc_decomp} and $1$-bounded turning property of $T$, we verify (2) of Definition \ref{D:decomp}. 

To verify (3) in Definition \ref{D:decomp}, let $x,y\in T$. Write $(w_j)_{j\in J}$ to denote the decomposition path in $[x,y]$ and $\{\gamma_j\}_{1\leq j\leq \max(J)}=\{[w_{j-1},w_j]\}_{1\leq j\leq\max(J)}$ to denote the subarcs of $[x,y]$ induced by $\{K_n^j\}_{(n,j)\in I}$. By Proposition \ref{P:arc_decomp} and the $1$-bounded turning property of $T$, we verify (3) of Definition \ref{D:decomp}.

Having confirmed properties (1)-(3) of Definition \ref{D:decomp}, we thus confirm that $\{K_n^j\}_{(n,j)\in I}$ is a $C_1$-geometric tree-like decomposition of $T$.
\end{proof}

%We claim that, for each $i\in J$, we have $\gamma_i=\gamma\cap K_{n_i}^{j_i}$. By way of contradiction, assume the contrary: there exists $i_0\in J$ such that $\gamma_{i_0}\subsetneq \gamma\cap K_{n_{i_0}}^{j_{i_0}}$. Since $K_{n_{i_0}}^{j_{i_0}}$ is a tree and $\gamma$ is an arc, it must be true that $\gamma\cap K_{n_i}^{j_i}$ is an arc (via \cite[Theorem 10.10]{Nadler92}). Denote this arc by $\gamma_{i_0}'$. Since $\gamma_{i_0}'\cap (\gamma\setminus\gamma_{i_0})\not=\emptyset$, we have $\gamma_{i_0}'\cap(z_{i_1-1},z_{i_1})\not=\emptyset$ for some $i_1\not=i_0$. But this implies that $K_{n_{i_0}}^{j_{i_0}}\cap K_{n_{i_1}}^{j_{i_1}}$ is non-trivial, from which follows a contradiction to the minimality of $(z_i)_{i\in J}$. Therefore, for every $i\in J$, we have $\gamma_i=\gamma\cap K_{n_i}^{j_i}$. In other words, $\{\gamma_i\}_{i\in J}$ is the collection of subarcs in $\gamma$ induced by $\{K_n^j\}_{(n,j)\in I}$.

%Finally, we apply Proposition \ref{P:arc_decomp} to verify (2) of Definition \ref{D:decomp}, and thus confirm that $\{K_n^j\}_{(n,j)\in I}$ is a $C_1$-geometric tree-like decomposition of $T$. 

From here, we no longer assume that $T$ has finitely many leaves. Given a DEBV decomposition $\{K_n^j\}_{(n,j)\in I}$ of $T_\infty\subset T$, we note that, for any $(n,j)\in I$, the tree $\bigcup_{(n',j')\leq(n,j)}K_{n'}^{j'}$ has finitely many leaves, and $\{K_{n'}^{j'}\}_{(n',j')\leq(n,j)}$ constitutes a DEBV decomposition for this tree. Therefore, by Lemma \ref{L:trees_qd}, for every $(n,j)\in I$, there exists a unique point $p^j_{n,1}\in K_n^j\cap \bigcup_{(n',j')<(n,j)}K_{n'}^{j'}$. We also write $\{\gamma_{n,m}^j\}_{m\in M_n^j}$ to denote the collection of subarcs of $K_n^j$ given to us by Lemma \ref{L:gammas}. Write $K:=\{(n,j,m)\,|\,(n,j)\in I, m\in M_n^j\}$, and endow $K$ with the lexicographic ordering. We use this terminology to state the following lemma.

\begin{theorem}\label{thm:arc_qa_decomp}
If $T$ is a $1$-bounded turning QC tree and $\{K_n^j\}_{(n,j)\in I}$ is a DEBV decomposition of $T_\infty$, then the arcs $\{\gamma_{n,m}^j\}_{(n,j,m)\in K}$ constitute a $C_1C$-geometric tree-like decomposition of $T_\infty$, with $C$ as in Lemma \ref{L:arcs_qd} and $C_1$ as in Proposition \ref{P:arc_decomp}. 
\end{theorem}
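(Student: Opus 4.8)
The plan is to combine the two geometric tree-like decomposition results already established — Lemma \ref{L:trees_qd}, which says the subtrees $\{K_n^j\}_{(n,j)\in I}$ form a $C_1$-geometric tree-like decomposition of $T_\infty$, and Lemma \ref{L:arcs_qd}, which says that for each fixed $(n,j)$ the arcs $\{\gamma_{n,m}^j\}_{m\in M_n^j}$ form a $C$-geometric tree-like decomposition of $K_n^j$ — into a single statement about the refined collection $\{\gamma_{n,m}^j\}_{(n,j,m)\in K}$. By Remark \ref{rmk:finitetreedecomp}, it suffices to prove this when $T_\infty$ is replaced by a finite-leaved truncation $\bigcup_{(n',j',m')\leq(n,j,m)}\gamma_{n',m'}^{j'}$, with a constant independent of $(n,j,m)$; equivalently (and more cleanly) we may assume throughout that $T$ has finitely many leaves, $\diam(T)=1$, and $T=\bigcup_{(n,j)\in I}K_n^j=\bigcup_{(n,j,m)\in K}\gamma_{n,m}^j$.

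First I would verify property (1) of Definition \ref{D:decomp}, that $\{\gamma_{n,m}^j\}_{(n,j,m)\in K}$ is a tree-like decomposition. Fix $(n,j,m)\in K$ with $(n,j,m)>(1,1,1)$. If $m\geq 2$, then by Lemma \ref{L:gammas} there is a unique point of $\gamma_{n,m}^j$ in $\bigcup_{l<m}\gamma_{n,l}^j$, and since $\gamma_{n,m}^j\subset K_n^j$ meets $\bigcup_{(n',j')<(n,j)}K_{n'}^{j'}$ only at $p^j_{n,1}\in\gamma_{n,1}^j$ (this is the content of Lemma \ref{L:trees_qd} applied to the relevant truncation), the unique branch point in $\bigcup_{(n',j',m')<(n,j,m)}\gamma_{n',m'}^{j'}$ is exactly the branch point furnished by Lemma \ref{L:gammas}. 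If $m=1$ and $(n,j)>(1,1)$, then the unique point is $p^j_{n,1}$, which lies in $\gamma_{n,1}^j$ by construction and in $\bigcup_{(n',j')<(n,j)}K_{n'}^{j'}=\bigcup_{(n',j',m')<(n,j,1)}\gamma_{n',m'}^{j'}$ by Lemma \ref{L:trees_qd}; uniqueness again comes from Lemma \ref{L:trees_qd}. In either case I should note that the two types of branch points are the branch points of the subtree decomposition together with the in-tree branch points of each $K_n^j$, so no new coincidences arise.

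The substantive step is properties (2) and (3). For (3), given $x,y\in T$, I would first take the decomposition path $(w_j)_{j\in J}$ in $[x,y]$ induced by the subtree decomposition $\{K_n^j\}$; by Proposition \ref{P:arc_decomp} and $1$-bounded turning this is short with constant $C_1$. Then on each induced subarc $\gamma_j=[w_{j-1},w_j]\subset K_{n_j}^{j_j}$, I apply Lemma \ref{L:arcs_qd}'s short-path property inside that single subtree to refine $\gamma_j$ into a $C$-short decomposition path with respect to $\{\gamma_{n_j,m}^{j_j}\}_m$; concatenating these (using $\hat{*}$) and applying $1$-bounded turning of $T$ to pass from a sum of diameters of the $\gamma_j$'s to a sum over the finer pieces, the total path is short with constant $C_1C$. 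For (2), I would use Lemma \ref{L:path_to_arc}: a minimal decomposition path $(z_i)_{i\in I}$ for the fine collection traces out a compact arc $\gamma=\bigcup_i[z_{i-1},z_i]$, and $\{[z_{i-1},z_i]\}_i$ is precisely the collection of subarcs of $\gamma$ induced by $\{\gamma_{n,m}^j\}$. Collapsing the consecutive pieces that lie in a common $K_n^j$ shows these are a refinement of the subarcs of $\gamma$ induced by $\{K_n^j\}$, and within each $K_n^j$ at most $C$ pieces occur (Lemma \ref{L:arcs_qd}(2) combined with Lemma \ref{L:gammas}); combined with $d(z_0,z_{\max(I)})=\diam(\gamma)\leq C_1\max_{\text{subtree-pieces}}\diam\leq C_1 C\max_i d(z_{i-1},z_i)$, this gives the required inequality. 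The main obstacle I anticipate is bookkeeping: carefully matching the "subarcs induced by the fine decomposition" with the "subarcs induced by the subtree decomposition, each further subdivided by Lemma \ref{L:gammas}," and making sure that minimality of the fine path forces the block structure (consecutive pieces in one $K_n^j$) so that the two-level constant estimate $C_1\cdot C$ genuinely applies — this is exactly the kind of place where Lemma \ref{L:path_to_arc} and the uniqueness assertions of Lemma \ref{L:trees_qd} must be invoked with care rather than hand-waved.
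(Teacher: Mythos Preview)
Your proposal is correct and follows essentially the same approach as the paper: reduce to finitely many leaves via Remark~\ref{rmk:finitetreedecomp}, verify the tree-like property by the same case split ($m=1$ versus $m\geq 2$), and for properties (2) and (3) use Lemma~\ref{L:path_to_arc} to convert a minimal decomposition path into an arc whose induced subarcs block into consecutive runs lying in a single $K_n^j$, then apply Lemma~\ref{L:trees_qd} at the subtree level and Lemma~\ref{L:arcs_qd} within each block to obtain the product constant $C_1C$. The paper carries out the ``collapsing'' step you anticipate by explicitly constructing the coarser subsequence $(z_{l_s})_{s\in S}$ and checking it is the decomposition path induced by $\{K_n^j\}$ (hence minimal by Lemma~\ref{L:path_to_arc}(2)), which is exactly the bookkeeping you flag as the main obstacle.
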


\begin{proof}
As mentioned after the definition of DEBV decompositions, Remark \ref{rmk:finitetreedecomp} allows us to assume that $T$ has finitely many leaves. It is also clear that a $C_1C$-geometric tree-like decomposition of $T$ remains a $C_1C$-geometric tree-like decomposition after rescaling the metric, so we may assume that $\diam(T) = 1$. Hence, the standing assumptions of this section are now met, and we are in position to use the results herein.

We first confirm that $\{\gamma_{n,m}^j\}_{(n,j,m)\in K}$ forms a tree-like decomposition of $T_\infty$. Let $(n,j,m) 
\in K$ with $(1,1,1) < (n,j,m)$. First assume that $m = 1$. Then
\[\bigcup_{(n',j',m') < (n,j,1)} \gamma^{j'}_{n',m'} = \bigcup_{(n',j') < (n,j)} K^{j'}_{n'}\]
and $\gamma_{n,1}^j \subset K_{n}^j$, and thus $\gamma_{n,1}^j \cap \bigcup_{(n',j',m') < (n,j,1)} \gamma^{j'}_{n',m'}$ consists of at most one point by Lemma \ref{L:trees_qd}. By Lemma \ref{L:gammas}, $p_{n,1}^j \in \gamma_{n,1}^j$, and thus $p_{n,1}^j \in \gamma_{n,1}^j \cap \bigcup_{(n',j',m') < (n,j,1)} \gamma^{j'}_{n',m'}$. Now assume $m > 1$. Then $\bigcup_{(n',j',m') < (n,j,m)} \gamma^{j'}_{n',m'} = \bigcup_{(n',j') < (n,j)} K^{j'}_{n'} \cup \bigcup_{m'<m} \gamma^{j}_{n,m'}$ and $\gamma_{n,m}^j \subset K_{n}^j$. This gives us
\begin{align*}
    \gamma_{n,m}^j \cap &\bigcup_{(n',j',m') < (n,j,m)} \gamma^{j'}_{n',m'} \\
    &\subset \left(\gamma_{n,m}^j \cap \left(K_n^j \cap \bigcup_{(n',j') < (n,j)} K^{j'}_{n'}\right)\right) \cup \left(\gamma_{n,m}^j \cap \bigcup_{m'<m} \gamma^{j}_{n,m'}\right) \\
    &= \left(\gamma_{n,m}^j \cap \{p_{n,1}^j\}\right) \cup \left(\gamma_{n,m}^j \cap \bigcup_{m'<m} \gamma^{j}_{n,m'}\right) \\
    &= \gamma_{n,m}^j \cap \bigcup_{m'<m} \{p_{n,1}^j\} \cup \gamma^{j}_{n,m'} \\
    &= \gamma_{n,m}^j \cap \bigcup_{m'<m} \gamma^{j}_{n,m'} \\
    &= \{p_{n,m}^j\},
\end{align*}
showing that $\gamma_{n,m}^j \cap \bigcup_{(n',j',m') < (n,j,m)} \gamma^{j'}_{n',m'}$ consists of at most one point. The other containment $p_{n,m}^j \in \gamma_{n,m}^j \cap \bigcup_{(n',j',m') < (n,j,m)} \gamma^{j'}_{n',m'}$ is easy to see via Lemma \ref{L:gammas}, which completes the proof that $\{\gamma_{n,m}^j\}_{(n,j,m)\in K}$ is a tree-like decomposition of $T_\infty$.

To show that $\{\gamma_{n,m}^j\}_{(n,j,m)\in K}$ is a geometric tree-like decomposition of $T_\infty$, let $(z_l)_{l\in L}$ denote a minimal decomposition path (with respect to the tree-like decomposition $\{\gamma_{n,m}^j\}_{(n,j,m)\in K}$) in $T_\infty$. By Lemma \ref{L:path_to_arc}, the union $\bigcup_{1\leq l\leq \max(L)}[z_{l-1},z_l]$ is an arc, which we denote by $\gamma$. Furthermore, for each $1\leq l\leq \max(L)$, there exists a unique $(n_l,j_l,m_l)\in K$ such that $\gamma\cap \gamma_{n_l,m_l}^{j_l}=[z_{l-1},z_l]$. Writing $(K_{n_s}^{j_s})_{s\in S}$ to denote the sequence of sets $K_n^j$ traversed by $\gamma$ in consecutive order along $\gamma$, then, for each $1\leq s\leq \max(S)$, we note that the subarc $\gamma\cap K_{n_s}^{j_s}$ satisfies
\begin{align*}
\gamma\cap K_{n_s}^{j_s}&=\bigcup_{m\in M_{n_s}^{j_s}}\gamma\cap\gamma_{n_s,m}^{j_s}\\
&=\bigcup\{[z_{l-1},z_l]\,|\,l\in L \text{ such that } (n_l,j_l,m_l)=(n_s,j_s,m_l)\}.
\end{align*}
We thus obtain a sub-sequence $(z_{l_s})_{s\in S}\subset(z_l)_{l\in L}$ such that, for each $1\leq s\leq\max(S)$, we have 
\[\gamma\cap K_{n_s}^{j_s}=[z_{l_{s-1}},z_{l_s}]=\bigcup_{l_{s-1}< l\leq l_s}[z_{l-1},z_l].\]
In particular, we have
\[\{z_l\,|\,l_{s-1}\leq l\leq l_{s}\}\subset[z_{l_{s-1}},z_{l_s}]=\gamma\cap K_{n_s}^{j_s}.\] 
We also note that $z_{l_0}=z_0$ and $z_{l_{\max(S)}}=z_{\max(L)}$. Therefore, $(z_{l_s})_{s\in S}$ is the decomposition path in $\gamma$ induced by $\{K_n^j\}_{(n,j)\in I}$, and each sequence $(z_l)_{l_s\leq l\leq l_{s+1}}$ is a decomposition path in $K_{n_s}^{j_s}$ with respect to the tree-like decomposition $\{\gamma_{n_s,m}^{j_s}\}_{m\in M_{n_s}^{j_s}}$. Since $(z_l)_{l\in L}$ is minimal, each $(z_l)_{l_s\leq l\leq l_{s+1}}$ is minimal. By Lemma \ref{L:path_to_arc}, the decomposition path $(z_{l_s})_{s\in S}$ is also minimal. By Lemmas \ref{L:trees_qd} and \ref{L:arcs_qd}, we have
\begin{align*}
d(z_0,z_{\max(L)})&\leq C_1\max_{s<\max(S)}d(z_{l_s},z_{l_{s+1}})\\
&\leq C_1C\max_{s<\max(S)}\max_{l_s\leq l< l_{s+1}}d(z_l,z_{l+1})=C_1C\max_{l<\max(L)}d(z_l,z_{l+1}).
\end{align*}
Furthermore, by Proposition \ref{P:arc_decomp} and the $1$-bounded turning property of $T$, the path $(z_{l_s})_{s\in S}$ is a $C_1$-short decomposition path with respect to $\{K_n^j\}_{(n,j)\in I}$. By Lemma \ref{L:gammas} and the $1$-bounded turning property of $T$, each path $(z_l)_{l_s\leq l\leq s_{s+1}}$ is a $C$-short decomposition path with respect to $\{\gamma_{n_s,m}^{j_s}\}_{m\in M_{n_s}^{j_s}}$. Therefore
\begin{align*}
\sum_{l<\max(L)}d(z_l,z_{l+1})&=\sum_{s<\max(S)}\sum_{l_s\leq l<l_{s+1}}d(z_l,z_{l+1})\\
&\leq C\sum_{s<\max(S)}d(z_{l_s},z_{l_{s+1}})\leq CC_1\,d(z_0,z_{\max(L)}).
\end{align*}
In conclusion, $\{\gamma_{n,m}^j\}_{(n,j,m)\in K}$ constitutes a $C_1C$-geometric tree-like decomposition of $T_\infty$.
\end{proof}

\subsection{The Lipschitz Free Space of a QC Tree} Here we apply Theorem \ref{T:general} to the case that $X$ is a quasiconformal tree, thus proving Theorem \ref{T:main_trees}.

\begin{corollary}\label{C:tree_arcs}
Suppose $T$ is a QC tree, and $\{K_n^j\}_{(n,j)\in I}$ is a DEBV decomposition of $T_\infty$. then $\Lip_0(T) = \Lip_0(T_\infty)$ is linearly weak*-isomorphic to $\bigoplus_{(n,j,m)\in K}^\infty\Lip_0(\gamma_{n,m}^j)$. Consequently, $\F(T)$ is isomorphic to $L^1(Z)$ for some measure space $Z$, where $Z$ is purely atomic if and only if $T$ is purely 1-unrectifiable.
\end{corollary}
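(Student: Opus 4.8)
The plan is to assemble \texttt{Corollary \ref{C:tree_arcs}} from the machinery already developed, treating it as essentially a bookkeeping exercise once the hard structural theorem \ref{thm:arc_qa_decomp} is in hand. First I would record the identification $\Lip_0(T) = \Lip_0(T_\infty)$: since $T_\infty \subset T$ is dense (as noted after \eqref{eq:L(T)^cdense}) and $T$ is complete (being compact), restriction $f \mapsto f\res_{T_\infty}$ is an isometric isomorphism $\Lip_0(T) \to \Lip_0(T_\infty)$, which is moreover weak*-weak*-continuous because it preserves pointwise convergence on the dense set; equivalently, $\F(T) = \F(T_\infty)$ via the canonical predual. (One must choose the basepoint of $T$ inside $T_\infty$, which is harmless since changing basepoints induces an isometric isomorphism of $\Lip_0$.)

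Next I would invoke \texttt{Theorem \ref{thm:arc_qa_decomp}}: the arcs $\{\gamma_{n,m}^j\}_{(n,j,m)\in K}$ form a $C_1C$-geometric tree-like decomposition of $T_\infty$. Applying \texttt{Theorem \ref{T:general}} (with $\B = \R$), the map $\Phi(f)_{(n,j,m)} := f\res_{\gamma_{n,m}^j} - f(p_{n,m}^j)$ is a linear $C_1C$-isomorphism $\Lip_0(T_\infty) \to \bigoplus^\infty_{(n,j,m)\in K} \Lip_0(\gamma_{n,m}^j)$ that is weak*-weak*-continuous. Chaining with the identification from the previous paragraph gives the first sentence of the corollary. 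For the second sentence, I would predualize: by the general Banach-space facts recalled in the Preliminaries, a weak*-isomorphism $\Lip_0(T_\infty) \cong \bigoplus^\infty_{(n,j,m)} \Lip_0(\gamma_{n,m}^j)$ dualizes to an isomorphism $\F(T) \cong \bigoplus^1_{(n,j,m)} \F(\gamma_{n,m}^j)$. Each $\gamma_{n,m}^j$ is a subarc of a QC tree, hence a QC arc (it is a Jordan arc that is bounded turning and doubling as a subset of $T$), so by \texttt{Corollary \ref{cor:isoL1}} each $\F(\gamma_{n,m}^j)$ is isomorphic to $L^1(X_{n,m}^j)$ for some measure space, with uniformly bounded isomorphism constants since the bounded turning and doubling constants of the $\gamma_{n,m}^j$ are controlled by those of $T$. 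An $\ell^1$-sum of $L^1$-spaces is again an $L^1$-space over the disjoint union $Z := \bigsqcup_{(n,j,m)\in K} X_{n,m}^j$, giving $\F(T) \cong L^1(Z)$.

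For the final ``purely atomic iff purely 1-unrectifiable'' equivalence I would argue exactly as in the second paragraph of the proof of \texttt{Corollary \ref{cor:isoL1}}, using the Schur property as an isomorphism invariant: $L^1(Z)$ has the Schur property iff $Z$ is purely atomic, and by \texttt{\cite[Theorem C]{AGPP}} $\F(T)$ has the Schur property iff $T$ is purely 1-unrectifiable. (For this last step one needs $T$ itself, not just $T_\infty$; but $\F(T) = \F(T_\infty)$ and purely 1-unrectifiability of $T$ is equivalent to that of its dense subset $T_\infty$ since any bi-Lipschitz image of a positive-measure subset of $\R$ in $T$ has positive $\H^1$-measure and can be intersected down, or one simply cites that \cite[Theorem C]{AGPP} applies to the complete metric space $T$ directly.)

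The genuinely substantive input is \texttt{Theorem \ref{thm:arc_qa_decomp}}, already proven; given that, the only mild obstacle here is verifying the bookkeeping claims — that restriction to $T_\infty$ is a weak*-isomorphism, that the $\gamma_{n,m}^j$ are genuine QC arcs with uniformly controlled constants so \texttt{Corollary \ref{cor:isoL1}} yields a uniform bound, and that passing between $T$ and $T_\infty$ does not disturb pure 1-unrectifiability. None of these requires new ideas, so the corollary follows with essentially no new work beyond careful citation.
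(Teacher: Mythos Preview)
Your proposal is correct and follows essentially the same route as the paper's proof. The only step you gloss over is that Theorem~\ref{thm:arc_qa_decomp} is stated for $1$-bounded turning trees, so the paper first passes to a $1$-bounded turning, $D'$-doubling model via \cite[Lemma~2.5]{BM20a} before invoking it; once that reduction is made, everything proceeds exactly as you outline.
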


\begin{proof}
Let $B$ be the bounded turning constant of $T$ and $D$ its doubling constant. Then by \cite[Lemma~2.5]{BM20a}, $T$ is $B$-bi-Lipschitz equivalent to a 1-bounded turning, $D'$-doubling tree, where $D'$ only depends on $B,D$. Since $B$-bi-Lipschitz equivalent metric spaces have $B$-weak* isomorphic spaces of Lipschitz functions, we may assume that $T$ is 1-bounded turning and $D'$-doubling.

Since $T_\infty \subset T$ is dense, $\Lip_0(T) = \Lip_0(T_\infty)$. By Theorem \ref{thm:arc_qa_decomp}, \\ $\{\gamma_{n,m}^j\}_{(n,j,m)\in K}$ is a geometric tree-like decomposition of $T_\infty$. Therefore, by Theorem \ref{T:general}, we have that $\Lip_0(T)$ is linearly weak*-isomorphic to $\bigoplus_{(n,j,m)\in K}^\infty\Lip_0(\gamma_{n,m}^j)$. Hence, $\F(T)$ is isomorphic to $\bigoplus_{(n,j,m)\in K}^1\F(\gamma_{n,m}^j)$. Since each arc $\gamma_{n,m}^j$ is $1$-bounded turning and $D'$-doubling, Corollary \ref{cor:isoL1} implies that $\F(T)$ is isomorphic to $L^1(Z)$ for some measure space $Z$. As in the proof of Corollary \ref{cor:isoL1}, $Z$ is purely atomic $\Leftrightarrow$ $L^1(Z)$ has the Schur property $\Leftrightarrow$ $\F(T)$ has the Schur property $\Leftrightarrow$ $T$ is purely 1-unrectifiable.
\end{proof}

\subsection{The Lipschitz Dimension of a QC Tree}
We begin by citing the following result. 

\begin{theorem}[Theorem~2.2, \cite{Freeman20}]\label{T:arc_lip_dim}
Given a $1$-bounded turning Jordan arc $\gamma$, there exists an $L$-Lipschitz and $Q$-light map $f:\gamma\to\mathbb{R}$, where $L$ and $Q$ are absolute constants. 
\end{theorem}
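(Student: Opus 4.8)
The plan is to reduce the statement to the Herron--Meyer model of $\gamma$ and then construct the map by hand there. After rescaling so that $\diam(\gamma)=1$ (which does not change the bounded turning constant), Theorem~\ref{T:HM} gives an $8$-bi-Lipschitz homeomorphism between $\gamma$ and some $([0,1],d_\Delta)$ with $\Delta\in\mathfrak{D}$. Since being $L$-Lipschitz and $Q$-light is stable, with explicitly controlled worsening of $L$ and $Q$, under pre- and post-composition with bi-Lipschitz maps (immediate from Definition~\ref{D:LL}), it suffices to produce, for \emph{every} $\Delta\in\mathfrak{D}$, an $L$-Lipschitz $Q$-light map $f_\Delta\colon([0,1],d_\Delta)\to\R$ with $L,Q$ absolute. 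I emphasize that Lemma~\ref{L:dyadic} is unavailable here: $\gamma$ is only assumed bounded turning, so $([0,1],d_\Delta)$ need not be doubling.

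The core is the construction of $f_\Delta$. Along any branch of the dyadic tree the \emph{splitting} edges $e$ (those with $\Delta(e_0)=\Delta(e_1)=\tfrac12\Delta(e)$) are the ones at which $d_\Delta$ genuinely ``descends a scale,'' their values $\Delta(e)$ running through $1,\tfrac12,\tfrac14,\dots$. I would build $f_\Delta$ recursively down the tree, assigning to the endpoints of each $e\in\D$ real numbers so that $f_\Delta(e)$ is an interval of length comparable to $\Delta(e)$, and choosing at each subdivision step to \emph{fold back} rather than proceed monotonically (as the identity does) with amplitude matched to $\Delta(e)$ --- the identity satisfies the Lipschitz bound below but fails to be light on snowflaked pieces precisely because it is monotone. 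The output should satisfy two opposing estimates, both with absolute constants and uniformly in $\Delta$:
\begin{itemize}
\item[(a)] $\mathrm{osc}_e(f_\Delta):=\sup_{u,v\in e}|f_\Delta(u)-f_\Delta(v)|\le L\,\Delta(e)$ for every $e\in\D$;
\item[(b)] (fragmentation) there are absolute $C_0,c_0>0$ such that for every $r>0$, every $e\in\D$ with $\Delta(e)\ge C_0 r$, and every $E\subset\R$ with $\diam(E)\le r$, there is a subinterval $I\subset e$ with $f_\Delta(I)\cap E=\emptyset$ and $\diam_{d_\Delta}(I)>r$.
\end{itemize}

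Granting such an $f_\Delta$, the two required properties follow by short arguments. For the Lipschitz bound: given $x,y$, pick a dyadic chain $(e^k)_k$ covering $\{x,y\}$ with consecutive edges meeting and $\sum_k\Delta(e^k)\le 2d_\Delta(x,y)$; trimming to the subchain from the edge containing $x$ to the one containing $y$ and choosing $z_k\in e^k\cap e^{k+1}$, telescoping gives $|f_\Delta(x)-f_\Delta(y)|\le\sum_k\mathrm{osc}_{e^k}(f_\Delta)\le L\sum_k\Delta(e^k)\le 2L\,d_\Delta(x,y)$, so $f_\Delta$ is $2L$-Lipschitz. For $Q$-lightness: fix $r>0$, $E$ with $\diam(E)\le r$, and a $d_\Delta$-$r$-component $U$ of $f_\Delta^{-1}(E)$; if $\diam_{d_\Delta}(U)>Qr$ for a large absolute $Q$, choose $x,y\in U$ with $d_\Delta(x,y)>Qr$, and by \cite[Lemma~3.5]{HM12} (as already used in Lemma~\ref{lem:Up1u}) take a dyadic $e\subset[x,y]$ with $\Delta(e)\ge\tfrac14 d_\Delta(x,y)>\tfrac{Q}{4}r\ge C_0 r$. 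Then (b) gives $I\subset e\subset[x,y]$ with $f_\Delta(I)\cap E=\emptyset$ and $\diam_{d_\Delta}(I)>r$. Since $[0,1]$ is topologically an arc, any $d_\Delta$-$r$-chain in $[0,1]$ from $x$ to $y$ missing $I$ has two consecutive points $p,q$ with $[p,q]\supset I$, whence by $1$-bounded turning $d_\Delta(p,q)\ge\diam_{d_\Delta}([p,q])\ge\diam_{d_\Delta}(I)>r$, impossible; so every such chain meets $I$ and therefore leaves $f_\Delta^{-1}(E)$, a contradiction. Hence $\diam_{d_\Delta}(U)\le Qr$, and $Q=4C_0$ works.

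The hard part is manufacturing $f_\Delta$ so that (a) and (b) hold \emph{simultaneously}, with absolute constants, for an arbitrary --- possibly highly inhomogeneous and non-doubling --- $\Delta\in\mathfrak{D}$: the two bounds force the folding amplitude at each splitting edge to be $\asymp\Delta(e)$, and one must arrange the shapes, placements, and signs of the infinitely many nested folds so that they neither accumulate enough to break the upper oscillation bound (a) nor cancel enough to destroy the lower fragmentation bound (b). This calibration is the technical heart of \cite{Freeman20}, and I would follow that construction.
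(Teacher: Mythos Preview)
The paper does not prove this theorem but cites it from \cite{Freeman20}; the accompanying Remark merely traces through \cite[Lemmas~5.5 and 5.11]{Freeman20} (mapping first to the unit circle, then projecting to $\R$) to confirm that $L$ and $Q$ are absolute. Your outline takes essentially the same route---reduce via Theorem~\ref{T:HM} to the Herron--Meyer model and defer the actual folding construction to \cite{Freeman20}---and your derivations of the Lipschitz bound from (a) and of $Q$-lightness from (b) are both correct.
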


\begin{remark}
To see that $L$ and $Q$ are absolute constants, by Theorem \ref{T:HM}, any $1$-bounded turning Jordan arc $\gamma$ with $\diam(\gamma)=1$ is $8$-bi-Lipschitz to a Jordan arc $\Gamma$ in $\mathcal{S}_1'$. Then, \cite[Lemmas 5.5 and 5.11]{Freeman20} demonstrate the existence of a $1$-Lipschitz $129$-light map $F:\Gamma\to\mathbb{S}$, where $\mathbb{S}$ denotes the unit circle equipped with a normalized arc-length distance. Since it is easy to see that the unit circle admits a $1$-Lipschitz $4$-light map into $\mathbb{R}$, we conclude (via the composition of these Lipschitz light maps) that $\Gamma$ admits a $1$-Lipschitz $Q'$-light map into $\mathbb{R}$, where $Q'$ is absolute (in fact, one can verify that $Q'=129\cdot4$). In the same way, it then follows that $\gamma$ admits an $8$-Lipschitz $Q$-light map into $\mathbb{R}$, where $Q$ is absolute. Furthermore, the assumption that $\diam(\gamma)=1$ is harmless in this context. Indeed, if we must scale $\gamma$ by $1/\diam(\gamma)$ in order to satisfy this assumption, then we simply rescale the resulting image in $\mathbb{R}$ by $\diam(\gamma)$. It is straightforward to verify that this will yield an $8$-Lipschitz $Q$-light map from $\gamma$ into $\mathbb{R}$. 
\end{remark}

While the proof of the following lemma is routine, it is included for the convenience of the reader.

\begin{lemma}\label{L:dense_ll}
Suppose $f:X\to\mathbb{R}$ is $L$-Lipschitz and $Q$-light. Then $f$ extends to an $L$-Lipschitz and $Q$-light map $\overline{f}:\overline{X}\to\mathbb{R}$, where $\overline{X}$ represents the completion of $X$. 
\end{lemma}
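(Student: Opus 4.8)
The plan is to extend $f$ by uniform continuity and then verify both defining properties of a Lipschitz light map survive the extension.

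\medskip

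\textbf{Extension by continuity.} Since $f$ is $L$-Lipschitz, it is uniformly continuous, so it extends uniquely to a continuous map $\overline{f}: \overline{X} \to \R$; concretely, for $\overline{x} \in \overline{X}$ pick a sequence $(x_k)$ in $X$ with $x_k \to \overline{x}$ and set $\overline{f}(\overline{x}) := \lim_k f(x_k)$, the limit existing because $(f(x_k))$ is Cauchy. That $\overline{f}$ is $L$-Lipschitz is immediate: for $\overline{x}, \overline{y} \in \overline{X}$ with approximating sequences $(x_k), (y_k)$, we have $|\overline{f}(\overline{x}) - \overline{f}(\overline{y})| = \lim_k |f(x_k) - f(y_k)| \leq \lim_k L\, d(x_k, y_k) = L\, d(\overline{x}, \overline{y})$. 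This handles condition (1) of Definition \ref{D:LL}.

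\medskip

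\textbf{Lightness of the extension.} For condition (2), fix $r > 0$ and $E \subset \R$ with $\diam(E) \leq r$, and let $\overline{U}$ be an $r$-component of $\overline{f}^{-1}(E)$ in $\overline{X}$; we must show $\diam(\overline{U}) \leq Qr$. The key observation is that $\overline{X}$ is the completion of $X$, so $X$ is dense in $\overline{X}$. Take any $\overline{x}, \overline{y} \in \overline{U}$ and any $\eps > 0$; it suffices to bound $d(\overline{x}, \overline{y})$ by $Qr + O(\eps)$. Since $\overline{x}, \overline{y}$ lie in a common $r$-chain in $\overline{U}$, and since points of $X$ are dense, I would perturb each point of that $r$-chain slightly (within $\eps$) to a nearby point of $X$; the perturbed chain is an $(r + 2\eps)$-chain in $X$. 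The difficulty is that the perturbed points need not land in $f^{-1}(E)$, only in $f^{-1}(E')$ for a slightly enlarged set $E'$ with $\diam(E') \leq r + 2L\eps$. Thus the perturbed chain is an $(r+2\eps)$-chain inside the $(r+2\eps)$-component of $f^{-1}(E')$ containing the perturbation of $\overline{x}$. Applying the lightness of the original $f$ to the set $E'$ with parameter $\rho := r + 2\eps + 2L\eps \geq \max(\diam(E'), r+2\eps)$ — here using that the definition of $\delta$-component and the lightness condition behave monotonically in $\delta$, or more carefully invoking the definition with $r$ replaced by $\rho$ — gives that the diameter of the perturbed chain is at most $Q\rho$. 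Hence $d(\overline{x}, \overline{y}) \leq Q\rho + 2\eps = Q(r + 2\eps + 2L\eps) + 2\eps$, and letting $\eps \to 0$ yields $d(\overline{x}, \overline{y}) \leq Qr$, so $\diam(\overline{U}) \leq Qr$.

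\medskip

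\textbf{Main obstacle.} The routine part is the Lipschitz extension; the only real subtlety is the bookkeeping in the lightness argument, namely that an $r$-chain in $\overline{X}$ passing through the image set $E$ must be replaced by an $(r+2\eps)$-chain in $X$ passing through a slightly fattened set $E'$, and checking that Definition \ref{D:LL}(2) applied at the slightly larger scale $\rho$ (rather than exactly $r$) still gives the bound $Q\rho$, which tends to $Qr$. One should also remark that a chain with consecutive points in $\overline{X}$ can always be assumed finite (as in the definition), so the perturbation is over finitely many points and no uniformity-in-the-chain issue arises. I do not expect any genuine obstruction here — it is a standard density-and-approximation argument — but care is needed that the enlargement of $E$ is controlled linearly by $L$ and the enlargement of the chain scale vanishes with $\eps$.
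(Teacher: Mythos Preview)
Your proposal is correct and follows essentially the same approach as the paper's proof: extend $f$ by continuity to preserve the Lipschitz bound, then verify lightness by perturbing an $r$-chain in $\overline{f}^{-1}(E)$ to an $(r+2\eps)$-chain in $X$ lying in $f^{-1}(E')$ for a slightly fattened $E'$, applying the lightness of $f$ at the enlarged scale, and sending $\eps \to 0$. If anything, your bookkeeping with the parameter $\rho = r + 2\eps + 2L\eps$ is slightly more explicit than the paper's in ensuring both $\diam(E') \leq \rho$ and that the perturbed chain is a $\rho$-chain.
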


\begin{proof}
By continuity, $\overline{f}$ remains $L$-Lipschitz. To see that $\overline{f}$ is $Q$-light, let $E\subset \mathbb{R}$ be given such that $\delta:=\diam(E)>0$. Let $U\subset \overline{X}$ denote a $\delta$-component of $\overline{f}^{-1}(E)$ in $\overline{X}$. If $U\subset X$, then $U\subset f^{-1}(E)$ and, by assumption, $\diam(U)\leq Q\delta$. Suppose there exists a $\delta$-chain $(z_i)_{i\in I}$ in $U$ such that $(z_i)_{i\in I}\not\subset X$. Since $X$ is dense in $\overline{X}$, for any $\delta'>\delta$, there exists a $\delta'$-chain $(z'_i)_{i\in I}\subset X$ such that, for each $i\in I$, we have $d(z_i,z_i')<\varepsilon:=\delta'-\delta$. Since $f$ is $L$-Lipschitz, we also note that $f(\{z_i'\}_{i\in I})\subset E'$, where $E'$ is defined to be the $L\varepsilon$-neighborhood of $E$ in $\mathbb{R}$. Since $f$ is $Q$-light, we have 
\[\diam(\{z_i\}_{i\in I})\leq\diam(\{z_i'\}_{i\in I})+2\varepsilon\leq Q\diam(E')+2\varepsilon=Q\delta+2LQ\varepsilon+2\varepsilon.\] 
Since $\varepsilon>0$ is a arbitrarily small, it follows that $\diam(U)\leq Q\delta$. 
\end{proof}

\begin{theorem}\label{T:tree_dim1}
If $T$ is a $B$-bounded turning and $D$-doubling QC tree, then there exists an $L$-Lipschitz and $Q$-Light map $f:T\to\mathbb{R}$, where $L$ and $Q$ depend only on $B$ and $D$.
\end{theorem}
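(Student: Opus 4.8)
The plan is to assemble the theorem from pieces already in place: Theorem \ref{thm:arc_qa_decomp} produces a geometric tree-like decomposition of $T_\infty$ into $1$-bounded turning Jordan arcs, Theorem \ref{T:arc_lip_dim} supplies uniformly Lipschitz light maps on those arcs, Theorem \ref{T:general_LL} glues them into a global Lipschitz light map on $T_\infty$, and Lemma \ref{L:dense_ll} passes to the completion $T$.

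First I would normalize. By \cite[Lemma~2.5]{BM20a}, the $B$-bounded turning, $D$-doubling tree $T$ is $B$-bi-Lipschitz equivalent to a $1$-bounded turning, $D'$-doubling QC tree, with $D'$ depending only on $B$ and $D$; rescaling the metric, we may also assume the diameter is $1$. Since a $B$-bi-Lipschitz equivalence carries an $L$-Lipschitz $Q$-light map to a $B^2L$-Lipschitz $B^2Q$-light map (up to harmless constant factors), it suffices to produce the desired map under the standing assumptions that $T$ is $1$-bounded turning, $D'$-doubling, and $\diam(T) = 1$ — exactly the hypotheses of Section \ref{S:tree_apps}.

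Next, fix a DEBV decomposition $\{K_n^j\}_{(n,j)\in I}$ of $T_\infty \subset T$ and let $\{\gamma_{n,m}^j\}_{(n,j,m)\in K}$ be the associated subarcs from Lemma \ref{L:gammas}. By Theorem \ref{thm:arc_qa_decomp} this is a $C_1C$-geometric tree-like decomposition of $T_\infty$, where $C$ and $C_1$ depend only on the doubling constant of $T$ (hence only on $B$ and $D$); since $K$ under its lexicographic order is countable of order type $\omega$, after relabeling its index set has the form required in Definition \ref{D:treelike}. Each $\gamma_{n,m}^j$ is a compact subarc of the $1$-bounded turning tree $T$, and is therefore itself a $1$-bounded turning Jordan arc: in a metric tree the unique arc joining two points of a connected set lies inside that set, so for any $u,v\in\gamma_{n,m}^j$ the arc $[u,v]\subset\gamma_{n,m}^j$ witnesses $1$-bounded turning. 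Hence Theorem \ref{T:arc_lip_dim} gives, for every $(n,j,m)\in K$, an $L_0$-Lipschitz $Q_0$-light map $f_{n,m}^j:\gamma_{n,m}^j\to\R$ with $L_0,Q_0$ absolute constants; in particular these maps are uniformly Lipschitz light.

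Finally I would apply Theorem \ref{T:general_LL} with $d=1$, $\B=\R$, and geometric constant $C_1C$, producing an $L_0C_1C$-Lipschitz, $Q'$-light map $f:T_\infty\to\R$ with $Q'$ depending only on $L_0,Q_0,C_1C$, hence only on $B$ and $D$. Because $T_\infty$ is dense in the compact (thus complete) space $T$, Lemma \ref{L:dense_ll} extends $f$ to $\overline{f}:T\to\R$ with the same constants; undoing the initial bi-Lipschitz normalization yields the claimed $L$-Lipschitz $Q$-light map with $L,Q$ depending only on $B$ and $D$. I do not expect a genuine obstacle here: the two substantive ingredients — decomposing $T$ into uniformly controlled arcs and gluing Lipschitz light maps across a geometric tree-like decomposition — are precisely Theorems \ref{thm:arc_qa_decomp} and \ref{T:general_LL}, so the work is mostly assembling them and tracking that every constant ultimately depends only on $B$ and $D$. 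The only small point needing an argument is the inheritance of the $1$-bounded turning property by the arcs $\gamma_{n,m}^j$, which follows from uniqueness of arcs in a metric tree as indicated above.
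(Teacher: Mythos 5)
Your proof is correct and follows essentially the same route as the paper: normalize to the $1$-bounded turning case via \cite[Lemma~2.5]{BM20a}, invoke Theorem~\ref{thm:arc_qa_decomp} to obtain a $C_1C$-geometric tree-like decomposition of $T_\infty$ into $1$-bounded turning arcs, apply Theorem~\ref{T:arc_lip_dim} for uniformly Lipschitz light maps on those arcs, glue them via Theorem~\ref{T:general_LL}, and extend to $T$ by Lemma~\ref{L:dense_ll}. The only differences are two small bookkeeping points you spell out that the paper leaves implicit (that the arcs $\gamma_{n,m}^j$ inherit $1$-bounded turning from $T$, and that the lexicographic order on $K$ has order type $\omega$ so Definition~\ref{D:treelike} applies after relabeling), both of which are handled correctly.
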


\begin{proof}
Since every $B$-bounded turning tree is $B$-bi-Lipschitz equivalent to a $1$-bounded turning tree (by \cite[Lemma~2.5]{BM20a}), it suffices to assume that $T$ is $1$-bounded turning and $D'$-doubling (where $D'$ depends only on $B,D$). Let $\{K_n^j\}_{(n,j)\in I}$ denote a DEBV decomposition of $T_\infty$. By Theorem \ref{thm:arc_qa_decomp}, the  collection of arcs $\{\gamma_{n,m}^j\}_{(n,j,m)\in K}$ associated with $\{K_n^j\}_{(n,j)\in I}$ is a $C$-geometric tree-like decomposition of $T_\infty$, where $C$ depends only on $D'$. By Theorem \ref{T:arc_lip_dim}, for each $(n,j,m)\in K$, there exists an $L$-Lipschitz and $Q$-light map $f_{(n,j,m)}:\gamma_{n,m}^j\to\mathbb{R}$. By Theorem \ref{T:general_LL}, there exists an $LC$-Lipschitz and $Q'$-light map $f:T_\infty\to \mathbb{R}$, where $Q'$ depends only on $L$, $Q$, and $C$. Since $T_\infty$ is dense in $T$, Lemma \ref{L:dense_ll} enables us to conclude that these properties are retained by the extension $\overline{f}:T\to\mathbb{R}$. 
\end{proof}

\section{Open Questions}

While the assumption of the doubling condition on $X$ is an essential ingredient to our proof of Proposition \ref{P:arc_decomp}, in light of Theorem \ref{T:arc_lip_dim}, it is not immediately obvious that the doubling property is a necessary assumption in Theorem \ref{T:main_dim}. Thus we ask:

\begin{question}
Is the Lipschitz dimension of a bounded turning (but not necessarily doubling) metric tree equal to 1?
\end{question}

Furthermore, while the assumption of the doubling property is necessary for our proof of Lemma \ref{L:dyadic}, we do not know if alternate methods could be used to identify the Lipschitz free space of a bounded turning Jordan arc. In particular, we ask:

\begin{question}
Is the Lipschitz free space of a (possibly non-doubling) bounded turning Jordan arc isomorphic to $L^1(Z)$ for some measure space $Z$?
\end{question}
\noindent This question generalizes one brought forth in the last paragraph of \cite[Section 8.6]{Weaver18}.

\appendix
\section{Quotients of $L^1$}
For the sake of self-containment, we include proofs of some well-known facts on isomorphisms between subspaces and quotient spaces of $L^1$. The arguments prioritize simplicity over obtaining the best possible isomorphism constants.

\begin{lemma} \label{lem:kernel}
Let $\mu$ be a finite measure on a measurable space $Y$, and let $K \subset L^1(\mu)$ denote the closed subspace consisting of all $g \in L^1(\mu)$ with $\int g \,d\mu = 0$. Then there exists a measurable subset $X \subset Y$ such that $K$ is 64-isomorphic to $L^1(\mu \llcorner X)$.
\end{lemma}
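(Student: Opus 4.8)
The plan is to realize $K=\{g\in L^1(\mu):\int g\,d\mu=0\}$ as a complemented subspace of $L^1$ together with a lower bound, and then invoke the standard fact that a complemented subspace of $L^1(\mu)$ which is itself isomorphic to an $L^1$-space is already isomorphic to an $L^1$-space — but since we want an explicit $L^1(\mu\llcorner X)$ and a concrete constant, I would instead argue by hand. First, if $\mu(Y)=0$ the statement is trivial, so assume $\mu(Y)>0$; after rescaling we may assume $\mu$ is a probability measure. The key structural dichotomy is whether $\mu$ has an atom of mass at least, say, $1/2$, or not.

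Case 1: there is a measurable set $A\subset Y$ with $1/3\le\mu(A)\le 2/3$. (If $\mu$ is non-atomic this is immediate; if $\mu$ has atoms one splits off atoms greedily until the accumulated mass first crosses $1/3$, and the last atom added has mass at most $2/3$ unless a single atom already has mass in $[1/3,2/3]$ — in the one remaining bad case a single atom has mass $>2/3$ and the complement has mass $<1/3$, which is Case 2 below with the roles of $A$ and its complement interchanged; I should be slightly careful here but it is routine.) Given such an $A$, let $B:=Y\setminus A$, and define the bounded linear projection $P:L^1(\mu)\to L^1(\mu)$ by $P(g):=g-\big(\fint_A g\,d\mu\big)1_A-\big(\fint_B g\,d\mu\big)1_B + c(g)\,1_B$ where the constant $c(g)$ is chosen so that $\int P(g)\,d\mu=0$; concretely the cleanest choice is to map $g\mapsto g - \big(\fint_A g\big)1_A + \big(\tfrac{\mu(A)}{\mu(B)}\fint_A g\big)1_B$, which already lands in $K$, equals the identity on $K$, and has norm bounded by an absolute constant because $\mu(A),\mu(B)\in[1/3,2/3]$ forces $\|1_A\|_1,\|1_B\|_1,\mu(A)/\mu(B)$ all bounded. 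This exhibits $K$ as the range of a bounded projection with bounded complement norm. Then $L^1(\mu)\cong K\oplus \mathrm{span}(1_A - \tfrac{\mu(A)}{\mu(B)}1_B)$ isomorphically, i.e. $K$ is isomorphic to the codimension-one subspace, and by the Pełczyński-type decomposition argument (or just directly: a codimension-one subspace of $L^1$ is isomorphic to $L^1$, with controlled constant in the presence of the above projection) we get $K$ isomorphic to $L^1(\mu\llcorner X)$ with $X=Y$. I should double-check the constant: each elementary map above has norm at most $4$, so composing and inverting keeps us comfortably under $64$.

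Case 2: every set has measure outside $(1/3,2/3)$, which (since $\mu$ is a probability measure) forces the existence of an atom $y_0$ with $\mu(\{y_0\})>2/3$, and $\mu(Y\setminus\{y_0\})<1/3$. Put $X:=Y\setminus\{y_0\}$. I claim $K$ is isomorphic to $L^1(\mu\llcorner X)$ via the restriction-and-correction map: send $g\in K$ to $g\llcorner X\in L^1(\mu\llcorner X)$, with inverse sending $h\in L^1(\mu\llcorner X)$ to $h + c(h)1_{\{y_0\}}$ where $c(h)\mu(\{y_0\}) = -\int_X h\,d\mu$. This inverse map has norm at most $1 + \mu(\{y_0\})\cdot|c(h)|/\|h\|_1 \le 1 + \mu(X)/\mu(\{y_0\}) \le 1 + (1/3)/(2/3) = 3/2$, and the forward map is a contraction, so $K$ is $\tfrac{3}{2}$-isomorphic to $L^1(\mu\llcorner X)$.

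The main obstacle is Case 1, specifically making the reduction to "there exists $A$ with balanced mass" fully rigorous when $\mu$ has several large-but-not-huge atoms, and then citing or reproving the statement that a codimension-one (or boundedly-complemented) subspace of $L^1(\mu)$ is isomorphic to an $L^1$-space with an absolute constant. The honest clean route, which I would take in the writeup, is: reduce everything to the two cases above; in Case 2 the argument is elementary and self-contained as shown; in Case 1, choose $A$ with $\mu(A)\in[1/3,2/3]$ and simply write down the explicit isomorphism $L^1(\mu) \to K \oplus_1 \mathbb{R}$ and then absorb the one-dimensional summand into $L^1(\mu\llcorner A)$ (which is infinite-dimensional unless $A$ is a single atom, in which case reduce to Case 2 again) using the elementary fact $L^1(\nu) \oplus_1 \mathbb{R} \cong L^1(\nu)$ for any infinite $L^1$-space, which follows by shuffling a sequence of disjoint positive-measure pieces. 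Tracking constants through this gives a bound well below $64$, and I would state the lemma with $64$ to leave generous slack, exactly as the appendix's opening remark permits ("prioritize simplicity over obtaining the best possible isomorphism constants").
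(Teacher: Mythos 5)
There is a genuine gap. Your dichotomy — Case 1: there exists $A$ with $\mu(A)\in[1/3,2/3]$; Case 2: a single atom of mass $>2/3$ — does not cover the purely atomic case with finitely many atoms correctly, and in fact Case 1 as stated is false there. For example, if $\mu$ is supported on three atoms each of mass $1/3$, then $L^1(\mu)\cong\ell^1_3$ is three-dimensional and $K$ is two-dimensional, so $K$ is certainly not isomorphic to $L^1(\mu\llcorner Y)=L^1(\mu)$, which is what your Case 1 strategy (setting $X=Y$ and showing $K\cong L^1(\mu)$) would require. The shuffle/shift absorbing a one-dimensional summand into $L^1(\nu)$ genuinely needs $L^1(\nu)$ to be infinite-dimensional — it cannot help when everything is finite-dimensional. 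Your fallback ``reduce to Case 2 again'' when $A$ is a single atom also does not work, because your Case 2 is stated only for atoms of mass $>2/3$, and $A$ may well be a single atom of mass $1/3$; more generally, $A$ may be a union of several atoms whose restricted $L^1$ is still finite-dimensional. You partially sense this (``I should be slightly careful here but it is routine'') but the issue is not routine under your chosen case split.

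The paper sidesteps all of this by choosing a simpler and more robust dichotomy: either $Y$ contains some atom $A$, or $Y$ is non-atomic. In the atomic case, one takes $X:=Y\setminus A$ for an arbitrary atom $A$, and the restriction $g\mapsto g\res_X$ is a $2$-isomorphism from $K$ onto $L^1(\mu\llcorner X)$ with explicit inverse $h\mapsto h1_X-\left(\int_X h\,d\mu\right)\frac{1}{\mu(A)}1_A$; crucially this inverse has norm $\leq 2$ \emph{independently} of $\mu(A)$, so there is no need to restrict to ``large'' atoms. (Your Case 2 norm computation yielding $3/2$ is also off — the correct bound on your inverse is $2$, since $|c(h)|\mu(\{y_0\})=|\int_X h\,d\mu|\leq\|h\|_1$, but this doesn't affect the final $64$.) Only when there are no atoms at all does the paper invoke the non-atomicity to produce a countable measurable partition $\{A_n\}$ with $\mu(A_n)=2^{-n-1}\mu(Y)$ and carry out an explicit shift across it. Your ``shuffle a sequence of disjoint positive-measure pieces'' in the genuinely non-atomic case is the right idea and essentially matches the paper's shift, but as presented, the overall argument fails to cover all measures. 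To repair your write-up, replace the balanced-mass dichotomy with the atom/no-atom dichotomy and note that the restriction-and-correction map is a $2$-isomorphism for \emph{any} atom.
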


\begin{proof}
If $\mu = 0$, then $K = Y$ and $X := Y$ satisfies the desired property. Assume $\mu \neq 0$. First assume that $Y$ contains an atom $A \subset Y$. Then we choose $X := Y \setminus A$, and we get a 2-isomorphism $K \to L^1(\mu \llcorner X)$ given by $g \mapsto g\res_X$ with norm-2 inverse $h \mapsto h1_{X} - \left(\int_X h \, d\mu\right) \frac{1}{\mu(A)}1_A$.

Now assume that $Y$ contains no atoms. Let $\{A_n\}_{n\in\N}$ be a measurable partition of $Y$ with $\mu(A_n) = \frac{1}{2^{n+1}}\mu(Y)$ for every $n\in\N$. Let $K_0 \subset L^1(\mu)$ denote the closed subspace consisting of all $g \in L^1(\mu)$ with $\int_{A_0} g \,d\mu = 0$. Then the map $K \to K_0$ given by $g \mapsto g1_{Y \setminus A_0} + \left(g-\fint_{A_0}g \, d\mu\right)1_{A_0}$ is a norm-2 isomorphism with norm-2 inverse $h \mapsto h1_{Y \setminus A_0} + \left(h-\fint_{Y \setminus A_0}h \, d\mu\right)1_{A_0}$. Hence, it suffices to prove that $K_0$ is 16-isomorphic to $L^1(\mu \llcorner X)$ for some $X \subset Y$ measurable. We simply choose $X := Y$ and note that the map $K_0 \to L^1(\mu)$ given by
$$g \mapsto \sum_{n\in\N} \left(g - \fint_{A_n} g \, d\mu + \fint_{A_{n+1}} g \, d\mu \right)1_{A_n}$$
has norm 4 with norm-4 inverse
$$h \mapsto \left(h - \fint_{A_0} h \, d\mu\right)1_{A_0} + \sum_{n\geq 1} \left(h - \fint_{A_n} h \, d\mu + \fint_{A_{n-1}} h \, d\mu\right)1_{A_n}.$$
\end{proof}

\begin{lemma} \label{lem:L1/1R}
Let $\mu$ be a finite measure on a measurable space $Y$, and let $\R1 \subset L^1(\mu)$ denote subspace consisting of constant functions. Then there exists a measurable subset $X \subset Y$ such that $L^1(\mu)/\R1$ is 128-isomorphic to $L^1(\mu \llcorner X)$.
\end{lemma}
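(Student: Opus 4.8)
The plan is to reduce to Lemma \ref{lem:kernel} by identifying the quotient $L^1(\mu)/\R1$ with the kernel subspace $K := \{g \in L^1(\mu): \int g \, d\mu = 0\}$ via the averaging projection. If $\mu = 0$ there is nothing to do (take $X := Y$, so that both spaces are trivial), so I would assume $\mu \neq 0$, hence $\mu(Y) \in (0,\infty)$ and $\fint_Y$ makes sense.

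First I would define $P: L^1(\mu) \to K$ by $P(g) := g - \left(\fint_Y g \, d\mu\right)1_Y$ and note that $\|P(g)\|_1 \leq \|g\|_1 + \left|\int_Y g \, d\mu\right| \leq 2\|g\|_1$, so $\|P\| \leq 2$. Since $P$ annihilates $\R1$ and restricts to the identity on $K$, it descends to a bounded bijection $\bar P: L^1(\mu)/\R1 \to K$. Using that $P(g) = P(g - c1_Y)$ for every scalar $c$ and taking the infimum over $c$ of $\|g - c1_Y\|_1$, one gets $\|\bar P\| \leq 2$. Its inverse is the map induced by the inclusion $K \hookrightarrow L^1(\mu)$ composed with the quotient map, which has norm at most $1$. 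Hence $L^1(\mu)/\R1$ is $2$-isomorphic to $K$.

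Then I would apply Lemma \ref{lem:kernel} to produce a measurable subset $X \subset Y$ together with a $64$-isomorphism $S: K \to L^1(\mu \llcorner X)$, and compose to obtain $S \circ \bar P: L^1(\mu)/\R1 \to L^1(\mu \llcorner X)$. Tracking constants under composition, $\|S \circ \bar P\|\,\|(S \circ \bar P)^{-1}\| \leq \|S\|\|S^{-1}\|\cdot\|\bar P\|\|\bar P^{-1}\| \leq 64 \cdot 2 = 128$, which gives the claim.

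This argument is essentially routine; the only points requiring mild care are the quotient-norm estimate $\|\bar P\| \leq 2$ (which relies on $P$ killing the constants) and the bookkeeping of isomorphism constants under composition with the isomorphism furnished by Lemma \ref{lem:kernel}.
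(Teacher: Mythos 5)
Your proof is correct and follows essentially the same route as the paper: it reduces to Lemma \ref{lem:kernel} by exhibiting the averaging map $g+\R1 \mapsto g - \fint_Y g\,d\mu$ as a $2$-isomorphism from $L^1(\mu)/\R1$ onto $K$, then composes with the $64$-isomorphism from that lemma. The only difference is that you spell out the quotient-norm estimate $\|\bar P\|\le 2$ and the $\mu=0$ edge case, which the paper leaves implicit.
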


\begin{proof}
Let $K$ be as in the statement of Lemma \ref{lem:kernel}. Then by that lemma, it suffices to prove that $L^1(\mu)/\R1$ is 2-isomorphic to $K$. This is achieved by the norm-2 map $L^1(\mu)/\R1 \to K$ given by $g+\R1 \mapsto g-\fint_Y g \, d\mu$ with norm-1 inverse $h \mapsto h+\R1$.
\end{proof}

\section*{Acknowledgements}
We are indebted to the anonymous referee for a very careful reading of the original manuscript and for pointing out numerous mistakes. In particular, we are very thankful for their discovery of a crucial error in the original definition of geometric tree-like decomposition, which caused the original formulation of Theorem 5.7 to be incorrect.

\bibliography{references.bib}

\providecommand{\bysame}{\leavevmode\hbox to3em{\hrulefill}\thinspace}
\providecommand{\MR}{\relax\ifhmode\unskip\space\fi MR }
% \MRhref is called by the amsart/book/proc definition of \MR.
\providecommand{\MRhref}[2]{%
  \href{http://www.ams.org/mathscinet-getitem?mr=#1}{#2}
}
\providecommand{\href}[2]{#2}
\begin{thebibliography}{AACD21}

\bibitem[AACD21]{AACD21}
Fernando Albiac, Jos\'{e}~L. Ansorena, Marek C\'{u}th, and Michal Doucha,
  \emph{Lipschitz free spaces isomorphic to their infinite sums and geometric
  applications}, Trans. Amer. Math. Soc. \textbf{374} (2021), no.~10,
  7281--7312. \MR{4315605}

\bibitem[AGPP22]{AGPP}
Ram\'{o}n Aliaga, Chris Gartland, Colin Petitjean, and Anton\'{\i}n
  Proch\'{a}zka, \emph{Purely 1-unrectifiable metric spaces and locally flat
  {L}ipschitz functions}, Trans. Amer. Math. Soc. \textbf{375} (2022), no.~5,
  3529--3567. \MR{4402669}

\bibitem[APP21]{APP}
Ram\'{o}n~J. Aliaga, Colin Petitjean, and Anton\'{\i}n Proch\'{a}zka,
  \emph{Embeddings of {L}ipschitz-free spaces into {$\ell_1$}}, J. Funct. Anal.
  \textbf{280} (2021), no.~6, Paper No. 108916, 26.

\bibitem[BM20a]{BM20a}
Mario Bonk and Daniel Meyer, \emph{Quasiconformal and geodesic trees}, Fund.
  Math. \textbf{250} (2020), no.~3, 253--299. \MR{4107537}

\bibitem[BM20b]{BM20b}
\bysame, \emph{Uniformly branching trees}, arXiv:2004.07912 (2020).

\bibitem[CK13]{CK13}
Jeff Cheeger and Bruce Kleiner, \emph{Realization of metric spaces as inverse
  limits, and bilipschitz embedding in {$L_1$}}, Geom. Funct. Anal. \textbf{23}
  (2013), no.~1, 96--133. \MR{3037898}

\bibitem[Dav21]{David21}
Guy~C. David, \emph{On the {L}ipschitz dimension of {C}heeger-{K}leiner}, Fund.
  Math. \textbf{253} (2021), no.~3, 317--358. \MR{4205978}

\bibitem[DEBV21]{DEV21}
Guy David, Sylvester Eriksson-Bique, and Vyron Vellis, \emph{Bi-lipschitz
  embeddings of quasiconformal trees}, arXiv:2106.13007 (2021).

\bibitem[DV20]{DV20}
Guy~C. David and Vyron Vellis, \emph{Bi-{L}ipschitz geometry of quasiconformal
  trees}, arxiv:2007.12297 (2020).

\bibitem[Fed69]{Federer}
Herbert Federer, \emph{Geometric measure theory}, Die Grundlehren der
  mathematischen Wissenschaften, Band 153, Springer-Verlag New York Inc., New
  York, 1969. \MR{0257325}

\bibitem[Fre10]{Freeman10}
David~M. Freeman, \emph{Bilipschitz homogeneous {J}ordan curves, {M}\"{o}bius
  maps, and dimension}, Illinois J. Math. \textbf{54} (2010), no.~2, 753--770.
  \MR{2846481}

\bibitem[Fre22]{Freeman20}
\bysame, \emph{Weak quasicircles have {L}ipschitz dimension 1}, Ann. Fenn.
  Math. \textbf{47} (2022), no.~1, 283--303. \MR{4366417}

\bibitem[GK03]{GK03}
G.~Godefroy and N.~J. Kalton, \emph{Lipschitz-free {B}anach spaces}, vol. 159,
  2003, Dedicated to Professor Aleksander Pe\l czy\'{n}ski on the occasion of
  his 70th birthday, pp.~121--141. \MR{2030906}

\bibitem[God10]{Godard10}
A.~Godard, \emph{Tree metrics and their {L}ipschitz-free spaces}, Proc. Amer.
  Math. Soc. \textbf{138} (2010), no.~12, 4311--4320. \MR{2680057}

\bibitem[Hei01]{Heinonen}
Juha Heinonen, \emph{Lectures on analysis on metric spaces}, Universitext,
  Springer-Verlag, New York, 2001. \MR{1800917}

\bibitem[HM99]{HM99}
David~A. Herron and Volker Mayer, \emph{Bi-{L}ipschitz group actions and
  homogeneous {J}ordan curves}, Illinois J. Math. \textbf{43} (1999), no.~4,
  770--792. \MR{1712522}

\bibitem[HM12]{HM12}
David Herron and Daniel Meyer, \emph{Quasicircles and bounded turning circles
  modulo bi-{L}ipschitz maps}, Rev. Mat. Iberoam. \textbf{28} (2012), no.~3,
  603--630. \MR{2949615}

\bibitem[JL01]{JL}
William~B. Johnson and Joram Lindenstrauss, \emph{Basic concepts in the
  geometry of {B}anach spaces}, Handbook of the geometry of {B}anach spaces,
  {V}ol. {I}, North-Holland, Amsterdam, 2001, pp.~1--84. \MR{1863689}

\bibitem[Kin17]{K17}
Kyle Kinneberg, \emph{Conformal dimension and boundaries of planar domains},
  Trans. Amer. Math. Soc. \textbf{369} (2017), no.~9, 6511--6536. \MR{3660231}

\bibitem[Kir94]{Kirchheim}
Bernd Kirchheim, \emph{Rectifiable metric spaces: local structure and
  regularity of the {H}ausdorff measure}, Proc. Amer. Math. Soc. \textbf{121}
  (1994), no.~1, 113--123. \MR{1189747}

\bibitem[LD13]{LD13}
Enrico Le~Donne, \emph{Properties of isometrically homogeneous curves}, Int.
  Math. Res. Not. IMRN (2013), no.~12, 2756--2786. \MR{3071663}

\bibitem[Mat95]{Mattila}
Pertti Mattila, \emph{Geometry of sets and measures in {E}uclidean spaces},
  Cambridge Studies in Advanced Mathematics, vol.~44, Cambridge University
  Press, Cambridge, 1995, Fractals and rectifiability. \MR{1333890}

\bibitem[Nad92]{Nadler92}
Sam~B. Nadler, Jr., \emph{Continuum theory}, Monographs and Textbooks in Pure
  and Applied Mathematics, vol. 158, Marcel Dekker, Inc., New York, 1992, An
  introduction. \MR{1192552}

\bibitem[NS07]{NaorSchechtman}
Assaf Naor and Gideon Schechtman, \emph{Planar earthmover is not in {$L_1$}},
  SIAM J. Comput. \textbf{37} (2007), no.~3, 804--826.

\bibitem[SS05]{Stein}
Elias~M. Stein and Rami Shakarchi, \emph{Real analysis}, Princeton Lectures in
  Analysis, vol.~3, Princeton University Press, Princeton, NJ, 2005, Measure
  theory, integration, and Hilbert spaces. \MR{2129625}

\bibitem[TV80]{TV80}
P.~Tukia and J.~V\"{a}is\"{a}l\"{a}, \emph{Quasisymmetric embeddings of metric
  spaces}, Ann. Acad. Sci. Fenn. Ser. A I Math. \textbf{5} (1980), no.~1,
  97--114. \MR{595180}

\bibitem[Wea18]{Weaver18}
Nik Weaver, \emph{Lipschitz algebras}, World Scientific Publishing Co. Pte.
  Ltd., Hackensack, NJ, 2018, Second edition of [ MR1832645]. \MR{3792558}

\end{thebibliography}
\bibliographystyle{amsalpha}

\end{document}